\definecolor{myred}{RGB}{183,18,52}
\definecolor{myyellow}{RGB}{254,213,1}
\definecolor{myblue}{RGB}{0,80,198}
\definecolor{mygreen}{RGB}{0,155,72}
\newcommand{\mM}{\mathcal{M}}
\newcommand{\mS}{\mathcal{S}}
\def\eD{\EuScript{D}}
\def\eM{\EuScript{M}}
\def\eS{\EuScript{S}}
\def\eW{\EuScript{W}}
\def\eY{\EuScript{Y}}
\newcommand{\JJ}{\mathcal{J}}
\newcommand{\mJ}{\mathcal{J}}
\newcommand{\mT}{\mathcal{T}}
\newcommand{\mC}{\mathcal{C}}
\newcommand{\mX}{\mathcal{X}}
\newcommand{\mY}{\mathcal{Y}}
\newcommand{\mZ}{\mathcal{Z}}
\newcommand{\mR}{\mathcal{R}}
\newcommand{\mA}{\mathcal{A}}
\newcommand{\mG}{\mathcal{G}}
\newcommand{\mE}{\mathcal{E}}
\newcommand{\mL}{\mathcal{L}}
\newcommand{\mQ}{\mathcal{Q}}
\newcommand{\mB}{\mathcal{B}}
\newcommand{\mD}{\mathcal{D}}
\newcommand{\aA}{\mathbb{A}}
\newcommand{\DD}{\mathbb{D}}
\newcommand{\EE}{\mathbb{E}}
\newcommand{\ZZ}{\mathbb{Z}}
\newcommand{\CC}{\mathbb{C}}
\newcommand{\PP}{\mathbb{P}}
\newcommand{\QQ}{\mathbb{Q}}
\newcommand{\RR}{\mathbb{R}}
\newcommand{\TT}{\mathbb{T}}
\newcommand{\CP}{\mathbb{C}P}
\newcommand{\RP}{\mathbb{R}P}
\newcommand{\id}{\mbox{id}}
\def\arbar{\overline{Art}}
\def\G{\Gamma}
\newcommand\aut{\operatorname{Aut}}
\newcommand{\Conf}{\mbox{Conf}}
\newcommand{\Diff}{\mbox{Diff}}
\newcommand{\Aut}{\mbox{Aut}}
\newcommand{\PSL}{\mbox{PSL}}
\newcommand{\cov}{\mbox{cov}}
\newcommand{\mon}{\text{mon}}
\newcommand{\inte}{\text{int}}
\newcommand{\w}{\omega}
\newcommand{\coker}{\mbox{coker}}
\newcommand{\ov}{\overline}
\newcommand{\wt}{\widetilde}
\newcommand{\bP}{\mathbb{P}}
\newcommand{\red}{\textcolor{red}}
\newtheorem*{theorem*}{Theorem}
\newtheorem{thm}{Theorem}[section]
\newtheorem{dfn}[thm]{Definition}
\newtheorem{cor}[thm]{Corollary}
\newtheorem{lma}[thm]{Lemma}
\newtheorem{qes}[thm]{Question}
\newtheorem{prp}[thm]{Proposition}
\newtheorem{rmk}[thm]{Remark}
\newtheorem{conj}[thm]{Conjecture}
\begin{document}
\title{Symplectic Torelli groups of rational surfaces}
\author{Jun Li, Tian-Jun Li, and Weiwei Wu}
\begin{abstract}
  We call a symplectic rational surface $(X,\w)$ \textit{positive} if $c_1(X)\cdot[\w]>0$.  The positivity condition of a rational surface is equivalent to the existence of a divisor $D\subset X$, such that $(X, D)$ is a log Calabi-Yau surface.  

  The cohomology class of a symplectic form can be endowed with a \textit{type} using the root system associated to its Lagrangian spherical classes.  In this paper, we prove that the symplectic Torelli group of a positive rational surface is trivial if it is of type $\aA$, and is a sphere braid group if it is of type $\DD$.

  As an application, we answer affirmatively a long-term open question that Lagrangian spherical Dehn twists generate the symplectic Torelli group $Symp_h(X)$ when $X$ is a positive rational surface. We also prove that all symplectic toric surfaces have trivial symplectic Torelli groups.   Lastly, we verify that Chiang-Kessler's symplectic involution is Hamiltonian, answering a question of Kedra positively.

  Our key new input is the recent study of almost complex subvarieties due to Li-Zhang \cite{LZ15} and Zhang \cite{Zha17, Zhang16}.  Inspired by these works, we define a new \textit{coarse stratification} for the almost complex structures for positive rational surfaces.  We also combined symplectic field theory and the parametrized Gromov-Witten theories for our applications.

 \end{abstract}

\maketitle
\setcounter{tocdepth}{2}
\tableofcontents

\section{Introduction}

\subsection{Background and problems} % (fold)
\label{sub:background_and_problems}

It has been long noticed that the topology of symplectomorphism groups is connected to the space of compatible almost complex structures of a symplectic manifold.  This relation can be made very explicit in dimension $4$. Starting from Gromov's work, a series of foundational works \cite{Abr98,AM00,McDacs,Anj02,LP04,McDuffBlowup,AGK09} have made substantial progress in understanding the symplctomorphism groups on $S^2\times S^2$ and $\CP^2\#k\ov\CP^2$ when $k\le2$.

However, even for slightly larger rational surfaces, the space of almost complex structures turns out extremely hard to study.   When the symplectic form is monotone, the space of compatible almost complex structure behaves nicely, and the weak homotopy type of $Symp (X,\omega)$ was fully computed in \cite{Eva} when $X=\CP^2\#k\ov\CP^2$ for $3\le k\le5$.  When the symplectic form is not monotone, this problem becomes considerably harder.  See \cite{AP13} for the case of $k=3$.

The focus of the current paper is the lowest dimension of the homotopy group for $Symp(X)$, i.e. its connected components.  This is also called the \textbf{symplectic mapping class group}.  The homological action of symplectomorphism groups of a rational surface has been classified in \cite{LW12} (see also \cite{She10}), which is given by reflections along homology classes of embedded Lagrangian spheres.  Denote $Symp_h(X,\w)$ as the homologically trivial part of the symplectomorphism group, and we call $\pi_0(Symp_h(X,\w))$ the \textbf{symplectic Torelli group}.

Seidel and Tonkonog's results showed that the symplectic Torelli group is infinite for a large class of hypersurfaces in projective space \cite{Se99,Seithesis,Ton15}.  A recent result by Sheridan and Smith \cite{SS17} discovered a symplectic $K3$ surface for which the symplectic Torelli group is infinitely generated.  In the same paper, they summarized several open questions that has been known for a long time in folklore, and the following one is particularly relevant to our current result.

\begin{qes}[Donaldson]
 	Is the symplectic Torelli group generated by squared Dehn twists along Lagrangian spheres?
 \end{qes}

To date, many constructions of interesting group embeddings into the Torelli groups are known, see \cite{Sei08}, \cite{KS02}, \cite{ST04}, \cite{Kea12}, \cite{NBparametrizedGW} for example.  Most of these non-trivial subgroups are related to braid groups, which reflect features of the monodromies of certain moduli spaces.  Donaldson's question in rational surfaces has long expected a positive answer, partly due to its connection to the fundamental group of the configuration space of points.  The only cases where Donaldson's question is completely understood are the small rational surfaces $S^2\times S^2$ and $\CP^2\#k\ov\CP^2$ for $k\le5$ (see a detailed account in the introduction of \cite{Sei08,Eva}), mostly as a direct consequence of a complete understanding of the weak homotopy types of the symplectomorphism groups.  In \cite{LLW22}, the authors computed the symplectic mapping class group of the non-monotone $\CP^2\#5\ov\CP^2$, and an affirmative answer to Donaldson's question for these cases should be deduced from the result therein as well.

There is another seemingly unrelated question that originated from the last decade.	

\begin{qes}[Kedra]
	Does there exist a Hamiltonian action by a cyclic group, which does not extend to an $S^1$-Hamiltonian action?
\end{qes}

So far, the only example of a homologically trivial cyclic action that cannot be extended to a Hamiltonian $S^1$-Hamiltonian action is given by Chiang and Kessler \cite{CKfinite}.  Their example is a symplectic involution on a rational surface diffeomorphic to $\CP^2\#6\ov\CP^2$ with a non-monotone symplectic form, but it seems difficult to conclude whether this involution is Hamiltonian with their construction.  In addition, even though there are abundance of examples of cyclic actions from algebraic geometry, most of them seem to be either clearly non-Hamiltonian or hard to prove either way.

\subsection{Main results} % (fold)
\label{sub:main_results}

Recall that a symplectic rational surface is either a symplectic $S^2\times S^2$ or $\CP^2\#k\ov\CP^2$, $k\ge1$.

\begin{dfn}\label{d:positiveRational}
    We call a symplectic form $\w\in\Omega^2(X,\RR)$ $c_1$-\textbf{positive}, if $[c_1]\cdot[\w]>0$.  A symplectic rational surface $(X,\w)$ is called \textbf{positive} if $\w$ is $c_1$-positive.
\end{dfn}

Note that even though the positivity condition concerns only with the symplectic class, the following result of McDuff \cite{McD96} shows $Symp(X,\w)$ is irrelevant to the choice of the form.

\begin{thm}\label{t:DusaUniqueness}
    Cohomologous symplectic forms on a rational surface are diffeomorphic and hence the symplectomorphism type of a rational surface $(X,\w)$ is determined by $[\w]$.
\end{thm}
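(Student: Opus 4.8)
The plan is to deduce Theorem~\ref{t:DusaUniqueness} from McDuff's uniqueness-of-symplectic-forms theorem for rational (and more generally, rational and ruled) $4$-manifolds, together with a standard parametrized Moser argument. First I would recall the statement: on a rational $4$-manifold $X$, any two cohomologous symplectic forms are related by a diffeomorphism of $X$; more precisely, the action of $\mathrm{Diff}(X)$ on the space of symplectic forms is such that two forms lie in the same orbit if and only if they have the same cohomology class (and the same orientation, which is automatic here since $b^+=1$ forces the sign). This is exactly \cite{McD96}, whose proof proceeds by the inflation technique and an analysis of the space of $\w$-compatible almost complex structures together with the classification of symplectic ruled surfaces; I would cite it as a black box.

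Next I would spell out why this yields the second clause, that $Symp(X,\w)$ — up to isomorphism, and indeed the pair $(X,\w)$ up to symplectomorphism — depends only on $[\w]$. Suppose $\w_0$ and $\w_1$ are cohomologous symplectic forms on $X$. By McDuff's theorem there is $\phi\in\mathrm{Diff}(X)$ with $\phi^*\w_1=\w_0$; then conjugation by $\phi$ gives an isomorphism $Symp(X,\w_0)\xrightarrow{\ \sim\ }Symp(X,\w_1)$, $\psi\mapsto\phi\psi\phi^{-1}$, which is moreover compatible with the homological actions, so it restricts to an isomorphism of the homologically trivial subgroups $Symp_h$ and hence of the symplectic Torelli groups $\pi_0(Symp_h)$. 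This is the only content needed downstream.

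For completeness I would also note the reduction that lets one combine McDuff's theorem with Moser's argument inside a fixed cohomology class: if $\w_0,\w_1$ are cohomologous one first connects them by the linear path $\w_t=(1-t)\w_0+t\w_1$, which need not stay symplectic in general, so instead one uses McDuff's result directly rather than naive Moser — but if in addition $\w_0,\w_1$ lie in a connected family of symplectic forms in the same class, Moser produces an isotopy, and McDuff's theorem is precisely what guarantees the relevant space of cohomologous symplectic forms on a rational surface is connected. I would phrase the proof so that the only external input is \cite{McD96} and the rest is the one-line conjugation observation above.

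The main obstacle is essentially bookkeeping rather than mathematics: one must be careful that ``diffeomorphic'' is used in the sense of pulling back forms (so the diffeomorphism intertwines the two symplectic structures), and that the orientation issue is genuinely vacuous here because any symplectic form on a rational surface induces the complex orientation up to the action of $\pi_0(\mathrm{Diff})$ — so no sign hypothesis is needed in the statement. Since the theorem is quoted verbatim from McDuff, the ``proof'' in the paper will really just be a pointer to \cite{McD96} plus the conjugation remark; I would keep it to a few lines.
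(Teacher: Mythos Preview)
Your proposal is correct and matches the paper's approach exactly: the paper does not prove this theorem but simply attributes it to McDuff \cite{McD96} and uses it as a black box, precisely as you anticipated. Your conjugation remark is the implicit reasoning behind the paper's phrase that ``$Symp(X,\w)$ is irrelevant to the choice of the form,'' so there is nothing to add.
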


The positivity condition of $\w$ first appeared in the work of Friedman and Morgan \cite{FM88} as the K\"ahler cone of a specific class of integrable complex structures, which will be revisited later.  Positive rational surfaces were studied extensively in a series of works \cite{LZ15,Zha17}, where they are called $c_1$-\emph{nef}.  $c_1$-positive symplectic structures behave nicely in terms of their correspondence with pseudo-holomorphic curves, and hence many results therein have natural analogs from algebraic geometry.  In particular, Li-Zhang and Zhang showed that the $c_1$-positive sphere cone coincides with the $c_1$-positive symplectic cone for any almost complex structure $J$ compatible with some symplectic form \cite{LZ15,Zha17}.  This class of symplectic forms is also broad enough to include quite interesting examples of rational surfaces, namely, the following two classes.

\begin{enumerate}[(1)]
  \item $S^2\times S^2$ or $\CP^2\#k\ov\CP^2$ for $k\le9$ with an arbitrary symplectic form \cite{LL01}, \cite{Bir99pakstab}.  This includes all symplectic del Pezzo surfaces (i.e. monotone rational surfaces).
  \item A symplectic rational surface $X$ that admits symplectic divisor $D$, where $[D]$ is dual to the anti-canonical class.  This is the case when $(X,D)$ is a symplectic log Calabi-Yau pair.
In fact, as a direct consequence of a result of Dorfmeister-Li, which says $(X,\w)$ is a positive rational surface if and only if there is a symplectic divisor $D$, such that $(X,D)$ is a log Calabi-Yau pair.  For example, take an embedded elliptic curve $D'$ in $\CP^2$ as a symplectic divisor, and perform $n$ small size symplectic blow-ups on $D'$, we get a proper transform $D\subset X=\CP^2\#n\ov\CP^2$.  From \cite[Theorem 2.7]{DoL10} (see also \cite[Corollary 5.12]{EF21}), the symplectic cone of $(X,D)$ is given by $\{d\in H^2(X)| d^2>0, d\cdot[E]>0\text{ for all }E\text{ exceptional, and }d\cdot[D]>0\}$.  This is exactly the whole $c_1$-positive cone.
\end{enumerate}

Following \cite{LLW22} and \cite{ALLP}, we show in Section \ref{sec:symplectic_cone} that, for any symplectic form $\omega$ on a rational surface, the homology classes of Lagrangian $(-2)$-spheres form a root system $\Gamma(X,\w)$, called the \textbf{Lagrangian system}.  This is related to the stratification of the  symplectic cone.  Borrowing again from \cite{FM88}, one may define \textit{wall structures} by the sign of pairing against $-2$ classes, and the stratification can be defined by the corner structures of these walls.  We will tag these strata according to the Dynkin diagram of the root system formed by Lagrangian spheres, which was first suggested by Shevchishin in a preprint \cite{She10} assuming $\w$ is reduced.

%\red{Introduce $\Gamma_L$ as in \cite{LLW22} 5 lines on p3}
Therefore, we divide the symplectic classes of a rational surface into type $\mathbb{A}$, $\mathbb{D}$ and $\mathbb{E}$ (see Definition \ref{d:typeForms} and a concrete numerical description of these forms).  Notice that a generic symplectic form is of type $\aA$, and type $\DD_k, \EE_k$ has codimension $k$ in the reduced cone (and hence symplectic cone).   Note that type $\aA$ forms is an open dense region of the reduced cone in Definition \ref{d:reduced}. Type $\DD_k$ forms are Cremona equivalent to  $(1| a,  \underbrace{\frac{1-a}{2}, \frac{1-a}{2}, \frac{1-a}{2}, \cdots \frac{1-a}{2}}_k, m_{k+2} \cdots)$ where $a<\frac13,$ and $\frac{1-a}{2}> m_{k+2}$ (see Definition \ref{d:slice} for the notation). Below is our main calculation for the symplectic Torelli group.

\begin{thm}\label{genSMC}
Given a symplectic rational surface $(X,\w)$ where $\w$ is $c_1$-positive, then

 \begin{itemize}
    \item  $\pi_0(Symp_h(X,\w))={\{1\}}$ if $\w$ is of type $\mathbb{A}$;
    \item $\pi_0(Symp_h(X,\w))=PB_k(S^2)$, the pure braid group of m strands on the sphere if $\w$ is of type $\DD_k$.  
  \end{itemize}

\end{thm}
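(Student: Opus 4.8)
The plan is to follow the now-standard strategy of relating $\pi_0(\operatorname{Symp}_h(X,\omega))$ to the (co)homology of the space $\mathcal{J}$ of $\omega$-compatible almost complex structures, via the action of the symplectomorphism group on $\mathcal{J}$. Concretely, $\operatorname{Symp}_h(X,\omega)$ acts on the contractible space $\mathcal{J}$, and one wants to understand the homotopy quotient; the key is a suitable stratification of $\mathcal{J}$ by the configuration of embedded $J$-holomorphic curves in the exceptional and $(-2)$-classes. This is exactly where the paper's new \emph{coarse stratification} of $\mathcal{J}$ for positive rational surfaces enters: the $c_1$-positivity hypothesis (equivalently, existence of a log Calabi-Yau anticanonical divisor) forces every $J$ to carry an honest anticanonical configuration, and Li--Zhang/Zhang's results on almost complex subvarieties control how the curves in the relevant classes degenerate. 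First I would set up this stratification and identify the open dense stratum $\mathcal{J}_{\mathrm{open}}$ (the locus where the "expected" configuration of curves is embedded and transverse), together with the positive-codimension strata indexed by which Lagrangian $(-2)$-classes acquire holomorphic representatives or which exceptional curves break.

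Next I would analyze the open stratum. On $\mathcal{J}_{\mathrm{open}}$, a parametrized Gromov--Witten / Alexander--Hirsch--Pardon-type argument should show that $\operatorname{Symp}_h$ acts on $\mathcal{J}_{\mathrm{open}}$ with a nice fibration whose fiber is the stabilizer of a standard configuration; tracking the exceptional spheres (a full exceptional collection realizing $X$ as an iterated blow-up of $\mathbb{C}P^2$ or $S^2\times S^2$) reduces the stabilizer, inductively, to the symplectomorphism group of the base minus the curves — ultimately a group known to be connected. This is the mechanism by which the \emph{type} $\mathbb{A}$ case yields $\pi_0 = \{1\}$: in type $\mathbb{A}$ there are (generically) no Lagrangian $(-2)$-spheres forcing extra strata of the kind that produce monodromy, so $\mathcal{J}$ is, up to the fibration bookkeeping, homotopy-equivalent to the orbit of a single $J$, and connectedness of $\operatorname{Symp}_h$ follows. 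For type $\mathbb{D}_k$, the Lagrangian system contributes a codimension-$k$ stratum; the normal data of this stratum is governed by a configuration space of $k$ points (the nodes/collision points on an anticanonical curve, which for a type-$\mathbb{D}$ degeneration is a nodal rational curve, i.e. a sphere), and the boundary of a tubular neighborhood of that stratum is a bundle whose monodromy produces precisely the pure braid group $PB_k(S^2)$. I would make this precise by using the symplectic field theory / neck-stretching input of the paper to identify the link of the $\mathbb{D}_k$-stratum with a $PB_k(S^2)$-bundle and then run the long exact sequence of the pair $(\mathcal{J}, \mathcal{J}_{\mathrm{open}})$ together with the fibration $\operatorname{Symp}_h \to \mathcal{J} \to \mathcal{J}/\!/\operatorname{Symp}_h$ to read off $\pi_0$.

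The main obstacle I expect is controlling the \emph{positive-codimension strata} precisely enough — both showing the coarse stratification is a genuine (Whitney-type) stratification with the claimed codimensions, and, more seriously, proving that no \emph{other} strata contribute to $\pi_0$. In particular one must rule out that higher-codimension strata (type $\mathbb{E}$-like degenerations sitting inside the closure, or cusp/tangency loci) attach cells that would either kill the braid relations or add unwanted generators; this is a transversality-and-gluing problem for the almost complex subvarieties, and is exactly what the Li--Zhang and Zhang machinery is being invoked to handle. A secondary technical point is the passage from the homotopy quotient statement to $\pi_0(\operatorname{Symp}_h)$ itself: one needs the relevant fibration to be, e.g., a quasifibration or to have a section up to homotopy on $\pi_1$, so that $\pi_1(\mathcal{J}/\!/\operatorname{Symp}_h) \twoheadrightarrow \pi_0(\operatorname{Symp}_h)$ and the kernel is understood; since $\mathcal{J}$ is contractible this should reduce cleanly to $\pi_0(\operatorname{Symp}_h) = \pi_1(\mathcal{J}/\!/\operatorname{Symp}_h)$, but verifying the surjectivity and injectivity carefully (that every loop of $J$'s lifts, and that homologically trivial symplectomorphisms fixing a standard $J$ are isotopic to the identity — the base case of the induction) is where I would spend the most care. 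Finally, one has to check the group is the \emph{pure} braid group rather than the full braid group $B_k(S^2)$: this should come from the homological triviality constraint, which prevents permuting the $k$ collision points/ $(-2)$-classes, so the monodromy lands in $PB_k(S^2)$.
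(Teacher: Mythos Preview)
Your overall framework (act on $\mathcal{J}$, stratify, read off $\pi_0$) is the right setting, but the mechanism you describe diverges from the paper's proof in ways that matter, and in two places your proposal rests on claims that are not correct.

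\textbf{Type $\mathbb{A}$.} You write that ``in type $\mathbb{A}$ there are (generically) no Lagrangian $(-2)$-spheres forcing extra strata of the kind that produce monodromy.'' This is false: a type $\mathbb{A}$ form can have a large $\mathbb{A}_m$ Lagrangian root system, and each such class does produce a codimension-$2$ wall $U_D$ in $\mathcal{A}_\omega$. The paper does \emph{not} argue by showing these walls are absent or harmless for the link. Instead, the coarse stratification is used to prove \emph{inflation} results (Proposition~\ref{p:universal}, Corollary~\ref{01stab}): $\pi_0(\mathrm{Symp}_h)$ is invariant along the A-extremal and minimal deformations. One then blows down the minimal exceptional class repeatedly (Lemma~\ref{l:blowdown}, via a ball-packing fibration and the fact that restriction to small balls kills $\pi_1$ of the embedding space), preserving the type, until $n\le 4$ where triviality is already known. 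The connectedness is not read off from $\mathcal{J}_{\mathrm{open}}$ directly.

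\textbf{Type $\mathbb{D}_k$.} There is no single ``codimension-$k$ stratum'' whose link is a $PB_k(S^2)$-bundle. The $k$ simple roots of the $\mathbb{D}_k$ system give $k$ separate codimension-$2$ submanifolds $U_{D_i}$; the long exact sequence of the pair you propose would at best give generators, not the braid \emph{relations}, and controlling how the meridians interact across different $U_{D_i}$ is precisely the hard part. The paper proceeds quite differently. First it blows down to make the root system irreducible ($\mathbb{D}_{n-1}$ on $X_n$), then uses the A-extremal deformation to reduce to a dense set of classes (Lemma~\ref{l:dense}). For those, it proves \emph{two surjections} and invokes the Hopfian property of sphere braid groups: (i) a surjection $\pi_0(\mathrm{Diff}^+(S^2,n-1))\twoheadrightarrow\pi_0(\mathrm{Symp}_h)$ via the fibration $\mathrm{Stab}(C)\to\mathrm{Symp}_h\to\mathcal{C}_0$ for an explicit \emph{filling divisor} $C$ whose complement is Stein and has contractible compactly supported symplectomorphism group (Lemmas~\ref{Dsurjbraid}, \ref{sympcu}); and (ii) a surjection $\pi_0(\mathrm{Symp}_h)\twoheadrightarrow PB_{n-1}(S^2)/\mathbb{Z}_2$ built from an algebraic family $\alpha:\mathrm{Conf}_{n-1}(\mathbb{CP}^1)\to\mathcal{J}_\omega^s/\mathrm{Symp}_h$ and a Gromov-theoretic ``forgetful'' map $\beta$ back to configuration space, with $\beta\circ\alpha$ the canonical projection (Propositions~\ref{l:alpha}--\ref{prp:surjective}). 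Neither SFT nor link-of-stratum analysis enters here; the SFT/parametrized Gromov--Witten input in the paper is used only for Theorem~\ref{t:generation} (generation by Dehn twists), not for this computation.

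In short: the coarse stratification in the paper is a tool for proving deformation invariance of $\pi_0(\mathrm{Symp}_h)$ inside the $c_1$-positive cone, which then feeds a blow-down induction (type $\mathbb{A}$) and a Hopfian two-surjection argument (type $\mathbb{D}$). Your proposal to compute $\pi_0$ directly from the topology of the stratification misses both of these reductions, and the specific claims about ``no monodromy in type $\mathbb{A}$'' and ``a codimension-$k$ stratum in type $\mathbb{D}_k$'' are where the argument would break.
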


 The main difficulty proving Theorem \ref{genSMC} has deep roots in the well-known problem of finding a good notion of genericity condition in $\mathbb{P}^2$, which entangles with the notorious Nagata conjecture.  Given $p_1,\cdots,p_r\in\CP^2$ at very general positions and $m_1,\cdots, m_r\in\ZZ^+$ for $r>9$, Nagata conjecture predicts that, any curve $C$ in $\CP^2$ which passes through each of the points $p_i$ with multiplicity $m_i$ satisfies 
 \[
   \deg C>\frac{1}{\sqrt{r}}\sum_{i=1}^r m_i.	
 \]
 
The connections between Nagata conjecture and symplectic geometry was established from a series of pioneering work, see for example \cite{MP94,Bir99pakstab,}, inspiring Biran's extension of Nagata conjecture to arbitrary polarized surfaces \cite{Bir01}.  Namely, given any irreducible curve $C$ in $\CP^2$ satisfying the assumptions of Nagata conjecture, one may first blowup all the $p_i$'s, and consider the proper transform $\wt C$ in a complex rational surface $\CP^2\#r\ov\CP^2$.  If the integrable complex structure obtained this way is compatible with a symplectic class $[\w_\epsilon]=(1|\frac{1}{\sqrt{r}}-\epsilon,\cdots,\frac{1}{\sqrt{r}}-\epsilon)$ for any $\epsilon$, the fact that $\w(\wt C)>0$ would imply Nagata conjecture.  Note that Biran proved $(1|\frac{1}{\sqrt{r}}-\epsilon,\cdots,\frac{1}{\sqrt{r}}-\epsilon)$ is indeed represented by a symplectic form in his famous paper \cite{Bir99pakstab}, so this problem mainly concerns the compatibility of this form with the complex structure obtained by blow-ups at a very general position.

To approach the topology of symplectomorphism groups on rational surfaces, McDuff made an important observation extending Kronheimer's fibration (see Section \ref{inflation}).  Her approach clarifies that, understanding the change of a symplectomorphism group under a symplectic deformation $\{\w_t\}_{t\in[0,1]}$ is equivalent to the study of the change of $\mA_{\w_t}$, which is the space of almost complex structures which are compatible with some symplectic structure isotopic to $\w_t$.  If an almost complex structure $J$ is compatible with a path of (non-cohomologous) forms $\{\w_t\}_{t\in [0,1]}$, this particular symplectic deformation can be regarded as a $J$-\textit{inflation}.  From this perspective, one would have proved Nagata conjecture, if they find a $J$-inflation for the integrable complex structure $J$ obtained by blowing up $r$ points of very general positions, whose ending symplectic class $[\w_1]$ is near Biran's full packing form.  This problem indicates that, it is a highly non-trivial task to understand the compatible cone for even a very general complex structure on a rational surface.  Therefore, understanding the change of $\mA_\w$ for blow-up numbers greater than $9$ under an arbitrary symplectic deformation seems out of reach for current technology.  Moreover, even finding a reasonable notion of ``genericity'' where the symplectic cone can be understood is a non-trivial problem.

Friedman and Morgan's notion of good generic complex structures provided an important insight to this problem.  A \textit{good generic surface} is one that admits a smooth elliptic curve $F$ representing the anti-canonical class, and the only irreducible curves with $C^2<0$ are exceptional divisors and $F$.  Such surfaces must be rational, and their K\"ahler cone is exactly the $c_1$-positive cone (\cite[Proposition 3.4]{FM88}), even though they are in general \textit{not} very general in Nagata's sense.

Our solution to the problem is largely inspired by this line of thoughts by focusing exclusively on exceptional curves, and considering an almost complex structure \textit{generic} if all exceptional classes have embedded representatives.  This is similar to the notion of \textit{generic} complex structures in Friedman-Morgan's sense, but we do not eliminate curves with self-intersections less than $-1$.  Extending this definition, we define a stratification which is \textit{coarse} in two senses: it only concerns exceptional curves, and it does not stratify almost complex structures with codimensions higher than $4$.

  We are able to prove that the compatible cone of generic almost complex structures in our sense contains the $c_1$-positive cone as a consequence of results from \cite{LZ15,Zha17}.  For almost complex structures in the second stratum, we are still able to obtain inflation for a large part of it, which allows us to obtain the invariance of the Torelli group when the type of $\w$ is fixed.

Although our method cannot obtain a complete description of symplectomorphism groups for type $\mathbb{E}$ forms, this is largely expected: Seidel explained in \cite{Sei08} that the fundamental group of the moduli space of marked cubic surfaces is in fact very large, by results of Allcock et al \cite{ACT98,Al00}; and for del Pezzo surfaces of degree 2, it is related to the moduli space of non-hyperelliptic curves of genus 3.  Investigations of symplectomorphism groups of this type seem to require completely different sorts of techniques.  Despite of that, our deformation technique does reduce them to the three base cases, i.e. the monotone blow-ups of 6, 7, 8 times on $\CP^2$ (see Lemma \ref{l:blowdown} and Remark \ref{rem:typeE}).  

Moreover, we are still able to prove the following theorem, which does not depend on the type of $\w$, answering Donaldson's question for positive rational surfaces.

\begin{thm}\label{t:generation}
   Given a positive rational surface $(X,\w)$, the symplectic Torelli group is generated by Lagrangian Dehn twists.
\end{thm}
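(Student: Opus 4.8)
The plan is to deduce the statement from Theorem~\ref{genSMC} together with the type trichotomy, the only genuinely new work being the type $\EE$ cases; by Theorem~\ref{t:DusaUniqueness} we may fix any convenient representative of $[\w]$ throughout. If $\w$ is of type $\aA$, then $\pi_0(Symp_h(X,\w))=\{1\}$ by Theorem~\ref{genSMC} and there is nothing to prove. If $\w$ is of type $\DD_k$, then Theorem~\ref{genSMC} identifies $\pi_0(Symp_h(X,\w))$ with the pure sphere braid group $PB_k(S^2)$, and I would go back to how that isomorphism is produced --- as the monodromy of a fibration whose base is essentially the configuration space of $k$ points on an $S^2$ carrying the $\DD_k$-chain of Lagrangian $(-2)$-spheres --- and observe that the standard pure braid generators $A_{ij}$ are the monodromies of loops pushing one marked point once around another, which act as (squares of) Dehn twists along the Lagrangian sphere joining them. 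Since those generate $PB_k(S^2)$, the $\DD_k$ case follows once this geometricity, already visible in the proof of Theorem~\ref{genSMC}, is made explicit.

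It remains to treat type $\EE_k$, which by the finiteness of the Lagrangian system occurs only for $k\in\{6,7,8\}$. By Lemma~\ref{l:blowdown} and Remark~\ref{rem:typeE}, a symplectic deformation through type $\EE_k$ forms followed by symplectic blow-downs of exceptional spheres disjoint from the Lagrangian system reduces the statement to the monotone del Pezzo surface of degree $9-k$. I would check two points: that a deformation within a fixed type induces an isomorphism on $\pi_0(Symp_h)$ (part of the machinery of Theorem~\ref{genSMC}) carrying Lagrangian $(-2)$-spheres to Lagrangian $(-2)$-spheres, hence Dehn twists to Dehn twists; and that each blow-down alters $\pi_0(Symp_h)$ only by a pure braid factor arising from the configuration space of the blow-down sites, which is itself generated by Dehn twists. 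Granting these, it suffices to prove the theorem for the three monotone base cases.

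For the monotone del Pezzo surface $X$ of degree $d=9-k$ I would work with the complement $X\setminus D$ of a smooth symplectic anticanonical divisor $D$, which exists and is unique up to symplectic isotopy by the log Calabi--Yau structure. This complement is a completion of the Milnor fibre of the simple-elliptic singularity $\widetilde{\EE}_k$, whose compactly supported symplectic mapping class group is generated by the Dehn twists along its Lagrangian vanishing cycles --- a Picard--Lefschetz statement modelled on the ADE Milnor fibres. A neck-stretching (symplectic field theory) argument along the contact boundary of a standard neighbourhood of $D$ then lets one isotope a given $\phi\in Symp_h(X,\w)$ to be standard near $D$, record the compactly supported symplectomorphism it induces on the completed complement (a product of Lagrangian vanishing-cycle twists) and the one it induces on the fibered neighbourhood of $D$ (which the fibered structure forces to be a product of Lagrangian Dehn twists up to isotopy), and conclude that $[\phi]$ is a product of Lagrangian Dehn twists; the parametrized Gromov--Witten input from the rest of the paper is what makes the reduction to standard form near $D$ effective.

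The main obstacle is this last step: the neck-stretching comparison between $\pi_0(Symp_h(X,\w))$ and the symplectic mapping class group of the log Calabi--Yau complement, and in particular controlling the symplectomorphisms supported in the neighbourhood of $D$. A secondary difficulty, shared with type $\DD$, is that the coarse stratification only resolves almost complex structures up to codimension $4$, so one must argue that the unresolved higher-codimension strata contribute no generators beyond Lagrangian Dehn twists: concretely, the relevant fundamental group should be normally generated by the meridians of the codimension-$2$ walls, and each such meridian is a Dehn twist along the Lagrangian $(-2)$-sphere whose class bubbles off an exceptional curve across that wall. Making this precise without separately computing the group is the delicate point, and is where the field-theoretic and parametrized Gromov--Witten machinery does the essential work.
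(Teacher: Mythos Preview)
Your plan splits into cases and then, for type $\EE$, proposes an anticanonical--divisor/Milnor--fibre decomposition. This is not what the paper does, and the type $\EE$ part has a genuine gap.

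The paper's proof is a single uniform argument that does not separate into types $\aA$, $\DD$, $\EE$ and in particular never computes $\pi_0(Symp_h)$ for type $\EE$. Given any positive $(X,\w)$, one perturbs to a nearby type $\aA$ form $\w_0$ with empty Lagrangian system. Using the coarse stratification with a fixed finite set $\Pi$ of exceptional classes, one has $\mA^0_\w=\mA^0_{\w_0}$, and the inclusion
\[
\mA^0_\w\cup\mT_{\w,\w_0}\;\hookrightarrow\;(\mA^0_{\w_0}\cup\mA^2_{\w_0})\setminus\!\!\bigcup_{D\in\mL_\w}U_{\w_0,D}
\]
is a $\pi_1$-isomorphism (Lemma~\ref{l:fundamentalgroup}). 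Since $\w_0$ is type $\aA$, $\pi_1(\mA_{\w_0})$ is already in the image of $\pi_1(\Diff_0(X))$; hence the only new generators of $\pi_1(\mA_\w)$ beyond $i_\w(\pi_1(\Diff_0))$ are the based meridians of the codimension-$2$ walls $U_{\w_0,D}$, $D\in\mL_\w$ (Lemma~\ref{l:generationlma}). The heart of the proof is Proposition~\ref{c:meridian}: for each Lagrangian sphere $L$, the explicit loop $\gamma^{L,\w_L}$ (built from the smooth isotopy $\tau_L^2\simeq\mathrm{id}$) is a meridian of $U_{[L]}$, and its image under $j_\w$ is $[\tau_L^2]$. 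The meridian property is established by a parametrized Gromov--Witten count, computed to be $\pm1$ by an SFT neck-stretch that localizes the count to $T^*S^2$ and reduces to the known monotone $5$-blowup case (where Seidel and Evans determine $\pi_0(Symp_h)$). So the SFT/parametrized GW input is used precisely to identify meridians with squared Dehn twists, not to make any symplectomorphism standard near an anticanonical torus.

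Your type $\EE$ approach has two substantive problems. First, the claim that $Symp_c$ of the $\widetilde{\EE}_k$ Milnor fibre (the complement of a smooth anticanonical torus in a monotone del Pezzo) is generated by vanishing-cycle Dehn twists is not established; in fact the paper's own discussion in \S\ref{sub:further_applications} (following Hacking--Keating) indicates these compactly supported groups can be infinitely generated, and understanding them is an open problem. Second, making a given $\phi\in Symp_h(X,\w)$ ``standard near $D$'' via neck-stretching presupposes control over the space $\eS_\eD$ of anticanonical symplectic tori, whose $\pi_1$ the paper argues is likely of infinite type. There is no known machinery that produces this reduction. Your last paragraph correctly identifies the meridian picture as relevant, but it is not a secondary difficulty---it \emph{is} the proof, applied uniformly to all types, and it sidesteps both obstacles above by never opening $X$ along $D$.

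A minor point: in your reduction step you say blow-down ``alters $\pi_0(Symp_h)$ only by a pure braid factor''. In fact Lemma~\ref{l:blowdown} shows the minimal blow-down (when the minimal exceptional class is disjoint from $\mL_\w$) induces an \emph{isomorphism} on $\pi_0(Symp_h)$; no extra factor appears.
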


As is pointed out in \cite{LW12}, Corollary \ref{c:LagSph} could be considered as a form of symplectic counterpart of a classical theory by M. Noether, which asserts that a plane Cremona map can be decomposed into a series of ordinary quadratic transformations.  See \cite{CremonaMapBook} for the history of this problem.

 % One of the main novelty of our proof of Theorem \ref{genSMC} is that we define a \textit{coarse stratification} for the space of almost complex structures in positive rational surfaces.  In the work of earlier works \cite{Abr98,AM00,AGK09} etc, the crucial part is to understand the topology of stratifications of the almost complex structures from the existence of non-generic curves.  To understand the symplectic mapping class groups, it suffices to understand the strata allowing curves with virtual dimension at least $-2$.  In rational surfaces, singularities of curves are easier to control, and this works the almost complex structures are stratified by the existence of non-generic curves.  This approach works quite well when the blow-up numbers are small.  However, when the number of blow-ups increases, especially when $n\ge9$, this strategy would not work in general, because.    Indeed, a previous observation of the authors in \cite{Italian} shows that even a stratification by holomorphic spheres cannot be defined in the traditional manner: there exists a sequence of almost complex structures $\{J_i\}_{i\ge1}$ in $\CP^2\#10\ov\CP^2$ admitting four $-2$ rational curve, whose limit admits only a $-4$ rational curve.  This means in rational surfaces with a large number of blow-ups, the local patching of non-generic strata can behave strangely.

\subsection{Further applications and remarks} % (fold)
\label{sub:further_applications}

There is an immediate consequence of Theorem \ref{genSMC} regarding toric surfaces.

\begin{cor}[=Theorem \ref{toric}]
The symplectic Torelli group is trivial for any symplectic toric surfaces.
\end{cor}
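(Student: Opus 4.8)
The plan is to reduce to Theorem \ref{genSMC}: it suffices to show that a symplectic toric surface $(X,\w)$ is a positive rational surface whose symplectic class is of type $\aA$, for then $\pi_0(Symp_h(X,\w))=\{1\}$. By Theorem \ref{t:DusaUniqueness} this conclusion depends only on $[\w]$, so the choice of toric structure is immaterial.

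\emph{Positivity.} Let $\Delta$ be the moment polytope and $D=\bigcup_i D_i$ the toric boundary, the union of the symplectic $2$–spheres $D_i$ over the edges of $\Delta$. Then $[D]$ is Poincar\'e dual to $c_1$, and $\w(D_i)$ is the lattice length of the $i$-th edge, hence positive; thus $c_1\cdot[\w]=\sum_i\w(D_i)>0$ and $(X,\w)$ is positive in the sense of Definition \ref{d:positiveRational}; equivalently $(X,D)$ is a symplectic log Calabi--Yau pair. By the positivity results of \cite{LZ15,Zha17} recalled above, the Lagrangian system is then $\Gamma(X,\w)=\{\alpha\in H^2(X;\ZZ):\alpha^2=-2,\ c_1\cdot\alpha=0,\ \w\cdot\alpha=0\}$, a finite root system because $\w^{\perp}$ is negative definite.

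\emph{Type.} The core is to show that $\Gamma(X,\w)$ is of type $\aA$, i.e.\ that its Dynkin diagram is a disjoint union of $A$–chains; equivalently it has no trivalent node; equivalently, by Weyl/Cremona invariance of the type, $[\w]$ is not Cremona–equivalent to a type $\DD_k$ or $\EE_k$ normal form $(1\,|\,a,\tfrac{1-a}{2},\dots,\tfrac{1-a}{2},m_{k+2},\dots)$ as listed before Theorem \ref{genSMC}. I would prove this by induction on $b_2(X)$. When $b_2(X)\le 5$ the surface is $\CP^2$, a Hirzebruch surface, or $\CP^2\#k\ov\CP^2$ with $k\le4$, whose full root systems ($\varnothing$, $A_1$, $A_1$, $A_2\oplus A_1$, $A_4$) are already of type $\aA$, so the base case is automatic. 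For the inductive step one blows down a $T^2$–invariant $(-1)$–sphere to a toric surface $X'$ with $b_2(X')=b_2(X)-1$; then $\Gamma(X',\w')$ is of type $\aA$ by induction, and $\Gamma(X,\w)$ is obtained from it by adjoining the roots that pair nontrivially with the new exceptional class $E$ and vanish against $\w$, each of which records a coincidence between the blow-up size $\w(E)=t$ and an earlier $\w$–area. A trivalent node in $\Gamma(X,\w)$ involving such a new root unwinds, in a geometric basis, to four exceptional classes of a common $\w$–area $b$ together with a relation $\w(H)=\w(E_{i_0})+2b$; but the equivariant blow-up producing the last of those four $(-1)$–spheres is a corner chop of $\Delta$ of size $b$, and the Delzant condition requires $b$ to be strictly smaller than the lattice lengths of the two edges met by that corner, while the relation $\w(H)=\w(E_{i_0})+2b$ forces one of those lengths to equal $b$ at that stage — a contradiction.

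The step I expect to be the main obstacle is precisely this combinatorial one: controlling, over all sequences of equivariant blow-ups, which patterns of coincidences among the areas $\w(E_i)$ and $\w(H-E_i-E_j)$ a Delzant polytope can realize, and certifying that none of them creates a trivalent node (equivalently a $\DD_k$ or $\EE_k$ subdiagram). A cleaner, coordinate-free version of the argument — which I would pursue in parallel — works directly with the anticanonical cycle: a trivalent node in $\Gamma(X,\w)$ would force two chains of self-intersection $-2$ components of $D$ to abut a common third component, which the cyclic, non-degenerate structure of a moment polytope does not allow. Granting the combinatorial input, $\Gamma(X,\w)$ is of type $\aA$, and Theorem \ref{genSMC} gives $\pi_0(Symp_h(X,\w))=\{1\}$, which is the corollary.
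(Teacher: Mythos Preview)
Your overall plan—show positivity, show type $\aA$, then invoke Theorem~\ref{aA}—is correct and matches the paper's. Your positivity argument via the toric boundary $D$ being anticanonical is essentially the paper's (``log Calabi--Yau'').

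Where you diverge is in establishing type $\aA$. You attempt a direct combinatorial analysis of Delzant polytopes, and you correctly flag this as the hard step. The sketch you give has a genuine gap: the reduced basis $(H,E_1,\dots,E_n)$ in which the normal forms $(1\,|\,a,\tfrac{1-a}{2},\dots)$ are written is in general \emph{not} the basis arising from any sequence of equivariant blow-ups, so the assertion ``the equivariant blow-up producing the last of those four $(-1)$-spheres is a corner chop of size $b$'' is unjustified—the class $E_5$ in the reduced basis may not be any toric exceptional divisor at all. Your alternative ``coordinate-free'' proposal is also wrong as stated: elements of $\mL_\w$ are $(-2)$-classes of \emph{zero} $\w$-area, hence are never irreducible components of the anticanonical cycle $D$ (all of which have positive area); a trivalent node in $\mL_\w$ does not translate into a trivalent configuration of components of $D$.

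The paper bypasses all polytope combinatorics with a single $J$-holomorphic observation. By Lemma~\ref{minemb}, the minimal-area exceptional class $E_n$ in the reduced basis has an embedded $J$-representative for \emph{every} tame $J$. Taking $J=J_\Delta$ the toric complex structure, this representative is a rigid $(-1)$-curve, hence $T^2$-invariant, hence a toric divisor. Its equivariant blow-down is therefore again a symplectic toric surface, and its reduced class is obtained by simply deleting the last entry $c_n$. This makes the induction on $n$ run in lockstep with the minimal-blow-down reduction of Theorem~\ref{aA}, with no Delzant bookkeeping required. The moral: rather than trying to match the reduced basis to a toric blow-up sequence, the paper shows directly that the class one needs to contract \emph{is} a toric divisor, whatever basis one happens to be in.
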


The next corollary observes that Chiang and Kessler's example is indeed a positive rational surface of type $\mathbb{A}$.  From Theorem \ref{genSMC}, we conclude that

\begin{cor}
The Chiang-Kessler involution $\iota_{CK}: X_{CK}\to X_{CK}$ is Hamiltonian isotopic to identity, hence yields a Hamiltonian $\ZZ_2$-action which does not extend to a Hamiltonian $S^1$-action.
\end{cor}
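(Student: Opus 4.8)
The plan is to combine the main calculation in Theorem~\ref{genSMC} with a concrete identification of the Chiang--Kessler surface $X_{CK}$ and its symplectic involution $\iota_{CK}$. The argument naturally splits into three pieces: first, locating $(X_{CK},\w_{CK})$ inside our framework; second, extracting what the triviality (or near-triviality) of the symplectic Torelli group tells us about $\iota_{CK}$; and third, translating "Hamiltonian isotopic to identity" into the non-extension statement that answers Kedra's question.

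\textbf{Step 1: Identify $X_{CK}$ as a positive rational surface of type $\aA$.} From \cite{CKfinite}, $X_{CK}$ is diffeomorphic to $\CP^2\#6\ov\CP^2$ and carries a non-monotone symplectic form together with a symplectic involution $\iota_{CK}$ that acts trivially on $H_*(X_{CK})$. Since $k=6\le 9$, item (1) of the list after Theorem~\ref{t:DusaUniqueness} guarantees that $\w_{CK}$ is $c_1$-positive: every symplectic form on $\CP^2\#k\ov\CP^2$ with $k\le 9$ lies in the $c_1$-positive cone. So $(X_{CK},\w_{CK})$ is automatically a positive rational surface. What remains is to pin down the \emph{type}: I would compute the Lagrangian system $\Gamma(X_{CK},\w_{CK})$ by writing $[\w_{CK}]$ in reduced form (Definition~\ref{d:reduced}) via Theorem~\ref{t:DusaUniqueness} and \cite{LW12}, and check that no $(-2)$-class $e$ pairs to zero with $[\w_{CK}]$ while having $e\cdot[\w_{CK}]$ sign-compatible with the Lagrangian condition in the codimension-$\ge 1$ walls; equivalently, that $[\w_{CK}]$ lies in the open dense type-$\aA$ stratum of the reduced cone. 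This is the step that requires actually unwinding Chiang--Kessler's explicit reduced weight vector (which can be read off from their paper's parametrization of the form), and is where I expect some bookkeeping, but no conceptual difficulty.

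\textbf{Step 2: Conclude $\iota_{CK}$ is Hamiltonian isotopic to the identity.} Because $\iota_{CK}$ acts trivially on homology, it represents a class in $\pi_0(Symp_h(X_{CK},\w_{CK}))$. By Step~1 and Theorem~\ref{genSMC}, this group is $\{1\}$, so $\iota_{CK}$ is symplectically isotopic to the identity. To upgrade "symplectically isotopic to $\id$" to "Hamiltonian isotopic to $\id$", note that $X_{CK}$ is simply connected and $b^+=1$, so $H^1(X_{CK};\RR)=0$; hence the flux homomorphism vanishes and $Symp_0(X_{CK},\w_{CK})=Ham(X_{CK},\w_{CK})$. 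Any symplectic isotopy from $\iota_{CK}$ to $\id$ is therefore a Hamiltonian isotopy. This gives the first assertion of the corollary.

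\textbf{Step 3: Deduce the non-extension statement.} Since $\iota_{CK}$ is Hamiltonian isotopic to $\id$, the $\ZZ_2$-action it generates is, in particular, a Hamiltonian cyclic action (each element, being a power of $\iota_{CK}$, is Hamiltonian). On the other hand, Chiang and Kessler showed this $\ZZ_2$-action does not extend to any $S^1$-action on $X_{CK}$ — indeed their whole point was to produce a finite symplectic action not coming from a circle action, and the obstruction they exhibit to extending to an $S^1$-action is intrinsic to the action and unaffected by whether the generator is Hamiltonian. Thus we obtain a Hamiltonian $\ZZ_2$-action that does not extend to a Hamiltonian $S^1$-action, which is precisely what Kedra's question asks for. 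The only genuine mathematical content beyond citing \cite{CKfinite} and Theorem~\ref{genSMC} is Step~1 — verifying that the specific non-monotone form in the Chiang--Kessler example falls in the type-$\aA$ stratum rather than a type-$\DD$ or type-$\EE$ wall — and I expect that to be the main (though routine) obstacle.
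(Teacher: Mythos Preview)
Your proposal is correct and follows exactly the approach the paper takes: the paper's entire argument is the sentence preceding the corollary, namely that Chiang--Kessler's example is a positive rational surface of type $\aA$, after which Theorem~\ref{genSMC} gives trivial Torelli group and the rest is standard (flux vanishes since $X_{CK}$ is simply connected, and non-extension is Chiang--Kessler's original result). Your Steps 1--3 just spell out these implicit details, with Step~1 correctly flagged as the only point requiring actual computation.
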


In \cite[Corollary 1.2, Lemma 2.4]{BLW12}, Borman and the second and third authors proved that $Symp_h(X)$ acts transitively on the space of Lagrangian spheres ($-2$ symplectic spheres, resp.) with a fixed homology class for any symplectic rational surface $X$.  Theorem \ref{genSMC} and \ref{t:generation} imply that

\begin{cor}\label{c:LagSph}
    Given a positive rational surface $(X,\w)$, any two homologous Lagrangian spheres are Hamiltonian isotopic up to a sequence of Lagrangian Dehn twists.

	If $(X,\w)$ is of type $\mathbb{A}$, then any two homologous Lagrangian spheres are Hamiltonian isotopic.  In particular, they represent the same object in the Fukaya category.
\end{cor}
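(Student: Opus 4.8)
The plan is to obtain this corollary as an essentially formal consequence of Theorems \ref{genSMC} and \ref{t:generation}, combined with the transitivity result of \cite{BLW12} and the observation that a positive rational surface $X$ is simply connected, so that the flux group vanishes and $Symp_0(X,\w)=\Ham(X,\w)$.

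First I would fix two homologous Lagrangian spheres $L_1,L_2\subset(X,\w)$ and apply \cite[Corollary 1.2, Lemma 2.4]{BLW12} to produce $\phi\in Symp_h(X,\w)$ with $\phi(L_1)=L_2$. In the type $\aA$ case, Theorem \ref{genSMC} gives $[\phi]=1$ in $\pi_0(Symp_h(X,\w))$, i.e.\ $\phi$ lies in the identity component, which is $Symp_0(X,\w)=\Ham(X,\w)$. Writing $\phi$ as the time-one map of a Hamiltonian isotopy $\{\phi_t\}$, the path $\{\phi_t(L_1)\}$ is then a Hamiltonian isotopy of Lagrangian spheres from $L_1$ to $L_2$. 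The last assertion follows because a Hamiltonian isotopy of Lagrangians induces, via continuation maps, a quasi-isomorphism of the corresponding objects in the Fukaya category; hence $L_1$ and $L_2$ are isomorphic there.

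For the general positive case, I would use Theorem \ref{t:generation} to express $[\phi]\in\pi_0(Symp_h(X,\w))$ as a word $[\tau_{S_1}]^{\epsilon_1}\cdots[\tau_{S_m}]^{\epsilon_m}$ in classes of Lagrangian Dehn twists, and set $\psi:=\tau_{S_1}^{\epsilon_1}\circ\cdots\circ\tau_{S_m}^{\epsilon_m}$. Since $\psi$ and $\phi$ represent the same class in $\pi_0(Symp_h(X,\w))$, the symplectomorphism $h:=\psi^{-1}\circ\phi$ lies in the identity component $Symp_0(X,\w)=\Ham(X,\w)$, so $\psi^{-1}(L_2)=h(L_1)$ is Hamiltonian isotopic to $L_1$. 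Equivalently, applying the Lagrangian Dehn twists $\tau_{S_m}^{-\epsilon_m},\dots,\tau_{S_1}^{-\epsilon_1}$ to $L_2$ produces a Lagrangian sphere Hamiltonian isotopic to $L_1$, which is the claim.

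I do not anticipate a real obstacle here, as all the substantive work is already carried out in Theorems \ref{genSMC} and \ref{t:generation}. The two points that need attention are: (i) the identification of $Symp_0(X,\w)$ with $\Ham(X,\w)$, which uses $\pi_1(X)=0$; and (ii) the bookkeeping that an individual Lagrangian Dehn twist $\tau_{S_i}$ need not be homologically trivial, yet the word representing $[\phi]$ is, so that the correction $\psi^{-1}\circ\phi$ genuinely lies in $Symp_0(X,\w)$, a point that disappears entirely if the generators in Theorem \ref{t:generation} are taken to be squared Dehn twists.
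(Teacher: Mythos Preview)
Your proposal is correct and matches the paper's own approach exactly: the paper simply states that Corollary~\ref{c:LagSph} follows from Theorems~\ref{genSMC} and~\ref{t:generation} together with the transitivity result \cite[Corollary 1.2, Lemma 2.4]{BLW12}, and you have supplied precisely the straightforward details of that deduction. Your concern in point~(ii) is indeed moot, since the proof of Theorem~\ref{t:generation} in Section~\ref{sec:kroheimer_mcduff_fibration_ball_swappings_and_dehn_twists} actually produces squared Dehn twists as generators of $\pi_0(Symp_h(X,\w))$.
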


The results above lead to more questions.  Although Chiang-Kessler's involution $\iota_{CK}$ is non-extendable, their proof relies on very specific knowledge from Karshon's work of $S^1$ Hamiltonian four manifolds \cite{Ka99}, which does have an extension to higher dimensions at the moment.  Therefore, the following question is still difficult to answer, which seems to require a new type of Floer theoretic obstruction to extending a finite group action to an $S^1$-action.

\begin{qes}
   Given any symplectic manifold $M$, does $M\times X_{CK}$ admit a Hamiltonian involution that does not extend to a Hamiltonian $S^1$-action?
 \end{qes}

There is another connection between our results and a recent work by Hacking and Keating.  In \cite{HK20}, the mirror symmetry of a class of (complex) log Calabi-Yau surfaces $(Y,D)$ with a distinguished complex structure is considered.  Their mirror Weinstein manifold $W$ can be constructed as a Lefschetz fibration using the data of $(Y,D)$.  Suppose we also have a compactification such that $W=\eY\setminus \eD$, where $(\eY,\eD)$ is a log CY pair.\footnote{In fact, one may construct another log CY pair $(Y',D')$ from the dual cusp, such that $W$ is an open analytic subset of $Y'\setminus D'$.  But the symplectic form of $W$ cannot extend to a K\"ahler form of $Y'$ (see \cite[Remark 1.4]{HK20}).  It is also a conjecture of Hacking and Keating that such divisorial compactifications only exist when $(Y,D)$ is negative semi-definite, i.e. $D$ is a cycle of $-2$ spheres.}   Consider the space $\eS_\eD$ of symplectic configurations isotopic to $\eD$.  Assume that $\pi_0(Symp_c(W))$ is infinitely generated and $\pi_0(Symp_h(\eY))$ is finitely generated, the homotopy exact sequence
\[
   \wt\pi_1(\eS_\eD)\to \pi_0(Symp_c(W))\to\pi_0(Symp_h(\eY))\to\wt\pi_0(Symp_c(\eS_\eD))
 \]

 implies that $\eS_\eD$ has an infinite type topology.  Here, $\wt\pi_{i}(\eS_\eD)$ for $i=0,1$ is an extension of the fundamental group of $\eS_\eD$ by the extra generators given by automorphisms of a neighborhood of $\eD$, which is expected to be finitely generated.  This phenomenon regarding the topology of the space of symplectic curves seems new to the literature, and clearly contrasts sharply against linear systems.

When $D$ is negative semi-definite, it is known that $W=X\setminus E$, where $X$ is a del Pezzo surface, and $E$ is a smooth symplectic torus with anti-canonical class.  Moreover, there is an upcoming result by Hacking and Keating \footnote{Private communications.} that $Symp_h(W)$ is infinitely generated as long as there exists one $-2$ curve in $Y\setminus D$ (this last condition is easy to verify for any negative semi-definite case.  Compare also \cite[Section 7.2]{HK21} for the negative definite case).  Given this result, the above discussions apply to $(Y,D)$, in which $D$ is negative semi-definite and the length $4\le k\le9$.  The mirror of $(Y,D)$ is $(X,E)$, where $X=\CP^2\#(9-k)\ov\CP^2$ is known to have a finitely generated Torelli group.  Therefore, Hacking and Keating's observation implies the space of anticanonical symplectic torus has an infinitely generated fundamental group.  To extend this to $k=1,2,3$, we need the following extension of Theorem \ref{genSMC}.

\begin{qes}
   Is the symplectic Torelli group finitely generated for a type $\mathbb{E}$ positive rational surface $(X,\w)$?
\end{qes}

We do not know whether there are mirror Weinstein manifolds that compactify to rational surfaces of more than $8$ blow-ups (presumably these will be mirrors of length $k\ge9$ if exist), and there seems no obvious way to prove similar infinite generation results without mirror symmetry. 

\begin{qes}
   Given a rational surface $(X,\w)$ which is not monotone.  Does the space of symplectic tori of anti-canonical class have the topology of infinite type? 
\end{qes}

\subsection{Outline of the paper}

In Section \ref{sec:symplectic_cone}, we prove that any $c_1$-positive symplectic form is a positive linear combination of exceptional classes.  Section \ref{inflation} defines the coarse stratification and proves all the related inflation results.  In particular, we show that the Torelli group is invariant under two distinguished directions of symplectic deformations (Corollary \ref{01stab}), which will span the whole $c_1$-positive cone.

To prove Theorem \ref{genSMC} for type $\mathbb{A}$ forms, we use the so-called minimal inflation which shrinks the area of the smallest exceptional divisor.  This in turn proves the Torelli group is invariant even after blowing-down such a divisor, regardless of the type of $\w$ (when $\w$ is of type $\mathbb{D}$ or $\mathbb{E}$, we ask this minimal exceptional divisor to stay out of the relevant root system).  This concludes the case of type $\mathbb{A}$ forms and reduces type $\mathbb{D}$ or $\mathbb{E}$ to the case when the Lagrangian root system is irreducible.

The proof of type $\mathbb{D}$ case is more complicated.  To use the Hopfian property of the braid group, we divide the theorem into two surjectivity statements Lemma \ref{Dsurjbraid} and Proposition \ref{prp:surjective}.  The first statement is proved using a new symplectic skeleton by understanding the action of $Symp_h(X,\w)$ on its space, similar to \cite{Eva}, \cite{LLW22}.  The proof of Proposition \ref{prp:surjective} involves more new ingredients, and depends on an analysis of ampleness of certain divisors.  We construct a map $\beta: \pi_0(Symp_h(X,\w))\to \Conf_{n-1}(\CP^2)/\PSL(2,\CC)$ using the moduli space of certain pseudo-holomorphic curves, and prove that it precomposes another homomorphism $\alpha:\Conf_{n-1}(\CP^2)\to \pi_0(Symp_h(X,\w))$ to yield the canonical projection.  The map $\alpha$ is constructed using algebraic geometry.  

Lastly, to prove Theorem \ref{t:generation}, we consider the Torelli group as the cokernel of $\pi_1(\Diff(X))\to\pi_1(\mA_\w,\w)$.  Since we already know that this map is surjective when $\w$ is of type $\mathbb{A}$, for other types of symplectic classes, we perturb it to a type $\mathbb{A}$ form, and study the change of $\pi_1(\mA_\w,\w)$.  The deformation from $\w$ to a type $\mathbb{A}$ form glues in a series of codimension $2$ divisors in $\mA_\w$, and the extra generators all come from the meridians of these divisors.  We consider a specific construction of loops in almost complex structures, and prove that this loop is a meridian by considering a parametrized Gromov-Witten invariant using SFT.

The authors noted that Alekseeva \cite{Alekseeva21} announced an upcoming article, joint with Shevchishin, proving Theorem \ref{genSMC} for pure type $\mathbb{D}$ forms, as well as \ref{t:generation} for pure type $\mathbb{D}$ and $\mathbb{E}$ forms for a range of classes contained in the $c_1$-positive classes.  Their methods seem related but essentially different from ours.  It would be interesting to compare the two methods.

The last problem of parametrized Gromov-Witten invariant has very close relations to a circle of questions via family Seiberg-Witten invariants in \cite{SmirnovFlop,SmirnovK3,Linloop22,SmirnovTori22,SS17elliptic}.  The authors expect the parametrized Gromov-Witten invariant we used in this paper should correspond to the family Seiberg-Witten invariant used in \cite{SmirnovDehn} under a family version of Taubes's \textit{GW=SW} correspondence. \\

{\bf Acknowledgements:} J.Li is partially supported by AMS-Simons Travel Grant.  T-J. Li is supported by NSF Grant DMS1611680. W.Wu is supported by Simons Collaboration Grants for Mathematicians \#524427. The authors are particularly grateful to Ailsa Keating, who offered numerous suggestions and corrections to an earlier draft.  We would also like to thank Silvia Anjos, Paul Biran, Olguta Buse, Octav Cornea, Jonny Evans, Richard Hind, Nitu Kitchloo, Dusa McDuff, Martin Pinsonnault, Leonid Polterovich, and Gleb Smirnov for helpful conversations.

\begin{comment}
 {\bf List of notations:}\\

$S=aH+\sum b_i E_i$ denotes an element in the 2nd integrable homology class of a rational surface, where $ H $ is the hyperplane class, and $E_i$ is an exceptional class.  We'll often use {\bf a-coefficient} to refer to the coefficient of $H$ in the class $S$.\\

$\mS_\w$ denotes the set of homology classes of embedded sphere classes.\\

$\mC^{top}_{\w}\subset \mS_\w$ denotes the set of exceptional sphere classes with a-coefficient being $0,1,$ or $2$.\\

??????????

\end{comment}

\section{The symplectic cone and Lagrangian root systems}\label{sec:symplectic_cone}
Recall that  the symplectic cone $\mC_X$ of an oriented, smooth manifold $X$ is the open cone 
of classes in $H^2(X;\RR)$ that are  represented by a symplectic form compatible with the orientation.
For a rational surface, a consequence of  Theorem \ref{t:DusaUniqueness} is that Diff$^+(X)$ acts on $\mC_X$. 
Notice that diffeomorphic symplectic forms have homeomorphic symplectomorphism groups. 
Thus we can restrict our attention to a fundamental region of $\mC_X$ under the action of Diff$^+(X)$. 
We describe such a region and its $c_1-$positive portion explicitly in this section.
This is done in two steps, considering first the subcone $\mC_{X, K}$ in 2.1 and then the reduced cone in 2.2. 
Finally, we introduce the Lagrangian root system $\mL_\w$ in 2.3. 

\subsection{The  $K-$symplectic cone, $W(X)$, $\Gamma_K$ and $P_K^+$}

Let $K\in H^2(X;\mathbb Z)$ be the symplectic canonical class of some orientation-compatible symplectic form on $X$ and define the $K$-symplectic cone as
\begin{equation}\label{def:K-symplectic cone}
\mC_{X,K}= \{e\in \mC_X~|~e=[\w] \hbox{ with }  K_{\w}=K\}.
\end{equation}
 Introduce the set of exceptional classes
\[\mE =\{E\in H_2(X,\ZZ)~|~E\cdot E=-1\text{~and~} E \text{~is represented by a smooth sphere} \},\]
the subset of $K$-exceptional spherical classes
$\mE_{K}= \{E\in \mE~|~E\cdot K=-1\}$,
and the subset of $\w$-exceptional spherical classes 
\[\mE_{\w}= \{E\in \mE~|~E\text{~is represented by an~}\w\text{-symplectic sphere}\}.\]
By the adjunction formula, $\mE_{\w}\subset \mE_{K_\w}$. It follows from Lemma 3.5 in~\cite{LL01} that  
%\begin{equation} \label{Esets}
$\mE_{\w}=\mE_{K_\w}$,
%\end{equation}
and  Theorem 4 of the same paper yields a characterization of the $K$-symplectic cone as
\begin{equation} \label{Kcone}
\mC_{X,K}= \left\{e\in \mathcal P_X~|~ e \cdot E > 0  \text{~for all~} E\in \mE_K\right\}.
\end{equation}
Since Diff$^+(X)$ acts transitively on the set of symplectic canonical classes, it suffices to focus to the subcone $\mC_{X, K}$ for one choice of $K$. 
 
%The fundamental chamber of $P_K^+$ will also be the focus. 

%We denote $$\mE_K:=\{E\in H_2(X,\ZZ): E\text{ is an K-exceptional class, i.e. }\}.$$

Let $X$ be a rational surface not homeomorphic to $S^2\times S^2$. A basis $\{A_1,\ldots,A_{k+1}\}$ of $H_2(X;\ZZ)$ is said to be standard if i) each class $A_i$ is represented by a smoothly embedded sphere, and ii) the intersection pairing is represented, in this basis, by the diagonal matrix diag$(1,-1,\ldots,-1)$. Note that any identification $X\simeq X_k:= {\CC P^2}\# k\overline{\CC P^2}$ determines a standard homology basis $\{H, E_1, E_2, \cdots, E_{k}\}$. 
%If we pick a  standard basis of $H_2(X,\ZZ)$ for a rational surface $X$ is given by $\{H, E_1,\cdots, E_n\}\subset H_2(X,\ZZ)$, consisting of a smooth sphere class $H$ of square $1$ and $n$ smooth sphere classes $E_i$ of square $-1$ such that $H\cdot E_i=E_i\cdot E_j =0$.  Under diffeomorphism, any symplectic forms $\w$ on a symplectic rational surface can be mapped to some $\w$ with canonical class $K_\w=-3H+E_1+\cdots+E_n$.  % By the diffeomorphism 
For the product $X_{S^2}=S^2\times S^2$, a standard basis consists of two classes $B$ and $F$ where $B$ and $F$ are both represented by a smoothly embedded sphere with self-intersection zero and $B\cdot F=1$. 
%$\{B=[S^2\times pt], F=[pt \times S^2]\}$
A rational surface $X$ together with a choice of a standard basis of $H_2(X,\ZZ)$ is called a \emph{framed surface}. 

A framed rational surface is equipped with a distinguished choice of $K$. In the case $X=X_n$, $K=-3H+E_1+\cdots+E_n$, and $K=-2B-2F$ for $S^2\times S^2$. 
From now on, in order to simplify the exposition, we will often implicitly assume that such a framing is chosen with the associated $K$.

We next give a brief account of the Cremona group for framed rational surfaces.  In the context of algebraic geometry, the relevant Kac-Moody algebra and generalized root system were first used by Looijenga \cite{Lo81} to the best of authors' knowledge, and the corresponding Coxeter groups are studied by Vinberg in a very general context \cite{Vin67,Vin84}, and this was used by Nikulin \cite{Ni87} in the study of $K3$ surfaces.  We will follow the exposition of \cite{Lo81,She10,GaoHZ} tailored to the symplectic context.

When $X=X_n$,  by taking a standard basis, we may identify $H_2(X,\RR)$ with the Lorentzian space $\RR^{1,n}$.  Consider the group $W_n=W(X)$ of automorphisms of 
$H_2(X, \RR)$ generated by reflections along the $-2$ classes $l_i, i=0, n-1$, where
\begin{equation}\label{e:Kroot}
     l_0=H-E_1-E_2-E_3, \quad l_1=E_1-E_2,\quad \cdots,\quad l_{n-1}= E_{n-1}-E_{n},
\end{equation}
 The group $W(X)$ is called the Cremona group of the framed rational surface $X$.

%In below, we will denote $l_0=H-E_1-E_2-E_3$, $l_i=E_i-E_{i+1}$ for $i=1,\dots,n-1$.
 In \cite[Proposition 4.7]{LW12}, the second and third author proved that $W_n$ is the homological action of diffeomorphisms that preserves the standard anti-canonical class $K$. 
 Let  $W_n'$ be the extension of $W_n$ by adding one more generator given by the reflection of $E_n$.
 It is proved in \cite{LL02,GaoHZ} that $W_n'$ is an index 2 subgroup in the homological actions of $\Diff(X)$ on $H_2(X)$.  
 
 From the general theory of Coxeter groups, $W(X)$ can be represented as the Weyl group associated to a Kac-Moody Lie algebra.  Its corresponding (generalized) root system consists of the orbit of $l_1=E_1-E_2$ (together with $l_0$ when $n=2$) under the $W(X)$-action.  Elements in this orbit are $-2$ classes $d\in H_2(X,\ZZ)$ which can be represented by an embedded $-2$ sphere and satisfies $d\cdot K=0$ from \cite{LL02}.   We note that  this set  is precisely the set $\mathcal S^{-2}_K$ in \cite{ALLP}. It is noted there that $\mathcal S^{-2}_K$ is an infinite set when $n\geq 9$. 
 
 We call this the $K$-\emph{root system} $\Gamma_{K}$, and the simple roots are given by $l_i$, $i\ge0$.  
Note that  $\Gamma_K$ is not a classical root system when $n\geq 9$. 

 \begin{figure}[tb]
   \centering
   \includegraphics[scale=0.8]{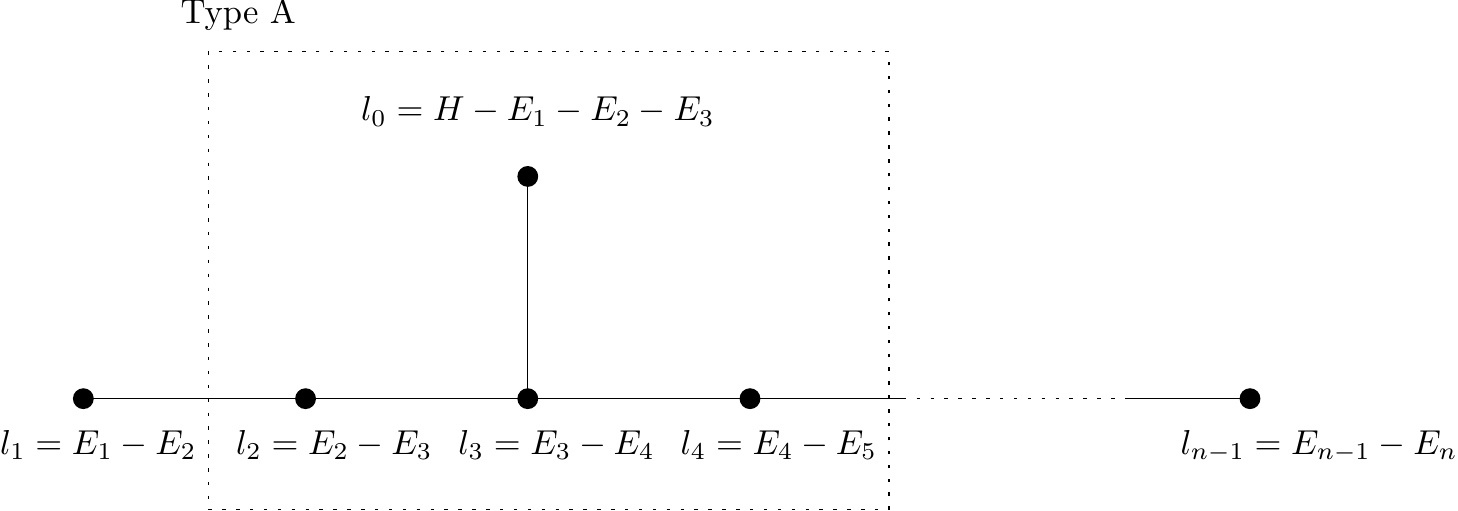}
   \caption{$K$-root system of $\CP^2\#n\ov\CP^2$ }
   \label{fig:Kroot}
 \end{figure}

In this paper, we are primarily interested in   $P_K^+$, the $c_1$-\textbf{positive} portion of  $\mC_{X, K}$, i.e. $d\in P_K^+$ if $d\in \mC_{X, K}$ and satisfies  $d\cdot (-K)>0$.  
% The main result about  $P_K^+$ is Theorem \ref{pcomb}.  

\begin{thm}\label{pcomb}
 Let $X=X_n$ with $n\geq 1$. Any class $d\in P_K^+$ can be written as a finite $\RR_+$-combination of exceptional classes in $\mE_K$.  

  % In particular, any reduced symplectic classes in the $c_1$-positive cone can be written as a finite positive combination of exceptional classes $\mE_{K}.$
\end{thm}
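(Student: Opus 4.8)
The plan is to prove that $P_K^+$ is contained in the convex cone spanned by $\mathcal{E}_K$ by reverse induction on the "last" nonzero coefficient in a standard basis, exploiting the characterization \eqref{Kcone} of $\mathcal{C}_{X,K}$ together with the $W_n'$-equivariance of the problem. Fix a framing $X = X_n$ with standard basis $\{H, E_1, \ldots, E_n\}$ and $K = -3H + \sum E_i$, and write $d = aH - \sum_{i=1}^n b_i E_i \in P_K^+$. The base cases $n \le 9$ should be handled separately: here $\mathcal{E}_K$ is finite, $\mathcal{C}_{X,K}$ is a rational polyhedral cone (it is cut out by finitely many inequalities $d\cdot E > 0$), and one checks directly that the extremal rays of its $c_1$-positive part $P_K^+$ are spanned by exceptional classes — this is essentially the classical del Pezzo / root-system picture and can be cited or verified by hand for $n\le 9$.

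For $n \ge 10$, I would argue as follows. After acting by an element of $W_n$ (which permutes $\mathcal{E}_K$ and preserves $P_K^+$, so is harmless), we may assume the class $d$ is \emph{reduced}, i.e. $b_1 \ge b_2 \ge \cdots \ge b_n \ge 0$ and $a \ge b_1 + b_2 + b_3$ — this normalization is exactly the reduced cone of Definition \ref{d:reduced}, and the existence of such a representative in each $W_n$-orbit is standard (it follows from the fact that applying the reflections $l_i$ decreases the relevant quantities). The key point is then: for a reduced class $d$ in $P_K^+$, the class $d - E_n = aH - \sum_{i<n} b_i E_i - (b_n - 1)E_n$ — or more precisely $d$ minus a suitable exceptional class — should again lie in (the closure of) $P_K^+$, allowing us to peel off exceptional classes one at a time. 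Concretely, since $d \cdot (-K) = 3a - \sum b_i > 0$ and $d^2 = a^2 - \sum b_i^2 > 0$, and $E_n \in \mathcal{E}_K$, I expect to show $d - b_n E_n$ (reducing to the case $b_n = 0$) and then an induction on the number of nonzero $b_i$, with the terminal case $d = aH$ with $a > 0$, which is $\frac{a}{?}$ times a sum of lines — indeed $H = E_1 + (H - E_1 - E_2) + E_2$ is not quite a single exceptional class, but $aH$ with $a>0$ is manifestly a positive combination of the exceptional classes $H - E_i - E_j$ and $E_i$ via relations like $3H = \sum_{\text{lines}} (H - E_i - E_j) + \sum 2E_i$ (checking the combinatorics is routine linear algebra). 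The inductive step requires verifying that after subtracting $E_n$ the resulting class still pairs positively with all of $\mathcal{E}_K$ and still has positive square and positive $(-K)$-pairing; positivity of square and of $c_1$-pairing are easy inequalities, while positivity against every $E \in \mathcal{E}_K$ is where one must use the reducedness normalization and the structure of exceptional classes (every $E\in\mathcal{E}_K$ has $E \cdot E_n \ge -1$, with equality only for $E = E_n$, etc.).

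The main obstacle I anticipate is precisely controlling the pairing $d' \cdot E$ for \emph{all} infinitely many exceptional classes $E \in \mathcal{E}_K$ when $n \ge 10$, after subtracting off an exceptional class — this is exactly where the Nagata-type phenomena discussed in the introduction enter, and a naive induction can fail because subtracting $E_n$ might violate $d' \cdot E > 0$ for some exceptional $E$ with large coefficients. I would sidestep this by working only with reduced classes and using the known fact (from \cite{LL01}, and the reduced-cone analysis that the paper sets up in Section 2.2) that for a reduced class $d$, the inequalities $d \cdot E > 0$ for the finitely many "small" exceptional classes ($E_i$, $H - E_i - E_j$, $2H - E_{i_1} - \cdots - E_{i_5}$, and a bounded list) already imply $d \cdot E > 0$ for all of $\mathcal{E}_K$; this reduces the infinite set of constraints to a finite polyhedral problem on the reduced cone, after which the peeling argument becomes a finite-dimensional convexity statement. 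Thus the real content is: (i) reduce to the reduced cone via $W_n$; (ii) on the reduced cone, reduce the defining inequalities to finitely many; (iii) run the finite convex-geometry induction peeling off $E_n, E_{n-1}, \ldots$ and finally express $aH$ as a positive combination of lines and exceptional points.
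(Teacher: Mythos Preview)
Your peeling strategy contains a sign error that breaks the induction. With $d = aH - \sum_i b_i E_i$ and $b_i \geq 0$, one has $d - E_n = aH - \sum_{i<n} b_i E_i - (b_n + 1)E_n$, not $-(b_n - 1)E_n$: subtracting copies of $E_n$ drives $b_n$ \emph{up}, not down. Equivalently, reaching your terminal class $aH$ means writing $d = aH - \sum b_i E_i$, so the $E_i$ would enter the decomposition with \emph{negative} coefficients. You note parenthetically that one may need ``a suitable exceptional class'' other than $E_n$; indeed any workable choice must carry negative $E_i$-components, e.g.\ $H - E_{i} - E_{j}$. But then the inductive check that the remainder stays reduced, has positive square, and remains $c_1$-positive no longer follows from the easy inequalities you list. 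For example, subtracting $b_n(H - E_{n-1} - E_n)$ kills the $E_n$-coefficient but leaves the $H$-coefficient at $a - b_n$, and reducedness of the remainder then demands $a - b_n \geq b_1 + b_2 + b_3$, strictly stronger than the original $a \geq b_1 + b_2 + b_3$. So the inductive step as proposed does not go through, and repairing it would require a substantially more careful choice of exceptional class at each stage than the proposal supplies.

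The paper's argument is quite different and sidesteps this bookkeeping. It first invokes \cite[Proposition~5.21]{LZ15}, which already expresses any $d \in P_K^+$ as a finite $\RR_+$-combination of \emph{positive-square} embedded-sphere classes $\sigma_i$. Lemma~\ref{l:sphereComb} then shows each such $\sigma_i$ is $W_n$-equivalent to one of the model classes $2H$, $kH-(k-1)E_1$, or $kH-(k-1)E_1-E_2$, and each of these is visibly a positive combination of $H-E_1-E_2$, $E_1$, $E_2 \in \mE_K$. Since $W_n$ permutes $\mE_K$, pulling back gives the decomposition of each $\sigma_i$ and hence of $d$. Your observation that on the reduced cone the infinitely many exceptional-class constraints collapse to a finite condition is correct and is essentially Proposition~\ref{redtran}(2), but the paper does not need it for this theorem.
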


To prove this we recall the notion of a   reduced class.

\begin{dfn}[Reduced classes, cf.\cite{LL01,Gaozhaoruled}]\label{d:reduced}
Fix a standard basis of $X_k$. A nonzero class $\nu H-\sum_{i=1}^k m_i E_i\in H^2(X_k;\RR)$ is called \emph{reduced with respect to this basis} if
\begin{align*}
\nu&>0,\text{~for~}k=0;\\
\nu&\geq m_1\geq 0,\text{~for~}k=1;\\
\nu&\geq m_1+m_2\text{~and~}m_1\geq m_2\ge0, \text{~for~}k=2;\\
\nu&\geq m_1+m_2+m_3 \text{~and~} m_1\geq m_2 \geq \cdots \geq m_k\ge0, \text{~for~}k\geq3. 
\end{align*}
For the product $X_{S^2}=S^2\times S^2$ with the standard basis $\{B=[S^2\times pt], F=[pt \times S^2]\}$, a  class $bB+fF$ is called reduced if $b\geq f\geq 0$. 
\end{dfn}
  We need the following lemma which might be of independent interest.

 \begin{lma}\label{l:sphereComb}
 Let $X=X_n$ with $n\geq 1$.    If $\alpha\in H_2(X,\ZZ)$ has $\alpha^2>0$ and is represented by a symplectic embedded sphere, then $\alpha$ is Cremona equivalent to one of the following model classes

    \begin{itemize}
       \item $2H$,
       \item $kH-(k-1)E_1$, or
       \item $kH-(k-1)E_1-E_2$.
     \end{itemize}
 \end{lma}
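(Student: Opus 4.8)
\emph{Proof plan.} The plan is to record the numerical constraints the hypotheses impose on $\alpha$, normalize $\alpha$ by Cremona transformations, and then enumerate. First I would fix a standard basis for which $K = K_\w = -3H + E_1 + \cdots + E_n$ and write $\alpha = aH - \sum_{i=1}^{n} b_i E_i$. Since $\alpha$ is represented by an embedded $\w$-symplectic sphere, the adjunction (genus) formula gives $\alpha^2 + K\cdot\alpha = -2$; substituting $K\cdot\alpha = -3a + \sum b_i$ and $\alpha^2 = a^2 - \sum b_i^2$, this reads
\[
  \sum_{i=1}^{n} b_i(b_i-1) = (a-1)(a-2).
\]
Then I would pick an $\w$-compatible $J$ for which that sphere is $J$-holomorphic; since every exceptional class $E \in \mE_K = \mE_\w$ has a (possibly reducible) $J$-holomorphic representative, positivity of intersections yields $\alpha\cdot E \ge 0$ — a component mapped onto our sphere contributes a positive multiple of $\alpha^2$, and the remaining components contribute non-negatively. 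In particular $b_i = \alpha\cdot E_i \ge 0$ for all $i$, while $a^2 - \sum b_i^2 = \alpha^2 > 0$.

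Next comes the normalization. Using the reflections along $E_i - E_{i+1} \in W(X)$ one arranges $b_1 \ge b_2 \ge \cdots \ge b_n \ge 0$. Whenever $a < b_1 + b_2 + b_3$, reflecting along the $-2$ class $l_0 = H - E_1 - E_2 - E_3 \in W(X)$ produces a Cremona-equivalent class whose $H$-coefficient equals $2a - (b_1+b_2+b_3)$; this is strictly less than $a$ and still positive, because $(b_1+b_2+b_3)^2 \le 3\sum b_i^2 < 3a^2$. Re-sorting and iterating, and using that $\alpha^2 > 0$ is preserved, the $H$-coefficient stays a positive integer and strictly decreases, so after finitely many steps $\alpha$ becomes Cremona-equivalent to a reduced class (Definition \ref{d:reduced}), still written $aH - \sum b_i E_i$; the displayed identity and the inequalities $b_i \ge 0$, $\sum b_i^2 < a^2$ persist, as $W(X)$ fixes both $K$ and the intersection form. (For $n \le 2$ the same argument applies with the simpler reducedness conditions.)

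Finally I would enumerate the reduced classes $\alpha = aH - \sum b_i E_i$ with $\alpha^2 > 0$ satisfying $\sum b_i(b_i-1) = (a-1)(a-2)$. From $\sum b_i^2 < a^2$ one gets $\sum b_i \le 3a - 3$ and $b_1 \le a - 1$. If $b_1 = 0$, then $\alpha = aH$ and $(a-1)(a-2) = 0$ forces $\alpha \in \{H, 2H\}$. If $b_1 = a-1$, then $\sum_{i\ge2} b_i(b_i-1) = 0$, so $b_i \in \{0,1\}$ for $i \ge 2$, and reducedness $a \ge (a-1) + b_2 + b_3$ lets at most one of these equal $1$, giving $aH - (a-1)E_1$ (with $\alpha^2 = 2a-1$) or $aH - (a-1)E_1 - E_2$ (with $\alpha^2 = 2a-2$, hence $a \ge 2$). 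The remaining range $1 \le b_1 \le a-2$ cannot occur: reaching $\sum b_i(b_i-1) = (a-1)(a-2)$ then needs many nonzero $b_i$, but $b_1 + b_2 + b_3 \le a$ forces them spread out enough that $\sum b_i^2 \ge a^2$, contradicting $\alpha^2 > 0$; ruling this out is an elementary case analysis on the values of $b_2$ and $b_3$. Collecting the outcomes — and noting $H$ is the $k = 1$ instance of $kH - (k-1)E_1$ — every such $\alpha$ is Cremona-equivalent to $2H$, to $kH - (k-1)E_1$, or to $kH - (k-1)E_1 - E_2$.

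The conceptual points are the genus identity and the observation that $\alpha^2 > 0$ by itself makes the Cremona reduction terminate. The hard part will be the final step of the enumeration: balancing the genus equation, the inequality $\alpha^2 > 0$, and the reducedness inequalities so as to exclude $1 \le b_1 \le a-2$. (A blow-down induction on $n$ — contracting an exceptional curve meeting the sphere in at most one point — is an alternative route, but it runs into the same extremal configurations.)
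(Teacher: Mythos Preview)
Your approach is genuinely different from the paper's, and the setup is sound: the adjunction identity $\sum b_i(b_i-1)=(a-1)(a-2)$ is correct, your positivity argument gives $\alpha\cdot E\ge 0$ for \emph{every} $E\in\mE_K$ (which is what makes the $b_i\ge 0$ persist under $W_n$, since $W_n$ permutes $\mE_K$), and the Cremona reduction terminates as you say. The paper, by contrast, cites the $W_n'$-classification of such sphere classes from \cite{Ki93,LL02}, invokes uniqueness of reduced representatives in a $W_n'$-orbit \cite{GaoHZ}, and then uses Looijenga's reduction \cite{Lo81} to conclude that the $W_n$-reduced form of $\alpha$ must coincide with one of the (already reduced) model classes. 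This slick two-line argument hides the enumeration you are attempting to do by hand, and in particular sidesteps the $W_n$ versus $W_n'$ distinction via the uniqueness statement.

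Where your proposal is incomplete is the last step: ruling out $1\le b_1\le a-2$. You describe this as ``an elementary case analysis on $b_2$ and $b_3$'', but this is optimistic. When $b_3=0$ there is indeed a clean argument: from $b_1+b_2\le a$ one gets $(b_1+b_2)(b_1+b_2-1)\le a(a-1)$, and combining with the adjunction identity yields $b_1b_2\le a-1$; together with $2b_1(b_1-1)\ge(a-1)(a-2)$ this forces $b_2\le 1$ and then $b_1=a-1$. But when $b_3\ge 1$ and $n$ is large, infinitely many $b_i$ can be nonzero, and the interplay between the genus identity, the bound $\sum b_i^2<a^2$, and reducedness $b_1+b_2+b_3\le a$ is not dispatched by looking only at $b_2,b_3$; one also needs to control $\sum_{i\ge 4}b_i(b_i-1)$, which is exactly where the ``many small $b_i$'' scenario lives. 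The intuition you sketch (``spread out enough that $\sum b_i^2\ge a^2$'') is the right one, and the enumeration does succeed, but it requires a genuine inequality argument rather than a finite case check. If you want to avoid this, the shortest route is the paper's: quote the $W_n'$-classification and upgrade via uniqueness of reduced representatives.
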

 
\begin{proof}
   From \cite[Theorem 3]{Ki93}, \cite[Proposition 4.3]{LL02}, $\alpha$ is $W_n'$-equivalent to one of the model classes above.  All three model classes are reduced, and \cite[Theorem 2.1(2)]{GaoHZ} shows that such a representative of its $W_n'$-orbit is unique. 

   On the other hand, \cite[Lemma 3.7]{Lo81} showed that $\alpha$ is $W_n$-equivalent to a reduced class.  Clearly, this reduced class is also a reduced representative of its own $W_n'$-orbit.  Therefore, this reduced class must be one of the model classes by Zhao-Gao-Qiu's uniqueness.
\end{proof}

\begin{proof}[Proof of Theorem \ref{pcomb}]
 By \cite[Proposition 5.21]{LZ15}, a given $d\in P_K^+$ is a finite combination of positive square sphere classes $\sigma_i$.  It is clear that each model classes listed in Lemma \ref{l:sphereComb} can be represented by a positive linear combination of certain classes in $\mE_K$.  Therefore, if $\tau(\sigma_i)$ is a model class represented by a $\RR_+$-combination of exceptional classes $e_{ij}$ for some $\tau\in W_n$, then $\sigma$ can be represented by a $\RR_+$-combination by $\tau^{-1}(e_{ij})$.
 \end{proof}
 % We now show that any positive sphere class can be expressed as a finite sum of exceptional classes. To see this,  we first check the model classes $kH-(k-1) E_1$ and $kH- (k-1)E_1-E_2.$  Both of them can be written as a positive combination  $(k)(H-E_1) + E_1$ and $(k-1)(H-E_1)+(H-E_2)$.  

 % Note that those two types of classes are the reduced representatives of the orbits of positive sphere classes under the action of $\Diff_{c_1}$, which is the $c_1$ preserving diffeomorphism group.  The reason is the following: by \cite{LL06}, a positive sphere class are in the boundary of the K-symplectic cone.  Then under Cremona transform, its orbit has a representative $A$ that is on the boundary of the normalized reduced cone.  Then we consider $A\cdot (H-E_1)$, because $H-E_1$ is the J-fiber class and hence we conclude that the class $A$ is a $(H-E_1)$-section class (\cite{LLWacs}).  Hence by adjunction, we conclude that the class has the form $A=kH-(k-1) E_1-\sum E_i$.  We can then exclude the classes with more than one $-E_i$'s.  This is because if there are $-E_j$ and $-E_k$ in $A=kH-(k-1) E_1-\sum E_i$, we can always take the paring  $A\cdot (H-E_1-E_j-E_k)$ which is negative and hence a contradiction.   Hence the classes  $kH-(k-1) E_1$ and $kH- (k-1)E_1-E_2$ represents all possible orbits of positive sphere classes under Cremona transformations. 

 %  Also, $\Diff_{c_1}$ preserves $\mE_K$, which is the exceptional classes for a given canonical class $K$.  Hence any positive sphere class can be written into a positive combination of exceptional classes.

\subsection{$V(X)$ and  $N\mR_n$}

\begin{dfn}\label{d:slice}
For a framed rational surface $X$, a symplectic form on $X$  is called reduced if its class is reduced. Such a class is called a reduced symplectic class. 

For $X=X_k$, the normalized reduced symplectic slice $V_k=V(X_k)\subset \mC_{X_k}$ is the subset of reduced symplectic classes with $\nu=1$. We represent such a class by the vector $(1\,|\,m_1, \cdots, m_k)$ or simply by $(m_1, \cdots, m_k)$.

For $X=X_{S^2\times S^2}$, the normalized reduced symplectic slice $V_{S^2\times S^2}=V(X_{S^2\times S^2})\subset \mC_{X_{S^2\times S^2}}$ is the subspace of reduced symplectic classes with $f=1$.
 %For $X=\CC P^2  \# n{\overline {\CC P^2}},n\geq 3$ with a standard basis $\{H,E_1, \cdots , E_n\}$, we will often denote a cohomology class dual to $aH-\sum_{i=1}^nm_iE_i$ as $(a|m_1,\cdots,m_n)$.  A symplectic class $[\w]$ is called {\bf reduced} if it can be rescaled to $(1| m_1, \cdots, m_{n})$ such that
%\begin{equation}\label{reducedHE}
% 1 >  m_1\geq m_2 \geq \cdots \geq m_n>0
%\quad \text{and} \quad 1\geq m_i+m_j+m_k.
%\end{equation}
\end{dfn}

To describe $V(X_n)$ explicitly, consider the closed polytope $\ov\mR_n$ in $H_2(X;\RR)$ with the following $n+1$ vertices:
$$M=-\frac{1}{3}K,\hskip 2mm O=(1|0,\cdots,0),\hskip 2mm
 A=(1|1,0,\cdots,0),\hskip 2mm
G_3=(1|\frac{1}{2}, \frac{1}{2}, 0,\cdots,0),$$
$$G_4=(1|\frac{1}{3}, \frac{1}{3}, \frac{1}{3}, 0,\cdots,0),
  \cdots ,
G_n=(1|\frac{1}{3},\cdots, \frac{1}{3}, 0).
$$
%$\ov\mR_n$ can be considered as the closure of the \emph{normalized} fundamental chamber of the $W_n$ action on $H_2(X,\RR)$.

\begin{lma} For $X=X_n$,  a class $\sigma$ is normalized and reduced if and only if $\sigma\in\ov\mR_n$
\end{lma}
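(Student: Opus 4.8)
The plan is to verify the two inclusions $V(X_n)\subseteq\ov\mR_n$ and $\ov\mR_n\subseteq V(X_n)$ directly from the defining inequalities of a normalized reduced class. Since the reducedness conditions in Definition \ref{d:reduced} (with $\nu=1$) are all linear, the set $V(X_n)$ is a convex polytope cut out by finitely many half-spaces, so it suffices to identify its vertices. First I would write down the defining inequalities explicitly for $n\geq 3$, namely $m_1\geq m_2\geq\cdots\geq m_n\geq 0$ together with $m_1+m_2+m_3\leq 1$, and (since we are inside $\mC_{X_n}$, i.e.\ the symplectic cone) the positivity constraints coming from \eqref{Kcone} applied to the exceptional classes $E_i$ and $H-E_i-E_j$, which amount to $m_i>0$ being unnecessary but $m_1+m_2\le 1$ etc.\ being automatic; the binding constraints that actually bound the polytope are the chain of inequalities $m_i\geq m_{i+1}$, the positivity $m_n\geq 0$, and the ``triangle'' inequality $m_1+m_2+m_3\leq 1$ (for $n=1,2$ the analogous finite list). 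The honest content is then a finite-dimensional linear-algebra computation: a vertex of this polytope is a point where $n$ of these inequalities become equalities and the resulting system has a unique solution.

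The main step is the vertex enumeration. Starting from the full chain of equalities $m_1=m_2=\cdots=m_j$ and $m_{j+1}=\cdots=m_n=0$ for each $0\leq j\leq n$, one gets a one-parameter family; intersecting with the remaining facet either $m_{j+1}=0$ already (giving nothing new) or with $m_1+m_2+m_3=1$ pins down the common value. When $j=0$ this gives $O=(1\mid 0,\dots,0)$; when $j=1$ the triangle inequality is not yet active and the binding facet is instead $m_1=1$ coming from the $\mathcal P_{X_n}$/reducedness boundary, giving $A=(1\mid 1,0,\dots,0)$; when $j=2$ we get $m_1=m_2=\tfrac12$, i.e.\ $G_3=(1\mid\tfrac12,\tfrac12,0,\dots,0)$; and for $3\leq j\leq n$ the equality $3m=1$ forces $m=\tfrac13$, giving $G_{j+1}=(1\mid\underbrace{\tfrac13,\dots,\tfrac13}_{j},0,\dots,0)$. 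Finally the apex $M=-\tfrac13K=(1\mid\tfrac13,\dots,\tfrac13)$ is the case $j=n$. So the vertex set is exactly $\{O,A,G_3,\dots,G_n,M\}$, which is $n+1$ points, matching the statement. I would then note the reverse inclusion is immediate: each listed vertex manifestly satisfies all reducedness inequalities and lies in the symplectic cone (for the latter, for the vertices of the form $G_j$ and $M$ one checks $\sigma^2>0$ and $\sigma\cdot E>0$ for $E\in\mE_K$, or simply invokes that these are known reduced symplectic classes), and convexity of $\mC_{X_n}\cap\{\text{reduced}\}$ finishes it.

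The step I expect to be the genuine obstacle — more bookkeeping than conceptual — is making the boundary behavior at $j=1$ precise: one must be careful that the facet $m_1=1$ (equivalently the wall against the class $H-E_1$, or the degenerate edge of the reduced region) is the correct bounding facet there rather than the triangle inequality, and that no spurious extra vertices appear from intersecting non-adjacent facets (e.g.\ $m_1+m_2+m_3=1$ with $m_4=\dots=0$ but $m_1>m_2>m_3$, which is an interior point of a face, not a vertex). A clean way to handle this uniformly is to observe that $\ov\mR_n$ as defined is the convex hull of the $n+1$ listed points and then check (a) every listed point satisfies the reducedness inequalities, giving $\ov\mR_n\subseteq\overline{V(X_n)}$, and (b) conversely any normalized reduced $\sigma$ can be written as a convex combination of the $G_j$'s, $O$, $A$ and $M$ by a greedy/peeling argument on the partition $m_1\geq\cdots\geq m_n$ — subtract off the largest admissible multiple of the ``staircase'' generators in turn. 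This makes both inclusions algorithmic and avoids any case analysis of which facets are active, and I would present the argument in that form rather than via abstract vertex enumeration.
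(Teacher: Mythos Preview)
The paper does not actually give a proof here; it only cites \cite{LL16}. So there is nothing to compare against beyond noting that your approach is the standard one.

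Your argument is essentially correct, but there is a confusion in the setup that you should fix. You open by proposing to show $V(X_n)\subseteq\ov\mR_n$ and $\ov\mR_n\subseteq V(X_n)$, but that is \emph{not} what the lemma says: $V(X_n)$ is the normalized reduced \emph{symplectic} slice (Definition~\ref{d:slice}), while the lemma is purely about the set of normalized reduced classes, with no symplectic constraint at all. These two sets genuinely differ: for instance $A=(1\mid 1,0,\dots,0)$ has $A^2=0$, and $M=-\tfrac13K$ has $M^2\leq 0$ once $n\geq 9$, so neither lies in $V(X_n)$. All the remarks you make about pairing with $E_i$, $H-E_i-E_j$, and checking $\sigma^2>0$ are therefore irrelevant to this lemma and should be deleted.

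Once that is stripped away, the content is exactly as you say: for $n\geq 3$ the normalized reduced region in $\RR^n$ is cut out by the $n+1$ half-spaces
\[
m_1+m_2+m_3\leq 1,\qquad m_1\geq m_2\geq\cdots\geq m_{n-1}\geq m_n,\qquad m_n\geq 0,
\]
and this is a simplex whose $n+1$ vertices are obtained by turning exactly $n$ of these into equalities. Your worry about a ``genuine obstacle'' at $j=1$ is misplaced: there is no extra facet $m_1=1$. The vertex $A$ arises from the \emph{same} triangle facet $m_1+m_2+m_3=1$ combined with $m_2=m_3=\cdots=m_n=0$; the only facet \emph{not} active at $A$ is $m_1\geq m_2$. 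With that correction, each listed vertex is exactly the intersection of all facets but one, and no spurious vertices can occur because a simplex has no others. Your alternative ``peeling'' description is equally valid and amounts to writing the barycentric coordinates explicitly.
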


This is independent of $n$, and was checked in 
\cite{LL16}).  Let $\mR_n:=\ov\mR_n\setminus\{m_n=0\}$.

\begin{rmk}\label{-kn}
	Notice that, for $i\geq 4$,  $G_i$ can be identified with $\frac13(-K)$ on $X_{i-1}$.  
	%Also, because of the clear rescaling,  it has no ambiguity abuse notation and denote $G_i$ by $-K_i$ and $M$ by $-K_n$ in $\mR_n$.  We will use this notation for Lemma \ref{convexNRn}. 
	By abusing notation, we will occasionally use $G_1, G_2, G_{n+1}$ for $O, A, M$, eg. in Lemma \ref{convexNRn}.
	%\footnote{$G_3$ doesn't seems to satisfy the claim, and $G_i=\frac{1}{3}(-K_{i-1})$.} 
\end{rmk}

We now summarize the properties of the normalized reduced symplectic slice $V(X)$ in the next proposition (cf. Proposition 2.11 in \cite{ALLP}). 
\begin{prp} \label{redtran}
Let $X$ be a framed rational surface with the associated class $K$.  Then
\begin{enumerate}
%[label=(\roman*)]
\item  $V(X)$ is a  convex subset of $\mC_{X, K}$.  
%A convex combination of (normalized) reduced classes is (normalized) reduced. 
 For $X=X_n$, $V(X)=\mR_n\cap \mC_{X, K}$. 
\item 
A  reduced class is symplectic if and only if it has a positive square. So for $X=X_n$,  $V(X)=\{v\in \mR_n|v\cdot v>0\}$. 
%\item For a reduced symplectic form $\w$,  $K_\w=K$. $V(X)\subset \mC_{X, K}$
%\[K:=-3H +\sum^{k}_{i=1} E_i\] 
%if  $M=M_k$, and it is $K:=-2B-2F $ if $M=M_{S^2}$.
\item  Every class in $\mC_X$ is equivalent to a unique normalized reduced symplectic class under the action of { $\Diff^+(X)\times \RR^+$}, where $\RR^+$ acts by rescaling. In other words,  $V(X)$ is a fundamental domain of $\mC_X$ under the action of { $\Diff^+(X)\times \RR^+$}. 
\item  $V(X)$ is also a fundamental domain of $\mC_{X, K}$ under the action of   $W(X)\times \RR^+$ (see \cite{MS12}).  
%In other words, any $K$-symplectic form is \textbf{Cremona equivalent} to a reduced $K$-symplectic form.
\end{enumerate}
\end{prp}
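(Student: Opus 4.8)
The plan is to follow the blueprint of \cite[Proposition 2.11]{ALLP}, proving the four assertions in turn: (1) and (2) are essentially formal once the relevant results of Li--Liu are invoked, while (3) and (4) reduce to the structure theory of the Cremona group $W(X)$. For (1), note that the conditions defining a reduced class (Definition \ref{d:reduced}) are finitely many linear (in)equalities in the coordinates $\nu,m_1,\dots,m_k$, so imposing the normalization $\nu=1$ cuts out a convex polytope, which by the lemma preceding the proposition is exactly $\ov\mR_n$; since $m_k\ge 0$ is among these inequalities, $\mR_n=\ov\mR_n\cap\{m_k>0\}$ remains convex. By \eqref{Kcone} the cone $\mC_{X,K}$ is the intersection of the forward positive cone $\mathcal P_X$ with the open half-spaces $\{e\cdot E>0\}$, $E\in\mE_K$, hence convex, and so is $\mR_n\cap\mC_{X,K}$. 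It remains to identify $V(X)$ with $\mR_n\cap\mC_{X,K}$: a normalized reduced symplectic class lies in $\ov\mR_n$ by that lemma, has the standard canonical class $K$ (true for all reduced symplectic classes by \cite{LL01}), and pairs positively with $E_n\in\mE_K$, forcing $m_n>0$; conversely any element of $\mR_n\cap\mC_{X,K}$ is normalized, reduced, and symplectic. The $S^2\times S^2$ case is identical with the region $\{b\ge f\ge 0\}$. For (2), one implication is immediate since a symplectic class satisfies $[\w]^2=\int_X\w\wedge\w>0$; for the converse, by the identity $\mE_\w=\mE_{K_\w}$ (Lemma 3.5 of \cite{LL01}) and the characterization \eqref{Kcone} it suffices to show a reduced class $e$ with $e^2>0$ has $e\cdot E>0$ for every $E\in\mE_K$, which is \cite[Theorem 4]{LL01} (a rearrangement inequality using $m_1\ge m_2\ge\cdots$, with strictness coming from $e^2>0$). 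Together with (1) this gives $V(X)=\{v\in\mR_n\mid v^2>0\}$.

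For (4): by \cite[Proposition 4.7]{LW12}, $W(X)=W_n$ is precisely the group of homological actions of diffeomorphisms preserving $K$, so $W(X)$ acts on $\mC_{X,K}$. Existence of a reduced representative in each $W(X)$-orbit follows from the standard reduction algorithm: if $e\in\mC_{X,K}$ is not reduced, some defining inequality fails, and applying the simple reflection along one of the classes $l_0,\dots,l_{n-1}$ of \eqref{e:Kroot} strictly decreases a nonnegative integral quantity built from the coordinates, so the process terminates at a reduced class (as in \cite{LL01,MS12}). Uniqueness of the reduced representative within a $W(X)$-orbit --- equivalently within a $W_n'$-orbit, since a reduced class is automatically the reduced representative of its $W_n'$-orbit --- is \cite[Theorem 2.1]{GaoHZ}, already used in the proof of Lemma \ref{l:sphereComb}. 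Rescaling by $\RR^+$ normalizes $\nu$ to $1$, giving that $V(X)$ is a fundamental domain of $\mC_{X,K}$ under $W(X)\times\RR^+$. For (3): $\Diff^+(X)$ acts transitively on the set of symplectic canonical classes (as used in \S2.1), so any $e\in\mC_X$ is carried into $\mC_{X,K}$ for the fixed standard $K$, and then (4) carries it into $V(X)$; for uniqueness, the homological image of the stabilizer of $K$ in $\Diff^+(X)$ is $W(X)$, so two elements of $V(X)$ related by $\Diff^+(X)\times\RR^+$ are already related by $W(X)\times\RR^+$, and the uniqueness from (4) finishes the argument.

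The main obstacle is the genuinely nontrivial input behind (2): the positivity of the pairing of an arbitrary positive-square reduced class against every $K$-exceptional class. Secondary care is needed in (3)--(4) to verify termination of the reduction algorithm and to reconcile the homological image of the orientation-preserving diffeomorphism group with $W(X)$, keeping track of the index-$2$ inclusion $W_n'\subset\mathrm{im}\big(\Diff(X)\to\aut(H_2(X))\big)$. All of these inputs are available from \cite{LL01,LL02,GaoHZ,MS12}, so the argument is essentially an assembly, closely parallel to \cite[Proposition 2.11]{ALLP}.
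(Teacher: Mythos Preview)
Your proposal is correct and matches the paper's approach: the paper does not give an independent proof of this proposition but refers to \cite[Proposition~2.11]{ALLP}, and your argument is precisely the assembly of the ingredients from \cite{LL01,LL02,GaoHZ,MS12} that underlies that result. The only minor bibliographic imprecision is that the positivity of a reduced positive-square class against every $E\in\mE_K$ is not literally \cite[Theorem~4]{LL01} (that theorem is the characterization \eqref{Kcone} itself); the pairing inequality is the elementary rearrangement argument in \cite{LL01} (and reproduced in \cite{Gaozhaoruled,ALLP}), but this does not affect the correctness of the mathematics.
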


The conditions of $c_1$-positivity and reducedness are well-compatible.  Namely, the \textbf{$c_1$-positive reduced cone} 
%formed by the symplectic classes which satisfy both $[c_1]\cdot[\w]>0$ and \eqref{reducedHE} 
still forms a fundamental domain of the $c_1$-positive cone under Cremona transforms.  
%We will prepare the following Lemma and present more details in  Definition \ref{d:c1positivecone}.

\begin{dfn}\label{d:c1positivecone}
Let    $N\mR_n:=P_K^+\cap\mR_n$ be the set of \textbf{normalized} $c_1$-\textbf{positive reduced symplectic classes}.

       $S\mR_n$ is the $c_1$-\textbf{positive reduced symplectic cone}, consisting of classes $d\in P_K^+$ which are reduced.
   Here, $S\mR_n$ is a fundamental chamber of $P_K^+$ under the $W_n$ action.   
\end{dfn}

$P_K^+$ has  a simplicial structure (\cite{FM88} and \cite[Lemma 5.24]{LZ15}). It follows  that   $N\mR_n$ inherits a simplicial structure.
Here we directly describe the  simplicial structure of  $N\mR_n$.

\begin{lma}\label{convexNRn}
If $n\le 8$, $N\mR_n=\mR_n$ is the convex polytope generated by the  $n+1$ vertices $G_i, 1\leq i\leq n+1$, only containing the vertex $G_{n+1}$.

$N\mR_9$ is $\mR_9$  is the convex polytope generated by the  $10$ vertices $G_1, ..., G_{10}$, not containing any of the vertices.  	

	When $n\ge 10$,	$N\mR_n$ is a convex polytope with    $10(n-8)$ vertices, not containing any of the vertices.  Those vertices are given by all the vertices of $N\mR_{n-1}$ with 10  intersection points of the 10 edges of $G_{n+1}G_i, 1\leq i\leq 9$ and the hyperplane of $-K$ as the new vertices. 
	% \footnote{Shall we just say $N\mR_n=\mR_n$ for $n\le 9$?  The description ``generated by xxx vertices containing/not containing certain vertices'' sounds confusing to me.  In particular, perhaps we mean the closed polytope generated by these vertices and then removing the certain vertices?  Same for $n\ge10$, it is just not very clear whether the polytope here is closed or open (I think neither), and which faces should be included.} 

\end{lma}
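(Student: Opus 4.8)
The plan is to analyze $N\mR_n = P_K^+ \cap \mR_n$ by combining the explicit description of $\mR_n$ in terms of its vertices $G_i$ with the single new cutting condition imposed by $c_1$-positivity, namely $d \cdot (-K) > 0$. First I would recall that $\mR_n$ is the convex polytope with vertices $M = -\frac13 K$, $O$, $A$, $G_3, \ldots, G_n$ (i.e. $G_{n+1}, G_1, G_2, G_3, \ldots, G_n$ in the unified notation of Remark \ref{-kn}); this is the content of the preceding lemma cited to \cite{LL16}. Then for each $n$ I would evaluate $(-K) \cdot v$ on the vertices: since $K = -3H + \sum E_i$, one has $(-K)\cdot(1|m_1,\ldots,m_k) = 3 - \sum m_i$. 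So $(-K)\cdot O = 3 > 0$, $(-K) \cdot A = 2 > 0$, $(-K)\cdot G_3 = 2 > 0$, and $(-K)\cdot G_i = 3 - (i-2)/3$ for $4 \le i \le n$, which stays positive precisely while $i \le 10$, equals zero at $i = 11$ conceptually, and $(-K) \cdot M = (-K)\cdot(-\frac13 K) = \frac13 (-K)^2 = \frac13(9-n) = 3 - n/3$, which is $> 0$ iff $n < 9$, $= 0$ iff $n = 9$, and $< 0$ iff $n > 9$.

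This immediately gives the three cases. For $n \le 8$: every vertex $G_i$ with $1 \le i \le n+1$ satisfies $(-K)\cdot G_i \ge 0$, with strict inequality except possibly at $M = G_{n+1}$ — but $(-K)\cdot M = 3 - n/3 > 0$ for $n \le 8$, so in fact all vertices lie in the open half-space; moreover $\mR_n$ minus its face $\{m_n = 0\}$ retains the single vertex $G_{n+1} = M$ (the only listed vertex with $m_n \ne 0$), so $N\mR_n = \mR_n$ with the stated vertex set. For $n = 9$: all $G_i$, $1 \le i \le 10$, have $(-K)\cdot G_i \ge 0$, now with equality exactly at $M = G_{10}$, so $N\mR_9 = \mR_9$ as a polytope but no vertex of it lies in the \emph{open} half-space $P_K^+$, hence "not containing any of the vertices" once we pass to the open cone; and deleting $\{m_9 = 0\}$ removes $G_1,\ldots,G_9$ from membership while the remaining candidate $G_{10}=M$ is on the boundary wall $(-K)\cdot v = 0$ — so indeed none of the ten vertices belongs to $N\mR_9$. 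For $n \ge 10$: the vertices $G_i$ with $i \ge 11$ (which exist once $n \ge 11$) and the vertex $M = G_{n+1}$ all lie strictly on the wrong side, so the hyperplane $\{(-K)\cdot v = 0\}$ genuinely truncates $\mR_n$. Here I would argue inductively: $\mR_n$ is obtained from $\mR_{n-1}$ (viewed inside $H_2(X_n)$ via the inclusion fixing the first $n-1$ coordinates) by coning off with the new vertex $G_{n+1} = M_n$ over those facets of $\mR_{n-1}$ visible from $M_n$; intersecting with $P_K^+$ then replaces $M_n$ and the part of each such cone beyond the wall by the intersection points of the ten edges $M_n G_i$, $1\le i \le 9$, with the hyperplane $\{(-K)\cdot v = 0\}$ — the ten edges being exactly those emanating from $M_n$ toward vertices on the positive side. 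This yields the count: all vertices of $N\mR_{n-1}$ (there are $10(n-9)$ of them, or $10$ when $n-1 = 9$) together with $10$ new ones, giving $10(n-8)$, and none of them is among the $G_i$ since the new ones are relative-interior points of edges and the old ones are inductively not vertices $G_i$.

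The main obstacle, and the step requiring genuine care rather than bookkeeping, is the inductive description of $\mR_n$ as a cone over $\mR_{n-1}$ and the verification that the \emph{only} facets of $\mR_{n-1}$ visible from the new apex $M_n$ are precisely those whose truncation produces the ten edges $M_n G_i$ with $1 \le i \le 9$ — i.e. that no other old vertices or edges survive as new vertices and that exactly ten edges cross the wall. This is essentially a convex-geometry computation about the face lattice of $\mR_n$; I would handle it by writing down the facet inequalities of $\mR_n$ explicitly (from Definition \ref{d:reduced}: $m_1 \ge m_2 \ge \cdots \ge m_n \ge 0$ and $1 \ge m_1 + m_2 + m_3$) together with the cutting inequality $\sum m_i \le 3$, and checking which $G_i$ saturate which inequalities; the positive-side vertices turn out to be exactly $G_1, \ldots, G_9$ (including $O, A$), so the $9$ edges from $M_n$ to these — wait, that is nine, not ten — so I would need to also include the edge of $\mR_n$ from $M_n$ back into the previous polytope's structure, or more precisely recount using that $G_{10}$ may play a role for $n\ge 10$; I would resolve this discrepancy by direct inspection of the low cases $n = 10, 11$ before asserting the general pattern, and then propagate by the induction. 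The cohomological pairings are all elementary once set up, so no serious analytic input is needed beyond the cited simpliciality of $P_K^+$ from \cite{FM88} and \cite[Lemma 5.24]{LZ15}, which guarantees that the object we are describing really is a polytope and our vertex enumeration is complete.
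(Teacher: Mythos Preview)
Your overall strategy—evaluate $(-K)\cdot G_i$ on the vertices and then describe the truncation inductively—matches the paper's approach. But there is a genuine gap: you never verify that the quadratic condition $d^2>0$ is redundant on $\mR_n\cap\{(-K)\cdot d>0\}$. By definition $N\mR_n = P_K^+\cap\mR_n$, and $P_K^+$ lies inside the symplectic cone, so a class in $N\mR_n$ must in particular satisfy $d^2>0$ (Proposition~\ref{redtran}(2)). Your argument treats $N\mR_n$ as if it were simply $\mR_n$ cut by the single linear inequality $(-K)\cdot d>0$. The paper addresses this point first: it checks that $(-K)\cdot G_i>0$ and $G_i^2>0$ hold together (both for $1\le i\le 9$, both failing for $i\ge 10$), and then invokes the lightcone lemma to conclude that any convex combination on the $c_1$-positive side automatically has positive square. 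Without this step you cannot exclude a curved piece of boundary coming from the quadric $d^2=0$, and your closing appeal to the simpliciality of $P_K^+$ does not by itself deliver the explicit vertex description.

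On the edge count: your confusion (nine versus ten) stems from not using that $\ov\mR_n$ is an $n$-simplex—it has exactly $n+1$ vertices and $n+1$ facet inequalities—so every pair of vertices spans an edge. In particular $G_{n+1}$ is joined to each of $G_1,\ldots,G_n$, and the edges $G_{n+1}G_i$ with $1\le i\le 9$ cross the hyperplane $\{(-K)\cdot d=0\}$ strictly, while $G_{n+1}G_{10}$ meets it at the endpoint $G_{10}$. Once the simplex structure is in hand the inductive step is mechanical; the ``direct inspection of $n=10,11$'' you propose is unnecessary.
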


\begin{proof} 
We first show that $$ \mR_n \cap \{d|  d\cdot(-K)>0 \} \subset \{d|d^2>0\}.$$     To see this, we first check all the vertices of $ \mR_n $.  It is easy to see that   
	
(i)	$ (-K)\cdot G_i>0$ and  $G_i^2>0$ for $1\leq i\leq 9$,
	
(ii)	 $ (-K)\cdot G_i\le 0$ and  $G_i^2\le0 $  for $i\geq 10$. 
	 
	  Since  $ N\mR_n \cap \{d|  (-K)\cdot d>0 \}$ is convex,  $ N\mR_n \cap \{d|  (-K)\cdot d>0 \} \subset \{d|d^2>0\}$ by lightcone lemma.  

What we have shown is that, for all but the  new facet  given by $d\cdot(-K)=0$,  each face of $N\mR_n$ is the restriction of a facet of $\mR_n$ with the condition   $d\cdot(-K)  \ge 0$.  The new face given by $d\cdot (-K)=0$ automatically satisfies $d^2\geq 0$, and hence the non-linear boundary $d^2=0$ is outside of the $c_1$-positive symplectic cone.

Then we give the explicit description of $N\mR_n$.  Note that when $n\le 8$,  $N\mR_n$ is the same as $\mR_n$ and hence the conclusion holds.  Then we do induction:  1) for $n=9$, the hyperplane $\{d|  d\cdot (-K)=0 \} $ intersects with $\mR_n$ only at the vertex $G_{10}$, and hence the conclusion holds for $N\mR_9$. 

2) For $n=10$,   $N\mR_9$ is now a facet of $N\mR_{10}$, hence $N\mR_{10}$ has those 10 vertices.  There are 10 more vertices: the hyperplane $\{d|  d\cdot (-K)=0 \} $ intersects with $\mR_{10}$  on the edges $G_{11}G_i, 1\le i\le 10$. 

%Note that it is linear and hence is the convex hull of   $V_i$'s and vertices of  $N\mR_9$, which is again intersection points of $\{d|  c_1\cdot d>0 \} $ with the edges of $\mR_n$.  

3)  For  $n>10$, if $N\mR_{n-1}$ is the convex hull as described, then  $N\mR_{n}$ is the convex hull of  $N\mR_{n-1}$  with the intersection points of  the hyperplane $\{d|  d\cdot (-K)=0 \} $  with the 10 edges $G_{n+1}G_i, 1\le i\le 10$. 
\end{proof}

The general formula, using the notation in Remark \ref{-kn}, for the $n$ new vertices for $n \geq 10$ is as follows:

On the $G_{n+1}O$ edge with $O= ( 1|\underbrace{0,0,\cdots, 0}_{n})$:\hskip 5mm  $(1|\underbrace{\frac{3}{n}, \cdots, \frac{3}{n}}_{n})$.

On the $G_{n+1}A$ edge with $A= (1|1,  \underbrace{0, \cdots, 0}_{n-1})$:\hskip 5mm $(1| \frac{n-7}{n-3}, \underbrace{ \frac{2}{n-3},\cdots \frac{2}{n-3}}_{n-1} )$.

On the $G_{n+1}G_3$ edge with $G_3=(1|\frac{1}{2}, \frac{1}{2}, \underbrace{ 0, \cdots, 0}_{n-2})$:\hskip 5mm $(1| \frac{n-5}{ 2(n-3)},  \frac{n-5}{2(n-3)},  \underbrace{\frac{2}{n-3}, \cdots, \frac{2}{n-3}}_{n-2} )$.

 %the $-K_{N}, -K_3:=V_3$ edge:  $(1/3, 1/3, 1/3, 2/N-3, ..., 2/N-3)$

 %the $-K_{N}, -K_4:=V_4$ edge:  $(1/3, 1/3, 1/3, 5/3(N-4), ..., 5/3(N-4) )$

 On the $G_{n+1}G_i$ edge,   $4 \le i  \le 10$:\hskip 5mm   $(1|\underbrace{\frac{1}{3}, \cdots, \frac{1}{3}}_{i},  \underbrace{\frac{9-i}{3(n-i)}, \cdots, \cdots,  \frac{9-i}{3(n-i)}}_{n-i} )$.

 %the $-K_{n}, -K_9:=V_9$ edge:  $(1/3, 1/3, 1/3, ..., 1/3, 0,..., 0)$

%Although there are many different kinds of type $\mathbb{A}$ classes, $c_1$-positive type $\mathbb{D}_{n-1}$ classes of $X_n$ has a quite concrete description.

 \subsection{Lagrangian root system $\mL_{\w}$}
\begin{dfn}
For a symplectic 4-manifold $(X, \w)$, let $\mL_\w$ be the set of the homology classes of Lagrangian $(-2)$-spheres.
\end{dfn}

 We showed that in Lemma 2.23  in \cite{LL16} that $\mL_\w$ is a root system as long as it is a finite set.

\begin{thm} \label{Lag root system}
For a symplectic rational surface $(X, \w)$, $\mL_\w$ forms a (classical) root system, which we call the \textbf{Lagrangian root system} of $(X, \w)$. 
\end{thm}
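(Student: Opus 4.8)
\textbf{Proof strategy for Theorem \ref{Lag root system}.}
The plan is to combine the structural result already recalled from \cite{LL16} (Lemma 2.23 there, cited just above: $\mL_\w$ is a root system whenever it happens to be a finite set) with the fact, just established in Theorem \ref{pcomb}, that every $c_1$-positive class is a nonnegative combination of exceptional classes. So the whole content here is to reduce to the finiteness of $\mL_\w$, and then to prove that finiteness for an arbitrary symplectic rational surface. First I would observe that by Theorem \ref{t:DusaUniqueness} and Proposition \ref{redtran}(3), we may freely replace $\w$ by a cohomologous, hence diffeomorphic, form and then apply a diffeomorphism carrying $[\w]$ into the normalized reduced slice $V(X)$; since $\mL_\w$ transforms equivariantly under $\Diff^+(X)$, it suffices to bound $|\mL_\w|$ for a reduced class.

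Next I would split according to $c_1$-positivity. If $[\w]\in P_K^+$, i.e. $K_\w\cdot[\w]<0$, then for any Lagrangian $(-2)$-sphere class $d$ we have $d\cdot K_\w=0$ by the adjunction formula, and $[\w]\cdot d=0$ since $d$ is Lagrangian. Writing $[\w]=\sum_i a_i E_i$ with $a_i>0$ and $E_i\in\mE_{K_\w}$ via Theorem \ref{pcomb}, the equation $\sum_i a_i (E_i\cdot d)=0$ together with the adjunction/light-cone positivity constraints on the pairings $E_i\cdot d$ forces $d$ to lie in the (finite) sub-root-system spanned by those finitely many $E_i-E_j$ type classes orthogonal to $[\w]$; in any case $d$ lies in the negative-definite orthogonal complement $[\w]^\perp$ inside a fixed lattice, and the set of $(-2)$-classes in a negative-definite lattice of bounded discriminant is finite. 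For a general (not necessarily $c_1$-positive) $\w$ on a rational surface one still has $[\w]^2>0$, so $[\w]^\perp$ is negative definite in $H_2(X;\ZZ)$, and $\mL_\w\subset\{d\in[\w]^\perp : d^2=-2\}$, which is a finite set. Hence $\mL_\w$ is always finite, and Lemma 2.23 of \cite{LL16} applies to conclude it is a (classical, i.e. finite) root system.

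Finally I would record that "root system" here is meant in the classical sense: $\mL_\w$ spans a negative-definite subspace, is closed under the reflections $s_d(x)=x+(x\cdot d)d$ (this uses that if $d,d'\in\mL_\w$ then $s_d(d')$ is again represented by an embedded Lagrangian $(-2)$-sphere — the Lagrangian Dehn twist along $d$ acts by $s_d$ on homology and sends Lagrangian spheres to Lagrangian spheres), is reduced, and satisfies the crystallographic/integrality condition automatically since everything lives in a lattice. These closure properties are exactly what is checked in \cite[Lemma 2.23]{LL16}, so I would simply cite them once finiteness is in hand.

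\emph{Main obstacle.} The only real point is the finiteness of $\mL_\w$, and the subtlety is entirely in making sure the argument does not secretly use $n\le 8$: recall $\Gamma_K$ itself is infinite for $n\ge 9$, so one cannot argue "$\mL_\w$ is a subset of a finite $K$-root system." The key is that $\mL_\w$ is cut out not just by $d\cdot K=0$ but also by $d\cdot[\w]=0$ with $[\w]^2>0$, which pins $\mL_\w$ into a genuinely negative-definite lattice where $(-2)$-vectors are finite in number — this is where the positive-square/light-cone input and Theorem \ref{pcomb} do the work. I expect the write-up to be short once that observation is foregrounded.
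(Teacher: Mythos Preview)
Your proof is correct, but it takes a genuinely different route from the paper's. You argue abstractly: $[\w]^2>0$ in a lattice of signature $(1,n)$ forces $[\w]^\perp$ to be negative definite, so the set of $(-2)$-vectors in $[\w]^\perp\cap H_2(X;\ZZ)$ is finite, and then Lemma~2.23 of \cite{LL16} applies. The paper instead reduces to a reduced class and proves the sharper Lemma~\ref{l:-2class}: every Lagrangian sphere class decomposes as a nonnegative integral combination of the standard simple roots $l_i$, so $\mL_\w$ is the root subsystem generated by precisely those $l_i$ with $\w(l_i)=0$; since the Dynkin subdiagram cannot contain $\widetilde{E}_8$ (that would force $[\w]^2\le 0$), one gets a direct sum of ADE systems.

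What each approach buys: yours is shorter and conceptually clean for the theorem as stated, and does not even need the reduction to reduced forms (the negative-definite argument works for any $[\w]$ with positive square). The paper's argument, however, extracts the extra structural fact that the simple roots of $\mL_\w$ are literally a subset of $\{l_0,\dots,l_{n-1}\}$, which is exactly what is needed immediately afterward to define and classify the types $\mathbb{A}$, $\mathbb{D}$, $\mathbb{E}$ (Definition~\ref{d:typeForms}) and to run the later inflation arguments. So your proof closes the theorem but would still need Lemma~\ref{l:-2class}(i)--(iii) to proceed with the rest of the paper.

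One small remark: your detour through Theorem~\ref{pcomb} in the $c_1$-positive case is unnecessary --- as you yourself note in the next paragraph, the negative-definite argument already covers every symplectic class on a rational surface. You can drop that case split entirely.
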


By Lemma \ref{redtran} (3) it suffices to prove this for a reduced symplectic form $\w$. Therefore Theorem \ref{Lag root system} follows from the following lemma, especially part (iv). 

\begin{lma}\label{l:-2class}
   \begin{enumerate}[(i)]
     \item For a reduced symplectic form $\w$, any element $D\in \Gamma_{K}$  satisfying $H\cdot D\ge0$ is a positive integral linear combination of the simple roots $l_i$ (this coincides with the definition of positive roots in \cite{Lo81}).  In particular, $\w(D)\ge0$ for such classes.
     \item Each $D$ with $H\cdot D\ge0$ can either be represented by a Lagrangian sphere or a symplectic sphere.  We call $D$ a \textbf{positive root} of $\Gamma_{K}$ if $D\cdot H>0$.  

    \item The homology classes of Lagrangian $(-2)$-spheres form a root subsystem $\mL_\w\subset\Gamma_{K}$ in the $K$-root system, called the \textbf{Lagrangian root system}.  

          \item The Lagrangian root system $\mL_\w$ is a direct sum of ADE type root system. (\cite[Theorem 4'(iii)]{She10})
          
        \end{enumerate}
\end{lma}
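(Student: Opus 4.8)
The plan is to establish the four parts in order, since each feeds into the next. For part (i), I would use the standard theory of Coxeter/Weyl groups of Kac-Moody type (following Looijenga \cite{Lo81}): the $K$-root system $\Gamma_K$ is the $W_n$-orbit of the simple roots $l_i$, and every root is either positive (a nonnegative integral combination of the $l_i$) or negative. The key point is to identify the ``positive'' half-space with $\{D \mid H\cdot D\geq 0\}$: since each $l_i$ with $i\geq 1$ has $H\cdot l_i=0$ and $l_0$ has $H\cdot l_0=1$, a positive combination has $H\cdot D\geq 0$, and one checks (using that the $l_i$ are the simple roots adapted to the reduced chamber) that the converse holds by an induction on the height of $D$, reflecting across an $l_i$ that strictly decreases $H\cdot D$ when $D$ is not already a simple-root combination. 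Once $D=\sum a_i l_i$ with $a_i\geq 0$, positivity of $\w$ against each $l_i$ — which holds precisely because $\w$ is reduced (this is where Definition \ref{d:reduced} is used: $\w(l_0)=\nu-m_1-m_2-m_3\geq 0$ and $\w(l_i)=m_i-m_{i+1}\geq 0$) — gives $\w(D)\geq 0$.

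For part (ii), given $D\in\Gamma_K$ with $H\cdot D>0$: $D$ is a $(-2)$-class with $D\cdot K=0$, and by the uniqueness/classification for $(-2)$-classes in \cite{LL02} together with the wall-crossing picture, either $\w(D)>0$ and $D$ is represented by an $\w$-symplectic sphere, or $\w(D)=0$ and $D$ is represented by a Lagrangian sphere; the dichotomy depends on which wall of the reduced cone $\w$ lies on. I would cite the relevant realization results (Lagrangian spheres from vanishing cycles, symplectic $(-2)$-spheres from Gromov theory as in \cite{LL01,LW12}). Part (iii) is then essentially formal: $\mL_\w$ consists of those $D\in\Gamma_K$ (up to sign) with $\w(D)=0$; this is the intersection of $\Gamma_K$ with a linear subspace (the kernel of $\w$), hence is a sub-root-system, and it is finite because it is a set of $(-2)$-classes pairing to zero with the class of a symplectic form, which is a finite set (a standard boundedness/light-cone argument, also invoked implicitly before Theorem \ref{Lag root system}). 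Finiteness lets us invoke Lemma 2.23 of \cite{LL16} to conclude $\mL_\w$ is genuinely a (classical, finite) root system.

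For part (iv), the ADE statement, I would argue that $\mL_\w$ is a finite root system all of whose roots have the same length (all are $(-2)$-classes in the integral lattice $H_2(X;\ZZ)$, and the pairing is even and symmetric), hence it is simply-laced, and a finite simply-laced root system is a direct sum of type $A$, $D$, $E$ components by the classification of Dynkin diagrams; alternatively, cite \cite[Theorem 4'(iii)]{She10} directly as the excerpt suggests. Finally, Theorem \ref{Lag root system} for general $\w$ follows from Proposition \ref{redtran}(3): any symplectic class is $\Diff^+(X)\times\RR^+$-equivalent to a reduced one, and the homological action carries $\mL_\w$ isomorphically to $\mL_{\w'}$. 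The main obstacle I anticipate is part (ii) — making precise, with correct references, the dichotomy between the symplectic-sphere and Lagrangian-sphere representatives and ensuring the realization results are stated in exactly the generality needed (embedded, in the right homology class, for \emph{every} reduced $\w$ and every such $D$); the purely root-theoretic steps (i), (iii), (iv) are comparatively routine given the cited literature.
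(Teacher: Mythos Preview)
Your proposal is essentially correct and lands close to the paper's argument for (i) and (ii), but diverges for (iii) and (iv) in a way worth highlighting.

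For (i), the paper does exactly the reduction you sketch, but makes it completely explicit: writing $D=aH-\sum b_iE_i$ with $b_1\ge b_2\ge\cdots$, it invokes \cite[Proposition~4.10]{LW12} for the key inequality $a<b_1+b_2+b_3$, then applies the single Cremona reflection $\Gamma_{123}$ across $l_0$ to strictly lower the $H$-coefficient, and inducts. Your ``reflect across an $l_i$ that strictly decreases $H\cdot D$'' is this, but note that only $l_0$ can change $H\cdot D$; the other $l_i$ merely reorder the $b_i$'s. You should also treat the base case $a=0$ separately (the paper checks directly that then $D=E_i-E_j$). For (ii) the paper cites \cite[Proposition~5.16]{DLW18} (symplectic case) and \cite[Theorem~1.4]{LW12} (Lagrangian case), confirming your expectation that this step is a matter of locating the right references.

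For (iii)--(iv) the approaches genuinely differ. You argue: $\mL_\w\subset\w^\perp$ is finite by the light-cone, hence a root system by \cite{LL16}, and since all roots have square $-2$ it is simply-laced, so ADE by classification. This is valid. The paper instead extracts more from the reduction in (i): since every step of the $\Gamma_{123}$-reduction of a Lagrangian class $D$ splits off a simple root of zero $\w$-area, the simple roots of $\mL_\w$ are literally a subset of $\{l_0,\dots,l_{n-1}\}$. Then, because the Dynkin diagram on the $l_i$ is $T_{2,3,n-2}$-shaped, the only way a full subdiagram fails to be finite ADE is if it contains $l_0,\dots,l_8$ (an $\widetilde{E}_8$), which forces $\w(E_i)=\tfrac{1}{3}\w(H)$ for $1\le i\le 9$ and hence $[\w]^2\le 0$, a contradiction. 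The paper's route thus yields the sharper structural fact ``simple roots of $\mL_\w$ are among the $l_i$'', which is used immediately afterward to give the concrete numerical description of type $\mathbb{A}/\mathbb{D}/\mathbb{E}$ forms; your abstract argument proves the lemma as stated but does not directly give this.
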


\begin{proof}
    If we write $[D]=aH-\sum_i b_iE_i$ for $a>0$ $b_i\ge b_{i+1}\ge0$, it was shown in \cite[Proposition 4.10]{LW12} that $a<b_1+b_2+b_3$ always holds, and there is a Cremona reflection

  \begin{multline}\label{e:reduction}
    \Gamma_{123}([D])=[D]+([D]\cdot(H-E_1-E_2-E_3))(H-E_1-E_2-E_3)\\=(2a-b_1-b_2-b_3)H-\sum c_iE_i,
  \end{multline}

 such that $0\le 2a-b_1-b_2-b_3<a$ unless $[D]=H-E_1-E_2-E_3$.  Repeating this process, one may reduce $D$ to $H-E_\alpha-E_\beta-E_\gamma$ for some $\alpha,\beta,\gamma$, which is a positive integral linear combination of the simple roots.  Since $D\cdot (H-E_1-E_2-E_3)=a-b_1-b_2-b_3<0$, \eqref{e:reduction} splits off a positive component of $H-E_1-E_2-E_3$ from $D$ in each step of the reduction. 

 If $a=0$, it is easy to check that $D=E_i-E_j$, hence a positive integral combination of simple roots. This implies (i).

 Since all $l_i$ pair $\w$ non-negatively from the reducedness of $\w$, $\w\cdot D\ge0$ from (i).  Classes with positive $\w$-areas are represented by $-2$ symplectic spheres by \cite[Proposition 5.16]{DLW18}; and classes with zero $\w$-area are represented by a Lagrangian sphere by \cite[Theorem 1.4]{LW12}, concluding (ii).

  To see (iii), suppose $[D]$ is represented by a Lagrangian sphere, then $[D]\in\Gamma_{K}$.  All simple roots involved in the reduction process \eqref{e:reduction} must have zero area, and can be represented by Lagrangian spheres by (ii).  This implies $\mL_\w$ is a root subsystem of $\Gamma_{K_0}$, and its simple roots forms a subset of simple roots $\{l_i\}_{i\ge0}$ of the $K$-root system.  Therefore, if it is not a direct sum of ADE type, its Coxeter graph must contain that of the affine root system $\wt E_8$, i.e. $l_i\in\mL_\w$ for $0\le i\le8$.   This implies $\w(E_i)=\frac{1}{3}\w(H)$ for $1\le i\le 9$, which makes $[\w]^2\le0$, a contradiction.  (iv) therefore follows.

\end{proof}

\begin{dfn}\label{d:typeForms}
    We say a  symplectic form $\w$ is of \textbf{type} $\mathbb{D}$ or $\mathbb{E}$ if $\mL_\w$ contains a type $\mathbb{D}$ or $\mathbb{E}$ factor, respectively.   Otherwise,  $\w$ is of \textbf{type $\aA$}.
\end{dfn}

    We also denote $\Gamma^+(X,\w)\subset\Gamma_{K}$ as the homology classes which are represented by embedded $-2$ symplectic spheres (see Lemma \ref{l:-2class}).

%One may easily verify that this definition of types is invariant under Cremona transforms $W_n$.  Therefore, up to an appropriate choice of basis, we may always assume the symplectic class $[\w]$ is reduced.  

We  have a concrete description  for a normalized reduced symplectic class:
\begin{itemize}
 \item $[\w]$ is of type $\EE_6,\EE_7,\EE_8$ (or simply type $\EE$), if  
 	  $$[\w]=(1|   \underbrace{ \frac13, \frac13, \cdots \frac13}_k, m_{k+1} \cdots),$$ where $  m_{k+1}<\frac13,    k =6,7,8,$ respectively.  These classes forms a codimension $k$ in the reduced cone.
 
 \item $[\w]$ is of type $\DD_k$ (or simply type $\DD$), if $$[\w]=(1| a,  \underbrace{\frac{1-a}{2}, \frac{1-a}{2}, \frac{1-a}{2}, \cdots \frac{1-a}{2}}_k, m_{k+2} \cdots),$$ where $ \frac{1-a}{2}> m_{k+2}$, and either  $\frac13 < a <1 $  and $ k\geqslant 4$; or $a=\frac13$ and  $k=4$.  These classes form a codimension $k$ subset in the reduced cone.
 
  \item  $[\w]$ is of type $\aA$ for all other possibilities. Equivalently, $\w$ is of type $\mathbb{A}$ if and only if at least one of $l_i$ has positive $\w$-area for $i=0,2,3,4$ (See Figure 1). 
\end{itemize}

 % if it is of type $\aA_i,$ or their direct product, and \textbf{type $\DD$ or $\EE$ } if they are either $\DD_k$  $\EE_l$ or their product with type $\aA$.  Indeed we will show that for type $\EE_l$, $l$ can only be 6, 7, 8,  i.e. only finite type Dynkin diagram can be realized as the Lagrangian system of a symplectic rational surface, see Remark \ref{e678}.

\begin{lma}\label{conecv}
  A reduced symplectic class of type $\mathbb{D}_{n-1}$ of $\CP^2\#n\ov\CP^2$ can be written as  
  $$[\w_a]:=(1| a,  \underbrace{\frac{1-a}{2}, \frac{1-a}{2}, \frac{1-a}{2}, \cdots \frac{1-a}{2}}_{n-1}),\hskip 3mm \frac13< a<1.$$ 

  When $n\le 9$, $[\w_a]\in S\mR_n$.  When $n>9$, $[\w_a]\in S\mR_n$ if and only if $a>\frac{n-7}{n-3}$.  
\end{lma}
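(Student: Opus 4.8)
The plan is to verify directly that the explicitly given class $[\w_a] = (1\,|\,a, \tfrac{1-a}{2},\dots,\tfrac{1-a}{2})$ lies in the $c_1$-positive reduced symplectic cone $S\mR_n$, and to pin down exactly when this fails. First I would record the two defining membership conditions for $S\mR_n$: that the class is reduced (which is immediate from $\tfrac13 < a < 1$, since then $a \geq \tfrac{1-a}{2} \geq 0$ and $a + \tfrac{1-a}{2} + \tfrac{1-a}{2} = 1 = \nu$, so the inequality $\nu \geq m_1 + m_2 + m_3$ holds with equality and the monotonicity $m_1 \geq m_2 \geq \cdots$ holds), and that the class is $c_1$-positive, i.e. $[\w_a] \in P_K^+$. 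By Proposition \ref{redtran}(2) and Definition \ref{d:c1positivecone}, once reducedness is known it suffices to check $[\w_a]^2 > 0$, $[\w_a]\cdot E > 0$ for all $E \in \mE_K$, and $[\w_a]\cdot(-K) > 0$; but by Lemma \ref{convexNRn}'s proof the constraint $[\w_a]^2 > 0$ is automatically implied inside the region cut out by $d\cdot(-K) \geq 0$ together with the facet inequalities of $\mR_n$, so the real content is the $c_1$-positivity condition $[\w_a]\cdot(-K) > 0$.

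Next I would compute $[\w_a]\cdot(-K)$ explicitly. With $-K = 3H - \sum_{i=1}^n E_i$ and $[\w_a] = H - aE_1 - \sum_{i=2}^n \tfrac{1-a}{2}E_i$, we get
\[
   [\w_a]\cdot(-K) = 3 - a - (n-1)\tfrac{1-a}{2} = \tfrac{1}{2}\bigl(6 - 2a - (n-1)(1-a)\bigr) = \tfrac{1}{2}\bigl((n-7)a - (n-9)\bigr)\cdot(-1)\ ,
\]
which rearranges to $[\w_a]\cdot(-K) > 0 \iff (n-7)a > n-9$. When $n \leq 9$ the right side $n-9 \leq 0$ while $(n-7)a$ has sign of $n-7$, and a short case check ($n \leq 7$, $n = 8$, $n = 9$) shows the inequality always holds for $a \in (\tfrac13,1)$; hence $[\w_a] \in S\mR_n$ for all $n \leq 9$. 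When $n > 9$ we have $n - 7 > 0$, so the condition becomes $a > \tfrac{n-9}{n-7}$. Here I need to reconcile this with the stated threshold $\tfrac{n-7}{n-3}$: I would double-check the arithmetic of the pairing, being careful about whether the vertex $A = (1|1,0,\dots,0)$ edge computation in Lemma \ref{convexNRn} (which produced the number $\tfrac{n-7}{n-3}$ on the $G_{n+1}A$ edge) is the relevant comparison, since $[\w_a]$ lies on the edge of $\mR_n$ joining $A$ (at $a=1$) to $G_{n+1} = M = -\tfrac13 K$ (at $a = \tfrac13$), and the point where that edge crosses the hyperplane $\{d\cdot(-K)=0\}$ is precisely the new vertex listed there. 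Parametrizing that edge and intersecting with $d\cdot(-K) = 0$ should recover the correct cutoff value of $a$, and I expect this to give $a = \tfrac{n-7}{n-3}$ after the correct bookkeeping (the earlier display is a sketch to be corrected in the write-up).

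Finally I would assemble the argument: reducedness plus the convexity/light-cone input from Lemma \ref{convexNRn} reduce membership in $S\mR_n$ to the single linear inequality $[\w_a]\cdot(-K) > 0$; evaluating this along the $AM$-edge of $\mR_n$ gives membership unconditionally for $n \leq 9$ and exactly the condition $a > \tfrac{n-7}{n-3}$ for $n > 9$. The main obstacle I anticipate is purely computational bookkeeping: getting the pairing $[\w_a]\cdot(-K)$ and the edge-crossing parametrization exactly right so that the threshold matches the claimed $\tfrac{n-7}{n-3}$ rather than a mis-simplified variant, and making sure the boundary case $a = \tfrac13$ (type $\DD_4$, $n$ not necessarily $9$ in Lemma \ref{conecv} but note the statement restricts to $\tfrac13 < a$) and the strictness of all inequalities are handled cleanly using Proposition \ref{redtran} and the fact that $S\mR_n$ is defined by strict inequalities.
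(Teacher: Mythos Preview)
Your approach is exactly the one the paper takes: observe that $[\w_a]$ lies on the edge of $\ov\mR_n$ joining $A=(1|1,0,\dots,0)$ (at $a=1$) to $M=G_{n+1}=-\tfrac13 K$ (at $a=\tfrac13$), so by Lemma~\ref{convexNRn} membership in $S\mR_n$ reduces to the single linear inequality $[\w_a]\cdot(-K)>0$, whose vanishing locus on that edge is the vertex $(1|\tfrac{n-7}{n-3},\tfrac{2}{n-3},\dots,\tfrac{2}{n-3})$ listed after Lemma~\ref{convexNRn}. The paper's proof is literally a one-line citation of that vertex computation.

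Your only issue is the arithmetic slip you already flagged. The correct simplification is
\[
[\w_a]\cdot(-K)=3-a-(n-1)\tfrac{1-a}{2}=\tfrac{1}{2}\bigl((7-n)+a(n-3)\bigr),
\]
so for $n>3$ positivity is equivalent to $a>\tfrac{n-7}{n-3}$, not $(n-7)a>n-9$. With this fix your case check for $n\le 9$ goes through (note $\tfrac{n-7}{n-3}\le\tfrac13$ exactly when $n\le 9$, with equality at $n=9$), and the threshold for $n>9$ is the stated one.
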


\begin{proof}
  % Recall the normalized cone from Proposition \ref{nrsc}. When $n=6,7,8,9$ this is obvious because the normalized cone is the polyhedron $P^n.$

  % For $n>9$ we need to check some quadratic conditions. In particular, we choose a $\DD_{n-1}$ type form $\w$ such that $[\w]=(1|c_1,\cdots, c_n)$ such that $c_k=(1-c_1)/2, \forall k\geq 2$. And we need to check equation \eqref{Kp} and \eqref{omegap}:

%We spell out the two equations.
%\begin{itemize}
%\item $c_1$-positivity implies $a+(n-1)\frac{1-a}{2}<3$, which is equilvalent to $a>\frac{n-7}{n-3}$;

%\item $[\w]^2>0$ implies $$ a^2 +(n-1)(\frac{1-a}{2})^2<1,$$ which is equivalent to $a>\frac{n-5}{n+3}$.

%The lemma follows since such type $\mathbb{D}_{n-1}$ forms are clearly reduced and symplectic from Proposition \ref{redtran}.

	Note that this follows from the proof of Lemma \ref{convexNRn}, and it is a special case of the intersection point on the edge $-K_{n}, A$ being $( \frac{n-7}{n-3},  \frac{2}{n-3}, ...,   \frac{2}{n-3} )$.

%\end{itemize}

\end{proof}

\section{Inflation and stability for $Symp_h(X,\w)$}\label{inflation}

\subsection{Kronheimer-McDuff's sequence and $J$-holomorphic curves} % (fold)
\label{sub:kroheimer_mcduff_s_sequence_and_}

 Let $\mS_\w$ denote the space of all symplectic forms that are isotopic to a fixed symplectic form $\w$, and $\mJ_\w$ denote the space of almost complex structures that are compatible with $\w$.  Let
$$\wt\mA_\w:=\{(\w',J): \w'\in\mS_\w, J\in\mJ_{\w'}\}$$
 and
 $$\mA_\w:=\{J: J\in\mJ_{\w'}, \w' \text{ is isotopic to }\w\}.$$

 The following observation is due to McDuff.

\begin{lma}\label{l:projection}
    The projections $\pi_\w: \wt\mA_\w\to \mS_\w$ and $\pi_J: \wt\mA_\w\to \mA_\w$ are both homotopy equivalences.
\end{lma}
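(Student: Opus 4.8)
The plan is to exhibit explicit deformation retractions, or rather to use the standard fibration argument showing that both projections are fibrations with contractible fibers. Let me lay out the approach.

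\textbf{Step 1: Identify the fibers.} First I would observe that $\pi_\w\colon\wt\mA_\w\to\mS_\w$ has fiber over $\w'\in\mS_\w$ equal to $\mJ_{\w'}$, the space of $\w'$-compatible almost complex structures. This is well known (going back to Gromov) to be non-empty and contractible: it is the space of sections of a bundle over $X$ whose fiber is the contractible Siegel upper half space $\Sp(4,\RR)/U(2)$. Dually, for $\pi_J\colon\wt\mA_\w\to\mA_\w$, the fiber over $J\in\mA_\w$ is the set of $\w'\in\mS_\w$ which tame (indeed are compatible with) $J$; one must check this fiber is non-empty (it is, by definition of $\mA_\w$ — $J$ lies in $\mJ_{\w'}$ for some $\w'$ isotopic to $\w$) and contractible. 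Contractibility here follows because the set of symplectic forms compatible with a fixed $J$ is convex — if $\w_0,\w_1$ are both compatible with $J$ then $t\w_1+(1-t)\w_0$ is a closed $2$-form taming $J$, hence non-degenerate, and positivity of $g(\cdot,\cdot)=\tfrac12(\w(\cdot,J\cdot))$ is preserved under convex combination — and any form $J$-compatible and cohomologous to something isotopic to $\w$ stays inside $\mS_\w$ by Moser, so the fiber is a non-empty convex (hence contractible) set. Actually one should be a touch careful: the fiber of $\pi_J$ consists of $\w'\in\mS_\w$ with $\w'$-compatible $J$; convexity as above shows it is contractible once non-empty.

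\textbf{Step 2: Promote to homotopy equivalence.} Having contractible fibers is not by itself enough; I would invoke that both $\pi_\w$ and $\pi_J$ are (Serre, or even locally trivial) fibrations. For $\pi_\w$ this is the standard fact that the space of compatible almost complex structures forms the total space of a fiber bundle over $\mS_\w$ — local triviality comes from the affine/convex structure of the fibers and a partition-of-unity argument, or one simply notes $\wt\mA_\w\to\mS_\w$ admits local sections since $\mJ_{\w'}$ varies continuously and one can average. Similarly $\pi_J$ is a fibration over $\mA_\w$. Then a fibration with weakly contractible fibers is a weak homotopy equivalence by the long exact sequence of homotopy groups; since the statement (as used in the paper) is about weak homotopy type, this suffices. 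Alternatively, and more in the spirit of McDuff's original argument, one can build an explicit deformation retraction of $\wt\mA_\w$ onto a section: fix $J_0$, and use convex combinations / Moser to flow, though the cleaner route is the fibration argument.

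\textbf{Main obstacle.} The genuinely delicate point is the contractibility of the fiber of $\pi_J$, i.e.\ that for fixed $J\in\mA_\w$ the space $\{\w'\in\mS_\w : J\in\mJ_{\w'}\}$ is non-empty and contractible — one must verify that the convex combination of two $J$-compatible symplectic forms, both lying in the isotopy class of $\w$ (hence in particular cohomologous to each other and of the same cohomology class), stays symplectic and stays in $\mS_\w$; non-degeneracy and the metric-positivity are straightforward from taming, and staying in $\mS_\w$ follows since cohomologous symplectic forms joined by a path of symplectic forms are isotopic. A second, more technical obstacle is establishing local triviality (or the Serre fibration property) of $\pi_J$ cleanly; here I would cite the relevant lemma from McDuff's work rather than reprove it, since the excerpt explicitly attributes the observation to McDuff. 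So the proof is essentially: both maps are fibrations, both have non-empty contractible fibers (compatible $J$'s on one side, convex sets of compatible forms on the other), hence both are weak homotopy equivalences.
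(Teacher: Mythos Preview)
Your proposal is correct and follows exactly the standard argument due to McDuff that the paper invokes (without proof): both projections are fibrations with contractible fibers---$\mJ_{\w'}$ for $\pi_\w$, and the convex set of $J$-compatible forms in $\mS_\w$ for $\pi_J$---hence weak homotopy equivalences. Your treatment of the one genuine subtlety (that convex combinations of $J$-compatible forms remain in $\mS_\w$ via Moser) is correct.
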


Consider Kronheimer's fibration
\begin{equation}\label{e:Kro}
    Symp_0(X,\omega)\to \Diff_0(X)\to \mS_\w,
\end{equation}

where $\Diff_0(X)$ is the identity component of the diffeomorphism group, and $Symp_0(X_n,\w)$ is the group of symplectomorphisms which are smoothly isotopic to identity.  From \cite[Theorem 0]{LLW22}, $Symp_0(X,\w)\cong Symp_h(X,\w)$.  Since $\Diff_0(X)$ doesn't depend on $\w$, the change of homotopy groups of $Symp_h(X_k,\w)$ under a symplectic deformation is controlled by $\mS_\w$, which is homotopic to $\mA_\w$.  Combining Lemma \ref{l:projection} with the original Kronheimer fibration, we have the following form of the Krohemier-McDuff fibration, well defined up to homotopy
\begin{equation} \label{homotopy fibration}
Symp_h(X,\w)\to \Diff_0(X) \to \mS_{\w}\sim \mA_{\w}.
 \end{equation}

Recall from section 4 of \cite{ALLP} the following version of inflation lemma:
\begin{thm}\label{t:inflation}
   Given a compatible pair  $(J,\w)$ which admits a $J$-holomorphic curve $Z$, there exist a family of symplectic forms $\w_t$ compatible with  $J$ such that $[\w_t]= [\w]+ t PD(Z), t\in [0,\lambda)$ where $\lambda= \infty$ if $Z\cdot Z\ge0$ and $\lambda= \frac{\w(Z)}{(-Z\cdot Z)}$ if $Z\cdot Z<0$.

\end{thm}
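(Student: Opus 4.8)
The plan is to follow the standard $J$-inflation philosophy going back to Lalonde--McDuff and refined by McDuff--Opshtein, as recalled in the introduction of the excerpt: starting from a $J$-holomorphic curve $Z$ in the class whose Poincar\'e dual we wish to add, one builds a family of closed $2$-forms supported near $Z$ and then shows that $\w + t\,\theta_Z$ stays nondegenerate (hence symplectic) and $J$-compatible for $t$ in the asserted range. First I would handle the case $Z\cdot Z\ge 0$, where $Z$ has a closed tubular neighborhood $N$ with a fibration structure and the normal bundle has nonnegative degree; here one constructs a closed $2$-form $\theta_Z$ supported in $N$, cohomologous to $\mathrm{PD}(Z)$, which is nonnegative on every $J$-complex line and strictly positive along $TZ$ and the fibers. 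Then $\w_t := \w + t\,\theta_Z$ tames $J$ for all $t\ge 0$: away from $N$ it equals $\w$, and on $N$ the sum of a taming form and a nonnegative form is still taming; after averaging one promotes "tames" to "compatible". Since the construction is local near $Z$ and $\theta_Z$ is genuinely closed, $[\w_t] = [\w] + t\,\mathrm{PD}(Z)$ on the nose, giving $\lambda = \infty$.

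Next I would treat the case $Z\cdot Z < 0$, which is where the finite bound $\lambda = \w(Z)/(-Z\cdot Z)$ enters. The normal bundle of $Z$ now has negative degree $-k$ with $k = -Z\cdot Z > 0$, so one cannot build a globally closed nonnegative form in $\mathrm{PD}(Z)$ supported near $Z$; instead one uses the local model of a disk bundle of Euler number $-k$ and constructs, for each $t$ in the allowed range, a form $\theta_{Z,t}$ (or equivalently scales a fixed local model) that is compatible with $J$ near $Z$ and whose restriction to $Z$ has total area $\w(Z) + t\,(Z\cdot Z)$. The key pointwise inequality is that this restriction must remain positive, i.e. $\w(Z) + t\,(Z\cdot Z) > 0$, which is exactly $t < \w(Z)/(-Z\cdot Z)$; beyond this the area of $Z$ would hit zero and the form degenerates. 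Matching the local model to $\w$ on the boundary of the tubular neighborhood, checking the matching condition is compatible with the cohomological constraint, and verifying nondegeneracy on the transition region is the technical heart.

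The main obstacle I anticipate is precisely this gluing/matching step in the negative self-intersection case: one needs the interpolation between the local inflated form near $Z$ and $\w$ outside a neighborhood to remain symplectic and $J$-tamed throughout the collar, and this requires carefully chosen cutoff functions together with the estimate that the "defect" introduced by the cutoff is dominated by the positive contributions of $\w$ away from $Z$ — which is possible exactly because the area budget $\w(Z) + t(Z\cdot Z)$ is still positive. I would organize this by first doing the two clean model computations (trivial bundle over $Z$ with $Z\cdot Z \ge 0$; $\mathcal O(-k)$-model with $Z\cdot Z = -k$), then invoking a standard patching lemma for taming forms, and finally averaging over the $U(n)$-action (or using the retraction from tamed to compatible almost complex structures) to upgrade tameness to compatibility while keeping the cohomology class fixed. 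Since the statement and its proof are essentially classical and only recalled here from \cite{ALLP}, I would keep the exposition brief and reference the corresponding statements there, spelling out only the points where the $c_1$-positive setting or the specific curve classes used later require the precise range of $t$.
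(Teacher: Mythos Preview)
The paper does not prove this theorem; it simply recalls it from \cite{ALLP}, where the proof ultimately rests on Li--Zhang's result \cite{LZ15,LZ09} that on these $4$-manifolds the $J$-tame cone coincides with the $J$-compatible cone. That is the genuine content here, and it is precisely the step your proposal glosses over.

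Your local construction is the classical Lalonde--McDuff inflation and it correctly produces a closed form $\theta_Z$ supported near $Z$ with $\w + t\theta_Z$ \emph{taming} $J$ for $t$ in the stated range. The gap is the sentence ``after averaging one promotes `tames' to `compatible' while keeping the cohomology class fixed.'' There is no such averaging procedure: the symmetrization $\tfrac{1}{2}(\w(\cdot,\cdot)+\w(J\cdot,J\cdot))$ is not closed when $J$ is non-integrable, and the standard retraction from tamed to compatible structures moves $J$, not $\w$. Producing a $J$-\emph{compatible} form in a prescribed cohomology class from a $J$-tame one is exactly the nontrivial theorem of Li--Zhang, and that result gives only existence of some compatible $\w_t$ with $[\w_t]=[\w]+t\,\mathrm{PD}(Z)$, not one of the shape $\w+t\eta$ with $\eta$ supported near $Z$. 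This is why the paper's Remark~\ref{rem:J} explicitly notes that the version used here is weaker than the traditional inflation output, and why Corollary~\ref{cor:smoothALLP} has to recover a smooth family by convex interpolation inside the $J$-compatible cone rather than by writing down $\w+t\eta$. So either drop the compatibility claim and work with taming (which is often enough downstream), or cite Li--Zhang for the tame-to-compatible step and accept that you lose the explicit local form.
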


Readers might be concerned that Theorem \ref{t:inflation} only stated the existence of a symplectic form $\w_t$ which is compatible with $J$ and has the correct cohomology class for each $t>0$. A priori, the theorem cannot guarantee smooth (or even continuous) dependence since we applied the theorem in \cite{LZ15}.  However, the situation can be easily improved as follows.

\begin{cor}\label{cor:smoothALLP}
    $\{\w_t\}_{t\in[0,\lambda)}$ in Theorem \ref{t:inflation} can be chosen smoothly.
\end{cor}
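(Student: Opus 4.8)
The plan is to upgrade the pointwise existence statement of Theorem \ref{t:inflation} to a smooth family by a standard smoothing-and-partition-of-unity argument, exploiting the fact that the obstruction to smoothness is merely bookkeeping: at each time $t$ the inflation construction of \cite{LZ15} produces a closed $2$-form $\tau_t$ (supported near $Z$, Poincaré dual to $Z$, and nonnegative on $J$-complex tangent planes) such that $\w + s\,\tau_t$ is compatible with $J$ for $s$ in an interval, but the recipe for $\tau_t$ need not vary continuously. First I would observe that the set of pairs $(t, \sigma)$ with $\sigma$ a $J$-tame (equivalently, on the complex surface, $J$-compatible after averaging) closed form in the class $[\w] + t\,PD(Z)$ is, for each fixed $t$, a nonempty open convex subset of the affine space of closed forms in that class; convexity is what lets one average local choices.

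Concretely, I would proceed as follows. Cover $[0,\lambda)$ by open intervals $I_\alpha$ on each of which a single choice works uniformly: pick $t_\alpha \in [0,\lambda)$, apply Theorem \ref{t:inflation} to get $\w_{t_\alpha}$ compatible with $J$ in class $[\w] + t_\alpha PD(Z)$, write $\w_{t_\alpha} = \w + t_\alpha \tau$ for the associated closed form $\tau$ (which we may take with the positivity property $\tau|_{J\text{-lines}} \ge 0$ and $[\tau] = PD(Z)$), and note that $\w + t\tau = \w_{t_\alpha} + (t - t_\alpha)\tau$ is still compatible with $J$ for all $t$ in a neighborhood $I_\alpha$ of $t_\alpha$, since compatibility is an open condition and $\tau$ is $J$-nonnegative so increasing its coefficient only helps in the directions that matter (for $t < t_\alpha$ one shrinks the interval using the bound $\lambda$). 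Then choose a locally finite refinement and a smooth partition of unity $\{\rho_\alpha\}$ subordinate to it, and set
\[
    \w_t := \w + t\sum_\alpha \rho_\alpha(t)\,\tau_\alpha.
\]
For each $t$, the form $\sum_\alpha \rho_\alpha(t)\tau_\alpha$ is a convex combination of the finitely many $\tau_\alpha$ that are ``active'' near $t$, each of which makes $\w + t\tau_\alpha$ compatible with $J$; by convexity of the compatible cone of $J$-forms in a fixed cohomology class, $\w_t$ is compatible with $J$, lies in class $[\w] + t\,PD(Z)$, and depends smoothly on $t$ because the $\rho_\alpha$ do and the sum is locally finite. One checks the endpoint $\w_0 = \w$ by arranging $\tau_\alpha = 0$-contribution correctly near $0$, or simply reparametrizing.

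The main obstacle — really the only subtlety — is making precise the claim that ``compatibility with $J$ is preserved under convex combination of the forms $t\tau_\alpha$.'' The cleanest route is to work with $J$-tameness rather than strict compatibility: the set of closed $2$-forms $\sigma$ with $\sigma(v, Jv) > 0$ for all nonzero $v$ is convex, and it is open in the $C^0$ topology on a compact manifold; on a closed rational surface one then passes from a tame form back to a compatible one in the same cohomology class by the usual averaging over the (contractible) space of compatible metrics, which can be done smoothly in the parameter. Care is needed at $t = 0$, where the positivity margin of $\w + t\tau_\alpha$ degenerates to that of $\w$ itself, but since $\w$ is compatible with $J$ this is not an issue; and care is needed to ensure local finiteness of the cover so that $\w_t$ is genuinely a finite sum near each $t$. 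I expect all of this to be routine, so the corollary will follow without any new geometric input beyond Theorem \ref{t:inflation}.
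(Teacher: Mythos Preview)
Your argument is basically sound, but there is one unjustified step and one unnecessary detour, and the paper's route is considerably simpler.

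The unjustified step is the claim that you may take $\tau_\alpha$ with $\tau_\alpha|_{J\text{-lines}}\ge 0$. Theorem~\ref{t:inflation} as stated (the version from \cite{ALLP}/\cite{LZ15}) only gives you, for each $t$, \emph{some} $J$-compatible form in the right class; it does not hand you a nonnegative Thom-type form $\eta$ with $\w+t\eta$ compatible. The paper makes exactly this point in Remark~\ref{rem:J}. Fortunately you do not need it: since both $\w$ and $\w_{t_\alpha}$ are $J$-compatible, so is $\tau_\alpha:=(\w_{t_\alpha}-\w)/t_\alpha$ $J$-invariant, and $\w+t\tau_\alpha=(1-t/t_\alpha)\w+(t/t_\alpha)\w_{t_\alpha}$ is a convex combination for $t\in[0,t_\alpha]$, hence compatible by convexity of the $J$-compatible cone. (Openness alone also suffices to get a small $I_\alpha$.) So your partition-of-unity argument works once you arrange $I_\alpha\subset[0,t_\alpha]$. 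The tameness-and-averaging detour in your last paragraph is unnecessary: the $J$-\emph{compatible} cone is already convex (condition $\w(J\cdot,J\cdot)=\w$ is linear, positivity is convex), so there is no need to pass through tame forms and average back.

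The paper's proof exploits exactly this convexity but far more directly: pick the discrete forms $\w_N$ at integer times (or along a sequence $t_N\nearrow\lambda$ when $\lambda<\infty$), linearly interpolate $\widetilde\w_t=(1-s)\w_{N-1}+s\w_N$ on $[N-1,N]$, and smooth at the corners. The interpolant stays $J$-compatible by convexity and lands in the correct class $[\w]+t\,PD(Z)$ automatically. No cover, no partition of unity, no extraction of $\tau_\alpha$.
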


\begin{proof}

When $\lambda=\infty$, one may form a family $\{\wt\w_t\}_{t\in[N-1,N]}$ by a linear interpolation between $\w_{N-1}$ and $\w_N$, since the $J$-compatible cone is convex.  Concatenating these families for each $N\in\mathbb{N}$ and smoothing at the corners, one indeed obtains a smooth family of $J$-compatible forms $\{\w_t\}_{t\ge0}$.  One may use this interpolation method similarly when $Z\cdot Z<0$.

\end{proof}

\begin{rmk}\label{rem:J}
      Corollary \ref{cor:smoothALLP} is still slightly weaker than the traditional inflation method, which yields a family $\w+t\eta$ for some $\eta\in\Omega^2(M)$ supported near $Z$.  It is clear that one could use the above interpolation method between $\w_0$ and $\w_N$ to obtain some two-form $\eta_N$ in this flavor, but we do not know whether one could obtain a good control so that $\eta_N$ would converge in any regularity.
  Thankfully, the above family seems to suffice for most applications at hand.
\end{rmk}

\begin{lma}\label{rem:AtoS}
   \begin{enumerate}
     \item 
     Consider 
     \begin{multline}
      \mA^{t}_\w:=\{J: J\text{ is tamed by some }\w'\in\mS_\w,\\
      \text{ and } J\in\mJ_{\w''}\text{ for some symplectic form }\w''\}.\end{multline} 

      Then $\mA^{t}_\w=\mA_\w$.  

     \item 
     The isomorphism
     \begin{equation}\label{e:pin}
          \rho_n: \pi_n(\mA_\w)\xrightarrow{\sim}\pi_n(\mS_\w)
     \end{equation}
     can be given by the following explicit description.  Assume that $\alpha: (S^n,*)\to (\mA_\w,*)$ and $\beta: (S^n,*)\to (\mS_\w,*)$ satisfies $\alpha(x)$ tames $\beta(x)$, then $\rho_n([\alpha])=[\beta]$.  
   \end{enumerate}

\end{lma}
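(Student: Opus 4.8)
\textbf{Proof proposal for Lemma \ref{rem:AtoS}.}

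The plan is to deduce both statements from the homotopy equivalences of Lemma \ref{l:projection} together with the inflation mechanism of Theorem \ref{t:inflation} and Corollary \ref{cor:smoothALLP}. For part (1), the inclusion $\mA_\w\subset\mA^t_\w$ is immediate, since a $\w'$-compatible $J$ is in particular $\w'$-tamed. For the reverse inclusion, suppose $J\in\mA^t_\w$, so $J$ is tamed by some $\w'\in\mS_\w$ and is $\w''$-compatible for some symplectic $\w''$. The point is that being tamed is an open condition and the taming cone is convex, so one can run the standard argument (as in McDuff--Salamon) producing, from the $\w'$-taming pair, a genuine $J$-compatible form; the subtlety is only that we want this form to be isotopic to $\w$, which is exactly what the convexity of the $J$-taming/compatible cone buys us: the straight-line path from $\w'$ to the symmetrized compatible form stays inside the taming cone of $J$, hence consists of symplectic forms, and by Moser this path is an isotopy. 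Thus $J\in\mA_\w$. I would phrase this cleanly by invoking that on a $4$-manifold (or in general) the space of $\w'$-tame almost complex structures deformation retracts onto $J$-compatible ones in a way that moves the form through the taming cone.

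For part (2), the explicit description of $\rho_n$, I would argue as follows. Recall $\rho_n$ is defined (via Lemma \ref{l:projection}) as $(\pi_\w)_*\circ((\pi_J)_*)^{-1}$, i.e. given $\alpha\colon(S^n,*)\to(\mA_\w,*)$ one lifts it to $\wt\alpha\colon(S^n,*)\to(\wt\mA_\w,*)$ along the homotopy equivalence $\pi_J$, and sets $\rho_n([\alpha])=[\pi_\w\circ\wt\alpha]$. So the content is: if $\beta\colon(S^n,*)\to(\mS_\w,*)$ has the property that $\alpha(x)$ tames $\beta(x)$ for every $x$, then $(\alpha,\beta)$ — suitably corrected — represents a valid lift, hence $[\beta]=\rho_n([\alpha])$. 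To make $(\alpha(x),\beta(x))$ into an honest point of $\wt\mA_\w$ one needs $\beta(x)$ to be \emph{compatible} with $\alpha(x)$, not merely tamed; but the fiberwise retraction from the taming cone to the compatible cone (convex, hence canonical) gives a family $\beta'(x)$ with $(\alpha(x),\beta'(x))\in\wt\mA_\w$, basepoint-preserving, and $\beta'$ homotopic to $\beta$ rel basepoint through forms tamed by $\alpha(x)$ — in particular through $\mS_\w$. Then $\wt\alpha=(\alpha,\beta')$ is a lift of $\alpha$ along $\pi_J$ up to homotopy, so $\rho_n([\alpha])=[\pi_\w\circ\wt\alpha]=[\beta']=[\beta]$.

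The main obstacle — really the only non-formal point — is justifying the fiberwise passage from tamed to compatible within the correct isotopy class, i.e. checking that the convex-combination/retraction argument does not leave the connected component of $\w$ in the space of symplectic forms. This is where $c_1$-positivity and the rational surface hypothesis play no role; it is a general convexity fact: the set of symplectic forms taming a fixed $J$ is convex, hence path-connected, and any path of cohomologous symplectic forms is an isotopy by Moser, while a path of non-cohomologous ones among $\mS_\w$ is handled because $\mS_\w$ is by definition closed under isotopy and the retraction keeps the cohomology class fixed pointwise (one retracts $g_x:=\tfrac12(\alpha(x)^*(\cdot)+\alpha(x)^*(\cdot))$ type symmetrization which preserves $[\beta(x)]$). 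I would spell this symmetrization out once and then both (1) and (2) follow formally; everything else is bookkeeping with the equivalences $\pi_\w,\pi_J$ of Lemma \ref{l:projection}.
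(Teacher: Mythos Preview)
Your argument has a genuine gap that affects both parts: the ``symmetrization'' you invoke does not produce a closed form. Given a $J$-taming symplectic form $\w'$, the bilinear form $\tilde\w'(u,v)=\tfrac12(\w'(u,v)+\w'(Ju,Jv))$ is $J$-compatible and nondegenerate, but in general $d\tilde\w'\neq 0$ unless $J$ is integrable. So your ``straight-line path from $\w'$ to the symmetrized compatible form'' is not a path of symplectic forms, and your ``fiberwise retraction from the taming cone to the compatible cone'' does not exist as a map between cones of \emph{symplectic} forms. This is not a convexity fact at all---it is Donaldson's tame-versus-compatible question, which is highly nontrivial.

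For part (1), the paper instead invokes \cite[Corollary 1.4]{LZ09}: if $J$ is tamed by some symplectic form and compatible with \emph{some} symplectic form $\w''$, then (on a $4$-manifold with $b^+=1$) $J$ is compatible with a symplectic form cohomologous to the taming form. This uses inflation and curve theory specific to this setting; your claim that ``$c_1$-positivity and the rational surface hypothesis play no role'' is exactly backwards. Once one has a $J$-compatible $\w'''$ with $[\w''']=[\w']=[\w]$, the linear path from $\w'$ to $\w'''$ lies in the (convex) $J$-tame cone, hence is a Moser isotopy, so $\w'''\in\mS_\w$ and $J\in\mA_\w$.

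For part (2), your high-level strategy---replace $\beta$ by a compatible family $\beta'$, then observe $(\alpha,\beta')$ is a lift of $\alpha$ along $\pi_J$---is the same as the paper's. But the paper produces $\beta'$ differently: it fixes a section $\gamma:\mA_\w\to\wt\mA_\w$ of $\pi_J$ (which exists since the fibers are contractible) and sets $\beta'=\pi_\w\circ\gamma\circ\alpha$. Then $\beta'(x)$ is $\alpha(x)$-compatible by construction, and both $\beta(x)$ and $\beta'(x)$ lie in the $\alpha(x)$-tame cone and in $\mS_\w$, so the linear interpolation between them stays in $\mS_\w$ (cohomologous forms, tame cone is convex, Moser). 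This avoids symmetrization entirely. After that reduction, $(\alpha,\beta')$ and $\gamma\circ\alpha$ are two lifts of $\alpha$ along a fibration with contractible fibers, hence homotopic, giving $\rho_n([\alpha])=[\beta']=[\beta]$.
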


\begin{proof}
\begin{enumerate}
  \item 
      Note that $\w''$ doesn't need to be related to $\w$ or $\w'$ in any way.  It is obvious that $\mA_\w\subset\mA^{t}_\w$.  For $J\in\mA^{t}_\w$, we know $J\in\mA_{\w'}$ from \cite[Corollary 1.4]{LZ09} for some $\w'$ \emph{cohomologous} to $\w$.  $\w$ and $\w'$ then are linearly isotopic because they are in the same tame cone, so $J\in\mA_\w$ by definition.

  \item We first reduce the statement to the case when $\alpha(x)$ is compatible with $\beta(x)$.  Take a based homotopy equivalence $\gamma:\mA_\w\to\wt\mA_\w$ which is realized by a section of $\pi_J:\wt\mA_\w\to \mA_\w$, then $\pi_J\circ\gamma\circ\alpha(x)$ is a symplectic form isotopic to $\w$ and compatible with $\alpha(x)$.  Since $\pi_J\circ\gamma\circ\alpha(x)$ and $\beta(x)$ are both inside the tame cone of $\alpha(x)$, there is a linear interpolation which isotopes $\beta$ to $\pi_J\circ\gamma\circ\alpha$ as based maps from $S^n$.

  Therefore, we assume without loss of generality that $\alpha(x)$ is compatible with $\beta(x)$ for all $x\in S^n$.  Note that $\rho_n$ factorizes through $\wt\mA_\w$.  The map $\wt\alpha: (S^n, *)\to (\wt\mA_\w,*)$ given by $\wt\alpha(x):=(\alpha(x),\beta(x))$ is clearly a lift over $\alpha$ of the Serre fibration $\pi_J$.  Therefore, $\gamma\circ\alpha$ is homotopic to $\wt\alpha$ since the fiber of $\pi_J$ over each $\alpha(x)$ is contractible.  This implies that under the composition of homotopy equivalences $\mA_\w\xrightarrow{\gamma}\wt\mA_\w\xrightarrow{\pi_\w}\mS_\w$, the homotopy classes are mapped via $[\alpha]\mapsto[\wt\alpha]\mapsto[\beta]$, giving $\rho_n$.
\end{enumerate}

\end{proof}

\begin{rmk}\label{rem:}
    Below, we will sometimes use $\mA_\w$ as the equivalent but more flexible tame counterpart above (especially in Section \ref{sec:kroheimer_mcduff_fibration_ball_swappings_and_dehn_twists}).  A recent result by Tan-Wang-Zhou-Zhu \cite{TWZZ} asserts that $\mA^{t}$ can also be defined simply as the almost complex structure tamed by a symplectic form (not assuming the existence of compatible symplectic form $\w''$), and the above discussion remains valid.

\end{rmk}

Below, we collect some existence results of $J$-holomorphic curves for $J\in\mA_\w$ to prepare us for the inflation procedures later.

\begin{lma}[\cite{Pin08}, Lemma 1.2]\label{minemb}
  Let $H, E_1,\cdots, E_n$ be the standard basis of $H_2(X,\ZZ)$, where the $[\w]=(1|m_1,\cdots,m_n)$.
The class $E_n$ has the smallest area among all exceptional sphere classes in $X$, and hence has an embedded $J$-holomorphic representative for any $J\in \mJ_{\w}$. 
\end{lma}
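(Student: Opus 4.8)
The plan is to reduce the statement to two well-known facts: first, that the class $E_n$ has minimal symplectic area among all exceptional classes when $[\w]$ is reduced; and second, that a minimal-area exceptional class always has an embedded $J$-holomorphic representative for any compatible $J$. For the first point I would argue directly from the reducedness of $[\w]=(1\,|\,m_1,\dots,m_n)$. Any exceptional class $E$ can be written as $aH - \sum b_i E_i$ with respect to the standard basis, and its area is $\w(E) = a - \sum b_i m_i$. Using the constraints $m_1 \geq m_2 \geq \dots \geq m_n \geq 0$ together with the defining numerical relations of an exceptional class ($E^2 = -1$, $E\cdot K = -1$, i.e. $\sum b_i^2 = a^2+1$ and $\sum b_i = 3a-1$), one checks by a short rearrangement/Cauchy–Schwarz-type estimate that $\w(E) \geq \w(E_n) = m_n$, with the comparison to $E_n$ being the extremal one precisely because $m_n$ is the smallest coordinate. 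I would phrase this as: among all reduced-basis exceptional classes, the "most economical" way to achieve the required pairings with $H$ and $K$ while respecting the monotonicity of the $m_i$ forces the area to be at least $m_n$. This is exactly the content cited from \cite{Pin08}, so in the write-up I would simply invoke that lemma rather than reprove the inequality.

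Next I would establish existence of the embedded representative. Given $J\in\mathcal{J}_\w$, by Taubes–Seiberg–Witten theory (or the Gromov-type results for rational surfaces already in play via \cite{LL01,LZ15}), the class $E_n$, being of minimal area among exceptional classes and having $E_n\cdot E_n = -1$, $E_n\cdot K_\w = -1$, has a $J$-holomorphic representative. The only thing to rule out is that this representative is a non-trivial cusp curve $E_n = \sum A_i$. Positivity of area forces each component $A_i$ to have $\w(A_i) < \w(E_n)$, and positivity of intersections together with the classification of $J$-holomorphic spheres of negative self-intersection (again Taubes) would force one of the components to be an exceptional class of strictly smaller area than $E_n$ — contradicting minimality. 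Hence the representative is irreducible, and since $E_n\cdot E_n = -1$ and it is $J$-holomorphic of genus $0$, the adjunction formula forces it to be embedded.

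The main obstacle, and the only place any real care is needed, is the bubbling/decomposition argument: one must make sure that every component of a hypothetical reducible representative of $E_n$ has positive $\w$-area and that at least one of them carries an exceptional summand of smaller area. This requires knowing that the homology classes appearing as components of $J$-holomorphic curves in a rational surface are themselves of a controlled type — essentially that a class with a $J$-holomorphic representative and negative square must be a multiple-cover or an exceptional-type class — which is where the Gromov/Taubes input is genuinely used. Since both of these inputs are precisely what \cite{Pin08} Lemma 1.2 packages, the cleanest route is to cite that lemma directly, as the excerpt does, and I would write the proof as a one-line appeal to it together with the observation that $[\w]$ being reduced is exactly the hypothesis under which $E_n$ has minimal area.
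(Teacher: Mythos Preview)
The paper does not prove this lemma at all; it is stated as a direct citation of \cite{Pin08}, Lemma 1.2, with no argument given. Your proposal correctly identifies both the content of the underlying argument and the fact that the cleanest route is simply to cite Pinsonnault's lemma, which is precisely what the paper does. Your sketch of the two ingredients (minimality of $\w(E_n)$ among exceptional classes for a reduced form, and the non-bubbling argument via positivity of area plus adjunction) is the standard proof and matches what one finds in \cite{Pin08}.
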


\begin{lma}[Lemma 4.1 in \cite{Zha17}, Lemma 3.2 in \cite{Chen20}]\label{a<0}Let $M=\mathbb CP^2\#k\overline{\mathbb CP^2}$.
If $S=aH+\sum b_iE_i$ with %$C^2<0$ and
$a<0$ is represented by an irreducible curve, then
\begin{itemize}

\item $S=-nH+(n+1)E_1-\sum_{k_j\neq 1}E_{k_j}$ up to a Cremona transform.

\item Or $S=f^*S'$, where $f$ is a Cremona transformation and $S'$ is a class with $a'>0$.
\end{itemize}

In particular, if the form is reduced, and $S$ is embedded with $S^2<0$, then $S=-nH+(n+1)E_1-\sum_{k_j\neq 1}E_{k_j}$ (without doing Cremona transforms) and it has to be a spherical class.

\end{lma}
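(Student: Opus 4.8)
The plan is to first recall the structural results classifying irreducible curves of negative self-intersection and then run a Cremona reduction argument driven by the $H$-coefficient. Let $S = aH + \sum_i b_i E_i$ be the class of an irreducible curve with $a<0$. Since $S$ is irreducible and not an exceptional class of a blow-down, for $J$ generic its moduli space is nonempty; the first step is to invoke the existing classification (as in \cite{Zha17, Chen20}) of irreducible curve classes: up to the action of the Cremona group $W_n$ one may pass from $S$ to a class $S'$ with the $H$-coefficient $a'$ made as large as possible. The reduction step is the standard quadratic transformation $\Gamma_{123}$ based on $H - E_1 - E_2 - E_3$: if $a' \cdot (H-E_1-E_2-E_3) < 0$ for the appropriately reordered class, applying $\Gamma_{123}$ strictly increases the $H$-coefficient, so after finitely many steps we arrive at a class which either has nonnegative $H$-coefficient or is rigidly a fixed point of all such reductions. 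I would track carefully that at each stage irreducibility (hence the positivity constraints $S\cdot E \geq 0$ against exceptional classes $E$ not equal to $S$, and $S^2 \geq -1$ unless $S$ is a $(-1)$ or $(-2)$ class) is preserved under Cremona transforms, since these transforms are realized by orientation-preserving diffeomorphisms permuting the relevant sphere classes.

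The heart of the argument is the dichotomy: either the reduction terminates at a class with $a' > 0$ — this is the second bullet, $S = f^* S'$ — or it terminates at a class that cannot be reduced further while keeping $a < 0$. The second step is to pin down exactly what such a terminal class looks like. The key numerical input is that for an irreducible curve with $a < 0$, intersecting against $E_1$ (the "first" exceptional class after reordering so that $b_1$ is extremal) forces $b_1 > 0$, and in fact the adjunction/intersection inequalities squeeze the coefficients so tightly that the only possibility is $S = -nH + (n+1)E_1 - \sum_{k_j \neq 1} E_{k_j}$ for some $n \geq 1$, together with $S^2 = n^2 - (n+1)^2 - (\#\{k_j\}) = -2n - 1 - (\#\{k_j\})$ and $K \cdot S = 3n - (n+1) + \#\{k_j\} = 2n - 1 + \#\{k_j\}$; checking $S^2 + K \cdot S = -2$ then shows via adjunction that any irreducible representative has genus $0$, i.e. $S$ is a spherical class. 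I would verify that $S \cdot E_1 = -(n+1) + \text{(nothing)}$... wait, more carefully $S \cdot E_1 = b_1 = n+1 > 0$ is consistent with $E_1$ not being a component; the real constraint comes from $S \cdot (H - E_i - E_j) \geq 0$ for pairs of exceptional classes, which fails unless the $b_i$'s have the very rigid form above.

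For the final "in particular" clause, the plan is: if $\w$ is reduced then the reordering is unnecessary, because reducedness already enforces $m_1 \geq m_2 \geq \cdots$ and $\nu \geq m_1 + m_2 + m_3$, so the destabilizing reduction $\Gamma_{123}$ never applies to $[\w]$-positive classes of positive square; hence an embedded $S$ with $S^2 < 0$ and $a < 0$ is already in terminal form $S = -nH + (n+1)E_1 - \sum_{k_j\neq 1} E_{k_j}$ without any Cremona transform, and by the genus computation above it is spherical. The main obstacle I anticipate is the bookkeeping in the second step — showing that the \emph{only} irreducible terminal classes with $a<0$ are the stated family — since this requires combining the intersection inequalities against all exceptional classes with the (non-negativity of area) / reducedness hypotheses in just the right order; the Cremona reduction itself (first step) is routine given \cite{LW12, Zha17}, and the genus/adjunction computation (last step) is a short calculation.
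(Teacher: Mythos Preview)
The paper does not give its own proof of this lemma: it is stated with attribution to \cite{Zha17} (Lemma 4.1) and \cite{Chen20} (Lemma 3.2), and the text moves directly on to the next statement. So there is no in-paper proof to compare against.

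On the substance of your proposal: the overall architecture (Cremona-reduce to maximize the $H$-coefficient, then classify terminal classes, then specialize to the reduced case) is the standard route and is almost certainly what the cited references do. A few points to tighten:

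\begin{itemize}
\item You have a sign slip: with $E_i^2=-1$ and $S=aH+\sum b_iE_i$, one has $S\cdot E_1=-b_1$, not $b_1$. For the terminal class $S=-nH+(n+1)E_1-\sum E_{k_j}$ this gives $S\cdot E_1=-(n+1)<0$, which is precisely the obstruction: it forces $E_1$ to have no embedded $J$-representative for the $J$ in question. Your sentence ``$S\cdot E_1=b_1=n+1>0$ is consistent with $E_1$ not being a component'' has this backwards.
\item The adjunction computation you give ($S^2+K\cdot S=-2$, hence genus $0$) is correct and is exactly what establishes the ``spherical class'' conclusion.
\item For the ``in particular'' clause, your explanation is not quite the right mechanism. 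The point is not that $\Gamma_{123}$ fails to apply to $[\omega]$; rather, when $\omega$ is reduced, any Cremona transform moving $S$ to the second alternative (a class with $a'>0$) would simultaneously move $[\omega]$ out of the reduced chamber, and one then checks that an embedded $S$ with $S^2<0$ and $a<0$ already sits in the terminal family without any transform. You correctly flag this step as the delicate one.
\end{itemize}
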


Recall that a class $A$ is called $J$-\textbf{nef} if $A\cdot [\Sigma]\ge0$ for any irreducible $J$-curve $\Sigma$.  We have the following Nakai-Moishezon type result.

\begin{lma}\label{extray}
$J$-nef spherical classes have non-empty irreducible moduli, and hence always have embedded representatives.

 In particular, for any rational surface with $\chi>4$, $\w\in (1|m_1,\cdots, m_n)\in N\mR_n$,
  then for any $J$ taming $\w$, $H-E_1$ has an $J$-embedded representative.

  \end{lma}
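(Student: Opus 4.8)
The plan is to prove the two assertions in turn, the first being a Nakai–Moishezon criterion in the almost-complex category and the second a direct application. For the first statement, let $A$ be a $J$-nef spherical class, so $A^2 = -2 - K \cdot A$ and, since $A$ is a sphere class with $A \cdot A \geq -1$ in most relevant cases, we expect $A^2 \geq 0$ (one should first dispose of the case $A^2 = -1$, where $A$ is exceptional and nef-ness forces a contradiction with the existence of an embedded representative $E$ with $E \cdot A = A^2 < 0$ unless $A = E$ itself, which then has the desired representative trivially; and $A^2 = -2$ cannot be $J$-nef if it has a curve representative). So assume $A^2 \geq 0$. First I would invoke the results of \cite{LZ15, Zha17} — in particular the fact that the $c_1$-positive (or more generally nef) sphere cone coincides with the appropriate symplectic/pseudoholomorphic cone — to get that $A$ has a (possibly reducible, possibly multiply-covered) $J$-holomorphic representative. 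The key step is then to run a decomposition argument: write a $J$-holomorphic subvariety in class $A$ as $\sum a_i C_i$ with $C_i$ irreducible, and use $J$-nef-ness of $A$ together with positivity of intersections ($A \cdot C_i \geq 0$, $C_i \cdot C_j \geq 0$ for $i \neq j$, and the light cone lemma / Hodge index considerations on $H_2(X;\RR)$) to show that the configuration must in fact be irreducible and reduced when $A^2 > 0$, while when $A^2 = 0$ one argues $A$ is a primitive fiber class. Automatic transversality (Hofer–Lizan–Sikorav, or the dimension count $\text{ind} = 2 + 2(A^2 + K\cdot A)/\cdots$ which is $\geq 0$ for sphere classes) then upgrades the irreducible representative to an embedded one and shows the moduli space is non-empty of the expected dimension.

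For the second statement, take $X = \CP^2 \# n\ov\CP^2$ with $\chi > 4$, i.e. $n \geq 1$, and $[\w] = (1|m_1,\dots,m_n) \in N\mR_n$ a normalized $c_1$-positive reduced class, and $J$ any almost complex structure taming $\w$. I would check that $H - E_1$ is $J$-nef: its $\w$-area is $1 - m_1 > 0$, and $(H-E_1)^2 = 0$, $K \cdot (H - E_1) = -3 + 1 = -2$, so it is a spherical class of square zero. For nef-ness, suppose some irreducible $J$-curve $\Sigma$ in class $S = aH + \sum b_i E_i$ has $(H - E_1) \cdot S = a - b_1 < 0$; then $a < b_1$. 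By Lemma \ref{a<0}, since the form is reduced, such a curve with the relevant sign condition is severely constrained — if $a < 0$ it must be $-nH + (n+1)E_1 - \sum E_{k_j}$, but one checks $(H-E_1)\cdot S = -n - (n+1) < 0$ only if that class actually appears, which contradicts reducedness via the area positivity $\w(S) > 0$ forcing $-n + (n+1)m_1 - \cdots > 0$, impossible since $m_1 \leq 1$ and... — here I would instead argue directly: reducedness gives $m_1 \geq m_2 \geq \cdots$, $1 \geq m_1 + m_2 + m_3$, and combined with $\w(S) > 0$ and the adjunction/positivity constraints on an irreducible curve, the inequality $a < b_1$ leads to a contradiction (the only embedded negative-square curves are exceptional spheres $E_j$ and $H - E_i - E_j$ type classes, none of which pair negatively with $H - E_1$ when the form is reduced). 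Hence $H - E_1$ is $J$-nef, and the first part of the lemma gives the embedded $J$-representative.

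The main obstacle I anticipate is the decomposition/irreducibility step in the proof of the first statement: showing that a $J$-nef sphere class of non-negative square cannot degenerate into a genuinely reducible configuration, and that the resulting irreducible curve is embedded of genus zero. This requires care because in the almost complex (non-integrable) setting one does not have the full force of Nakai–Moishezon, and one must lean on the Gromov compactness + SFT-style neck-stretching or the explicit curve-cone results of Li–Zhang and Zhang, together with adjunction to rule out nodal degenerations raising the genus. A secondary subtlety is the bookkeeping in the second part to make Lemma \ref{a<0} genuinely exclude all irreducible curves pairing negatively with $H - E_1$ under the reducedness hypothesis; I expect this to be a finite case-check but one that must be done carefully to cover $a \geq 0$ (where $b_1$ large is constrained by $\w(S) > 0$ and $S$ being a curve class) as well as the $a < 0$ case handled by the lemma.
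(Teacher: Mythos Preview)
Your overall strategy matches the paper's: establish the general Nakai--Moishezon type statement, then verify that $H-E_1$ is $J$-nef and apply it. The paper simply cites Proposition~6 of \cite{LZ15} for the first assertion and Proposition~3.5 of \cite{Zha17} for the structure of irreducible curves with $a\ge 0$; your attempt to redo these by hand is where the problems appear.

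For the first assertion, your decomposition argument is not quite right as written. You claim that for a $J$-nef sphere class $A$ with $A^2>0$, positivity of intersections plus Hodge-index forces ``the configuration must in fact be irreducible and reduced.'' This is false as a statement about \emph{all} representatives (e.g.\ $2H$ in $\CP^2$ has reducible conic degenerations). What is needed is that the \emph{irreducible} moduli is non-empty, which is a dimension/transversality argument rather than a pure intersection-form argument; that is precisely what \cite{LZ15} Proposition~6 supplies. Your sketch does not close this gap.

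For the second assertion, you have a sign error in the $a<0$ case. With $S=-nH+(n+1)E_1-\sum_{k_j\ne 1}E_{k_j}$ one computes
\[
(H-E_1)\cdot S \;=\; -n-(n+1)E_1\cdot E_1 \;=\; -n+(n+1)\;=\;1\;\ge\;0,
\]
so this case is immediately fine and there is nothing to salvage via reducedness. More importantly, for $a\ge 0$ you hand-wave (``I would instead argue directly \ldots\ a finite case-check'') exactly where the paper invokes a sharp structural result: Proposition~3.5 in \cite{Zha17} says that an irreducible $J$-curve with $[\Sigma]=aH-\sum b_iE_i$ and $a\ge 0$ either satisfies $a>b_i$ for all $i$, or $[\Sigma]=E_i-\sum E_j$. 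In the first case $(H-E_1)\cdot\Sigma=a-b_1>0$; in the second, $(H-E_1)\cdot\Sigma\in\{0,1\}$ unless $1$ appears among the subtracted $E_j$'s with $i\ne 1$, which is ruled out by the reducedness $m_1\ge m_i$ (such a class would have non-positive $\w$-area). Without this classification your $a\ge 0$ case is genuinely incomplete.
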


\begin{proof}
 The first statement is   Proposition 6 in \cite{LZ15}.

 From Proposition 3.5 in \cite{Zha17}, for any irreducible $J$-curve $\Sigma$ with $[\Sigma]=aH-\sum_i b_iE_i$ where $a\ge0$, we either have $a>b_i$, or $[\Sigma]=E_i-\sum E_j$.  Lemma \ref{a<0} listed all irreducible curve classes for $a<0$.  $H-E_1$ pairs non-negative with all classes listed above, hence is $J$-nef for any $J$ taming $\w$.
\end{proof}

Lastly, we need to consider the existence of embedded $J$-curves when $J$ admits $(-2)$ spheres.  The following statement is a special case of \cite[Theorem 1.2.7]{MO15}.

\begin{thm}\label{thm:1} Let $(M,\w)$ be a symplectic $4$-manifold with an embedded symplectic sphere $\Sigma$, where $[\Sigma]^2=-2$.  Suppose $A\in H_2(M;\ZZ)$ satisfies:

\begin{enumerate}[1)]
  \item $Gr(A)\ne 0$;
  \item $A\cdot E\ge 0$ for every $E\in \mE-  \{A\}$, where $\mE$ is the set of exceptional classes;
  \item $A\cdot S_i\ge 0$ for  $1\le i\le s$.
\end{enumerate}

Let $\mJ(\Sigma)\subset \mJ_\w$ be the subset of almost complex structures such that $\Sigma$ is $J$-holomorphic when $J\in\mJ(\Sigma)$. Then there is a residual subset $\mJ_{emb}(\Sigma,A)\subset\mJ(\Sigma)$
such that  $A$ is represented by  an embedded $J$-holomorphic curve.  $\mJ_{emb}(\Sigma,A)$ contains an open dense subset of $\mJ(\Sigma)$, and is itself an open dense subset if $c_1(A)>0$.

Moreover, any two elements $J_0, J_1\in \mJ_{emb}(\Sigma, A)$ can be joined by a path
$J_t, t\in [0,1],$  in
$\mJ(\Sigma)$  for which there is a smooth family
 of embedded $J_t$-holomorphic $A$-curves.

\end{thm}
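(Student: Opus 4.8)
This statement is the specialization of \cite[Theorem 1.2.7]{MO15} to the case where the prescribed configuration is the single $(-2)$-sphere $\Sigma$ together with the auxiliary $(-2)$-classes $S_i$, so the plan is to recall how that argument is organized and to check that hypotheses $1)$--$3)$ supply exactly the input it requires. First I would fix the analytic setup: the space $\mJ(\Sigma)$ is nonempty and contractible --- in particular path-connected --- and it is an open subset of $\mJ_\w$ away from $\Sigma$, so that perturbations of $J$ supported in $X\setminus\Sigma$ are available for every transversality argument. Running the Sard--Smale machinery for somewhere-injective curves, perturbing $J$ only off $\Sigma$, I would obtain a residual subset $\mJ^{reg}(\Sigma)\subset\mJ(\Sigma)$ over which the moduli space $\mM_A(J)$ of somewhere-injective $J$-curves in class $A$ is a smooth manifold of the expected dimension, and over which every reducible or multiply-covered stratum has the expected codimension.

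For existence I would argue as follows. Because $Gr(A)\neq 0$, Taubes' identity $SW=Gr$ together with the diffeomorphism-invariance of the Seiberg--Witten invariants forces the Gromov compactification of $\mM_A(J)$ to be nonempty for \emph{every} $J\in\mJ_\w$, hence for $J\in\mJ^{reg}(\Sigma)$. For such generic $J$ the class $A$ then has an embedded irreducible representative once all nontrivial Gromov limits are excluded: such a limit is a stable map whose component classes $A_j$ sum to $A$, and positivity of intersections in dimension four applies. Hypothesis $2)$, $A\cdot E\ge 0$ for each exceptional $E\neq A$, forbids an exceptional sphere from bubbling off; hypothesis $3)$, $A\cdot S_i\ge 0$, forbids the classes $S_i$ --- and hence any positive multiple of $[\Sigma]$, which are the curves forced to lie in the one locus where genericity of $J$ fails --- from bubbling off; and the classification of irreducible curve classes with negative $H$-coefficient in Lemma \ref{a<0}, together with the dimension count on the generic strata, disposes of the remaining degenerations. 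This shows $\mJ_{emb}(\Sigma,A)$ contains $\mJ^{reg}(\Sigma)$ and is therefore residual. For the refinement I would invoke automatic transversality: an embedded $A$-curve is Fredholm regular as soon as $c_1(A)>0$ (Hofer--Lizan--Sikorav), so such a curve persists under every small perturbation of $J$ within $\mJ(\Sigma)$; hence in that case the complement of $\mJ_{emb}(\Sigma,A)$ is closed and, being disjoint from a residual set, has empty interior, so $\mJ_{emb}(\Sigma,A)$ is open and dense. Without the hypothesis $c_1(A)>0$ one still concludes that it contains the open dense set of $J$ carrying a \emph{regular} embedded $A$-curve.

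Finally, for the family statement I would take $J_0,J_1\in\mJ_{emb}(\Sigma,A)$, join them by a path inside the path-connected $\mJ(\Sigma)$, and perturb that path rel endpoints --- keeping $\Sigma$ holomorphic throughout --- to one that is generic in the parametrized sense. The universal moduli space of embedded $A$-curves over the path is then a smooth $1$-manifold whose boundary lies over $\{0,1\}$; the degenerations that could otherwise produce boundary are, by the same intersection-positivity input, of parametrized codimension at least two and are avoided along a generic path, so the embedded $A$-curves assemble into a smooth family over $[0,1]$. The main obstacle throughout is the bubbling analysis in the second and third steps: one must exclude, and in the one-parameter family control, components that slide into or are contained in $\Sigma$ while $J$ is only allowed to be deformed away from $\Sigma$ --- this is exactly the delicate local positivity analysis carried out in \cite{MO15}, and hypotheses $2)$ and $3)$ are precisely the numerical conditions it requires.
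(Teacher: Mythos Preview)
Your approach is the same as the paper's: invoke \cite[Theorem~1.2.7]{MO15} and verify its hypotheses. The paper's proof is very short: it checks that $A$ is $\Sigma$-good in the McDuff--Opshtein sense, verifies their assumption (v) by noting that $A=l[\Sigma]$ would force $c_1(A)=0$ and $A^2<0$, contradicting $Gr(A)\neq 0$; and then deduces the open-dense refinement from \cite[Prop.~3.3.3]{MO15} together with automatic transversality \cite{HLS95} when $c_1(A)>0$.

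There is one genuine slip in your sketch. You invoke Lemma~\ref{a<0} (``the classification of irreducible curve classes with negative $H$-coefficient'') to dispose of residual degenerations. That lemma is specific to $\mathbb{CP}^2\#k\overline{\mathbb{CP}^2}$ with its standard basis $H,E_1,\dots,E_k$; the present theorem is stated for an arbitrary symplectic $4$-manifold $(M,\omega)$, where there is no class $H$ and the lemma has no meaning. The bubbling control in \cite{MO15} does not use any such classification; it proceeds by pure index and intersection estimates (their ``$\mathcal S$-good'' conditions), which is exactly what your hypotheses $1)$--$3)$ feed into. So this citation should simply be dropped, and you should say instead that conditions $2)$ and $3)$ make $A$ $\Sigma$-good while condition $1)$ (or equally $3)$, since $A=l[\Sigma]$ would give $A\cdot[\Sigma]=-2l<0$) rules out $A$ being a multiple of $[\Sigma]$, after which the McDuff--Opshtein machinery applies verbatim.

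A minor remark: you appeal to $SW=Gr$ to get nonemptiness of the compactified moduli space for every $J$. This is unnecessary; $Gr(A)\neq 0$ is itself a statement about a $J$-independent count, so Gromov compactness alone gives a (possibly reducible) $J$-representative for any tame $J$.
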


\begin{proof}
  It is clear from our assumption that $A$ is $\Sigma$-good in the sense of McDuff and Opshtein.  $A$ clearly satisfies the assumption (v) in \cite[Theorem 1.2.7]{MO15}: if $A=l[\Sigma]$, $c_1=0$ and $A^2<0$, whose Gromov-Witten dimension is clearly negative, contradicting the fact that $Gr(A)\neq0$.

  The only claim that requires caution is the open and dense property of $\mJ_{emb}(\Sigma,A)$.  The subset in $\mJ_{emb}(\Sigma,A)$ which makes $A$-curve regular is residual from \cite[Proposition 3.3.3]{MO15})  (the proof of this case involves difficult virtual dimension arguments but not by-hand constructions of $J$-curves).  Each regular $J$ has an open neighborhood where the embedded $A$-curve deforms smoothly hence yielding embedded curves as well.  The union of such neighborhoods clearly yields an open dense subset in $\mJ_{emb}(\Sigma,A)$.  If $c_1(A)>0$, then every embedded $A$-curve is transversal by automatic transversality \cite{HLS95}.  This implies $\mJ_{emb}(\Sigma,A)$ itself is open and dense.
\end{proof}

\subsection{A coarse stratification of $\mA_\w$}\label{s:stratification}

Recall the following result in
\cite{AP13} Appendix B.1 (cf.  \cite{LL16} Proposition 2.6), for spherical classes $\sigma$ such that $\sigma^2\le-1$.

\begin{prp}\label{stratum}
Let $(X,\omega)$ be a symplectic 4-manifold.
 Suppose $\mC$ is a finite subset of $H_2(X,\ZZ)$.  Denote $\mJ_{\mC}\subset\mJ_{\omega}$ is a subset
characterized by the existence of a configuration of somewhere injective
$J$-holomorphic rational curves $C_{1}\cup C_{2}\cup\cdots\cup C_{N}$ of negative self-intersections, with $\mC=\{ [C_{1}], [C_{2}],\cdots , [C_{N}]\}$.
Then $\mJ_{\mC}$ is a co-oriented Fr\'echet submanifold
of $\mJ_{\omega}$ of (real) codimension $2N-2c_{1}([C_{1}]+\cdots+ [C_{N}])$. 

The same conclusions hold for $\mJ_\mC^t\subset\mJ_\w^t$, where the almost complex structures are only tamed by $\w$.
\end{prp}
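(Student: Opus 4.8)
The plan is to reduce the statement to a standard application of the Sard--Smale theorem applied to a universal moduli space of configurations, exactly in the spirit of the proof of Proposition 2.6 of \cite{LL16} and Appendix B.1 of \cite{AP13}, and then explain why the argument is insensitive to whether one works with compatible or merely tamed almost complex structures. First I would set up the universal moduli space $\mathcal M_{\mathcal C}=\{(J,u_1,\dots,u_N) : J\in\mJ_\w,\ u_i \text{ somewhere injective } J\text{-holomorphic, } [u_i]=[C_i]\}$, modulo reparametrization; since each $C_i$ has negative self-intersection it is rigid in its class, and somewhere injectivity together with the fact that the classes are distinct (negative self-intersection forces the configuration components to be pairwise distinct) guarantees the relevant linearized Cauchy--Riemann operators are surjective on the universal level, so $\mathcal M_{\mathcal C}$ is a Fr\'echet manifold. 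The projection $\pi:\mathcal M_{\mathcal C}\to\mJ_\w$ is Fredholm, and its image is $\mJ_{\mathcal C}$; the index computation gives the stated codimension $2N-2c_1([C_1]+\dots+[C_N])$ by summing the virtual dimensions $2c_1([C_i])-2$ of the unparametrized moduli of each rigid rational curve $C_i$ (which are actually pointwise, i.e. the $C_i$ are isolated) and comparing with $\dim\mJ_\w$. Co-orientation comes from the complex orientation of the cokernel bundle, as in the cited references.

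The key steps in order: (1) verify that the components of any configuration counted by $\mJ_{\mathcal C}$ are pairwise distinct and each is an embedded sphere (automatic from negative self-intersection and the adjunction formula for somewhere injective curves, so no multiple covers arise and assumption $\sigma^2\le -1$ is exactly what is used); (2) build the universal moduli space and prove it is a smooth Banach (then Fr\'echet) manifold by the usual implicit function theorem argument, the transversality being on the $J$-parameter; (3) show $\pi$ is Fredholm of the claimed index and that $\mJ_{\mathcal C}=\mathrm{im}\,\pi$ is a submanifold of that codimension, using that each $C_i$ is already cut out transversally \emph{with} the $J$-variation even if not without it; (4) install the co-orientation. (5) Finally, observe the entire construction only used that $J$ is an almost complex structure for which the curves are pseudo-holomorphic and that the linearization is a perturbation of a complex-linear Cauchy--Riemann operator; nothing required $J$ to be $\w$-compatible rather than $\w$-tame. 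Hence replacing $\mJ_\w$ by $\mJ_\w^t$ verbatim, one gets that $\mJ_{\mathcal C}^t$ is a co-oriented Fr\'echet submanifold of $\mJ_\w^t$ of the same codimension. (Alternatively one could cite Lemma \ref{rem:AtoS} to pass between the tame and compatible pictures, but the direct observation is cleaner since the codimension count is intrinsic.)

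The main obstacle I expect is step (2), specifically checking surjectivity of the \emph{universal} linearized operator when several configuration components meet: one must ensure that the constraint ``all the $C_i$ pass through their mutual intersection points'' does not destroy transversality. The standard resolution is that negative self-intersection makes each $C_i$ infinitesimally rigid \emph{as a subset}, so in fact the intersection points are not extra constraints but are determined, and the universal operator for the disjoint union $\bigsqcup u_i$ is surjective because it is so for each somewhere injective factor individually (distinct images); the codimension then adds. I would also need to be slightly careful that the formula $2N-2c_1(\sum[C_i])$ is the codimension of $\mJ_{\mathcal C}$ in $\mJ_\w$ and not of its closure — but since we only claim $\mJ_{\mathcal C}$ itself is a submanifold (the strata of higher-codimension degenerations lie in the closure and are handled by induction elsewhere in the paper), this is exactly the content of the cited propositions and requires no new input. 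The remaining points (Fredholm index bookkeeping, co-orientation, the tame-versus-compatible remark) are routine.
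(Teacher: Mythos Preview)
Your proposal is correct and matches what the paper does: the paper does not give its own proof but simply cites \cite{AP13} Appendix B.1 and \cite{LL16} Proposition 2.6, and your sketch is exactly the standard universal-moduli-space/Fredholm-projection argument found there, with the same index bookkeeping and co-orientation from the complex cokernel. One small slip: in step (1) you claim each $C_i$ is automatically an \emph{embedded} sphere, but this is neither needed nor generally true---the proposition is stated for somewhere injective curves, and the paper's only added remark is precisely that the cited embedded-case proof goes through verbatim for somewhere injective curves; your argument does not actually use embeddedness anywhere, so this does not affect the rest.
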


In particular, for any symplectic 4-manifold  $(X,\omega)$, if $J$ admits a configuration as above, we say \textbf{$J$ is of codimension $2N-2c_1([C_1]+\cdots+[C_N])$}.  

Note that Anjos and Pinsonnault stated the result for embedded spheres, but their proof works as long as each $C_i$ is somewhere injective.  The Fr\`echet submanifolds $\mJ_\mC$ are usually \textit{not} closed, but they are relatively closed as a general consequence of Fredholm theory.  This means for any point $p\in\mJ_\mC$, there is an open neighborhood $V_p\subset\mJ_\w$, such that $\mJ_\mC\cap V_p$ is a closed submanifold of $V_p$, which is also a chart of $\mJ_\mC$.

 In the special case when $E\in\mE_K$, $\mJ_E$ is an open dense subset in $\mJ_\w$.  And if $D$ is a single embedded (-2) symplectic sphere class, $\mJ_D$ has codimension $2$ in $\mJ_\w$.

\begin{lma}\label{l:Edecomposition}
  \begin{enumerate}[(1)]
    \item Assume for a given $J\in\mJ_\w$, $E$ has a $J$-stable curve representation where $E=\sum r_i C_i, r_i>0$ and $C_i$ are irreducible components.  Then at least one of these components have $C_i^2<0$.
    \item If a class $A^2\le-1$ is represented as a $J$-rational curve $C_A$ which is not embedded.  Then its virtual dimension $vd(A)\le-4$.  The equality holds if and only if $C_A$ is immersed with a single transversal self-intersection, or has a single cusp point whose Milnor number equals $1$.
    \item Suppose a class $A^2\ge0$ is represented by an irreducible rational curve $C_A$ for some $J\in\mJ_\w$, or $A$ is an exceptional class.  Then $A\cdot E\ge0$ for any exceptional class $E\neq A$. 
  \end{enumerate}

\end{lma}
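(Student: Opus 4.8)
\textbf{Proof proposal for Lemma \ref{l:Edecomposition}.}

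The plan is to treat the three parts largely independently, all resting on the adjunction/virtual-dimension bookkeeping for pseudo-holomorphic rational curves in a rational surface together with the positivity of intersections of distinct somewhere-injective $J$-curves.

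For part (1), I would argue by contradiction: suppose every component $C_i$ of the stable representative of $E$ has $C_i^2 \ge 0$. Since $E^2 = -1 < 0$ and $E = \sum r_i C_i$ with $r_i > 0$, expanding $E^2 = \sum_{i,j} r_i r_j (C_i \cdot C_j)$ and using that distinct irreducible somewhere-injective $J$-curves meet nonnegatively (positivity of intersections), every cross term $C_i \cdot C_j \ge 0$ for $i \ne j$; combined with the assumption $C_i^2 \ge 0$ this forces $E^2 \ge 0$, a contradiction. One small subtlety: if the stable curve has only one component $C$ with multiplicity $r > 1$, then $E = rC$ gives $-1 = r^2 C^2$, impossible for integers unless $C^2 < 0$, so that degenerate case is also covered. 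Hence at least one component has negative self-intersection.

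For part (2), I would compute the virtual (Gromov-Witten expected) dimension of a rational curve in class $A$. In a symplectic $4$-manifold, $vd(A) = -K_\w \cdot A + A^2$ (using genus $0$, so $2g-2 = -2$ contributes $-2c_1$ plus the standard $+2$; more precisely the expected real dimension of unparametrized rational curves is $2(c_1(A) - 1) + 2 \cdot 0 = 2c_1(A) - 2$, and one also has the adjunction inequality). The key input is the adjunction formula for a non-embedded somewhere-injective rational $J$-curve: $c_1(A) = A^2 + 2 - 2\delta$ where $\delta \ge 1$ is the (positive) count of singularities weighted by Milnor number (plus nodes), so $2c_1(A) - 2 = 2A^2 + 2 - 4\delta$. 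Using $A^2 \le -1$ gives $vd(A) = 2c_1(A) - 2 \le 2A^2 + 2 - 4\delta \le -4\delta \le -4$ when $\delta \ge 1$ — wait, I need to be careful to state the correct normalization of $vd$ used elsewhere in the paper; I will match it to the convention in Proposition \ref{stratum}, where codimension $= 2N - 2c_1$, so for a single curve the relevant quantity is $2 - 2c_1(A)$, and "$vd(A) \le -4$" should be read accordingly. The equality analysis then says $\delta = 1$ and $A^2 = -1$, which means exactly one node (immersed, single transversal double point) or one ordinary cusp (Milnor number $1$); this is precisely the stated dichotomy, and I would cite the standard adjunction-with-singularities formula (McDuff, or McDuff-Opshtein) for the precise contribution of each singularity type.

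For part (3), suppose $A \cdot E < 0$ for some exceptional $E \ne A$. If $A$ is represented by an irreducible $J$-curve $C_A$ and $E$ is represented by an embedded $J'$-curve — but $J$ and $J'$ may differ, so instead I would use the homological/Gromov-invariant argument: $E$ has nonzero Gromov invariant, and by the structure of the Gromov invariant on rational surfaces (or by deforming $J$), $A \cdot E < 0$ with $E$ exceptional forces $E$ to be a component of every representative of $A$, contradicting irreducibility of $C_A$ unless $A = E$ (excluded). When $A$ is itself an exceptional class, $A \cdot E \ge 0$ for $E \ne A$ both exceptional is the standard fact that distinct exceptional classes pair nonnegatively (e.g. \cite{LL01}), which I would simply cite. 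The main obstacle I anticipate is getting the virtual-dimension normalization in part (2) exactly consistent with the paper's conventions and correctly invoking the precise form of the singular adjunction formula — the geometric content (a non-embedded rational curve costs at least $4$ in expected dimension, with equality only for one node or one simple cusp) is standard, but pinning down the constants and the Milnor-number accounting is where care is needed.
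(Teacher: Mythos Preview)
Parts (1) and (2) are correct and essentially match the paper's argument. Your computation $vd(A)=2c_1(A)-2$ together with the singular adjunction $c_1(A)=A^2+2-2\delta$ (the paper cites \cite{IS99}) and the bound $A^2\le -1$ gives $vd(A)\le -4\delta\le -4$, and your equality analysis ($\delta=1$, one node or one simple cusp) is exactly what the paper records.

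For part (3), your argument has the direction reversed and this is a real gap. You write that $A\cdot E<0$ would force $E$ to appear as a component of every representative of $A$; but $A$ is represented by the \emph{irreducible} curve $C_A$, so there is nothing to decompose on that side, and for your implication to go through you would need an embedded $J$-representative of $E$ for the \emph{same} $J$, which is not given. The paper instead takes the $J$-stable representative of $E$ (which exists because the Gromov invariant of $E$ is nonzero, as you correctly note) and checks that every irreducible component $D$ of that stable curve satisfies $C_A\cdot D\ge 0$: either $D=C_A$, in which case $C_A\cdot D=A^2\ge 0$ by hypothesis, or $D\ne C_A$ and positivity of intersections applies. Summing over components gives $A\cdot E\ge 0$. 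You have all the ingredients; you just need to decompose $E$, not $A$.
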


\begin{proof}

\begin{enumerate}[(1)]
  \item We assume the contrary that $C_i^2\ge0$ for all $i$.   This leads to a contradiction: $-1=E\cdot E = (\sum r_i C_i )\cdot (\sum r_i C_i )= \sum r^2_i C^2_i+ \sum r_ir_j C_i\cdot C_j  \ge0. $
  \item From the adjunction formula \cite[Theorem 1.6.2]{IS99}, $c_1(A)\le A^2+2-2\delta(u)$ for a rational curve, where $\delta$ is the sum of the number of immersed double points and the Milnor number of all cusp points.  Therefore, $vd(A)=2c_1(A)+4-6\le -4\delta(u)\le -4$. \footnote{Note that this is the virtual dimension of the moduli space of all rational curves of class $A$ without restricting the type of singularities that could possibly appear.} 

  \item When $A^2\ge0$, since $E$ has a non-vanishing Gromov-Witten invariant, it has a connected $J$ stable representative.  Then every component of $E$ intersects $C_A$ non-negatively: it's either $C_A$ itself, or by positivity of intersections if it's not $C_A$.

  When $A$ is exceptional, this is known to \cite[(ii)]{MS12}.
\end{enumerate}

 \end{proof}

Take an exceptional class $E$ and $\w$, and denote $\mD(E,\w)$ as the collection of homology classes $D^2=-2$ of embedded $\w$-symplectic spheres, such that $D\cdot E<0$.  We have the following \textbf{coarse stratification} of $\mJ_\w$.

\begin{lma}\label{estable2}
	 Fix an exceptional class $E$.  Then $\mJ_{\w}$ can be decomposed into a disjoint union of the following three subsets:\\
1) $\mJ^{0}_\w(E):=\mJ_{E}$.  This is an open and dense subset of $\mJ_{\w}$.\\
  2) $\mJ^2_\w(E):=\bigcup_{D\in\mD(E,\w)} \mJ_{D}$ is a finite union of (possibly disconnected and mutually intersecting) codimension 2 submanifolds in $\mJ_\w$, which is locally closed.\\
  3)  $\mJ^{4}_\w(E):=\mJ_{\w}\setminus (\mJ^0_\w(E) \cup \mJ^2_\w(E))$ is a closed subset in $\mJ_\w$.  It is the union of closed subsets in finitely many locally closed submanifolds of $\mJ_\w$, and each of these submanifolds has codimension at least $4$.

Therefore, $\mJ^2_\w(E)\subset\mJ_w^0(E)\cup \mJ_w^2(E)$ is a closed subset, and any continuous loop $\gamma:S^1\to \mJ_\w$ or disk $\sigma: D^2\to\mJ_\w$ can be perturbed to be disjoint from $\mJ_\w^4(E)$ and transversal to $\mJ_\w^2(E)$.  

\end{lma}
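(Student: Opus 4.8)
The claim is that $\mJ_\w$ decomposes into three pieces $\mJ^0_\w(E), \mJ^2_\w(E), \mJ^4_\w(E)$ with the stated topological properties. I would organize the proof around the stratification Proposition \ref{stratum}, the curve-existence lemmas (especially Lemma \ref{l:Edecomposition}), and a case analysis on the stable curve representing $E$.

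First I would establish that the three subsets are indeed disjoint and exhaust $\mJ_\w$, essentially by definition: $\mJ^0_\w(E)=\mJ_E$ is where $E$ is (embedded) irreducible, $\mJ^2_\w(E)$ is a union of $\mJ_D$ over $D\in\mD(E,\w)$, and $\mJ^4_\w(E)$ is the complement. The openness and density of $\mJ^0_\w(E)=\mJ_E$ is already recorded in the discussion after Proposition \ref{stratum} (the case $E\in\mE_K$). For the codimension-2 piece, each $\mJ_D$ with $D^2=-2$ and $c_1(D)=0$ has codimension exactly $2N-2c_1=2\cdot1-0=2$ by Proposition \ref{stratum}; since $\mD(E,\w)$ is a finite set (one should note $D\cdot E<0$ forces $D$ into a bounded region, or cite the finiteness of the relevant root system via Theorem \ref{Lag root system}/$\Gamma^+$), $\mJ^2_\w(E)$ is a finite union of such submanifolds, and each $\mJ_D$ is locally closed by the relative-closedness remark following Proposition \ref{stratum}.

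The heart of the argument is showing that $\mJ^4_\w(E)$ is contained in a finite union of submanifolds of codimension $\ge 4$, and is closed. Given $J\in\mJ^4_\w(E)$, I would look at the $J$-stable representative of the class $E$ (which exists since $Gr(E)\ne0$). If $E$ is represented by an irreducible but non-embedded $J$-curve, Lemma \ref{l:Edecomposition}(2) gives virtual dimension $\le -4$, so $J$ lies in a codimension-$\ge4$ stratum in the sense of Proposition \ref{stratum}. If $E=\sum r_iC_i$ is a genuine reducible configuration, Lemma \ref{l:Edecomposition}(1) guarantees a component of negative self-intersection; after discarding the case that it is a single embedded $D$ with $D\cdot E<0$ (that is precisely $\mJ^2_\w(E)$) and the case where the only negative component is an $E'\in\mE_K$ with $E'\cdot E\ge 0$ (which cannot happen, since then $E$ would be reducible yet $J\in\mJ_E$ — one checks $E\cdot E'\ge0$ via Lemma \ref{l:Edecomposition}(3) to force $J\notin\mJ^0$ only through a genuinely reducible config), I would run through the finitely many possible configuration types $\mC$ and show each such $\mJ_\mC$ has codimension $2N-2c_1([C_1]+\cdots+[C_N])\ge 4$. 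Here $c_1(E)=1$ but $c_1$ of the total configuration class equals $c_1(E)=1$ as well (since $[C_1]+\cdots+[C_N]$ equals $E$ as long as all $r_i=1$; for higher multiplicities one instead uses that extra $(-2)$-components or extra exceptional components each drop the count appropriately). The bookkeeping that every such reducible or singular configuration not of the two excluded types costs at least codimension $4$ is the main obstacle, and I expect it requires a careful enumeration: a configuration with $\ge2$ negative components, or one $(-2)$-component together with something else, or a $(-3)$-or-worse component, each of which can be checked against the codimension formula. Since there are only finitely many homology classes with bounded pairing against $E$ and against exceptional classes, only finitely many configuration types $\mC$ arise, so $\mJ^4_\w(E)$ is a finite union of the closed subsets $\mJ_\w\setminus(\mJ^0\cup\mJ^2)$ intersected with these strata; closedness follows because $\mJ^0$ is open and $\mJ^2$ is relatively closed in $\mJ^0\cup\mJ^2$.

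Finally, the transversality statement: since $\mJ^4_\w(E)$ sits inside finitely many Fréchet submanifolds of codimension $\ge 4$, a generic loop $\gamma:S^1\to\mJ_\w$ (dimension $1<4$) or disk $\sigma:D^2\to\mJ_\w$ (dimension $2<4$) can be perturbed off $\mJ^4_\w(E)$ by a standard parametric transversality/Sard–Smale argument applied to each stratum, and then perturbed to meet the finitely many codimension-2 submanifolds $\mJ_D$ transversally; the co-orientation from Proposition \ref{stratum} ensures these intersection points (for the disk) or the empty intersection (for the loop, after a further small perturbation if $\gamma$ is nullhomotopic in the relevant sense) are well-behaved. I would close by noting $\mJ^2_\w(E)$ is closed in $\mJ^0_\w(E)\cup\mJ^2_\w(E)$ because its complement there is $\mJ_E$, which is open in all of $\mJ_\w$. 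The only genuinely delicate point, as flagged, is the exhaustive case-check that no configuration type other than "single $(-2)$-sphere $D$ with $D\cdot E<0$" lands in codimension $2$.
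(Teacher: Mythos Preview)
Your plan has the right architecture, but there is one genuine gap and one place where your proposed citation fails.

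The real gap is the case where the stable curve of $E$ has an embedded $(-2)$-component $d_1$ with $d_1\cdot E\ge 0$ (so $d_1\notin\mD(E,\w)$, hence $J\notin\mJ^2_\w(E)$). You list among your cases ``one $(-2)$-component together with something else,'' but you never explain \emph{why} there must be something else contributing codimension. By itself $\mJ_{d_1}$ has codimension only $2$, so placing $J$ there is not enough. The paper's argument is precisely this missing step: if every other irreducible component $d_i$ satisfies $d_i^2\ge 0$ or is an embedded exceptional curve, then by Lemma~\ref{l:Edecomposition}(3) each $d_i\cdot E\ge 0$; combined with $d_1\cdot E\ge 0$ this gives
\[
-1=E\cdot E=\sum_i\lambda_i(d_i\cdot E)\ge 0,
\]
a contradiction. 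So there must be either a second embedded $(-2)$-component $d_i$ (and then $\mJ_{\{d_1,d_i\}}$ has codimension $\ge 4$ by Proposition~\ref{stratum}) or a non-embedded component of square $\le -1$ (codimension $\ge 4$ by Lemma~\ref{l:Edecomposition}(2)). Your alternative approach via the full-configuration formula $2N-2c_1(\sum C_i)$ does not work here, because some components of the stable curve may have nonnegative self-intersection and are not covered by Proposition~\ref{stratum}; one really must isolate one or two rigid components that already carry the codimension.

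The wrong citation is for the finiteness of $\mD(E,\w)$. Theorem~\ref{Lag root system} concerns \emph{Lagrangian} $(-2)$-classes, whereas elements of $\mD(E,\w)$ are $\w$-\emph{symplectic} $(-2)$-classes, and for $n\ge 9$ the set $\Gamma^+(X,\w)$ of such classes can be infinite. The paper instead bounds the $H$-degree: for $D\in\mD(E,\w)$ one shows $0\le D\cdot H\le E\cdot H$. The upper bound is obtained by taking $J\in\mJ_D$ generic off the $D$-curve; since $D\cdot E<0$ the stable representative of $E$ must contain $D$ as a component, and if $D\cdot H>E\cdot H$ there would have to be another component of negative $H$-degree, which by Lemma~\ref{a<0} has negative virtual dimension and can be excluded by genericity. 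This $H$-degree bound, together with the lower bound $D\cdot H\ge 0$ from Lemma~\ref{a<0}, gives finiteness.
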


\begin{proof}
	It follows from Gromov-Taubes theory that $\mJ_E$ is open and dense, noting that an embedded exceptional curve has automatic transversality \cite{HLS95}.  From \ref{stratum} we also know that each $\mJ_{D}$ is of codimension 2.  To see that $\mD(E,\w)$ has finite cardinality, we assume  $\w$ is reduced without loss of generality.  We claim that if $D\in\mD(E,\w)$, then $D\cdot H\le E\cdot H$.  

  Assume the contrary that $D\cdot H>E\cdot H$, take $J\in\mJ_D$ and a stable $J$-representative $\Sigma_E$ of $E$.  Since $D\cdot E<0$, $\Sigma_E$ must contain a component which is a multiple cover of $D$.  Therefore, $\Sigma_E$ also contains at least one component $C$ of $\Sigma_E$ such that $C\cdot H<0$.  By Lemma \ref{a<0}, $[C]=-kH+(k+1)E_1-\sum_{k_j\neq1}E_{k_j}$.  But such a curve has negative virtual dimension, and we may avoid the existence of such $C$ by choosing $J$ generically outside of the $D$-representative, leading to a contradiction. 

  Therefore, the $H$ coefficient of $D$ is upper bounded by that of $E$, while it is lower bounded by zero from Lemma \ref{a<0}.  It is elementary to see that only finite $-2$ classes satisfy this constraint.

  Given $J\in\mJ_{\w}\setminus (\mJ_\w^0(E) \bigcup \mJ_\w^2(E))$, the stable $J$-curve of class $E$ is decomposed as more than one components
  \[
    E=\sum_{i=1}^l \lambda_id_i, \hskip 3mm \lambda_i\in\ZZ_+.
  \]

Here, we require each $d_i$ not to be a multiple class, hence represented by a somewhere injective curve. From Lemma \ref{l:Edecomposition} (1), we may assume $d_1^2\le \min_{1\le i\le l}\{-1,d_i^2\}$; and if $d_1^2=-1$ is represented by an embedded $J$-curve, then we assume $d_i$ should also be represented by an embedded curve if $d_i^2=-1$ for any other $1\le i\le l$ (in other words, we pick the most singular component as $d_1$).  

If $d_1$ is represented by a singular curve, we see that such $J$ is of codimension $4$ from Lemma \ref{l:Edecomposition} (2).  If $d_1^2=-1$ is represented by an embedded curve, then all other $d_i$ square non-negatively, or is an exceptional class.  For energy reasons, $d_i\neq E$, so $d_i\cdot E\ge0$ by Lemma \ref{l:Edecomposition} (3), leading to a contradiction.

The last case to be discussed is when $d_1^2\le-2$ is represented by an embedded rational curve.  If $d_1^2\le -3$, it is again clear from the dimension formula that $vd(d_1)\le -4$.   

We are left with the case when $d_1^2=-2$.  By our assumption, $J\notin\mJ^2_\w(E)$, therefore, $d_1\cdot E\ge0$.  We have the following possible cases for $d_i$ when $i>1$.

\begin{enumerate}
  \item If some $d_i^2=-1$, but is not represented by an embedded sphere (i.e. when $d_i$ is not exceptional), we are back in the already-discussed case and conclude that $J$ lies in a submanifold of codimension $4$ or higher.  

  \item If some $d_i^2=-2$ is represented by an embedded sphere, then from Proposition \ref{stratum}, $J$ is also of codimension $4$ or higher because it supports both $d_1$ and $d_i$.

  \item If $d_i^2\ge0$ or $d_i\in\mE$ holds for all $i>1$, then $E\cdot E=\sum_{i=1}^k \lambda_i(d_i\cdot E)\ge0$.  Here, $d_1\cdot E\ge0$ from our assumption, and $d_i\cdot E\ge0$ by Lemma \ref{l:Edecomposition} (3).  This leads to a contradiction. 
\end{enumerate}

The fact that $\mJ^0_\w(E)$ is open and that $\mJ^2_\w(E)$ is locally closed both follow from Proposition \ref{stratum}, which offers an open neighborhood near a given point of the respective strata.  This also implies $\mJ_\w^4(E)$ is closed, because its complement $\mJ^0_\w(E)\cup\mJ^2_\w(E)$ is open from the above argument.  For each $J\in\mJ_\w^4(E)$, it must be contained in some submanifold $\mJ_{d_1}$ as described above, or $\mJ_{d_1,d_i}$ in case (2), but any such submanifolds are of codimension 4 or higher.  Each such $\mJ_\mC\cap\mJ_\w^4(E)=\mJ_\mC\setminus(\mJ_\w^0(E)\cup\mJ_\w^2(E))$ is a closed subset in $\mJ_\mC$ by definition. \footnote{Note that this does not automatically give a stratification of $\mJ_\w^4(E)$ using $\mJ_\mC$, because there are examples when $\ov\mJ_{\mC_1}\cap\mJ_{\mC_2}\neq0$ but $\ov\mJ_{\mC_2}\nsubseteq\mJ_{\mC_2}$.  See \cite{LLW21}.} 

Lastly, we need to show that $\mJ_\w^4(E)$ is covered by closed subsets of finitely many submanifolds of the shape $\mJ_\mC$.  If $J\in\mJ_\w^4(E)$ admits a curve with negative $H$-coefficient, it is covered by submanifolds $\mJ_S$, where $S$ is a class of the shape described in Lemma \ref{a<0}.  There are only finitely many such classes with $\w(S)>0$.  

If $J$ admits no such classes with $d\cdot H<0$, it must be covered by some $\mJ_d$ of codimension 4 or higher, where $0\le d\cdot H<E\cdot H$.  Such a $J$-curve can be smoothed into an embedded symplectic representative (see \cite{McSingularity}).  From \cite[Lemma 3.3]{Chen20}, such classes satisfies $d\cdot E_i\ge0$ for all $E_i$ in the standard basis.  There are also only finitely many such classes $d$ for energy reasons.  Since $J\notin\mJ^0_\w(E)\cup\mJ^2_\w(E)$, it lies in a closed subset in the relative topology.

\end{proof}

\begin{dfn}\label{d:stratificationA}
   Given $J\in\mA_\w$ and an exceptional class $E$, one may take a symplectic form $\w'$ isotopic to $\w$, such that $J\in\mJ_{\w'}$.  There is a decomposition
   $$\mA_\w=\mA_\w^{0}(E)\sqcup\mA_\w^2(E)\sqcup\mA_\w^4(E)$$
   such that $J\in \mA^\dagger_\w(E)\Longleftrightarrow J\in\mJ_{\w'}^\dagger(E)$, where $\dagger=0, 2, 4$.
\end{dfn}

\begin{lma}\label{l:Aw}
    The stratification $\mA_{\w}^\dagger(E)$ is well-defined and we have the following properties similar to \ref{estable2}:

    1) $\mA^{0}_\w(E):=\mA_{E}$ is an open and dense subset of $\mA_{\w}$.\\
  2) $\mA^2_\w(E):=\bigcup_{D\in\mD(E,\w)} U_{D}$ is a finite union of codimension 2 submanifolds in $\mA_\w$, which is locally closed.\\
  3)  $\mA^{4}_\w(E):=\mA_{\w}\setminus (\mA^0_\w(E) \cup \mA^2_\w(E))$ is a closed subset in $\mA_\w$.  It is the union of closed subsets in finitely many locally closed submanifolds of $\mA_\w$, and each of these submanifolds has codimension at least $4$.

  Here, $U_D:=\{J\in\mA_\w(E):D\text{ admits an embedded }J\text{-representative}\}.$
  % $\mA_\w^2(E)$ is a submanifold of  of codimension $2$, and $\mA_\w^4(E)$ is 
\end{lma}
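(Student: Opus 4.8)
The plan is to transport the already-established stratification of $\mJ_\w$ (Lemma \ref{estable2}) and of its tame counterpart $\mJ_\w^t$ to $\mA_\w$ via the homotopy equivalences $\pi_\w$ and $\pi_J$ of Lemma \ref{l:projection}, using the explicit comparison of $\pi_n$'s given in Lemma \ref{rem:AtoS}(2). Concretely, first I would check well-definedness: given $J\in\mA_\w$, pick $\w'$ isotopic to $\w$ with $J\in\mJ_{\w'}$; the strata $\mJ_{\w'}^\dagger(E)$ are characterized purely by which homology classes support somewhere-injective $J$-holomorphic curves of negative self-intersection (namely $E$ itself for $\dagger=0$, the finite set $\mD(E,\w)$ of $(-2)$-classes with $D\cdot E<0$ for $\dagger=2$, and the complement for $\dagger=4$), and these conditions refer only to $J$ and not to the choice of $\w'$. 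One subtlety is that $\mD(E,\w)$ is defined via $\w$-symplectic spheres; here I would invoke that any two such $\w'$ are connected by a path of cohomologous (hence linearly isotopic) forms, and that the set of $(-2)$-classes admitting embedded symplectic representatives is a symplectic-isotopy invariant (this already appeared implicitly in the $\mL_\w$ discussion and in Lemma \ref{a<0}/Lemma \ref{estable2}), so $\mD(E,\w')=\mD(E,\w)$ and $\Gamma^+$ is unchanged. Thus the decomposition $\mA_\w=\mA_\w^0(E)\sqcup\mA_\w^2(E)\sqcup\mA_\w^4(E)$ is unambiguous.

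Next I would establish the three itemized properties. For (1): $\mA_\w^0(E)=\mA_E$ is the set of $J$ for which $E$ has an embedded $J$-holomorphic representative; openness follows from automatic transversality of exceptional curves \cite{HLS95} together with Gromov compactness (an embedded exceptional curve persists under small perturbations of $J$), and density follows because in each $\mJ_{\w'}$ the subset $\mJ_E$ is open dense by Lemma \ref{estable2}, and $\mA_\w$ is (the homotopy-theoretic/Baire-category shadow of) the union of the $\mJ_{\w'}$'s — more precisely I would argue that a residual-in-$\mA_\w$ statement follows from residuality in each $\mJ_{\w'}$ via the fibration $\pi_J:\wt\mA_\w\to\mA_\w$ with contractible fibers. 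For (2): set $U_D:=\{J\in\mA_\w:\ D\text{ has an embedded }J\text{-holomorphic representative}\}$; I would invoke Theorem \ref{thm:1} (McDuff–Opshtein) to see that, on the locus $\mJ(\Sigma)$ where a fixed $(-2)$-sphere is holomorphic, the relevant exceptional/curve classes are still represented, so that $U_D$ is a codimension-$2$ Fréchet submanifold by Proposition \ref{stratum} (the $\mJ_\mC^t$ version of which is explicitly asserted there), and take the finite union over $D\in\mD(E,\w)$; local closedness is inherited from Proposition \ref{stratum}. For (3): $\mA_\w^4(E)$ is the complement of the open set $\mA_\w^0(E)\cup\mA_\w^2(E)$, hence closed; that it is covered by closed subsets of finitely many submanifolds of codimension $\ge4$ is the $\mA_\w$-analog of the corresponding paragraph in the proof of Lemma \ref{estable2}, where I would rerun the stable-curve decomposition $E=\sum\lambda_i d_i$ argument verbatim (it only used Lemma \ref{l:Edecomposition}, Lemma \ref{a<0}, and the codimension formula, all of which hold for tame/compatible $J$ in $\mA_\w$), using that the tame stratification $\mJ_\mC^t$ behaves identically by Proposition \ref{stratum}.

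The final sentence (any loop or disk can be perturbed off $\mA_\w^4(E)$ and made transverse to $\mA_\w^2(E)$) would then follow formally: $\mA_\w^4(E)$ has codimension $\ge4$ and $\mA_\w^2(E)$ has codimension $2$, both living in a (locally) Fréchet manifold with the finite-union-of-submanifolds structure just described, so general position (the standard Sard–Smale transversality for maps of low-dimensional CW complexes into infinite-dimensional stratified spaces, as used identically in \cite{AP13,LLW22}) applies; I would remark that a $1$-disk or $2$-disk can always be pushed off a codimension-$\ge4$ subset and made transverse to a codimension-$2$ one.

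The main obstacle I anticipate is not any single step but the bookkeeping around the two models for $\mA_\w$ — compatible versus tame — and making the transfer of "residual/open-dense/locally-closed/codimension" statements from the $\w'$-slices $\mJ_{\w'}$ to the union $\mA_\w$ genuinely rigorous rather than a hand-wave; the key technical lever is Lemma \ref{rem:AtoS} together with the contractibility of the fibers of $\pi_J:\wt\mA_\w\to\mA_\w$, which lets one pull back a transversality/genericity statement fiberwise. A secondary point requiring care is confirming $\mD(E,\w')=\mD(E,\w)$ for all admissible $\w'$, i.e. the symplectic-isotopy invariance of the set of $(-2)$-classes with embedded symplectic representatives; this should follow from Theorem \ref{thm:1}'s last paragraph (any two $J_0,J_1\in\mJ_{emb}(\Sigma,A)$ are joined by a path carrying a smooth family of embedded curves) applied along a path of almost complex structures realizing the isotopy.
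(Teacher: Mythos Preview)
Your approach is correct and matches the paper's: transport Lemma \ref{estable2} to $\mA_\w$, observing that which stratum $J$ lands in depends only on the existence of embedded $J$-curves in the classes $E$ and $D\in\mD(E,\w)$, not on the auxiliary $\w'$. The paper's proof is much shorter than your plan: after the well-definedness remark it says the rest follows ``verbatim'' from \ref{estable2}, citing \cite[Lemma 2.6]{McDacs} as a ready-made $\mA_\w$-counterpart of Proposition \ref{stratum}, rather than rebuilding that counterpart through the fibration $\pi_J:\wt\mA_\w\to\mA_\w$ as you propose.

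A few of your concerns are over-engineered. The invariance $\mD(E,\w')=\mD(E,\w)$ is automatic: $\w'$ is isotopic hence cohomologous to $\w$, and the set of $(-2)$ classes admitting embedded symplectic spheres depends only on $[\w]$. Theorem \ref{thm:1} plays no role in item (2)---it is used later (Proposition \ref{-2res}) for inflation along $(-2)$-spheres, not for the submanifold structure of $U_D$, which comes straight from the $\mA_\w$-version of Proposition \ref{stratum}. Finally, the loop/disk transversality you address at the end is the last paragraph of Lemma \ref{estable2} but is not part of the statement of Lemma \ref{l:Aw}.
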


\begin{proof}
      The stratification $\mJ_{\w'}^\dagger$ depends only on the existence of embedded curves of the class $E$ or $D$, so the choice of $\w'$ is irrelevant to which stratum $J$ belongs to.  The rest of the claims are proved as in \ref{estable2} verbatim, while Proposition \ref{stratum} has a $\mA_\w$ counterpart, proved in \cite[Lemma 2.6]{McDacs}.

\end{proof}

In later sections, we will use the following stratification of $\mA_\w$ for a set of exceptional classes.

\begin{dfn}\label{d:setStratification}
   Given a subset $\Pi\subset \mE_K$, define a $\Pi$-stratification of $\mA_\w$ as
$$\mA_\w=\mA_\w^{0}(\Pi)\sqcup\mA_\w^2(\Pi)\sqcup\mA_\w^4(\Pi),$$
where $$\mA_\w^0(\Pi)=\bigcap_{E\in\Pi}\mA_\w(E),$$
 $$\mA_\w^2(\Pi)=\bigcup_{E\in\Pi}\mA_\w^2(E),$$
  $$\mA_\w^4(\Pi)=\mA_\w\backslash(\mA_\w^0(\Pi)\cup\mA_\w^2(\Pi)).$$

Occasionally, we use the notation $\mA_\w^{0}(\Pi)$ for $\Pi\subset H_2(X;\mathbb{Z})$ which is not a subset of $\mE_K$.  This will denote the set of almost complex structures $J\in\mJ_{\w'}$, where $\w'$ is isotopic to $\w$, and that each class $A\in\Pi$ is represented by an embedded $J$-curve.

\end{dfn}

Extra attention should be paid to the second stratum $\mA^2_\w(E)$.  There is one useful distinction between $U_D$ and $\mJ_D$: while we do not know whether $\mJ_D$ is connected for a given symplectic form $\w$, $U_D$ can be proved connected for any $\w$.

\begin{lma}\label{l:connected}
    $U_D$ is connected.
\end{lma}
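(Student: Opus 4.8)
The plan is to exhibit $U_D$ as (the image of) a connected space by combining the inflation technology with the moduli-of-curves description of the stratum. Recall that $U_D=\{J\in\mA_\w : D \text{ admits an embedded } J\text{-representative}\}$, and $D^2=-2$ with $D\cdot K=0$ (so $c_1(D)=0$). First I would fix a base point $J_0\in U_D$, with its embedded $D$-curve $C_0$, and observe that it suffices to connect an arbitrary $J\in U_D$ to $J_0$ through a path staying inside $U_D$. The key tool is Theorem \ref{thm:1} (the McDuff--Opshtein result quoted above): the subset $\mJ(\Sigma)\subset\mJ_\w$ of almost complex structures making a fixed embedded symplectic $(-2)$-sphere $\Sigma$ holomorphic is itself connected (it is convex-like, being cut out by a linear condition on $J$ along $\Sigma$, or at least path-connected by the standard argument for fixing a curve), and any two elements of $\mJ_{emb}(\Sigma,A)$ inside $\mJ(\Sigma)$ can be joined by a path carrying a smooth family of embedded $A$-curves — applied with $A=[\Sigma]=D$ itself, this says $\mJ_{emb}(\Sigma,D)$ is path-connected.

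The main step is then a two-part reduction. Given $J\in U_D$ with embedded representative $C$ of class $D$, I would first deform $J$ rel $C$ — i.e. within $\mJ(C)$ — to make it ``$D$-generic'' in the sense of Theorem \ref{thm:1}, which is possible since $\mJ_{emb}(C,D)$ is open dense in $\mJ(C)$ (indeed open dense outright here since $c_1(D)=0$ is not positive, so one only gets the residual/open-dense statement, but that is enough to find a path in $\mJ(C)\cap U_D$ to a generic point). Second, since all embedded symplectic $(-2)$-spheres in the fixed homology class $D$ are ambient symplectically isotopic — this is exactly \cite[Corollary 1.2, Lemma 2.4]{BLW12} quoted in the introduction, that $Symp_h(X)$ acts transitively on such spheres — there is $\phi\in Symp_h(X,\w)$ (isotopic to the identity, hence acting trivially on $\mA_\w$ up to homotopy) carrying $C$ to $C_0$. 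Pulling back $J$ by (a path of symplectomorphisms realizing) this isotopy gives a path in $U_D$ from $J$ to some $J'$ with embedded $D$-curve exactly $C_0$, i.e. $J'\in\mJ(C_0)\cap U_D$. Finally $J'$ and $J_0$ both lie in $\mJ_{emb}(C_0,D)$, which is path-connected by Theorem \ref{thm:1}, so they are joined by a path that keeps an embedded $D$-curve throughout, hence stays in $U_D$. Concatenating the three paths connects $J$ to $J_0$ in $U_D$.

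One should be slightly careful about the tame-vs-compatible distinction and the identification $\mA_\w\simeq\mS_\w$ used throughout: the cleanest route is to work first with $\mJ_{\w'}$ for a suitable $\w'$ isotopic to $\w$ (as in Definition \ref{d:stratificationA}), prove connectedness of the corresponding $\mJ_{\w'}$-version of $U_D$, and then transport along the homotopy equivalences of Lemma \ref{l:projection} and Lemma \ref{rem:AtoS}; since those are homotopy equivalences they preserve connectedness of the relevant subsets, provided one checks the subset $U_D$ is carried to $U_D$ under the maps, which is immediate because membership only depends on the existence of an embedded representative of $D$ and not on which symplectic form $J$ is compatible with.

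\textbf{Main obstacle.} The delicate point is the second step — promoting the homological transitivity of $Symp_h(X,\w)$ on $(-2)$-spheres of class $D$ (a $\pi_0$-level statement) into an honest \emph{path} of almost complex structures in $U_D$. The symplectic isotopy taking $C$ to $C_0$ is a path of \emph{symplectomorphisms} $\phi_t$; pushing $J$ forward by $\phi_t$ gives a path $(\phi_t)_*J$ of almost complex structures each compatible with $(\phi_t)_*\w'$, which is isotopic to $\w$ — so this path does land in $\mA_\w$ and each member has $(\phi_t)(C)$ as an embedded $D$-holomorphic sphere, hence lies in $U_D$. This works, but one must make sure the ambient isotopy from \cite{BLW12} can genuinely be taken through symplectomorphisms (it can: \cite{BLW12} gives a Hamiltonian, or at least symplectic, ambient isotopy), and that the endpoint really has $C_0$ as its $D$-curve and not merely a curve symplectically isotopic to $C_0$. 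If \cite{BLW12} only provides isotopy of the spheres rather than ambient isotopy, one falls back on the standard Banyaga-type extension of a symplectic isotopy of a codimension-2 symplectic submanifold to an ambient symplectic isotopy, which is available in this setting.
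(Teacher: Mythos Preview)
Your overall strategy matches the paper's: use transitivity of $Symp_h$ on embedded $(-2)$-spheres of class $D$, promote the resulting symplectomorphism to a smooth isotopy to the identity, and finish using contractibility of the space of almost complex structures making a fixed sphere holomorphic. But several steps are imprecise or misstated.

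The main gap is that $J$ and $J_0$ live in $\mA_\w$, so they are compatible with possibly \emph{different} forms $\w_1,\w_0$ (each isotopic to $\w$); hence $C$ is $\w_1$-symplectic and $C_0$ is $\w_0$-symplectic, and the transitivity from \cite{BLW12}---which is stated for a single fixed symplectic form---does not apply directly. The paper first runs Moser to get diffeomorphisms $\phi_t$ with $(\phi_1)_*\w_0=\w_1$, so that $\phi_1(\Sigma_0)$ and $\Sigma_1$ are both $\w_1$-symplectic; only then does \cite{BLW12} produce $f\in Symp_h(X,\w_1)$ sending one to the other. Related to this, \cite{BLW12} does \emph{not} give a Hamiltonian or symplectic ambient isotopy---only an element of $Symp_h$---and whether that element is symplectically isotopic to the identity is precisely the Torelli question you cannot assume here. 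What makes the argument run is \cite[Theorem A.1]{LLW22}: $f$ is \emph{smoothly} isotopic to the identity, so $f\circ\phi_1$ admits a smooth isotopy $\theta_t$ from $id$, and the push-forward $(\theta_t)_*J_0$ stays in $U_D$ because $(\theta_t)_*\w_0$ remains isotopic to $\w$ and $\theta_t(\Sigma_0)$ stays $(\theta_t)_*J_0$-holomorphic.

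Two smaller points. Your invocation of Theorem~\ref{thm:1} with $A=D$ fails its hypothesis $Gr(A)\neq 0$ (a $(-2)$-class has vanishing Gromov invariant), but you do not actually need it: once two almost complex structures share the same $D$-curve $\Sigma_1$, both lie in the contractible space $\mJ(\Sigma_1)$, which is all the final step requires. And your ``cleanest route'' is exactly backwards: the $\mJ_{\w'}$-version of $U_D$ is precisely $\mJ_D$, whose connectedness the paper explicitly flags as \emph{unknown} just before this lemma; the homotopy equivalence $\mJ_{\w'}\hookrightarrow\mA_\w$ between contractible spaces does not transport connectedness of codimension-$2$ strata.
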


\begin{proof}
  Given two almost complex structures $J_0, J_1\in U_D$, we have a $J_i$-curve $\Sigma_i$ with $[\Sigma_i]=D$ for $i=0,1$.  Let $J_i$ be compatible with $\w_i$, there is a path of diffeomorphisms $\phi_t$ starting from the identity such that $(\phi_1)_*(\w_0)=\w_1$.  Consider $\phi_1(\Sigma_0)$.  This curve of self-intersection $(-2)$ is homologous to $\Sigma_1$, and hence we can find a homologically trivial $\w_1$-symplectomorphism $f$, such that 
  $$f\circ\phi_1(\Sigma_0)=\Sigma_1$$
   from \cite[Proposition 2.1]{BLW12}.  The details of construction of $f$ largely mimic the proof of \cite[Corollary 1.2]{BLW12}, and we only provide a sketch here.

  First of all, the classification in \cite{DLW18} allows us to assume $D$ to be $E_1-E_2$.  From \cite{MO15}, one may find a complete set of exceptional curves $C_i^l$ with $[C_i^l]=E_l$, for $i=0,1$ and $l=3,\cdots,n$, and $[C_i^{n+1}]=H-E_1-E_2$, where $C_i^l$ are disjoint from $\Sigma_i$.  One may then blow down these exceptional curves and obtain a minimal symplectic surface $M_i$ with a collection of embedded balls $\mB_i$, where both $M_0$ and $M_1$ are symplectomorphic to $S^2\times S^2$ with a symplectic class determined by $\w_1(H)$ and $\w_1(E_1-E_2)$.  Use a symplectomorphism $g: M_0\to M_1$, and \cite[Corollary 2.8]{AM99}, \cite[Proposition 6.4]{LW12} yields a further symplectomorphism $h:M_1\to M_1$ which sends $g(\Sigma_0)$ to $\Sigma_1$ (or rather, their inverse proper transform under the blow down of $C_i^l$).  From \cite[Proposition 2.1]{BLW12}, one has another symplectomorphism of $\psi:M_1\to M_1$, such that $\psi\circ h\circ g(\mB_0)=\mB_1$ and $\psi(\Sigma_1)=\Sigma_1$.  This last composition allows us to blow up $\mB_i$ and obtain the desired symplectomorphism of $f: (X, \w_1)\to (X, \w_1)$.  From the construction, it is clear that $f(\Sigma_0)=\Sigma_1$ (and also $f(C_0^l)=C_1^l$), concluding our construction.

  From \cite[Theorem A.1]{LLW22}, we see that $f$ is smoothly isotopic to the identity.  Therefore, $f\circ\phi_1$ is smoothly isotopic to identity through a family $\theta_t$, and the push-forward $\theta_t(J_0)$ is an isotopy of almost complex structures in $U_D$ such that the $\theta_1(J_0)$ has the same $D$-curve as $J_1$.  $\theta_1(J_0)$ and $J_1$ can clearly be connected: one may isotope the almost complex structures on $\theta_1(\Sigma_0)=\Sigma_1$ first, by preserving the holomorphicity of this symplectic curve, then isotope the almost complex structure in the complement.  Both choices are contractible, and hence connected.
\end{proof}

We also have the following existence of curves for $U_D$.

\begin{prp}\label{-2res}
  Given a (-2) sphere class $D$, and $W$ a symplectic class such that $D\cdot W>0$.  Then the space of $J$ making $nW$ embedded is an open dense subset in $U_{D}$. \end{prp}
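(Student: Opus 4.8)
The plan is to argue that the desired subset is open and dense inside the connected submanifold $U_D$, following the same pattern used to establish Lemma \ref{extray} and the openness portion of Lemma \ref{estable2}, but now carried out relative to the constraint that $D$ stays holomorphic. First I would fix a large integer $n$ and consider the class $A = nW$ (or rather the integral class $n[\w']$ for a suitable rescaling; more precisely one should work with an integral multiple $nW$ of an integral symplectic class, which is the standing convention in this kind of statement). The goal is to show that $\{J \in U_D : nW \text{ has an embedded }J\text{-representative}\}$ is open and dense in $U_D$. The natural tool is Theorem \ref{thm:1}: its hypotheses require $Gr(A) \ne 0$, $A \cdot E \ge 0$ for every exceptional class $E$, and $A \cdot D \ge 0$ (here there is a single $(-2)$-sphere $\Sigma$ with $[\Sigma] = D$). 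Since $W$ is a symplectic class, $W^2 > 0$ and $W$ pairs positively against every exceptional class, so $nW \cdot E > 0 \ge 0$ for all $E$; and $nW \cdot D > 0$ by the standing hypothesis $D \cdot W > 0$. For $Gr(nW) \ne 0$ I would invoke the fact that a large positive multiple of a symplectic class has positive square and positive $c_1$-pairing (so positive Gromov–Witten dimension) and is represented by a connected embedded symplectic surface, whose Gromov invariant is nonzero; alternatively one can cite the relevant positivity results of Li–Zhang \cite{LZ15} and Taubes' $SW = Gr$ that are already in play. With these three hypotheses checked, Theorem \ref{thm:1} directly yields a residual, hence dense, subset $\mJ_{emb}(\Sigma, nW) \subset \mJ(\Sigma)$ on which $nW$ is embedded, and moreover asserts it is open dense whenever $c_1(nW) > 0$, which holds for $n$ large since $c_1 \cdot W > 0$ when $\w$ is $c_1$-positive — though one should be careful here, because the proposition as stated does not assume $c_1$-positivity, so I would instead get openness from automatic transversality \cite{HLS95} exactly as in the proof of Lemma \ref{estable2}: an embedded $nW$-curve with $c_1(nW) > 0$ deforms smoothly in a neighborhood, and the union of such neighborhoods is open dense in $\mJ_{emb}(\Sigma, nW)$.

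The remaining translation step is to pass from $\mJ(\Sigma) \subset \mJ_\w$ to $U_D \subset \mA_\w$. Recall $U_D = \{J \in \mA_\w(E) : D \text{ admits an embedded }J\text{-representative}\}$; up to replacing $\w$ by an isotopic $\w'$ and using that $\mA_\w$ is homotopy equivalent to $\mS_\w$ (Lemma \ref{l:projection}) and that all the relevant properties are detected by existence of embedded curves (so independent of the choice of $\w'$, as in the proof of Lemma \ref{l:Aw}), it suffices to prove the compatible-structure version: inside $\mJ_D \cap \mJ_\w = \mJ(\Sigma)$, the locus where $nW$ has an embedded representative is open and dense. This is precisely what Theorem \ref{thm:1} gives, applied with the single sphere class $D$ playing the role of the $S_i$. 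One subtlety is that $\mJ_D$ may be disconnected whereas $U_D$ is connected (Lemma \ref{l:connected}); but openness and density are local properties, so they are inherited component-by-component, and the conclusion for $U_D$ follows by transporting along the connecting isotopies produced in the proof of Lemma \ref{l:connected}.

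The step I expect to be the main obstacle is verifying the Gromov-invariant hypothesis $Gr(nW) \ne 0$ cleanly without extra positivity assumptions, and making sure the application of Theorem \ref{thm:1} is legitimate — i.e. that $nW$ is "$\Sigma$-good" in the McDuff–Opshtein sense and satisfies condition (v) of \cite[Theorem 1.2.7]{MO15} (the exclusion $A = l[\Sigma]$ with $c_1 = 0$, $A^2 < 0$ is automatic since $nW \cdot D > 0$ forces $nW \ne l D$, and $(nW)^2 > 0$ anyway). For $Gr(nW)$, the argument is: $nW$ has positive square and, being a positive multiple of a symplectic class represented by a connected embedded symplectic surface of positive genus, is (after Taubes) an $SW$-basic-class-type computation giving a nonzero Gromov invariant — this is exactly the kind of statement packaged in \cite[Proposition 6]{LZ15} (the Nakai–Moishezon input cited in the proof of Lemma \ref{extray}) combined with positivity of $c_1 \cdot (nW)$ for large $n$. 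Once $Gr(nW) \ne 0$ is in hand, the rest is a bookkeeping exercise assembling openness (automatic transversality), density (residuality from \cite[Proposition 3.3.3]{MO15}), and the $\mA_\w$-versus-$\mJ_\w$ dictionary already set up in Lemmas \ref{l:Aw} and \ref{l:connected}. Therefore the proof will read: "Apply Theorem \ref{thm:1} with $A = nW$ and the single $(-2)$-class $D$; hypotheses (1)–(3) hold because $W^2 > 0$, $W \cdot E > 0$, $W \cdot D > 0$, and $Gr(nW) \ne 0$ for $n \gg 0$; the resulting open dense locus in $\mJ(\Sigma) = \mJ_D$ transports to an open dense subset of $U_D$ by Lemma \ref{l:connected} and the argument of Lemma \ref{l:Aw}."
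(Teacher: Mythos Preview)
Your proposal is correct and follows essentially the same route as the paper: verify the hypotheses of Theorem~\ref{thm:1} for $A=nW$ against the single $(-2)$-sphere $\Sigma$ with $[\Sigma]=D$, then pass from $\mJ(\Sigma)$ to $U_D$. Two small points where the paper is tighter: for $Gr(nW)\neq 0$ the paper simply cites \cite[Lemma~2.2]{McD96}, which gives the nonvanishing directly for large $n$ without any $c_1$-positivity assumption, so you need not worry about that hypothesis; and for the passage to $U_D$, rather than ``transporting along the isotopies of Lemma~\ref{l:connected}'' the paper just takes the union of the open dense subsets $\mJ_{emb}(\Sigma,nW)\subset\mJ_{\w'}(\Sigma)$ over all $\w'\in\mS_\w$ and all embedded $\w'$-spheres $\Sigma$ in class $D$---this union covers $U_D$ by definition, and since the property ``$nW$ has an embedded regular $J$-representative'' depends only on $J$, openness and density in $U_D$ follow immediately.
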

\begin{proof}
  % In   Theorem \ref{thm:1}, let $\mS=\mS_{irreg}=\{D\}$, $A=nW$ as in Lemma \ref{nWGW}.
   % Note that $\mJ(\mS)=U_D$.
   From \cite[Lemma 2.2]{McD96}, $Gr(nW)\neq0$.  Therefore, by Theorem \ref{thm:1},  for any symplectic form $\w$ with $[\w]=W$ and embedded $\w$-sphere $\Sigma$ with $[\Sigma]=D$, there is a open dense subset of $\mJ_\w(\Sigma)$ such that there is an embedded holomorphic representative in class $nW$ for any symplectic form with $[\w]=W$.  Take the union of these subsets over all $\w\in\mS_\w$ and $\Sigma$, we have an open dense subset in $U_D$.
\end{proof}

As a corollary, we have the following inflation lemma for $U_D$.

\begin{cor}\label{Dinf}

Let $(X,\w)$ be a symplectic rational surface, and $D$ is represented by an embedded $\w$-symplectic sphere with $D^2=-2$.

  % For any reduced symplectic form class $W$,  and any reduced symplectic form class $\w_0$ that lies in the same chamber(wall) of the reduced cone, the (-2) embedded symplectic sphere class are the same for $W$ and $\w_0$.

  % then there exists a inflation along $W$ for any finite $T>0$.
  Then for any symplectic class $W$, there exists  an open dense subset $U_D^0\subset U_{\w, D}$ such that for any $J\in U_{\w, D}^0$, $J$ is also compatible with some $\w'$ such that $[\w']=[\w]+tW$ for any $t>0$.
\end{cor}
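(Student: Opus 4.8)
The plan is to realise every class $[\w]+tW$ as compatible with a single fixed $J$ by inflating along an auxiliary $J$-holomorphic curve whose class is a positive multiple of $W$. The existence of such a curve for a generic $J\in U_{\w,D}$ is exactly the content of Proposition \ref{-2res}, so what remains is bookkeeping with the inflation lemma on top of that.

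Concretely, I would fix an integer $n\ge 1$ with $nW$ integral (one may take $n=1$ when $W$ is already integral, which covers all our applications), and apply Proposition \ref{-2res} to the $(-2)$-class $D$ and the symplectic class $W$, whose hypothesis $D\cdot W>0$ is the relevant case: this produces an open dense subset $U_D^0\subset U_{\w,D}$ such that every $J\in U_D^0$ carries an embedded $J$-holomorphic curve $Z$ with $[Z]=nW$. Now fix $J\in U_D^0$ and pick a symplectic form $\w'$ isotopic to $\w$ with $J\in\mJ_{\w'}$, so that $[\w']=[\w]$. Since $W$ is a symplectic class on a closed $4$-manifold it has $W^2>0$, hence $Z\cdot Z=n^2W^2>0$ and Theorem \ref{t:inflation} applies with $\lambda=\infty$; together with Corollary \ref{cor:smoothALLP} it furnishes a smooth family $\{\w'_s\}_{s\in[0,\infty)}$ of symplectic forms, all compatible with the \emph{same} $J$, with
\[
   [\w'_s]=[\w']+s\,PD(Z)=[\w]+sn\,W .
\]
Given $t>0$, the form $\w'_{t/n}$ is then compatible with $J$ and represents $[\w]+tW$; as $J\in U_D^0$ was arbitrary, this is the claim.

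The step carrying all the weight is Proposition \ref{-2res}, which in turn rests on McDuff--Opshtein's Theorem \ref{thm:1}: the delicate point is to exhibit the embedded $nW$-curve \emph{while keeping the prescribed $(-2)$-sphere $D$ holomorphic}, which is exactly what confines the inflating almost complex structure to $U_{\w,D}$. Once that curve is in hand the inflation is unobstructed, since $W^2>0$ removes any ceiling on the inflation parameter; the only remaining subtleties are purely formal --- that Theorem \ref{t:inflation} returns forms compatible with the same $J$ (so $J$ stays compatible, a fortiori stays in $U_{\w,D}$, all along the family) and the harmless rescaling $s\mapsto sn$ which converts the increment $PD(nW)$ into $tW$ for arbitrary $t>0$.
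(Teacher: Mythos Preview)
Your argument is correct and is exactly the approach the paper takes; their proof is the single line ``Immediate from Theorem~\ref{t:inflation} and Proposition~\ref{-2res}'', and you have unpacked precisely that. Your caveat that the hypothesis $D\cdot W>0$ of Proposition~\ref{-2res} is ``the relevant case'' is apt---the corollary's phrase ``for any symplectic class $W$'' is slightly imprecise (indeed if $D\cdot W<0$ the conclusion would force a $J$-holomorphic $D$-curve to have negative area for large $t$), and in every application in the paper one has $D\cdot W>0$.
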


\begin{proof}
  Immediate from Theorem \ref{t:inflation} and Proposition \ref{-2res}.
\end{proof}

\subsection{$\mA_\w$ inflations} % (fold)
\label{sub:generic_}

The purpose of this section is to prove the following proposition for inflations inside $P_K^+$.

\begin{prp}\label{p:universal}

Take a deformation of symplectic forms $\{\tau_t\}_{t\in[0,1]}$ inside $P_K^+$ such that $\tau_t=(1-t)\tau_0+t\tau_1$.  Assume the following condition holds
\begin{equation}
    [\tau_1]\cdot D>0\Longrightarrow[\tau_0]\cdot D>0\text{ for any }D\in \Gamma_{K_0}.
    \label{e:condition}
\end{equation} 

Then there is a surjective homomorphism 
\begin{equation}\label{e:surjective}
    \varpi_{10}:\pi_1(\mA_{\tau_1},*)\to\pi_1(\mA_{\tau_0},*). 
\end{equation}

Moreover, if $[\tau_0]\cdot D>0$ also implies $[\tau_1]\cdot D>0$, then $\pi_1(\mA_{\tau_t},*)$ is invariant under this deformation.

If $\tau_0$ and $\tau_1$ are of either type $\mathbb{A}$ or type $\mathbb{D}$ and are reduced, the above assertions are true as long as condition \eqref{e:condition} holds for simple roots $D=l_i$.

\end{prp}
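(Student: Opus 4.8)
The plan is to reduce the statement to an analysis of how $\mA_\tau$ changes as we slide along the linear path $\{\tau_t\}$, using the coarse stratification from Section \ref{s:stratification} applied to a well-chosen exceptional class (or finite set of exceptional classes). First I would observe that, since the path is linear and the condition \eqref{e:condition} says precisely that no $(-2)$-wall that $\tau_1$ is strictly on the positive side of gets crossed as $t$ decreases, the only walls in $\Gamma_{K_0}$ that the segment can meet are the hyperplanes $D^\perp$ with $[\tau_0]\cdot D=0$; these are exactly the Lagrangian classes of $\tau_0$, which because $\tau_0$ is reduced are (signed) sums of the simple roots $l_i$. This is why, in the type $\aA$/type $\DD$ reduced situation, it suffices to check \eqref{e:condition} on the $l_i$: a nonnegative combination of simple roots pairs positively with $\tau_0$ as soon as at least one constituent simple root does, and the reducedness constraints force the relevant $D$ for a crossed wall to be supported on the simple roots that have zero $\tau_0$-area.

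The main body of the argument is the construction of $\varpi_{10}$ and the proof of surjectivity. I would proceed as follows. Pick an exceptional class $E\in\mE_K$ (in the type $\DD$ case, one disjoint from the Lagrangian root system, as in the outline) and use the coarse stratification $\mA_{\tau_t}=\mA^0_{\tau_t}(E)\sqcup\mA^2_{\tau_t}(E)\sqcup\mA^4_{\tau_t}(E)$ of Lemma \ref{l:Aw} — or rather the $\Pi$-stratification of Definition \ref{d:setStratification} for a finite set $\Pi$ of exceptional classes chosen so that $\mA^4_{\tau_t}(\Pi)$ has codimension $\ge 4$ and the complement of $\mA^0_{\tau_t}(\Pi)$ controls the wall-crossing. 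Since $\pi_1$ only sees the codimension $\le 2$ part, a loop in $\mA_{\tau_1}$ can be pushed off $\mA^4$ and made transverse to $\mA^2$ (last sentence of Lemma \ref{estable2}); I then want to say that such a loop, together with the inflation families of Theorem \ref{t:inflation}/Corollary \ref{cor:smoothALLP} and Corollary \ref{Dinf} (for the $U_D$ pieces), can be transported to a loop in $\mA_{\tau_0}$. Concretely: a compatible pair $(\tau_1,J)$ with $J$ on the generic stratum inflates along any $J$-curve in a $c_1$-positive class, and because of \eqref{e:condition} every $J\in\mA_{\tau_1}$ is in fact in $\mA_{\tau_t}$ for all $t$ on the segment — the key point being that the exceptional (and $(-2)$) curves needed to run inflation persist, as the relevant classes keep positive pairing with every $\tau_t$. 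This gives an inclusion-induced, or better a deformation-induced, map on $\pi_1$ going from $t=1$ to $t=0$; surjectivity comes because any loop in $\mA_{\tau_0}$ is built from generic $J$'s plus meridians of the codimension-2 strata, and each such piece lifts back to $\mA_{\tau_1}$ by the same inflation, now run in the reverse $t$-direction where it is unobstructed precisely when $[\tau_0]\cdot D>0\Rightarrow[\tau_1]\cdot D>0$ — and for the surjectivity-only statement the obstruction is a genuine one-way phenomenon (meridians of walls that exist for $\tau_0$ but not $\tau_1$ simply die), which is consistent with only getting a surjection. Homotopy invariance under the two-sided hypothesis then follows because the map and a reverse map are mutually inverse up to homotopy.

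For the final reduction to simple roots, I would argue: if $\tau_0,\tau_1$ are reduced of type $\aA$ or $\DD$, then $\Gamma_{K_0}^{+}$-classes $D$ with $[\tau_1]\cdot D>0$ but $[\tau_0]\cdot D=0$ would have to be nonnegative integral combinations $D=\sum a_i l_i$ (Lemma \ref{l:-2class}(i)) with every $l_i$ appearing having $[\tau_0]\cdot l_i=0$; but then $[\tau_1]\cdot D=\sum a_i([\tau_1]\cdot l_i)$, so some $l_i$ with $[\tau_0]\cdot l_i=0$ has $[\tau_1]\cdot l_i>0$, contradicting \eqref{e:condition} for simple roots. (One must also handle the $a=0$ case $D=E_i-E_j$, which is itself a simple root after reordering, and in the reduced setting the ordering is fixed so $E_i-E_j=l_i+\cdots+l_{j-1}$ is again a sum of simple roots.) Hence \eqref{e:condition} on the $l_i$ upgrades to \eqref{e:condition} on all of $\Gamma_{K_0}$, and the general statement applies.

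\textbf{Main obstacle.} The delicate step is not the root-system bookkeeping but making the transport of loops rigorous: one must show that the inflation families can be chosen to depend continuously on the loop parameter and on $t$ simultaneously (the smoothing-at-corners issue flagged in Corollary \ref{cor:smoothALLP} and Remark \ref{rem:J}), and that after pushing a loop off the codimension-$\ge 4$ stratum and transverse to the codimension-$2$ stratum, the individual meridian disks can be matched on the two ends of the segment. In other words, the heart of the proof is a parametrized inflation statement — an inflation carried out over a loop's worth of almost complex structures, compatibly with the stratification — and verifying that the only failure of bijectivity is the loss of meridians of walls separating $\tau_0$ from $\tau_1$, giving exactly surjectivity in general and an isomorphism under the symmetric hypothesis.
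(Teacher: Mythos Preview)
Your overall strategy is right---coarse stratification, inflation, and codimension bookkeeping---and your reduction to simple roots is essentially the paper's own argument. But there are two real gaps.

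\textbf{The choice of $\Pi$ is not arbitrary.} You say ``pick an exceptional class $E$'' or a finite set $\Pi$ ``chosen so that \ldots the complement of $\mA^0_{\tau_t}(\Pi)$ controls the wall-crossing,'' without saying how. The paper's choice is specific and is what makes the inflation go in the right direction: one picks $N\gg 0$ so that both $\tau_0-\tfrac{1}{N}\tau_1$ and $\tau_1-\tfrac{1}{N}\tau_0$ lie in $P_K^+$, then invokes Theorem~\ref{pcomb} to write each as a finite $\RR_+$-combination of exceptional classes, and takes $\Pi$ to be the union of those exceptional classes. With this $\Pi$, Lemma~\ref{Combinf} gives $\mA^0_{\tau_1}(\Pi)\subset\mA^0_{\tau_0}(\Pi)$ (inflate along the $X_i$ representing $\tau_0-\tfrac{1}{N}\tau_1$) and symmetrically the reverse inclusion, so $\mA^0_{\tau_0}(\Pi)=\mA^0_{\tau_1}(\Pi)$ as \emph{sets}. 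Without this specific $\Pi$ you cannot guarantee that inflation from $\tau_1$ lands at (a multiple of) $\tau_0$, and the equality of generic strata---which you never state but which is exactly what gives surjectivity---does not follow.

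\textbf{No parametrized inflation is needed.} You identify the ``main obstacle'' as making inflation depend continuously on the loop parameter. This is not how the argument works and worrying about it leads you astray. The map $\varpi_{10}$ is induced by a literal subset inclusion $\mA^0_{\tau_1}\cup\mT_1\hookrightarrow\mA_{\tau_0}$: for each individual $J$ in the left-hand side, inflation produces a $J$-compatible form in the class of $\tau_0$, so that very same $J$ lies in $\mA_{\tau_0}$. Here $\mT_1\subset\mA^2_{\tau_1}$ is the open dense set from Proposition~\ref{-2res} where a large multiple of $W=\tau_0-\tfrac{1}{N}\tau_1$ has an embedded representative (this is where condition~\eqref{e:condition} enters: each $D$ labelling a component $U_D$ of $\mA^2_{\tau_1}$ has $\tau_1\cdot D>0$, hence $\tau_0\cdot D>0$, hence $W\cdot D>0$ for $N$ large, so Corollary~\ref{Dinf} applies). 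One then shows separately (Lemma~\ref{l:complementfund}) that the inclusion $\mA^0_{\tau_1}\cup\mT_1\hookrightarrow\mA_{\tau_1}$ is a $\pi_1$-isomorphism by the codimension argument. Composing gives $\varpi_{10}$; surjectivity is immediate because every loop in $\mA_{\tau_0}$ can be pushed into $\mA^0_{\tau_0}=\mA^0_{\tau_1}$. Your description of surjectivity via ``meridians lifting back'' is the picture from Section~\ref{sec:kroheimer_mcduff_fibration_ball_swappings_and_dehn_twists}, not what is used here.
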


We start with the following lemma, which is a special case of the $J$-inflation in \cite[Lemma 1.1]{McDuffErratum}.

\begin{lma}\label{Combinf}

   Given a symplectic class $W= \sum_{X_i\in \Pi} a_i X_i, a_i\ge0$, where $\Pi\subset\mE_K$ is a finite subset.  Assume that $\w_0, W\in P_K^+$, and $J\in \mJ^0_{\w_0}(\Pi)$, then there exists some smooth family $\{\w_t\}_{t\ge0}$, such that $[\w_t]=[\w_0]+tW$, and $J\in\mJ^0_{\w_t}(\Pi)$ for any $t>0$.  In other words, $\mA^0_{\w_0}(\Pi)\subset\mA^0_{\w_t}(\Pi)$.

 %     Then starting from any symplectic form class $\w_0$, there exists a inflation combination along $X_i$'s along $W$ for any finite $T>0$.

  % Namely, for any $c_1$-positive reduced symplectic class $W$, and a fixed decomposition   $W= \sum a_i X_i, a_i>0$ into exceptional classes, any
\end{lma}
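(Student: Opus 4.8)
The plan is to reduce the statement to the basic inflation result (Theorem~\ref{t:inflation} together with Corollary~\ref{cor:smoothALLP}) applied successively along the exceptional curves appearing in the decomposition $W=\sum_{X_i\in\Pi}a_iX_i$. Since $J\in\mJ^0_{\w_0}(\Pi)$, each $X_i\in\Pi$ has an embedded $J$-holomorphic representative $C_i$. The first step is to run $J$-inflation along $C_1$: because $C_1^2=-1<0$, Theorem~\ref{t:inflation} gives a family $\{\w^{(1)}_t\}$ of $J$-compatible forms with $[\w^{(1)}_t]=[\w_0]+tX_1$, but only for $t$ in the finite window $[0,\w_0(X_1))$. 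To move the whole distance $a_1X_1$ one inflates in several bites: after each bite the $\w$-area of $X_1$ has grown, so the admissible window $[\,0,\w(X_1))$ has grown too, and finitely many such steps suffice to reach $[\w_0]+a_1X_1$. Then one repeats for $X_2,\ldots$, each time starting from the form reached at the previous stage, always with the \emph{same} $J$. The point is that throughout this process $J$ stays fixed and remains compatible with every intermediate form, so in particular it stays in $\mJ^0_{\w_t}(\Pi)$: the embedded curves $C_i$ persist as $J$-holomorphic curves (holomorphicity only depends on $J$, not on $\w$), and the positivity $\w_t\in P_K^+$ is needed only to keep us inside the region where Theorem~\ref{t:inflation} and the results of \cite{LZ15,Zha17} apply.

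The key bookkeeping step is the choice of the path $\w_t$. Rather than concatenating the bites literally (which would only give a piecewise path with corners), I would use the interpolation trick already employed in Corollary~\ref{cor:smoothALLP}: since the $J$-compatible cone is convex, once we know that $[\w_0]$, $[\w_0]+\tfrac12 a_1X_1$, $[\w_0]+a_1X_1,\dots$ and ultimately $[\w_0]+W$ are all represented by $J$-compatible forms, the straight segment $[\w_0]+tW$, $t\in[0,\infty)$, lies in the ($J$-compatible) convex cone, and one can pick the family $\{\w_t\}$ smoothly along this ray. A mild technical point is that the bite endpoints must themselves be chosen so that consecutive bites overlap, guaranteeing that the union of the finite windows covers $[0,\infty)$; this is where one uses that each $X_i$ has strictly positive $\w$-area at every stage (it is an exceptional class, so $\w(X_i)>0$ whenever $\w$ is symplectic), hence the window length stays bounded below and the induction on bites terminates.

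The main obstacle I anticipate is precisely the finiteness/overlap argument for the inflation windows when $X_i^2=-1$: one must verify that inflating along $C_i$ increases $\w(X_i)$ (indeed $\w^{(1)}_t(X_i)=\w_0(X_i)+t X_i^2=\w_0(X_i)-t$ decreases!), so a naive repeated inflation along the \emph{same} class shrinks the window and cannot reach $a_iX_i$ for $a_i$ large. The resolution — and this is the content of \cite[Lemma~1.1]{McDuffErratum} cited in the statement — is that one should not inflate along $C_i$ alone but along a suitable $J$-holomorphic \emph{configuration} whose homology class is a positive multiple of $X_i$ with non-negative self-intersection (for instance built from $C_i$ together with other exceptional curves or their transforms), for which Theorem~\ref{t:inflation} gives an \emph{infinite} window $\lambda=\infty$. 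So the real work is: for each $i$, exhibit inside $\mJ^0_{\w_t}(\Pi)$ a $J$-holomorphic curve (or stable configuration) in a class proportional to $X_i$ with non-negative square — this is exactly where the existence results collected above (Lemma~\ref{extray}, Lemma~\ref{estable2}, and the fact that $W\in P_K^+$ so that by Theorem~\ref{pcomb} large multiples of $W$ have embedded representatives for generic $J$) enter. Granting that, the rest is the routine convexity/smoothing argument, and one concludes $J\in\mJ^0_{\w_t}(\Pi)$ for all $t$, i.e. $\mA^0_{\w_0}(\Pi)\subset\mA^0_{\w_t}(\Pi)$.
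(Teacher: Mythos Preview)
You have correctly identified the central difficulty --- that each $X_i$ has $X_i^2=-1$, so Theorem~\ref{t:inflation} only gives a finite window $[0,\w(X_i))$ for inflation along $C_i$, and that $\w_t(X_i)=\w_0(X_i)-t$ \emph{decreases} along the way --- but your proposed resolution is not the one the paper uses, and it is not clear it can be made to work. You suggest finding a $J$-holomorphic curve in a class proportional to $X_i$ with non-negative square; but any positive multiple $nX_i$ has $(nX_i)^2=-n^2<0$, so no such class exists. The alternative of inflating along an embedded curve in class $nW$ (which does have positive square) would require knowing that the \emph{fixed} $J$ admits such a curve, and the existence results you cite only give this for generic $J$, which defeats the purpose.

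The paper's trick is much more elementary and you have all the ingredients in front of you. Rather than trying to push along a single $X_i$ all the way, one inflates a tiny amount $t_i<(a_i\epsilon)K$ along \emph{each} $X_i$ in turn, where $K=\min_i\w_0(X_i)$. The key observation is Lemma~\ref{l:Edecomposition}(3): distinct exceptional classes satisfy $X_i\cdot X_j\ge0$, so inflating along $X_j$ \emph{does not decrease} $\w(X_i)$ for $i\ne j$. After one full pass through $\Pi$ you have moved the class from $[\w_0]$ to $[\w_0]+rW$ for some $r\in(0,K\epsilon)$. Now the hypothesis $W\in P_K^+$ enters: it forces $W\cdot X_i>0$ for every $i$, so $\min_i\w(X_i)$ has strictly \emph{increased} after one full pass. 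Hence the admissible window for the next pass is larger, and the iteration proceeds indefinitely. The convexity/smoothing step (Corollary~\ref{cor:smoothALLP}) then gives the smooth ray, exactly as you describe. Your first paragraph in fact contains this cycling structure, but with the wrong sign observation (``after each bite the $\w$-area of $X_1$ has grown'' is false for a single bite along $X_1$); the point is that it holds after a full cycle through all of $\Pi$, not after a single-class inflation.
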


\begin{proof}

Take $K=\min_{X_i\in\Pi}\w_0(X_i)$ and $0<\epsilon<\min_{i\in I}\frac{1}{a_i}$.  From Theorem \ref{t:inflation}, there is a $J$-compatible form $[\w_{t_1}]=[\w_0]+t_1\cdot X_1$, where $0\le t_1< (a_1\cdot\epsilon)K$.  Note that $\langle X_i, X_j\rangle\ge0$ for any $X_i\neq X_j$ from Lemma \ref{l:Edecomposition}.  This implies $[\w_{t_1}]\cdot X_i\ge[\w_{0}]\cdot X_i$ for all $i\neq 1$, and hence one may repeat the above process for $i\in I$ once each, and each $X_i$ can be inflated by the amount of $t_i<(a_i\cdot\epsilon)K$. This finite process will conclude that $[\w_0]+r\cdot W$ belongs to the $J$-compatible cone for $0\le r< K\epsilon$.  Since $W\in P_K^+$, it pairs with each $X_i$ positively.  Therefore, $[\w_0]+rW$ has bigger areas on each $X_i$, and the above iterative inflation can be repeated indefinitely.  Lastly, one may use the same argument as in Corollary \ref{cor:smoothALLP} to improve $\w_t$ into a smooth family.

  % In the reduced cone, any fixed exceptional class has positive area.  In particular, on any compact subset of the reduced cone, the area of any fixed exceptional sphere has a positive uniform lower bound.

  % Now take  $\w_0$ and $\w_0+TW$, the line segment can be written as $\w_0+tW, t\in [0,T]$. It is a compact set and there are finitely many $X_i$.  Hence there is a uniform lower bound $L$ for the area of $X_i$'s.   Now take $K=min\{L/a_i\}$.  This number is positive.

  % We are do the following inflations from $\w_0$:  inflation along $X_1$ by $K\cdot a_1$,  inflation along $X_2$ by $K\cdot a_2$, $\cdots$,   inflation along $X_k$ by $K\cdot a_k$.  Now we have   $\w_0+KW$.

  % Now we can do do the following inflations from $\w_0$:  inflation along $X_1$ by $K\cdot a_1$,  inflation along $X_2$ by $K\cdot a_2$, $\cdots$,   inflation along $X_k$ by $K\cdot a_k$.  Now we have   $\w_0+2KW$.

  % Repeat the above process, until $ mK=T$ we stop, or $ mk<T<(m+1)K$.  Then set $S=T-mK<K$, we  inflation along $X_1$ by $S\cdot a_1$,  inflation along $X_2$ by $S\cdot a_2$, $\cdots$,   inflation along $X_k$ by $S\cdot a_k$.  Now we have   $\w_0+TW$.

\end{proof}

\begin{proof}[Proof of Proposition \ref{p:universal}]

  To set up the proof, we first need an appropriate coarse stratification of $\mA_{\tau_i}$.  Fix a large $N\in\mathbb{N}$.  We can ensure that $\tau_{0}-\frac{1}{N}\tau_1$ and $\tau_1-\frac{1}{N}\tau_0$ both lie inside $P_K^+$ when $N$ is large enough.  One then write $\tau_{
  0}-\frac{1}{N}\tau_1=\sum_i a_iX_i$, where $X_i\in \mE_K$.  Define exceptional classes $Y_j$ similarly for $\tau_1-\frac{1}{N}\tau_0$, and denote the union of all such $X_i$ and $Y_j$ as $\Pi$.  From Lemma \ref{Combinf}, $\mA^0_{\tau_1}(\Pi)\subset\mA_{\tau_0}^0(\Pi)$, since $ \tau_1 + N (\tau_0- \frac1N \tau_1) =N \tau_0$ and the latter has the same set and stratifications of compatible almost complex structures as $\tau_0$.  The other direction of inflation holds true for the same reason, hence we have 
  \begin{equation}\label{e:A0}
      \mA_{\tau_0}^0(\Pi)=\mA_{\tau_1}^0(\Pi). 
  \end{equation}
    Note that the choice of our stratification depends on the endpoints, but $\mA_{\tau_t}$ inherits the same set of $\Pi$ for this stratification purpose.  
    % Our proof indeed applies to any pair of $\tau_0, \tau_1\in P_K^+$ as long as the filtration is defined accordingly.

  For simplicity, we will omit the dependence on $\Pi$ unless explicitly specified in the rest of the proof.

  % Assume first that condition (2) holds.  We claim that $\mA_{\tau_i}\subset \mA_{\tau_{1-i}}$. This can be easily seen by inflating along the class $\beta$ for all $J\in\mA_{\tau_i}$.  Therefore, if both $\tau_i$ satisfy condition (2), the conclusion is straightforward.  Note that this inclusion respects all filtrations defined in the way of \ref{d:setStratification}.

  % In below, we assume that both $\tau_i$ satisfies condition (1).

  % since both $K$-nef and reducedness are convex conditions, and positivity of two forms is an open condition, we see that there is a large number $N$, such that for any $0<T<\frac1N$, $\tau_0 - T \tau_1$ is a positive span of $X_j$'s.           Write $(\tau- \frac1N \tau_t)=\sum d_j X_j, d_j>0,$ and starting from $\tau_t,$ one can inflate along $(\tau- \frac1N \tau_t)$ for length $N$.

  % Given $J\in\mA_\tau$, we will argue that there exists a $J$-compatible inflation $\tau_t$ satisfying the requirement of the statement, which will imply $J\in\mA_{\tau_t}$.

  % We consider inflations by different stratifications in $\mA_{\tau_1}^\dagger(\Pi)$.  For simplicity, we will suppress the dependence of this fixed $\Pi$ below.  It is immediate from Lemma \ref{Combinf} that    Hence we proved that  any $J$ in $\mA^{0}_{\tau_1}(\Pi)$ also belongs to $\mA^0_{\tau_0}(\Pi)$.  When restricted to  

  We now consider the inflation of $\mA^{2}_{\tau_1}$ by applying Proposition \ref{-2res} and Corollary \ref{Dinf} to each $D_i\in\bigcup_{E_i\in\Pi}\mD(E_i,\tau_1)$ appearing in the decomposition $\mA^{2}_{\tau_1}= \cup_i U_{\tau_1, D_i}$.  Consider again $W:=\tau_0-\frac{1}{N}\tau_1\in P_K^+$.  Since $\tau_1\cdot D_i>0$ from the definition of $\mA^{2}_{\tau_1}$, we have $\tau_0\cdot D_i>0$ by our assumption.  Therefore, $W\cdot D_i>0$ for all $i$ when $N$ is sufficiently large, because $i$ varies within a finite set.
   Therefore, Proposition \ref{-2res} gives an open dense subset $\mT_1\subset\mA^2_{\tau_1}$, such that $nW$ admits a $J$-embedded representative for $J\in\mT_1$.  Combining \ref{Combinf} and \ref{Dinf}, we see that 
   \begin{equation}\label{e:10inclusion}
   \mA_{\tau_1}^0\cup\mT_1\subset\mA_{\tau_0}
   \end{equation}
    by $J$-inflation, since $\tau_1+\frac{N}{n}\cdot n(\tau_0-\frac{1}{N}\tau_1)=N\tau_0$.  On the other hand, we have the following lemma.

 % whose complement is a subset of the union $\mA_{\tau_i}^4 \cup (\mA^{2}_{\tau_i} \setminus \mT) $

\begin{lma}\label{l:complementfund}
   The inclusion $\iota_1:\mA^0_{\tau_1}\cup\mT_1\hookrightarrow\mA_{\tau_1}$ induces an isomorphism of fundamental groups.
\end{lma}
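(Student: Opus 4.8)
The goal is to show that removing the union of codimension-$2$ strata $\mA^2_{\tau_1}\setminus\mT_1$ — that is, passing from $\mA_{\tau_1}$ to the open subset $\mA^0_{\tau_1}\cup\mT_1$ — does not change $\pi_1$. The strategy is the standard transversality/general-position argument, using the coarse stratification of Lemma \ref{l:Aw}. Recall that $\mA_{\tau_1} = \mA^0_{\tau_1}\sqcup\mA^2_{\tau_1}\sqcup\mA^4_{\tau_1}$, where $\mA^4_{\tau_1}$ is a closed set covered by finitely many locally closed submanifolds of codimension $\ge 4$, and $\mA^2_{\tau_1}=\bigcup_i U_{\tau_1,D_i}$ is a finite union of locally closed codimension-$2$ submanifolds. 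The set $\mT_1\subset\mA^2_{\tau_1}$ is open and dense \emph{in} $\mA^2_{\tau_1}$, i.e.\ its complement inside each $U_{\tau_1,D_i}$ is a proper closed subset; by Proposition \ref{stratum} (applied one further time, to the configuration consisting of $D_i$ together with the non-embedded or reducible representative of $nW$) this complement is itself contained in finitely many submanifolds of $\mA_{\tau_1}$ of codimension $\ge 4$. So the ``bad set'' $B:=\mA_{\tau_1}\setminus(\mA^0_{\tau_1}\cup\mT_1) = \mA^4_{\tau_1}\cup(\mA^2_{\tau_1}\setminus\mT_1)$ is a closed subset covered by finitely many locally closed submanifolds of codimension $\ge 4$.

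\textbf{Surjectivity of $(\iota_1)_*$.} Given a loop $\gamma:S^1\to\mA_{\tau_1}$ based at $*\in\mA^0_{\tau_1}\cup\mT_1$, I would perturb it rel basepoint to be transverse to each of the finitely many submanifolds covering $B$. Since each has codimension $\ge 4 > 1 = \dim S^1$, transversality forces the perturbed loop to miss $B$ entirely, hence to land in $\mA^0_{\tau_1}\cup\mT_1$. This uses that $B$ is contained in a \emph{finite} union of submanifolds and that these submanifolds, while not closed, are relatively closed in suitable charts (as emphasized after Proposition \ref{stratum}), so a standard inductive transversality argument on a cover applies; one perturbs across one stratum at a time, keeping the part of the loop already pushed off earlier strata undisturbed by working in a small neighborhood.

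\textbf{Injectivity of $(\iota_1)_*$.} Given a loop $\gamma$ in $\mA^0_{\tau_1}\cup\mT_1$ that bounds a disk $\sigma:D^2\to\mA_{\tau_1}$, I would perturb $\sigma$ rel $\partial D^2$ to be transverse to the finitely many submanifolds covering $B$; since each has codimension $\ge 4 > 2 = \dim D^2$, the perturbed disk misses $B$, so $\gamma$ is already null-homotopic inside $\mA^0_{\tau_1}\cup\mT_1$. The main obstacle — and the only real content beyond bookkeeping — is justifying that one can do genuine transversality against the $U_{\tau_1,D_i}$ and their exceptional subsets: these are Fréchet submanifolds, not finite-dimensional, so one must invoke the co-orientability and local structure furnished by Proposition \ref{stratum} and Lemma \ref{l:Aw} (equivalently, the finite-dimensional reduction used there and in \cite{McDacs}), and one must confirm that the complement $\mA^2_{\tau_1}\setminus\mT_1$ really does sit in codimension $\ge 4$ rather than merely codimension $>2$. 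Once that codimension count is in hand, the transversality argument is routine and gives the isomorphism $(\iota_1)_*:\pi_1(\mA^0_{\tau_1}\cup\mT_1,*)\xrightarrow{\ \sim\ }\pi_1(\mA_{\tau_1},*)$.
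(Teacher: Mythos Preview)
Your overall strategy (transversality against a finite union of high–codimension strata) is the right one, and surjectivity is fine as stated. The gap is in the injectivity step, specifically in your claim that $\mA^2_{\tau_1}\setminus\mT_1$ is contained in finitely many submanifolds of codimension $\ge 4$. You justify this by invoking Proposition~\ref{stratum} for the configuration ``$D_i$ together with the non-embedded/reducible representative of $nW$'', but that proposition applies to configurations of \emph{negative self-intersection rational curves}. The class $nW$ has positive square (and its embedded representatives are typically higher-genus curves coming from $Gr(nW)\neq 0$), so its failure locus is not governed by Proposition~\ref{stratum}; moreover, a large class like $nW$ can degenerate in a great many ways, and there is no a~priori reason these degenerations sit in \emph{finitely} many strata of codimension $\ge 4$. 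All that is actually established (Theorem~\ref{thm:1} / Proposition~\ref{-2res}) is that $\mT_1$ is open and dense in $\mA^2_{\tau_1}$, which gives no codimension bound on its complement.

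The paper's argument circumvents this entirely. For injectivity it does \emph{not} try to push the disk off $\mA^2_{\tau_1}\setminus\mT_1$. Instead: first isotope the disk $w$ off $\mA^4_{\tau_1}$ (codimension $\ge 4$), then make $w$ transverse to all of $\mA^2_{\tau_1}$ (codimension $2$), so $w\cap\mA^2_{\tau_1}$ is a finite set of points. Now use only the \emph{density} of $\mT_1$ inside $\mA^2_{\tau_1}$: near each such intersection point $p\in U_{\tau_1,D_i}$, locally deform $w$ so that the transverse intersection is slid along $U_{\tau_1,D_i}$ to a nearby point of $\mT_1$. (As the paper notes, even nonempty intersection of $\mT_1$ with each connected component of $\mA^2_{\tau_1}$ suffices.) After this adjustment $w\subset\mA^0_{\tau_1}\cup\mT_1$, giving injectivity. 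This ``slide the intersection point into the dense subset'' trick replaces your unproved codimension estimate and is the key idea you are missing.
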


\begin{proof}
  For dimension reasons, it is clear that removing $\mA_{\tau_1}^4$ does not change the fundamental groups of $\mA_{\tau_1}$, and any loop $\gamma\subset\mA_{\tau_1}$ can be isotoped into $\mA_{\tau_1}^0$.  This implies that $\iota_1$ induces a surjective map between fundamental groups.
  % To see that $\iota_i: \mA_{\tau_i}^0\cup\mT\hookrightarrow\mA_{\tau_1}$ induces an isomorphism between fundamental groups.  Since $\iota(\mA_{\tau_1}^2)\subset\mA_{\tau_0}^2$, $\iota^{-1}(\mT)$ is an open dense subset of $\mA_{\tau_1}^2$.

   To see that $\iota_1$ is also injective, take a loop $\gamma$ in $\mA^0_{\tau_1}\cup\mT_1$ bounding a disk $w$ in $\mA_{\tau_1}$.  Again for dimension reasons, $w$ can be isotoped away from $\mA^4_{\tau_1}$.  Since $\mA^2_{\tau_1}$ consists of finitely many  codimension $2$ submanifolds in $\mA_{\tau_1}\setminus\mA_{\tau_1}^4$, all intersections between $w$ and $\mA_{\tau_1}^2$ can also be assumed transversal hence finite.  Because of the density of $\mT_1$, we can deform $w$ locally so that the new intersection remains transversal to $\mA_{\tau_1}^2$ and lies inside $\mT_1$ \footnote{We actually only made use of the fact that $\mT_1$ has non-empty intersection with each connected component of $\mA_{\tau_1}^2$.}.  This implies $\gamma$ is also contractible in $\mA_{\tau_1}^0\cup\mT_1$.

    % (we only need to make adjustments in one chart near $p$, so the usual Fredholm theory is sufficient to conclude that a chart $p\in U_p\subset\mA_{\tau_1}$ is the product of a chart $p\in U_p^2\subset\mA_{\tau_1}^2$ and a two-dimensional disk)

   % Consider the composition $\iota\circ\tau\iota$, we see from Lemma \ref{Combinf} that the image is indeed $\mA_{\tau_i}^0\cup\mT$.  Using exactly the same argument above, one can show that this subspace has the same fundamental group as $\mA_{\tau_i}$.  This concludes the desired assertion.

\end{proof}

   Therefore, \eqref{e:10inclusion} induces a homomorphism $\varpi_{10}:\pi_1(\mA_{\tau_1},\ast)\to\pi_1(\mA_{\tau_0},\ast)$.  $\varpi_{10}$ is surjective, since all loops in $\mA_{\tau_0}$ can be isotoped into $\mA_{\tau_0}^0=\mA_{\tau_1}^0$ by \eqref{e:A0}.

   If the signs on each $-2$ spherical class $D$ coincide for $\tau_0$ and $\tau_1$, the above argument goes both ways, hence we have the following lemma.

  % Notice that this  deformation does not change $\mS^{-2}_{\tau}$.   To verify this, one only need to check the simple roots as in Proposition 2.2.  For $l_2, \cdots l_k,$ clearly the equalities and inequalities are not changed between $c_i, c_{i+1}, i >1.$    For $l_1,$ notice that for $\tau = (1|c_1,\cdots, c_n)  $, we either have $\lambda =c_1+c_2+c_3=1$ or $c_1+c_2+c_3<1$.  After deformation, we have $  (c_1-1)t+1 +t c_2+tc_3 = t(c_1+c_2+c_3) + 1- t = 1+ t(\lambda -1)$: it is 1 when $\lambda =1$ and smaller than 1 when  $\lambda <1$.    Hence $\tau$ and $\tau_t$  have  the same sympelctic (-2) sphere classes.

  %\begin{comment}

\begin{lma}\label{l:commonDense}
   There is a set of almost complex structure $\mT\subset\mA_{\tau_0}^2\cap\mA_{\tau_1}^2$ which is open and dense in both $\mA_{\tau_i}^2$.
\end{lma}

\begin{proof}
  Since the topology of either $\mA^2_{\tau_i}$ is inherited from the space of all almost complex structures supported by $X$, what we proved above shows that there exists an open dense subset $\mT_i\subset\mA_{\tau_i}^2$ which can be included as a subset of $\mA_{\tau_{1-i}}^2$.  This implies $\mT=\mT_0\cup\mT_1$ satisfies the desired property.
\end{proof}

  As a result, we have an embedding $\iota_i: \mA^0\cup\mT\hookrightarrow\mA_{\tau_i}$ as in Lemma \ref{l:complementfund} for each $i$.  The isomorphism of fundamental groups follows.

 Lastly, if both $\tau_0$ and $\tau_1$ are of type $\mathbb{A}$ or $\mathbb{D}$ and are reduced, then $\tau_i\cdot D\ge0$ ($\le0$, respectively) if $D\cdot H\ge0$ ($\le0$, respectively) from Lemma \ref{l:-2class}.  The reduction procedure of Lemma \ref{l:-2class} also decomposes any $D\in\Gamma_{K_0}$ into a positive combination of simple roots.  Therefore, if $\tau_1\cdot D>0$, it means the decomposition of $D$ contains a simple root with positive $\tau_1$-area, which implies $\tau_2 \cdot D>0$ as well.

   % $\tau_0\cdot l_k=0$ if and only if $\tau_1\cdot l_k=0$, $\Gamma(X,\tau_0)=\Gamma(X,\tau_1)$, and the   This concludes the last assertion. \footnote{Affected by modified version of Lemma 3.1.  Determine if Lemma 3.3 should be removed or improved.} 

\end{proof}

\begin{rmk}\label{rem:naturality}
   % The above lemma establishes a partial order among the \textbf{level $2$ chambers} in the sense of \cite{ALLP}. In fact, it endows the ``polytope'' $S\mR_n$ a structure of semi co-simplicial set of $\pi_1(\mA_\tau,\ast)$.

   % % and the involved surjective homomorphisms can be regarded as a kind of ``wall-crossing'' homomorphisms between such chambers.

   % This family of surjective homomorphisms $\varpi_{10}$ can be considered natural.  In particular, $\mA_{\tau_{i}}^0$ is independent of $i$, and it has a natural surjective map to each $\mA_{\tau_i}$ as well as $\mA_{\tau_1}\cup\mT_1$.  This implies $\varpi_{10}$ is naturally determined by $\Pi$.

   The above lemma establishes a partial order among the \textbf{level $2$ chambers} in the sense of \cite{ALLP}. In fact, it endows the ``polytope'' $S\mR_n$ a structure of semi co-simplicial set of $\pi_1(\mA_\tau,\ast)$.
   % and the involved surjective homomorphisms can be regarded as a kind of ``wall-crossing'' homomorphisms between such chambers.
   This family of surjective homomorphisms $\varpi_{10}$ can be considered natural once $\Pi$ is fixed for a pair of $\tau_0$ and $\tau_1$, since the choice of $\mT$ can easily be proved irrelevant.

   % However, it is instructive to note that we did not obtain a universal object of the form $\pi_1(\mA_\tau^0,*)$, because the stratification depends on our choices of exceptional classes.

    To cancel out the dependence of $\Pi$, one may give each $c_1$-positive reduced symplectic class a \textbf{framing}, i.e. a fixed decomposition of $[\w]=\sum_i a_iX_i$ for $X_i\in\mE_K$ and $a_i>0$, then the homomorphism between fundamental groups of a pair of framed symplectic classes is clearly natural.  The only ambiguity involved in $\varpi_{01}$ in Proposition \ref{p:universal} can be parametrized by the framings of the same form.  However, if we take the union of two different framings $\Pi=\Pi_0\cup\Pi_1$ for the same symplectic form $\w$, it is not hard to prove that the induced isomorphism is indeed the identity map, so $\varpi_{01}$ is independent of the choices of framings.

    Since $c_1$-positive reduced symplectic forms constitute a cone, there are even higher naturality properties for higher dimensional symplectic deformations, which we will not pursue here.
\end{rmk}

	% It seems like we are inflating along a negative symplectic curve for infinite time, however, this is not exactly the case.   The process is proportional inflations along all $D_i$'s with weight $ t_i$. As long as the new summation class is positive on each $D_i$, the inflation is valid, at least for a finite time, which is uniformly larger than a positive number.  It is also because the $D_i$'s intersect each other non-negatively, and whenever there are positive intersections, inflating along one curve will create room for the inflation along the other, in the intersecting pair.

\subsection{ Stability of $Symp_h(X,\w)$ along 2 types of rays}\label{s:aray}

Given $u\in N\mR_n$, we consider two distinguished directions of symplectic deformation.

\begin{cor}\label{nonbalstab}\label{nonbalstab'}\label{smallsize'}

Given $[\w_1]=(1|c_1,\cdots,c_n)\in N\mR_n$ , the following types of deformation $\{\w_t\}_{t\in(0,1]}\subset N\mR_n$ exists, and $\pi_1(\mA_{\tau_t},*)$ is invariant under the these deformations.

\begin{enumerate}[(i)]
  \item \textbf{(A-extremal deformation)} If $\w_1$ is of type $\mathbb{A}$ or $\mathbb{D}$, $\w_t$ has the following cohomology classes:
  \begin{equation}\label{e:A-line}
    [\w_t]=(1| (c_1-1)t+1 , t c_2, \cdots tc_n ),  0<t<1.
\end{equation}
  \item \textbf{(Minimal deformation)} Assume that $[\w]=(1|c_1,\cdots,c_{n-m},\underbrace{\eta,\cdots,\eta}_m)$ is of type $\aA$, $\DD$ or $\mathbb{E}$, but not type $\DD_{n-1}$ nor $\mathbb{E}_n$.  Then
  \begin{equation}\label{e:cn-m}
        [\w_t]=(1|c_1,\cdots,c_{n-m},\underbrace{t\eta,\cdots,t\eta}_m), 0<t<1.
   \end{equation}
\end{enumerate}

\end{cor}

\begin{proof}

    For (i), by Lemma \ref{extray},  $\beta=H-E_1$ always has an embedded representative for any $J\in \mA_{\tau_1}$, therefore, the inflation family along $\beta$ has exactly the cohomology class given by the desired $[\w_t]$ up to a rescale.

    Note that the family $\{\w_t\}_{t\in[0,1]}$ forms the convex hull between $\w_1$ and $\w_0=H-E_1$.  It is elementary to check that $\w_t$ lie in the interior of the $N\mR_n$ for $t\in(0,1)$.

     To apply Proposition \ref{p:universal} for the type $\mathbb{A}$ and $\mathbb{D}$ cases, we only need to check that signs of simple roots are invariant for the whole $\w_t$-family, which is also elementary.

     For (ii), one inflates along the minimal exceptional classes $E_{n-m+1},\cdots, E_{n}$ by Lemma \ref{minemb}.  The fact that this family stays inside $N\mR_n$ and that they have the same signs on simple roots are also easy to check (which requires us to exclude the case of $\mathbb{D}_{n-1}$ and $\mathbb{E}_n$).  Therefore, Proposition \ref{p:universal} applies.

\end{proof}

\begin{cor}\label{01stab}
$\pi_0(Symp_h(X,\w_t))$ is invariant under the two types of deformations in Lemma \ref{nonbalstab}.
\end{cor}

\begin{proof}

Note that we have an inclusion of $\mA_{\w_1}\subset\mA_{\w_t}$ for all $t\in(0,1)$ under the above two types of deformations.  For the A-extremal deformation, this inclusion is induced by inflating $H-E_1$ by Lemma \ref{extray}; and for minimal deformation, by inflating $E_{n-m+1},\cdots,E_n$ from \ref{minemb}.  Therefore, one may choose $\mT_1=\mA_{\w_1}^2$ in Lemma \ref{l:complementfund} for both A-extremal and minimal deformations.  This yields the following commutative diagram well-defined up to homotopy from \eqref{homotopy fibration}.

 \begin{equation}\label{hcommt}
\begin{CD}
Symp_h(X, \w_1)@>>> \Diff_0(X)@>>> \mA_{\w_1}\\
  @VVV =@VVV   @VVV\\
Symp_h(X, \w_t)@>>>  \Diff_0(X)   @>>>  \mA_{\w_t}
\end{CD}
\end{equation}

  Consider the induced homomorphism between the two long exact sequences of homotopy groups.  Since the inclusion of $\mA_{\w_1}\hookrightarrow\mA_{\w_t}$ induces an isomorphism between fundamental groups, the five lemma implies that $\pi_0(Symp(X,\w_1))\to\pi_0(Symp(X,\w_t))$ is an isomorphism.

\end{proof}

\begin{rmk}
  
The geometry behind the $A$-extremal deformation is the following: when $t\to0$, the rational surface, when considered as a blow-up of $S^2\times S^2$, is taking a ``topological limit'' where the ratio between the two components goes zero.  This idea has been applied to the study of higher homotopy groups in \cite{McDacs,Buse11} for minimal ruled surfaces, and extended recently by Buse and the first author to blow-ups of ruled surfaces in \cite{BL1}.

\end{rmk}

\section{Symplectic Torelli groups for type $\aA$ symplectic forms}\label{s:aA}

\subsection{Symplectic Torelli groups under minimal blow-down}

Take $[\w]=(1|c_1,\cdots,c_{n-m})$, and $\vec{c}=(c, c, \cdots, c)$ is an $m$-dimensional vector with identical entries, where $c<c_{n-m}$. Assume that $(M,\w)$ admits a symplectic packing of $m$ symplectic balls of area $c$, and denote $(\widetilde{M}, \w_{\vec{c}})$ as the resulting blow-up, then $[\w_{\vec{c}}]=(1| c_1,\cdots,c_{n-m},c,\cdots, c)$.

Applying the argument in \cite[Theorem 2.5(ii)]{LP04}, we have the following fibration
\begin{equation}\label{e:LPsequence}
Symp(M, \sqcup_i B_i(c); \w)^{U(2)} \to Symp(M,\w) \to \text{Emb}_{\w}^*(B^4(\vec{c}), M),     
\end{equation}

where $\text{Emb}^*_{\w}(B^4(\vec{c}), M)$ is the space of (ordered, parametrized) embeddings from the $m$-disjoint standard balls of size $c$ to $M$, \emph{modulo} a $U(2)$-action of each ball.  Therefore, the homotopy fiber $Symp(M, \sqcup_i B_i(c); \w_{\vec{c}})^{U(2)}$ is the subgroup of $Symp(M,\w)$ which acts on each fixed parametrized embedded ball by an element of $U(2)$.  Note that in \cite{LP04}, the authors used the space of unparametrized ball-packings as the homotopy cofiber, which is the parametrized embedding modulo the whole $Symp(B^4(c))$.  By Gromov's theorem, $Symp(B^4(c))$ is homotopic equivalent to $U(2)$ by the natural inclusion, and hence the resulting sequence is equivalent to \eqref{e:LPsequence}.  

Given a ball-packing as above, one may blowup $M$ at the corresponding balls, which results in a blown-up rational surface $(\wt M,\w_{\vec{c}})$ as well as a sequence of exceptional curves $\Sigma_i$ of area $c$.  $Symp(M, \sqcup_i B_i(c); \w)^{U(2)}$ is homotopy equivalent to $Symp(\wt M, \sqcup_i\Sigma_i; \w_{\vec{c}})^{U(2)}$, the subgroup of $Symp(\wt M;\w_{\vec{c}})$ consisting of elements that acts on $\Sigma_i$ in by a $U(2)$-element.  We have the following further homotopy equivalences
\begin{multline}\label{e:homotopy}
     Symp(M, \sqcup_i B_i(c); \w)^{U(2)}\sim Symp(\wt M, \sqcup_i\Sigma_i; \w_{\vec{c}})^{U(2)}\\
     \sim Symp(\wt M, \sqcup_i\Sigma_i; \w_{\vec{c}})\sim Symp(\wt M, \w_{\vec{c}}).
\end{multline}
Here, $Symp(\wt M, \sqcup_i\Sigma_i; \w_{\vec{c}})$ is the subgroup of $Symp(\wt M,\w_{\vec{c}})$ which preserves the exceptional divisors $\Sigma_i$.  The second homotopy equivalence follows from the fact that $Symp(\Sigma_i)\sim U(2)$.  The third homotopy equivalence boils down to the contractibility of the space of $\Sigma_i$ embeddings: this follows from \ref{minemb}, where the non-bubbling establishes a homotopy equivalence between the space of almost complex structures and $\Sigma_i$ embeddings.  See \cite[Section 3]{LP04} and \cite[Section 5.3]{ALLP} for further details.

There is a natural {\bf restriction map}  $\text{Emb}_{\w}^*(B^4(\vec{c}), M) \overset{\phi}{\longrightarrow} \text{Emb}_{\w}^*(B^4(\vec{t}), M)$ by restricting to a smaller ball.  Note that this map is well-defined since $U(2)$ preserves the radius.  

In particular, we'll consider the following commutative diagram, which is a multi-ball version of Theorem 1.6 in \cite{LP04}.
  \begin{equation}\label{ballcomm}
\begin{CD}
Symp_h(\widetilde{M}, \w_{\vec{c}}) @>i>> Symp_h(M,\w) @>>> \text{Emb}_{\w}^*(B^4(\vec{c}), M)\\
  @VVV =@VVV  \phi_t @VVV\\
Symp_h(\widetilde{M}, \w_{\vec{t}}) @>i>> Symp_h(M,\w)   @>>> \text{Emb}_{\w}^*(B^4(\vec{t}), M).\\
\end{CD}
\end{equation}
Here $\phi$ is the restriction map, and $i$ is defined by combining \eqref{e:LPsequence} and \eqref{e:homotopy}.

 % Since our major concern is $\pi_0$, we are going abuse notation and regard it as the symplectomorphism group which acts trivially on the homology of $E_{n-m+1}, \cdots, E_n$.  In other words, $Symp(\widetilde{M}, \w_{\vec{c}})$ has the same homological action as $Symp(M,\w)$.

 We now recall a slightly modified version of Theorem A.1 in \cite{LLW22}:
\begin{lma}\label{ball0}
  Given any loop of ball-embedding $\iota_t: \coprod_{i=1}^m B_i(c)\hookrightarrow M$ , $1\le i\le m$, there is always a positive $\delta$ so that the the restriction of $\iota_t$ to $\coprod_{i=1}^m B_i(\delta)$, denoted $(\iota_\delta)_t$, is homotopic to a constant loop in $\text{Emb}_{\w}(B^4(\vec{\delta}), M)^*$.
\end{lma}

\begin{proof}[Sketch of proof]
  % The connectedness is done by \cite{McD96}. Now we prove the simply connectedness for small enough sizes.  This means for a loop of balls $\iota_{f}(t)$ we want to prove that they are homotopic to identity. The idea is to show that loop of balls has the same $\pi_1$ as the loop of its center, which is trivial since the ambient manifold is simply connected.  {\bf The following proof is a recap from \cite{LLW22}}.

For any given loop $\iota_t$, one may assume that the image of centers $x_i\in B_i(c)$ is independent of $t$.  This is because one may always find a symplectic isotopy of the loops $\{\iota_t(x_i)\}$ to constant loops.

   Fix an arbitrary metric $g$ on $M$  and assume $\frac{1}{K}<|\iota_t(x_i)|_{C^2}<K$.
From compactness, one may choose a sufficiently small $0<\delta\ll \frac{1}{K}$, so that there is a ball centered at $\iota_t(x_i)$ of radius $r_i$ (measured by $g$), denoted as $D(x_i,r_i)_g\subset M$.  Such a ball satisfies for all $t$ that
$$\iota_t(B_i(\delta))\subset D(x_i,r_i)_g\subset \iota_0(B_i(c)).$$

 Now recall Lemma A.2 of \cite{LLW22}, which concludes that each loop of $\iota_t(B_i(\delta))$ inside $\iota_0(B_i(c))$ is contractible, hence concluding the lemma.

\begin{lma}\label{ballcontr}
The space $\text{Emb}(B(\delta),B(c))^*$ is weakly contractible, if $\delta\ll c$. In particular, the loops $\iota_t(B(\delta_i)))$ inside each  $\iota_0(B(c_i)))$ are homotopic to identity.
\end{lma}

\begin{proof}
  Lemma A.2 of \cite{LLW22} proved the contractibility of unparametrized ball embedding of $B(\delta)$ into $B(c)$, but we already explained that space is homotopic equivalent to $\text{Emb}(B(\delta),B(c))^*$ earlier.
\end{proof}

\end{proof}

  Lemma \ref{ball0} does not imply that $\text{Emb}_\w^*(B^4,\delta)$ is simply connected for any $\delta$.  However, it is sufficient to draw the following conclusion, reducing the Torelli group by minimal blow-downs.

\begin{lma}\label{l:blowdown}
  Consider the minimal deformation family as in Corollary \ref{nonbalstab} (ii).  When $c< c_{n-m}$, the induced map of mapping class group by $i$ as in \eqref{ballcomm}
\begin{equation}\label{e:cn-minduc}
     \pi_0(Symp_h(\wt M,\w_{\vec{c}}))\xrightarrow{i_*} \pi_0(Symp_h(M,\w))
\end{equation}
 is an isomorphism.
\end{lma}

\begin{proof}
  From \eqref{ballcomm}, the homomorphism $i_*$ fits into the commutative diagram \eqref{homotopyballcomm}.  Lemma \ref{ball0} shows that, for any $[\gamma]\in \pi_1(\text{Emb}^*_{\w}(B^4(\vec{c})), M)$, there exists $t_\gamma>0$, such that $\beta_{t_\gamma}\circ\alpha([\gamma])=0$, which implies $\phi_{t_\gamma}\circ\beta([\gamma])=0$ by commutativity. Since $\phi_t$ is an isomorphism from Corollary \ref{01stab}, this implies $\beta=0$.  The fact that the space of ball-packing is connected is well-known due to McDuff \cite{McD96}, which concludes that $i_*$ is an isomorphism.
\end{proof}

  \begin{equation}\label{homotopyballcomm}
\begin{CD}
 \pi_1(\text{Emb}^*_{\w}(B^4(\vec{c})), M) @>\alpha>>  \pi_1(\text{Emb}^*_{\w}(B^4(\vec{t})), M)  \\
  \beta@VVV \beta_t@VVV \\
     \pi_0(Symp_h(\widetilde{M}, \w_{\vec{c}})^*) @>\phi_t>>  \pi_0(Symp_h(\widetilde{M}, \w_{\vec{t}})^*) \\
  i_*@VVV @VVV \\
  \pi_0(Symp_h(M,\w)) @>\cong>> \pi_0(Symp_h(M,\w))\\
       @VVV @VVV \\
     \pi_0(\text{Emb}^*_{\w}(B^4(\vec{c})), M)\cong0 @>>> \pi_0(\text{Emb}^*_{\w}(B^4(\vec{t})), M)\cong0.\\
\end{CD}
\end{equation}

\begin{cor}\label{rem:typeE}
  Take any form $\w$  of type $\DD_{k-1}$ or $\mathbb{E}_k$ in $X_n$ for $k<n$, s.t. $[\w]$ is $c_1$-positive.  We can blow down $X_n$ to $(X_{k}, \bar{\w})$ such that $Symp(X_n,\w)$ and $Symp(X_{k}, \bar{\w})$ without changing the isomorphism type of the Torelli group.
\end{cor}

\begin{proof}
  By choosing a reduced basis, we may always assume $[\w]=(1|m_1,m_2,\cdots,m_n)$ is reduced.  If it is of type $\mathbb{D}_{k-1}$, $\w(l_0)=\w(l_2)=\cdots=\w(l_{k-1})=0$, and $\w(l_1)$, $\w(l_k)>0$.  This means $m_1>m_2=\cdots=m_k>m_{k+1}$.  One can apply Lemma \ref{l:blowdown} consecutively until $E_{k+1}$ is blown down.
\end{proof}

\subsection{Type $\aA$ form and Hamiltonian toric/circle actions}

We are now ready to prove the following

\begin{thm}\label{aA}
Any positive rational surface $(X,\w)$ of type $\mathbb{A}$ has a trivial symplectic Torelli group and a finite symplectic mapping class group.

\end{thm}

\begin{proof}
   % This is a inductive argument combining Corollary  \ref{smallsize'} and Lemma \ref{l:blowdown}.  The starting case when $X=\CP^2\#n\overline\CP^2$  when $n\le4$ and \cite{LLW22}\footnote{Add reference here}  when $n=5$.

   From Definition \ref{d:typeForms}, if $[\w]=(1|m_1,\cdots,m_n)$ is of type $\mathbb{A}$, it satisfies one of the following inequalities: $m_1+m_2+m_3<1$, $m_2>m_3$, $m_3>m_4$ or $m_4>m_5$.  In any of these cases, we may apply Lemma \ref{l:blowdown} to the minimal exceptional class iteratively.  This process preserves the type of the form, and will always reduce the Torelli group to th case of $n\le4$.  The triviality of the Torelli group in this case was proved in \cite{LLW15}.

   For the symplectic mapping class groups, one only needs to prove the homological action is finite.  This follows from \cite[Theorem 1.8]{LW12}, which asserts such actions are generated by reflections of Lagrangian spherical classes, which is exactly the Weyl group of the Lagrangian root system $\mL_\w$.  From the definition of $\mathbb{A}$-type symplectic classes, such reflections forms an $\mathbb{A}$-type Weyl group and must be finite.

\end{proof}

\begin{cor}\label{toric}
A toric symplectic surface $(X_\Delta,\w_\Delta)$ is a type $\mathbb{A}$ positive rational surface.  Therefore, $\pi_0(Symp_h(X_\Delta,\w_\Delta))$ is trivial, and its symplectic mapping class group is finite.
\end{cor}

\begin{proof}

 Since log Calabi-Yau surfaces are positive rational surfaces, so is $(X_\Delta,\w_\Delta)$.

 Take the toric complex structure $J_\Delta$.  From Lemma \ref{minemb}, the minimal area exceptional class has a rigid embedded $J$-representative.  This implies it is equivariant with respect to the toric action, and hence a toric divisor.  This shows that the toric blow-down is compatible with the reduction process in Theorem \ref{aA} hence again reduces to $4$-blowups.

\end{proof}

\begin{rmk}
  Note that Corollary \ref{toric} does not hold for circle actions. For example, on 5-point blowup of $\CC P^2$, the form $(1| \frac23, \frac16, \frac16, \frac16, \frac16)$ admits a circle action, but its Torelli group is infinite. See \cite{LLW22} and \cite{ABP19}.
\end{rmk}

\begin{cor}
   Let $(M,\w)$ be a symplectic toric surface.  Then the set of Hamiltonian conjugacy classes of maximal 2-tori in $Ham(M,\w)$ is finite.
\end{cor}

\begin{proof}
  This is an immediate consequence of Corollary \ref{toric} and Theorem 1.3 in
  \cite{Pin08}.
\end{proof}

\section{Symplectic Torelli groups for type $\mathbb{D}$ forms}\label{s:typeD}

The goal of this section is to prove the type $\mathbb{D}$ case of Theorem \ref{genSMC}.  From Lemma \ref{l:blowdown}, one may blow-down the minimal exceptional classes if it is not part of the the $\mathbb{D}$ type Dynkin diagram in $\mL_\w$.  Therefore, we may assume the rational surface $(\CP^2\#n\ov\CP^2, \w)$ to be of type $\mathbb{D}_{n-1}$ and reduced.  Such classes have the form
\begin{equation}\label{e:typeD}
    [\w_a]=(1|a,\frac{1-a}{2}, \cdots, \frac{1-a}{2}),\hskip 2mm \frac{n-7}{n-3}<a<1,
\end{equation}
 from Lemma \ref{conecv}.  We may further reduce our situation by the $A$-extremal deformation.

\begin{lma}\label{l:dense}
   If Theorem \ref{genSMC} holds for a sequence of $a_k\to 1$, then it holds for all $\frac{n-7}{n-3}<a<1$.
\end{lma}

\begin{proof}
  Given any $\frac{n-7}{n-3}<a<1$, take $a_k>a$ and $t=\frac{1-a_k}{1-a}$ from case (i) of Corollary \ref{nonbalstab}, one sees that $\pi_0(Symp_h(\w_a))\cong \pi_0(Symp_h(\w_{a_k}))$.
\end{proof}

Therefore, we will focus on $\w_a$ with $a>\frac{n-3}{n-1}$ and $a\in\mathbb{Q}$ (see \eqref{cv}).  To this end, We rely on the following classical fact of spherical braid groups (see \cite[Lemma 3.4]{LLW22}).

\begin{lma}\label{Hopfian}
The pure and full braid groups on disks or spheres are Hopfian, i.e. every self-epimorphism is an isomorphism.
\end{lma}

In particular, we will prove two epimorphism statements, Lemma \ref{Dsurjbraid} and Proposition \ref{prp:surjective}, which will conclude Theorem \ref{genSMC}.

\subsection{From braid groups to Torelli: a divisorial decomposition} % (fold)
\label{sub:a_divisorial_decomposition}

Consider a configuration $C$ of embedded symplectic submanifolds for $X= \CC P^2  \# n\overline {\CC P^2}$ of homology classes shown in Figure \ref{Cn}, called a \textbf{filling divisor}.  The set of these homology classes is denoted by $\mC$.  Take a subset $\mC_0\subset\mC$, which are configurations that intersect symplectically orthogonally.  From Gompf's isotopy lemma \cite[Lemma 26]{Ev11}, $\mC_0\sim\mC$.  We then have the following collection of homotopy fibrations:

 \begin{equation}\label{summary}
\begin{CD}
Symp_c(U)=Stab^1(C) @>>> Stab^0(C) @>>> Stab(C) @>>> Symp_h(X, \omega) \\
@. @VVV @VVV @VVV \\
@. \mG(C) @. Symp(C) @. \mC_0 \simeq \mJ_{C}
\end{CD}
\end{equation}

 Here, $\mJ_{\mC}$ is the subspace of $\mJ_{\w}$ such that each class in $\mC$ has an embedded  $J$-holomorphic representative as in Proposition \ref{stratum}.  From a standard codimension argument as in Proposition \ref{stratum}, $\mJ_C$ is connected, hence $\mC_0$ is also connected (see more details in \cite[Appendix A]{Ev11}).  Here is the glossary of the terms in \eqref{summary}.

\begin{itemize}
  \item $\mC_0$: the space of embedded configurations isotopic to $C$ in the given homology classes.
  \item $Symp(C)$: automorphism of the union of curves $C$ (in the domain).
  \item $Stab(C)$: the subgroup in $Symp_h(X,\w)$ which preserves a given embedded configuration $C$.
  \item $Stab^0(C)$: the subgroup of $Stab(C)$ which fixes $C$ pointwise.
  \item $\mG(C)$: gauge group of the normal bundle of $C$, i.e. automorphisms of the normal bundle of $C$ induced by $Stab^0(C)$.
  \item $Stab^1(C)$: the subgroup of $Stab^1(C)$ which fixes the normal bundle of $C$.
  \item $Symp_c(U)$: the compactly supported symplectomorphisms of $U=X_n\setminus C$.
\end{itemize}

\begin{figure}[ht]
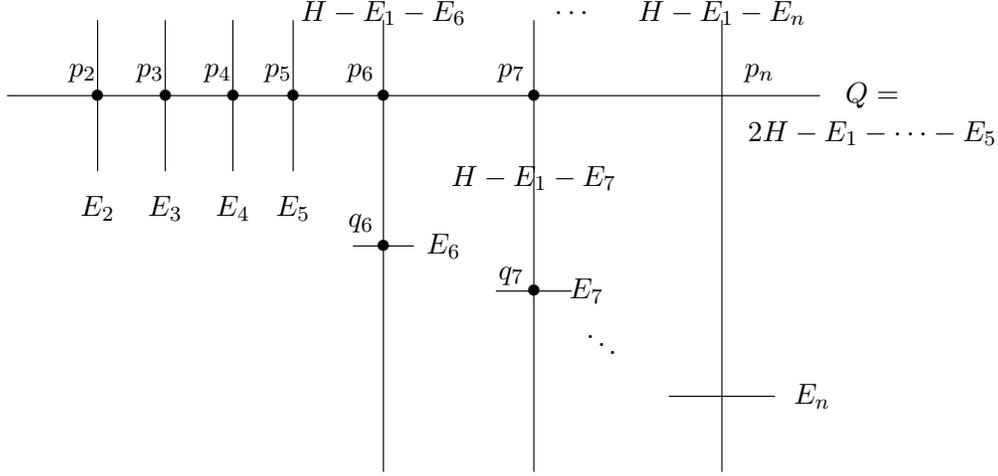

  \centering
\[
\xy
(0, -10)*{};(108, -10)* {}**\dir{-};
(115, -10)*{Q=};
(115,-15)*{2H-E_1-\cdots-E_5};
(50, 0)*{}; (50, -60)*{}**\dir{-};
(50, 1)*{H-E_1-E_6};
(12, 0)*{}; (12, -20)*{}**\dir{-};
(10, -7)*{p_2};
(12, -10)*{\bullet};
(12, -25)*{E_2};
(21, 0)*{}; (21, -20)*{}**\dir{-};
(19, -7)*{p_3};
(21, -10)*{\bullet};
(21, -25)*{E_3};
(30, 0)*{}; (30, -20)*{}**\dir{-};
(28, -7)*{p_4};
(30, -10)*{\bullet};
(30, -25)*{E_4};
(38, 0)*{}; (38, -20)*{}**\dir[red, ultra thick, domain=0:6]{-};
(36, -7)*{p_5};
(38, -10)*{\bullet};
(38, -25)*{E_5};
(46, -30)*{}; (54, -30)*{}**\dir{-};
(47, -7)*{p_6};
(50, -10)*{\bullet};
(50, -30)*{\bullet};
(47,-27)*{q_6};
(58, -30)*{E_6};
(70, 0)*{}; (70, -60)*{}**\dir{-};
(70, -21)*{H-E_1-E_7};(75, 1)*{\cdots};
(95, 0)*{}; (95, -60)*{}**\dir{-};
(95, 1)*{H-E_1-E_n};
(65, -36)*{}; (75, -36)*{}**\dir{-};
(67, -7)*{p_7};
(70, -10)*{\bullet};
(70, -36)*{\bullet};
(67,-34)*{q_7};
(77, -36)*{E_7};
(88, -50)*{}; (102, -50)*{}**\dir{-};
(107, -50)*{E_n};
(100, -7)*{p_n};
(79, -42)*{\ddots};
\endxy
\]
\caption{A filling divisor in $X_n$}
  \label{Cn}
\end{figure}

Each vertical map in \eqref{summary} is the fibration of a transitive action of a Lie group, and the horizontal maps are the inclusion of the corresponding isotropy group.  This follows the same line of argument as in Proposition \cite[Proposition 3.8]{LLW22}, and we only give a detailed justification of the fibration at the center for readers' convenience, since it is less standard (the argument in \cite{LLW22} is affected more essentially due to the different choice of symplectic divisors).

\begin{lma}\label{surj}
$Stab^0(C)\to Stab(C)\to Symp(C)$ is a homotopy fibration.
\end{lma}

\begin{proof}
We adapt the proof of Proposition 5.4 in \cite{LLW22}.

 Denote the spheres in class $H-E_1-E_i$ as $S_i$, the ones in class $E_i$ as $e_i$, and the one in class $2H-E_1-\cdots-E_5$ as $Q$. Let $p_i=e_i\cap Q$ for $2\le i\le 5$; and $p_i= S_i \cap Q$ for $i\ge 6$.  Also define $ q_i= S_i \cap e_i $ for $i\ge 6$. 

 Recall from \cite{Ev11} and \cite{LLW15}, $Symp(C)=\prod_{i=6}^{n} Symp(S_i;p_i,q_i)\times \prod_{i=2}^{5} Symp(S_i;p_i) \times Symp(Q,n-1)\times \prod_{i=6}^nSymp(e_i; q_i),$ where $Symp(S_i; p_i,q_i)$ is the symplectomorphism group of the sphere in class $H-E_1-E_i$ fixing the intersection points $p_i, q_i$, $Symp(Q,n-1)$ is the symplectomorphism group of $Q$ fixing $p_i$, $i=2,\cdots,n$, and $Symp(e_i; q_i)$ is the symplectomorphism of $e_i$ fixing $q_i$. Since $f:Stab(C)\to Symp(C)$ is a group homomorphism, we only need to show the projection to each factor is surjective.  This is clear for $Symp(e_i,q_i)$ and $Symp(S_i;p_i,q_i)$ factors (see Lemma 2.5 in \cite{LLW15}, for example).

The only thing we need to prove is the restriction map of $f$ being surjective on the factor $Symp(Q,n-1)$.
 This means for any given $ h^{(2)}\in Symp(Q,n-1)$, we need to find a symplectomorphism $h^{(4)}\in Stab(C) $  which fixes the whole configuration  $C$ as a set, whose restriction on $Q$ is $ h^{(2)}.$  To achieve this, we can blow down the exceptional spheres $e_i$ and $S_i$, and obtain
 a minimal symplectic rational surface 
 \begin{equation} \label{}
    \wt X=  \left\{ 
         \begin{aligned}
          &S^2\times S^2,  &\text{ when }n=2k  \\
           &\CP^2\#\ov\CP^2,  &\text{when }n=2k+1.        
         \end{aligned}
      \right.
 \end{equation}
  $\wt X$ also comes with a rational curve $\ov Q$, the proper (inverse) transform of $Q$, along with $n-1$ disjoint symplectic balls $\coprod_{i=2}^{n} B(i)$, and $n-5$ rational curves $D_i$ as the proper transforms of $e_i$ for $i\ge6$. Each intersection $B(i)\cap \ov Q$ is a disk $\Omega_i\subset\ov Q$. 

  This blow down process sends  $ h^{(2)}$ in $Symp(Q,n-1)$ to a unique
 $\overline {h^{(2)}} $ in $Symp(\ov Q,\coprod_{i=2}^{n} \Omega_i).$ It suffices to find a symplectomorphism $\overline {h^{(4)}}$ whose restriction is $\overline {h^{(2)}}$, and fixing the image of balls $\coprod_{i=2}^{n} B(i)$ and $D_i$.  Once we have that, blowing the balls $\coprod_{i=2}^{n} B(i)$ up and obtain a symplectomorphism $h^{(4)}\in Stab(C) $ whose restriction to $Q$ is
 the given $ h^{(2)}\in Symp(Q,n-1)$.

Now for a given  $\overline {h^{(2)}} $ in $ Symp(\ov Q,\coprod_{i=2}^{n} \Omega_i)$,
first find  $f^{(4)} \in  Symp(\wt X,\w)$ whose restriction on $\ov Q$ is   $\overline {h^{(2)}} $ in $Symp(\ov Q,\coprod_{i=2}^{n} D_i)$: $\overline {h^{(2)}}$ in $Symp(\ov Q,\coprod_{i=2}^{n} \Omega_i)$ is a  Hamiltonian diffeomorphism on $\ov Q$, therefore, the Hamiltonian function can be extended to a neighborhood so that the induced Hamiltonian diffeomorphism $f^{(4)}\in  Symp(\CC P^2  \# n{\overline {\CC P^2}},\w)$ equals $\overline{h^{(2)}}$ when restricted to $Q$.  $f^{(4)} $ clearly fixes the $(n-1)$ intersection disks $\coprod_{i=2}^{n} \Omega_i$.

Then we need another symplectomorphism  $g^{(4)} \in  Symp(\CC P^2,\w)$ so that $g^{(4)}$  move the $n-1$ symplectic  balls  back to their original position in
$\CC P^2.$    Namely, by Lemma 4.3 and Lemma 4.4  in \cite{Wu13}, the space of ball-packings relative to a divisor is connected, therefore, there exists a symplectomorphism $g^{(4)}\in Symp(\wt X,\w) $ such that  the composition $\overline {F^{(4)}}=g^{(4)}\circ f^{(4)}$ is a symplectomorphism fixing the $n-1$ balls.  After this, the isotopy from $\overline {F^{(4)}}(D_i)$ back to $D_i$ can be achieved by the usual $J$-holomorphic technique: pick an $\w$-compatible almost complex structure $J_0$ so that $\overline {F^{(4)}}(D_i)$ is $J_0$-holomorphic, and there is a path of $\w$-compatible $J_t, t\in[0,1]$ so that $D_i$ is $J_1$-holomorphic, and for all $J_t$, the $J_t$-holomorphic curves that passes through $D_i\cap \ov Q$ are embedded.  Note that we are in the situation of \cite{AM99} where $D_i$ is the fiber class in a minimal rational ruled surface, hence they cannot bubble for energy reasons.  The resulting isotopy of rational curves can therefore be extended to a Hamiltonian isotopy and hence be composed with $\overline {F^{(4)}}(D_i)$.

The end result of the above discussion is a symplectomorphism $\overline {h^{(4)}} $ of $\wt X$, which fixes the  $(n-1)$ balls and $D_i$.   Upon blowing up these balls we obtain an element $h^{(4)}$ in $Stab(C)$, which is  a ball swapping symplectomorphism whose restriction on $Symp(C)$
creates the group  $Symp(Q,n-1)$.  Hence this restriction map $Stab(C) \rightarrow  Symp(C)$ is surjective.

It is clear that the action of $Stab(C)$ on $ Symp(C)$ is transitive and by Theorem A in \cite{Mei02} $Stab(C) \rightarrow  Symp(C)$ is  a fibration. 

 \end{proof}

We first study $Symp_c(U)$ in \eqref{summary} for type $\mathbb{D}_{n-1}$ forms.  Consider a numerical condition 
\begin{equation}\label{cv}
  a >\frac{n-3}{n-1}.
\end{equation}

From Lemma \ref{conecv}, any $\w_a$ satisfying \eqref{cv} lies in the normalized reduced symplectic cone for any $n\ge5$.

\begin{lma}\label{CgenJ}
  Given a positive rational surface $(X_n,\w)$, $[\w]\in P_K^+$.  Then $\w$ is the K\"ahler form for some integrable complex structure $J$, which contains a $J$-holomorphic filling divisor as in Figure \ref{Cn}.  
\end{lma}

\begin{proof}
  Take a good generic complex structure as in \cite[Definition 2.1]{FM88}.  This can be obtained from the blow-up of $n$ distinct points generically chosen from a smooth cubic curve in $\CP^2$ (c.f. \cite[Proposition 2.6]{FM88}).  The genericity of blow-up position guarantees each irreducible component in Figure \ref{Cn} are embedded for this complex structure $J$.  

  On the other hand, the K\"ahler cone of a good generic complex structure is precisely $P_K^+$ by \cite[Proposition 3.4]{FM88}.  This means $J$ is compatible with some $\w'$ cohomologous to $\w$.  Therefore, $\w$ is K\"ahler with respect to some integrable complex structure by McDuff's uniqueness of symplectomorphism type \ref{t:DusaUniqueness}.

  % We discuss two cases: $n\le 9$, and $n\ge 10$.

  % When $n\le 9$, every symplectic form is K\"ahler on $X_n$, and there is always a generic complex structure $J$, such that any embedded  $J$-holomorphic curve with negative self-intersection is an embedded exceptional curve.  Notice that each component in the filling divisor of $C$ in \ref{C_n} is an embedded exceptional curve, and hence such $J$ is our desired $J$.

  % When $n\ge 10$, we will use Friedman-Morgan's good generic complex structures cf. \cite[Section 2]{FM88}.  This is obtained by blowing up $n$ points on the torus in class $3H$ on $\CC P^2$, and we denote the anti-canonical divisor by $F$. Recall \cite[Lemma 2.2]{FM88}, for a good generic rational surface $X$, if $C$ is an irreducible curve on $X,$ $C \ne F,$ and $C^2 <0,$ then $C$ is an exceptional curve. Hence if $C$ is an irreducible curve on $X$ and $C <-1, then C= F.$ Hence by this lemma, the only irreducible curves with negative self-intersection on a good generic surface are the smooth exceptional curves in every exceptional class and an anti-canonical curve when n is at least 10.   Hence we get a J-holomorphic configuration of smooth exceptional curves as in Figure \ref{Cn}.    Now by Nakai-Moishezon, the $J$-Kahler cone is the $c_1$-positive cone. Hence for any $c_1$ positive symplectic form, we can find such $J.$
  
\end{proof}

\begin{lma}
  Let $(X,\w)$ be $\CC P^2 \# n\overline {\CC P^2}$, $n>5$, $\w\in P_K^+$.  Assume that $[\w]=(1|m_1,\cdots, m_n)$ with rational periods (i.e. all $m_i\in\mathbb{Q}$) with 
\begin{equation}\label{e:Stein}
     2m_1-1-\sum_{i\ge6}m_i>0
\end{equation}
Then there exists a good generic complex structure $J$, which is compatible with $\w$, such that $(U=X_n-C,\w|_U)$ is Stein.  Here $C$ is the unique $J$-configuration as in Figure \ref{Cn}. Therefore, there is a weak homotopy equivalence $Symp_c(U)\sim Symp_c((\bP^1\setminus\{p_1,\cdots p_{n-5}\})\times D^2,\w_{std})$, where the symplectic structure on $\bP^1\times D^2$ is a standard product structure equipped with appropriate symplectic areas on each factor.  

  In particular, this applies to $\w_a$ for  $a>\frac{n-3}{n-1}$.
\end{lma}

\begin{proof}

  Since classes satisfying $\eqref{e:Stein}$ is contained in $P_K^+$, any good generic complex structure $J'$ is compatible with some K\"ahler form $\w'$, where $[\w']=[\w]$.  Clearly, the $J'$-configuration given by Lemma \ref{CgenJ} is unique.

  Given an intersection $p$ between two distinct components of $C$, take a small neighborhood $B_p$, in which $C = \{z_1 z_2 = 0\}$.  By \cite[Lemma 1.7]{Se03}, we can perturb $\w'$ near $p$, such that the resulting symplectic form has $\w''=\frac{\sqrt{-1}}{2}(dz_1\wedge d\bar z_1+dz_2\wedge d\bar z_2)$ standard in $B_p$.  Therefore, we may assume that each intersection between components of $C$ are $\w'$-orthogonal.  Using the pushforward of the pair $(\w',J')$ by \ref{t:DusaUniqueness}, we obtain a K\"ahler pair $(\w,J)$ with a $J$-holomorphic filler divisor $C$, where the pairwise intersections between components are symplectic orthogonal.

 From Proposition 3.3 in \cite{LLW15}, if $PD[l\w]$ is a positive linear combination of homology classes of irreducible components in $C$ for some large positive integer $l\in \mathbb{Z}$, then $U$ is Stein.  The set of homology classes for these irreducible components are given by $\{2H-E_1-E_2-E_3-E_4-E_5, H-E_1-E_6,E_2,E_3, E_4, E_5, E_6; H-E_1-E_i, E_i, \}$, where $7\leq i\leq n$.  Let
\begin{align}
PD([l\omega])=aH-b_1E_1-b_2E_2-b_3E_3-\cdots -b_nE_n \nonumber\\
=d_0( 2H-E_1-E_2-E_3-E_4-E_5 )   \nonumber\\
+ d_2 E_2 +\cdots d_5 E_5 \nonumber \\
+ \sum_{i=6}^n d_iE_i +  \sum_{i=6}^n f_i(H-E_1-E_i) \nonumber
\end{align}

Comparing the coefficients, we have 
\begin{align}
  &d_0 +\sum_{i=6}^n f_i =b_1\\
  &2d_0 +  \sum_{i=6}^n f_i =a,\\
  &d_0-d_j=b_j,\hskip 2mm j=2, 3, 4, 5,\\
  &f_i-d_i=b_i,\hskip 2mm i\ge6.
\end{align} 

Therefore, $d_0=a-b_1$ and $d_j=a-b_1-b_j$ for $j=2, 3, 4, 5$.  Here $d_j>0$ because it is the area of $H-E_1-E_j$.   Since $\sum_{i\ge6}f_i=\sum_{i\ge6}b_i+\sum_{i\ge6}d_i=2b_1-a$, the condition $2b_1-a-\sum_{i\ge6}b_i>0$ ensures a set of positive (possibly rational) solution.  Such solutions can be taken integral when $l$ is sufficiently large.

To see that this complement is symplectomorphic to $(\bP^1\setminus\{p_1,\cdots p_{n-5}\})\times D^2$, take a Hirzebruch surface $F(n-2)$ along with a section $\Sigma_{n-2}$ of self-intersection $n-2$, as well as $n-2$ points of generic position $\{p_2,\cdots,p_n\}\subset\Sigma_{n-2}$.  Take a divisor $C_0$ consisting of the union of $\Sigma_{n-2}$ along with $n-5$ fibers pasing through $p_6,\cdots, p_n$.  Blowing up $p_2,\cdots,p_{n}$ along $\Sigma_{n-2}$, we denote the resulting exceptional classes as $e_2, \cdots, e_n$.  Denote the total transformation of $C_0$ as $C_1$.

As the blow-up of $F(n-2)$, the second homology of the resulting rational surface has a basis $\{b,f,e_2,\cdots,e_n\}$, where $b=[\Sigma_{n-2}]$ and $f$ is the fiber class.  As a rational surface, there is a standard basis consisting of line and exceptional classes $\{H,E_1,\cdots,E_n\}$.  There is a base change of the second homology class of this surface, such that $e_i\mapsto E_i$ for $i=2, 3, 4, 5$ and $f-e_j\mapsto E_j$ for $j\ge 6$, $b\mapsto2H-E_1-\cdots-E_5$ and $f\mapsto H-E_1$.  Such a transformation is unique, and $C_1$ gives exactly a configuration as in Figure \ref{Cn} after the base change.  Therefore, the complement of $C_1$ is biholomorphic to the complement of $C_0$ in $F(n-2)$, which is in turn biholomorphic to $(\bP^1\setminus\{p_1,\cdots p_{n-5}\})\times D^2$.  By \cite[Proposition 16]{Ev11}, the two compactly supported symplectomorphism groups are weakly homotopic equivalent.

  % For the last statement, note that

\end{proof}

 % \begin{proof}

 % The proof is the same as Proposition 3.3 in \cite{LLW15}.

 % \end{proof}

 \begin{lma}\label{sympcu}
$U=(\CC-\{p_1,p_2,\cdots, p_{n-6}\})\times \CC,$ $Symp_c(U,\w_{std})  $
 is weakly contractible.  In particular, it is connected.
\end{lma}
\begin{proof}

Let $m=n-6$.  From \cite[Proposition 15]{Ev11}, the compactly supported symplectomorphism group of $U$ can be identified with that of $(D^2-\{p_1,p_2,\cdots, p_{m}\})\times D^2$, while the two disks share the same area form.  Denote $U_m=(D^2-\{p_1,p_2,\cdots, p_{m}\})\times D^2$ with such a product symplectic form.   There is a compactification of  $U_m$ into $M^c=S^2 \times S^2$, where $U_m=M^c\setminus\{\Sigma_\infty, F_0,\cdots, F_{m}\}$.  Among the components of the compactifying divisor, $\Sigma_\infty$ is a section, $F_i$ is a collection of fibers of the trivial $S^2$-fibration, and we assume $F_i\cap\Sigma_\infty=q_i$ for $i=0,\cdots,m$.  Let us also denote the homology classes $[F_i]=F$ and $[\Sigma_\infty]=B$, and the collection $\{F_i\}_{1\le i\le m}$ as $Z_0$.

In the proof of Gromov's theorem (cf. \cite{MS04} proof of Theorem 9.5.1), $Symp_c(M^c\setminus(\Sigma_\infty\cup F_0))\sim Symp_c(D^2\times D^2)$ is contractible, which can be considered as a subgroup of $Symp_h(M^c)$  which fixes a neighborhood of $\Sigma_\infty\cup F_0$.  Consider the space $\mZ$ of configurations consisting of pairwise disjoint symplectic spheres $C=\{C_1, \cdots C_m\}$ in the homology class $F$, such that $C_i  \cap \Sigma_\infty= F_i  \cap \Sigma_\infty=q_i$.  

\noindent {\bf Claim:}
$\mZ$ is contractible.
\begin{proof}

The contractible space $\mJ_{M^c}$ fibers over $\mZ$ by sending $J$ to the configuration of rational curves of class $F$ passing through $q_i$.  The homotopy fiber of this map is given by another contractible space, the almost complex structures which make a fixed configuration pseudo-holomorphic.

\end{proof}

$Symp_c(M^c\setminus(\Sigma_\infty\cup F_0))$ acts transitively on a smaller space of configurations $\mZ_0\subset\mZ$, consisting of an $m$-tuple of rational curves which coincides with the fixed configuration $Z_0$ in a small neighborhood of $\Sigma_\infty$.  But $\mZ_0$ is homotopy equivalent to $\mZ$ by  Gompf's isotopy (see \cite[Lemma 2.3]{Gom95}, or \cite[Section 5.2.1]{Ev11}).
 % This is  because the space of  each $C_i$ passing through $F_i  \cap D$ is contractible, and the point set $\{p_1,p_2,\cdots, p_{m}\}$ is fixed because it is a subset of $D$ which is fixed point-wisely.

 Therefore, we have the following fibrations similar to \eqref{summary} (using the same set of notations for the various stabilizer groups)

 \begin{equation}\label{e:Um}
\begin{CD}
Symp_c(U_m)=Stab^1(Z_0) @>>> Stab^0(Z_0) @>>> Stab(Z_0) @>>> Symp_c(D^2\times D^2) \\
@. @VVV @VVV @VVV \\
@. \mG(Z_0) @. Symp(Z_0)^* @. \mZ_0
\end{CD}
\end{equation}

Here, $Symp(Z_0)^*$ is the product of the symplectomorphism groups of each $F_i$ which fixes a small neighborhood of $q_i\in F_i$ (here $F_i$ is considered as an abstract symplectic curve, and its embedding into $S^2\times S^2$ is irrelevant). This is homotopic to the compactly supported symplectomorphism of a disk, hence contractible.  The gauge group $\mG(Z_0)$ is again the product of the gauge group on each $F_i$.  Such gauge group measures the action of $Stab^0(Z_0)$ on the normal bundle of $F_i$.  Again, since the tangent space of $q_i$ is fixed, this gauge group is homotopic to a point.  Therefore, all terms in \eqref{e:Um} are weakly contractible.

 % Then it follows from LES
 %  $$ 0= \pi_{n+1}(\mZ)\to \pi_n(Symp_c(U_m) ) \to \pi_n(Symp_c(\CC^2)) \to \pi_n(\mZ)=0$$

 % that the $Symp_c(U_m)$ is weakly contractible.

\end{proof}

We are ready to come back to \eqref{summary} for a symplectic form $\w$ satisfying condition \eqref{cv}.  Diagram \eqref{summary} has the following form.
 \begin{equation}\label{e:Un6}
\begin{CD}
Symp_c(U_{n-6}) @>>> Stab^0(C) @>>> Stab(C) @>>> Symp_h(X, \omega) \\
@. @VVV @VVV @VVV \\
@. \ZZ^{2n-7} @. (S^1)^{2n-6}\times \Diff^+(S^2,n-1) @. \mC_0 \simeq \mJ_{C}
\end{CD}
\end{equation}

The terms $Symp(C)$ and $\mG(C)$ can be computed by the fact that

$$Symp(S^2,1)\cong S^1, Symp(X^2,2)\cong S^1, Symp(S^2,3)\cong *;$$
$$ Symp(S^2,k)\cong \Diff^+(S^2,k)\cong PB_k(S^2)/\langle \tau\rangle  \text{ for }k>1. $$
$$\mG(S^2,0)\cong S^1, \mG(S^2, 1)\cong *; \mG(S^2, k)\cong\ZZ^{k-1} \text{ for }k>1. $$

Here, $Symp(S^2, k)$ denotes the symplectomorphism groups of $S^2$ which fixes $k$ points, while $\tau$ represents full twist element in the braid group; and $\mG(S^2, k)$ denotes the gauge group of a $D^2$-bundle over $S^2$ which fixes $k$ fibers.  The above calculations were proved in \cite{Ev11} and summarized in \cite[Section 2.1]{LLW15} \cite[Section 3.1]{LLW22}.

Consider the various long exact sequence of homotopy groups associated to \eqref{e:Un6} starting from the leftmost fibration, yielding $Stab^0(C)\cong \ZZ^{2n-7}$ from Lemma \ref{sympcu}.  For the zero$^{th}$ homotopy group of $Stab(C)$, we also have from \cite[Lemma 2.9]{LLW15} that the connecting map from $\pi_1 (S^1)^{2n-6}$ to $\ZZ^{2n-7} $ is surjective.    Therefore, $ \pi_0(Stab(C))\simeq \pi_0(\Diff^+(S^2,n-1)).$  Summarizing the above discussions, we have

\begin{lma}\label{Dsurjbraid}

For any normalized reduced symplectic form $\w$ on $X_n$ satisfying  \eqref{cv}, there is a surjective map  $\pi_0(\Diff^+(S^2,n-1)) \to \pi_0(Symp(X,\w))$.

\end{lma}

\begin{proof}
The long exact sequence of homotopy groups on the rightmost fibration in \eqref{e:Un6} reads
\begin{equation}\label{psi}
\cdots\to  \pi_1 (\mC_0) \to \pi_0(\Diff^+(S^2,n-1)) \to \pi_0(Symp(X,\w))  \to 1.
\end{equation}

\end{proof}

\begin{rmk}\label{rem:generators}
   It seems useful in many occasions that we have an explicit description for a set of generators for $\pi_0(Symp(X,\w))$ from the above discussions, assuming \eqref{cv} and using the ball-swapping constructions as in Lemma \ref{surj}.

   Since $\pi_1((S^1)^{2n-6})\to \ZZ^{2n-7}$ is surjective, the map $Stab(C)\to Symp(C)\simeq(S^1)^{2n-6}\times\Diff^+(S^2, n-1)$ induces an isomorphism in $\pi_0$, which means that a symplectomorphism $\phi\in Stab(C)$ is not isotopic to identity (inside $Stab(C)$) if and only if it induces a non-trivial braid element in $\pi_0(\Diff^+(S^2,n-1))$ when restricted to the component with homology class $2H-E_1-\cdots-E_5$.  From Lemma \ref{surj}, one may construct such a symplectomorphism for each braid element by blowing-down the corresponding exceptional curve, and using an isotopy of the resulting ball embedding.
\end{rmk}

  % There's an open region of the reduced  cone where $\w$ is of type $\aA,$ s.t.  $1 \to 1\to \pi_0(Symp(X,\w)) \to W(\Gamma_L)\to 1$.

\subsection{From Torelli to braid groups: monodromies from complex moduli spaces}

The aim of this section is to construct an epimorphism $\pi_0(Symp(X,\w))\twoheadrightarrow\pi_0(\Diff^+(S^2,n-1))$.  We will construct a family of rational surfaces $\mY_n$ from the configuration space of $n-1$ points on $\CP^1$, for which each fiber $(\mY_n)_x$ is diffeomorphic to $\CP^2\#n\ov\CP^2$ for $n=2k+1$.  For ease of notations, we will adopt the convention that a point in $\Conf_{n-1}(\CP^1)$, the \textbf{ordered} configuration space, is given by $\{p_i\}_{2\le i\le n}$, where $p_i\in\CP^1$.

\begin{dfn}\label{d:qn}
   Take $F(1):=\CP^2\#\ov\CP^2$ and denote the canonical homology basis consisting of the line class and the exceptional class as $\{h,e_1\}$.  Moreover, fix two distinct holomorphic embeddings $u, v:\CP^1\to F(1)$ whose homology classes are $h$.  Let $(F(1),u(\CP^1))\times\Conf_{n-1}(\CP^1)$ be the trivial family of a first Hirzebruch surface with a nef divisor over $\Conf_{n-1}(\CP^1)$.  There are two types of sections over this trivial family

   \begin{itemize}
      \item The section $s_{p_i}$ for $i\ge4$ is given by $\{u(p_i)\}$ over $\underline{p}:=(p_1, \cdots, p_n)\in \Conf_{n-1}(\CP^1)$.
      \item For $i=2,3$, take $q_i$ to be the intersection between $v$ and the $h-e_1$ curve that passes through $u(p_i)$.  This defines sections $s_{q_i}$.

      % We require $q_i$ be disjoint from the intersections between $C_i$ and $u$.
    \end{itemize}

  Blowing up $s_{p_i}$ and $s_{q_i}$, we have a smooth family of rational surfaces $\mY_n$.  Over a point $\underline{p}\in\Conf_{n-1}(\CP^1)$, we denote the fiber as $(\mY_n)_{\underline p}$, and the proper transform of $u$ as $(\Sigma_0)_{\underline{p}}$.  The union of $(\Sigma_0)_{\underline{p}}$ defines a divisor $\Sigma_0$ in $\mY_n$.  Sometimes the subscript $\underline{p}$ is omitted when the context is clear.
\end{dfn}

	% There are a family of choices of $q_i$, parametrized by $C^\infty$ sections of a $\CC^4$-bundle over $\Conf_{n-1}(\CP^2)$.  Therefore, our family $\mY_n$ is only a $C^\infty$-family of rational surfaces depending on such a choice.  In most cases, the choice of such a section is irrelevant except in Section \ref{sub:when_n=7}, and will be omitted.  It is possible to upgrade this section to be holomorphic in some cases, but we will not need this additional structure in the subsequent arguments.

In the above constructed $\mY_n$, there is a canonical basis for $H_2((\mY_n)_x,\ZZ)$ given by $h$, $e_1$ and the fiber of the exceptional divisors blowing up $s_{p_i}$ and $s_{q_i}$ for all $x\in\Conf_{n-1}(\CP^1)$.  We denote the exceptional divisors coming from $s_{p_i}$ and $s_{q_i}$ as $e_i$, $i\ge2$.

\begin{lma}\label{l:existenceCurves}
For each fiber of $\mY_n$, $h-e_1-e_i$ and $e_i$ each have an embedded holomorphic representative for all $i\ge2$.

Moreover, exactly one of the following pair of curves admits embedded representatives

\begin{enumerate}
  \item $h-e_2-e_3$ and $h-\sum_{i=4}^{n} e_i$;
  \item $h-e_2-e_3-e_a$ and $h-\sum_{i=4}^{n} e_i$ for some $a\ge4$;
  \item $h-e_2-e_3$ and $h-\sum_{i=4}^{n} e_i-e_a$ for some $a=2$ or $3$.
\end{enumerate}

\end{lma}

\begin{figure}[tb]
  \centering
  \includegraphics[scale=0.85]{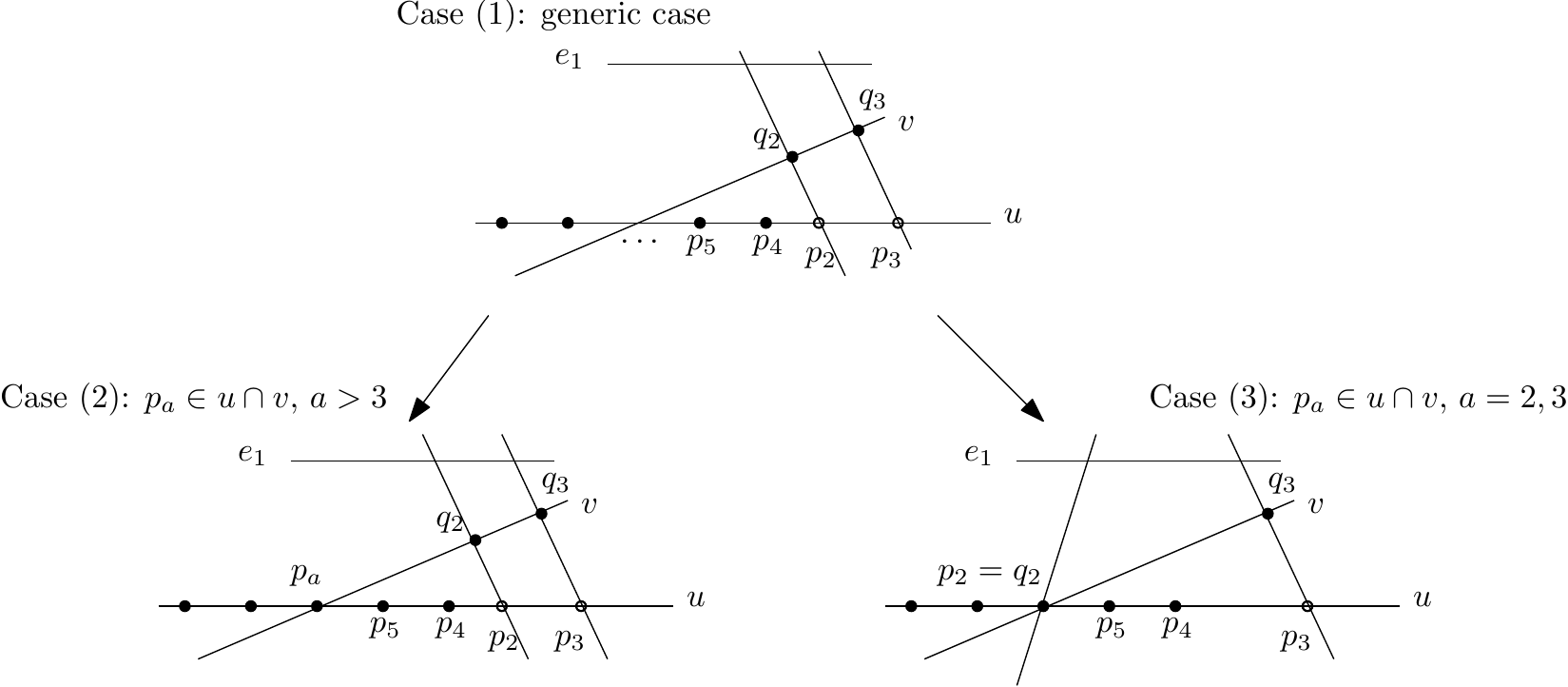}
  \caption{The three cases of Lemma \ref{l:existenceCurves}}
  \label{fig:threecases}
\end{figure}

\begin{proof}
  For the first assertion, note that before blowing up $u(p_i)$ and $q_i$, curves of the class $h-e_1$ intersect $u(\CP^1)$ and $v$ at exactly one point.  Therefore, if $h-e_1-e_i$ has a non-embedded representative, then either $u(p_i)=u(p_j)$ for some $i\neq j$, or some $q_i$ belongs to the $e_1$-curve, but neither is possible.  For $e_i$ to have a stable representative, some $u(p_i)$ and $q_j$ must coincide, which also contradicts our assumption.

  For the second part, case (1) corresponds to the case when none of $u(p_i)$ or $q_i$ lies at $u\cap v$, which is a single point; while case (2) corresponds to some $u(p_i)=u\cap v$, and case (3) happens when either $q_2$ or $q_3$ is $u\cap v$.
\end{proof}

 \begin{lma}\label{DkNM}

    Consider the family of rational surfaces with a divisor $(\mY_n,\Sigma_0)$.  For each fiber $(\mY_n)_x$, any holomorphic curve must have a non-negative coefficient of $h$ for all $x\in\Conf_{n-1}(\CP^1)$.  Moreover, there is a relatively ample line bundle $\mL\to \mY_n$, such that $\mL_x=\mathcal{O}(lh-(l-2)e_1-e_2-\cdots-e_n)$ over $(\mY_n)_x$ for all $l\ge n+5$.

       In particular, the family of complex structures are compatible with some K\"ahler form $\w_l$ whose class is dual to $lh-(l-2)e_1-\cdots-e_n$ for each $l\ge n+5$.
\end{lma}

\begin{proof}

  % Note from \eqref{e:trivialkey} the statement is equivalent to the ampleness of $\mathcal{O}(lh-(l-2)e_1-\cdots-e_n)$.

 %{\bf Proof of 2)}

  Take a fiber $(\mY_n)_x$, there is always a smooth representative of $h$ by taking a line in $F(1)$ avoiding a discrete set of points.  This proves the first assertion.  

  Let $D_A$ represent an irreducible rational curve of class $A$.  We construct a family of divisors as follows.  If $x\in\Conf_{n-1}(\CP^1)$ belongs to the first case in Lemma \ref{l:existenceCurves}, define
     \begin{equation} \label{e:Ddecomp}
              D_x:=(D_{h-e_{2}-e_3}+D_{e_{2}}+D_{e_{3}})+D_{h-\sum_{i=4}^{n} e_i}+\sum_{i=2}^3D_{h-e_1-e_i}+(l-4)(h-e_1-e_n)+(l-4)e_n,
     \end{equation}

  In case (2), replace $D_{h-e_{2}-e_3}$ by $D_{h-e_{2}-e_3-e_a}+D_{e_a}$; and in case (3), replace the second component $D_{h-\sum_{i=4}^{n-2} e_i}$ by $D_{h-\sum_{i=4}^{n-2} e_i-e_a}+D_{e_a}$.  
    By Lemma \ref{l:existenceCurves}, each irreducible component involved in $D_x$ exists and is unique. Therefore, this family of divisors defines a divisor $D\subset\mY_n$, giving a line bundle $\mL\to\mY_n$ (the degeneration of case (2) and (3) both give simple normal crossings).

  To prove $\mL\to\mY_n$ is relatively ample, we appeal to Nakai-Moishezon criteria by analyzing each fiber $\mathcal{L}_x\to(\mY_n)_x$, since our fibration is smooth and proper \cite[Theorem 1.7.8]{Positivity}.

  Note first that, any irreducible component involved in \eqref{e:Ddecomp} intersects $D_x$ positively.  Given any irreducible curve $\Sigma_x$ which is not one of the components of $D_x$, first suppose its $h$-coefficient is positive.  Then $\Sigma_x\cdot D_x\ge\Sigma_x\cdot (D_{h-e_{2}-e_3}+D_{e_{2}}+D_{e_{3}})>0$.  If $\Sigma_x$ has a vanishing $h$-coefficient, it must have at least one positive $e_i$-coefficient.  Since each $e_i$ has an embedded representative for each $i\ge2$, we have $i=1$ or $\Sigma=e_i$.  The latter case clearly intersects $D_x$ positively; and if $i=1$, $\Sigma$ must have the form $e_1-\sum_{i\in I\subset \{2,\cdots,n\}}c_ie_i$, where $c_i\ge0$ by positivity of intersections.  Since all $p_i$, $q_i$ are disjoint from $e_1$, $\Sigma_x=e_1$, which also intersects $D_x$ positively.  This shows that $D_x$ is an ample divisor and the rest of claim follows.

  % Lastly, the ample line bundle $\mathcal L_D$ of the  divisor $D$ varies in a natural way with respect to $x\in\Conf_{n-1}(\CP^1)$, this gives a relatively ample line bundle on $\mY_n$.  This concludes our lemma.

 \end{proof}

To get back to the topology of symplectomorphism groups, we need to consider a space of almost complex structures which have similar properties as those appearing in the family $\mY_n$.  Our goal is to show that the natural embedding the family $\mY_n$ into the universal family over this infinite dimensional almost complex structure is $1$-connected.

\begin{dfn}\label{JJ4}
Given a symplectic rational surface $(X,\w)$ of reduced $\mathbb{D}$ type, equipped with a basis $\{h,e_1,\cdots,e_n\}$ of $H_2(X)$ as before. $\mJ_{\w}^{s}$ is defined as the subspace of $\mJ_{\w}$ where each almost complex structure $J$

\begin{itemize}
  \item admits only holomorphic curves with non-negative $h$-coefficients;
  % \item admits embedded representatives for $e_i$, $i\ge2$;
  \item there is no subsets $I, K\subset\{2,\cdots n\}=I\cup K$, such that $e_1-\sum_{i\in I}e_i$ and $h-\sum_{k\in K}e_K$ each has an embedded $J$-representative.

  % \item if $k\ge4$, when $J$ admits a rational curve with homology class $h-\sum_{i\in I}e_i$, then $I$ contains at most $k-1$ elements in $\{6,\cdots,n=2k+1\}$;
  % \item if $n=7$, \footnote{fix the case $n=7$ later}.
\end{itemize}

\end{dfn}

To motivate this definition, the first conditio guarantees the simplicity of curve bubblings of curves that concerns us (Lemma \ref{l:bubble}).  We will prove the second condition prevents symmetries of the almost complex structures in Lemma \ref{4free}.  If $J\in\mJ_\w^2$ is integrable, the above conditions are equivalent to asking $J$ to be a blow-up complex structure of $F(1)$, and that not all such blow-ups belongs to the union of a line and the exceptional curve $e_1$.  The curve cone of almost complex structures $\mJ_\w^s$ has no essential difference from these integrable cases, as is shown below.  

We have the following basic properties of $\mJ_\w^s$.

\begin{lma}\label{l:codim}
	$\mJ_\w-\mJ_\w^s$ has at least codimension $4$ for any $n\ge5$.
\end{lma}

\begin{proof}
  Since we assume our symplectic form $\w$ is reduced, from \cite[Lemma 3.4]{Chen20}, a curve with negative $h$-coefficient must be of the shape $(a+1)e_1-ah-\sum_{i\in I}e_i$ for some $a\in\mathbb{Z}^+$.  Such almost complex structures contain only finitely many strata and each have at least codimension $4$ from \cite[Appendix B.1]{AP13}.

  If there is a decomposition $\{2,\cdots,n\}=I\cup L$ as in Definition \ref{JJ4}, since $|I|+|L|\ge n-1$, the existence of curves therein demands the codimension of such almost complex structures to have codimension at least 4 by Proposition \ref{stratum}.
\end{proof}

\begin{lma}\label{l:bubble}
   For any $J\in\mJ_\w^s$, each of $e_i, h-e_1-e_i$ is represented by at least one embedded $J$-holomorphic curve.  Also, $h-e_i-e_j$ and $2h-e_{i_1}-e_{i_2}-e_{i_3}-e_{i_4}-e_{i_5}$ each has at least one stable representative, whose irreducible components consist of curves in class $e_i$, $e_1-\sum_{i\in Q\subset \{2,\cdots,n\}}e_i$, $h-\sum_{i\in Q\subset [n]}e_i$ or $2h-\sum_{i\in Q\subset [n]}e_i$, where $[n]=\{1,\cdots,n\}$.

   Moreover, the stable representative of $e_1$ has a unique main component of type $e_1-\sum_{i\in Q}e_i$.  If $l,k\notin Q$, the stable representative of $h-e_l-e_k$ also has a unique main component of the shape $h-e_l-e_k-\sum_{i\in Q'}e_i$ where $1\notin Q'
   $.

\end{lma}

\begin{proof}
  The assertion of $h, e_{i\ge2}$ and $h-e_1-e_{i\ge2}$ follows from the definition of $\mJ^s_\w$, as well as the fact that $e_i$, $h-e_1-e_i$ have the smallest symplectic area among all exceptional curves (Lemma \ref{minemb}).

  For $h-e_i-e_j$ and $2h-e_{i_1}-e_{i_2}-e_{i_3}-e_{i_4}-e_{i_5}$, it follows from the definition of $\mJ_\w^s$ that if an irreducible component $[C]=ah-\sum_i b_ie_i$, then $2\ge a\ge0$. Also, $b_i\ge0$ unless $[C]=e_i$ when $i\ge2$ from the type $\mathbb{D}$ assumption.  If $a>0$, from the adjunction inequality, we know $c_1(C)-C^2\le 2$.  A straightforward computation shows that $a\ge b_i$, and the equality holds only when $b_i=1$ for all $i$.

  If $a=0$, again from a simple calculation based on adjunction inequality and the $\mathbb{D}$ type assumption that $m_1> m_2=\cdots=m_n$, one deduces that $b_1=-1$, $b_i=1$ for some $i\in Q\subset\{2,\cdots,n\}$ and $b_i=0$ for $i\notin Q$.

  For the second assertion, the $e_1$ stable components have the desired shape from the above calculation in the case of $a=0$.  For $h-e_l-e_k$ components, note that for any component with negative coefficients on $e_l$ or $e_k$, it must have a positive $e_1$ or $h$ coefficient (again it's a consequence of adjunction as above).  Since $l,k\notin Q$, both of them belongs to the unique component with positive $h$-coeffcient.  $1\notin Q'$ follows from the reduced condition and the symplectic area constraint.
  % s have non-negative $h$-coefficient, negative $e_i$-coefficients, and the $h$- from the first assertion.  Also, since $$
\end{proof}

We consider next the moduli space of isomorphism classes of almost K\"ahler structures with an almost K\"ahler form $\w$, i.e. the space of compatible almost complex structure modulo reparametrization by symplectomorphisms.  The construction is given by \cite{FS88} Corollary 3.2, where Fujiki and Schumacher denotes $\mC^k_{a,\w}$ as the $H^k$-completion of smooth almost complex structures compatible with $\w$, then the moduli space $\mM^k_{a,\w}:=\mC^k_{a,\w}/Symp^{k+1}(X,\w)$, where $Symp^{k+1}(X,\w)$ is the $H^{k+1}$-completion of the smooth symplectomorphism group. For the moduli space of smooth almost complex structures, one can take the inverse limit on $k,$ and endow the inverse limit topology of the moduli space  $\mM_{a,\w}=\mJ_{\w}/Symp(X,\w)$ (see \cite[Section 6]{FS88} for further details). We can restrict the group action to make a moduli space $\mJ_{\w}/Symp_h(X,\w)$ since the two groups only differs by a finite extension.  

\begin{lma}\label{4free}
Given a reduced type $\mathbb{D}$ positive rational surface $(X,\w)$ with $\chi(X)>8$, the action of $Symp_h(X,\w)$ on $\mJ_{\w}^s$ is free, hence $\pi_i(Symp_h(X,\w))=\pi_{i+1}(\mJ_{\w}^s)/Symp_h(X,\w)$ for $i=0,1$.

\end{lma}

\begin{proof}

From our assumption on $\mJ_\w^s$, we know $I\neq\{2,\cdots,n\}$ as defined in \ref{JJ4}.
We claim that $\{2,\cdots,n\}\setminus I$ contains at least three elements.  Otherwise, assume $\{2\}$ is the only element that does not belong to $I$, then the moduli space of $h-e_2$ forms a two-dimensional family.  The only possible degenerations are rigid configurations $(h-e_2-e_i)+e_i$ or $(h-e_1-e_2)+(e_1-e_3-\cdots-e_n)+\sum_{i=3}^n e_i$ from Lemma \ref{l:bubble}.  Therefore, $h-e_2$ has an embedded representative, contradicting the second assumption of $\mJ_\w^s$.

Assume $\{2,3\}\cup I=\{2,\cdots,n\}$. The stable representative of $h-e_2-e_3$ must be $(h-e_2-e_3-\sum_{k'\in K'}e_{k'})+\sum_{k'\in K'}e_{k'}$ from Lemma \ref{l:bubble}.  Here $1\notin K'$
due to the lack of $h-e_1-e_2-e_3$ curves by area constraints.  But this again contradicts our assumption on $\mJ_\w^s$.

We therefore assume $\{2,3,4\}$ are disjoint from $I$, and that $h-e_2-e_3-e_4$ does not admit a stable representative.
Consider the stable representatives of $h-e_2-e_3$ and $h-e_2-e_4$.  Again by Lemma \ref{l:bubble} and a similar argument as above, their stable representatives each have a component
of the shape $h-e_2-e_3-\sum_{l\in L_0'}e_l$ and $h-e_2-e_4-\sum_{l\in L_1'}e_l$.  Moreover, $h-e_1-e_2$ has an embedded representative as a minimal area curve.  These three components each intersects $e_2$ once.

Assume $\sigma\in Symp_h(X,\w)$ which fixes some $J\in\mJ_\w^s$.  From the analysis above, $\sigma$ must fix the $e_2$-curve pointwise.  Furthermore, the main component of $h-e_3-e_4$ also has the form $h-e_3-e_4-\sum_{l\in L_2'}e_l$.  We then consider the intersections of the embedded sphere $C$ of class $h-e_1-e_2$ with three curves: $e_1-\sum_{i\in I}e_i$, $e_2$, and the main component of $h-e_3-e_4$.  These are again three distinct points on $C$ which were fixed by $\sigma$.  This implies $h-e_1-e_2$ is also fixed pointwise by $\sigma$, hence $\sigma$ fixes the whole tangent space of the intersection between $h-e_1-e_2$ and $e_2$.  This implies $\sigma$ must be a trivial action.

\end{proof}

\begin{rmk}\label{rem:dropJs}
  The second assumption on $\mJ^s_\w$ cannot be dropped: one may easily construct an $S^1$ action on a type $\mathbb{D}$ rational surface using Karshon's description, by blow-ups on the exceptional divisor $e_1$ and a line $h$.  The corresponding equivariant almost complex structures don't even have finite stabilizers.  The first assumption prevents a large amount of technical difficulty, but the authors do not know whether it is necessary.
\end{rmk}

Now we are ready to establish the following $\alpha$-map.

\begin{prp}\label{l:alpha}
    There is a continuous map $\alpha: \Conf_{n-1}(\CP^1)\to\mJ_\w^s/Symp_h(X_k,\w_l)$, where $PD[\w_l]=lh-(l-2)e_1-e_2-\cdots-e_n$ for $l\ge n+5$.
\end{prp}

\begin{proof}

Consider the family of rational surfaces polarized by $\mL$ in Definition \ref{d:qn} and \ref{DkNM}.  The fiberwise projective embedding of $(\mY_n)_{\underline{p}}$ given by the polarization (after raising $\mL$ to a certain power) induces a biholomorphism between a fiber $(\mY_n)_{\underline p}$ with an integrable complex structure of some symplectic manifold $(X,\w_{\underline p})$.  From the definition of $\mL$, $\w_{\underline p}$ is symplectomorphic to $(X,\w_l)$ after a rescaling of the form, therefore associates a map $\alpha: \Conf_{n-1}(\CP^1)\to \mJ_{\w_l}/Symp_h(X,\w_l)$.

 Note that $\mJ_{\w_l}^s$ is invariant under $Symp_h(X,\w_l)$ action, since Definition \ref{JJ4} imposes only constraints on the curve cones.  The $\alpha$-image clearly satisfies the first condition of Definition \ref{JJ4}, verified by Lemma \ref{DkNM}, and the second condition is also clear from the choice of $q_i$.

\end{proof}

Our last ingredient of the proof is a map 
$$\beta: \mJ^s_\w/Symp_h(X,\w)\to\Conf_{n-1}(\CP^1)/PSL(2,\CC).$$  Taking an almost complex structure $J\in \mJ^s_\w$, we consider the stable representative $\Sigma_b$ of the class $[\Sigma_b]=2h-e_1-e_2-e_3-e_4-e_5$.  There are three possible types of bubblings of $\Sigma_b$ if it is not an embedded curve from Lemma \ref{l:bubble}:

\begin{enumerate}[(i)]\label{dec2h}
  \item $(2h-e_1-e_2-e_3-e_4-e_5-\sum_{i\in I_0} e_i)+\sum_{i\in I_0} e_i$;

  \item $(h-\sum_{i_0\in I_0}e_{i_0})+(h-e_1-\sum_{i_1\in I_1}e_{i_1})+\sum_{i\in I_2}e_i$;

  \item $(e_1-\sum_{i_0\in I_0}e_{i_0})+(h-e_1-\sum_{i_1\in I_1}e_{i_1})+(h-e_1-\sum_{i_2\in I_2}e_{i_2})+\sum_{i_3\in I_3}e_{i_3}$.
\end{enumerate}

In the above decompositions $|I_1|=0$ or $1$ from the area restrictions.  All index sets $I_0$, $I_1$, $I_2$ and $I_3$ are subsets of $\{2,\cdots,n\}$.  In each case, we will call the irreducible component in the first parenthesis our \textbf{main component}.  For convenience, we will denote the $J$-representative of a class $C$ as $J(C)$ below.

Note that it is (only) possible for a stable curve listed above to have a multiply-covered component, which is in case (iii) when $I_1=I_2$.  In that case, the second and third components should be combined.

 % To see this, first note that $a=1, b_1=0$ is impossible, otherwise, the total homology class of the remaining curve will have a negative $e_1$-coefficient, contradicting Lemma \ref{l:bubble}.  It is also impossible to have $a=0$: this means $b_1=-1$, and would imply the remaining components to have a total $e_1$-coefficient bigger than the $h$-coefficient.  This again contradicts Lemma \ref{l:bubble}.

 % We will abuse the notation and consider such multiple covers belong to the third case above when $I_1=I_2$.

  Consider the stable curves consisting of the union of two embedded components: $h-e_1-e_i$ and $e_i$, for each $i\ge2$.  All of them are irreducible and intersect the main component of $\Sigma_b$ at a unique point, denoted as $p_i$, yielding a configuration in $\Conf_{n-1}/PSL(2,\CC)$.

  % The space of choices of $\Sigma_h$ is not unique, but is connected, which implies $\bar\beta: \mJ^s_\w\to\Conf_{n-1}(\CP^1)/PSL(2,\CC)$ is well-defined up to homotopy.  Clearly, the image is preserved by the action of a symplectomorphism on $\mJ^s_\w$.  hence, we have

\begin{prp}\label{prp:beta}
    The above procedure defines a  continuous map $\beta: \mJ^s_\w/Symp_h(X,\w)\to\Conf_{n-1}(\CP^1)/\PSL(2,\CC)$ by sending $\beta: J\mapsto [p_i]$.  
\end{prp}

\begin{proof}

   Note first that the image $\beta(J)$ is also the image of the forgetful map from $\ov\eM([\Sigma_b]; \gamma_2,\cdots, \gamma_n; J)$ to its marked point configuration of the domain curve, where $\ov\eM([\Sigma_b]; \gamma_2,\cdots, \gamma_n; J)$ is the compactified moduli space of $J$-holomorphic curves of class $[\Sigma_b]$ with $n-1$ marked points evaluated to the cycles $\gamma_i$ defined by the union of $h-e_1-e_i$ and $e_i$.  This map is clearly smooth in the open and dense subset of $\mJ_\w$ when $J$ admits an embedded representative of $\Sigma_b$.  We call this subset the \textbf{generic locus}.  Our aim is to prove $\beta$ extends continuously to the stable locus by analyzing the configuration of marked points.
   Even though the point configuration is well-defined only up to a $\PSL(2,\CC)$, the reparametrization group is irrelevant when we discuss continuity near a fixed configuration when we fix the parametrization of $\CP^1$.  Therefore, we will use a notation $p_i:=\beta(J)_i$ as the $i^{th}$ component in a configuration without loss of generality.

   % We call $J$ \textbf{generic} if it admits an embedded $[\Sigma_b]=2h-e_1-e_2-e_3-e_4-e_5$ representative by abuse of notation.  

   We divide the rest of almost complex structures into types according to the bubblings of $2h-e_1-e_2-e_3-e_4-e_5$ listed above, and will use $J(A)$ to denote the $J$-holomorphic curves of class $A$ when such a curve is unique.

   Take a sequence of generic $\{J_r\}_{r\to\infty}$ approaching $J$ of type (i).  By the definition of Gromov convergence, we have $u_r:(\CP^1; p_2^r,\cdots,p_n^r)\to (X, J_r)$ converges in $C^\infty$-sense outside a small neighborhood of $\{p^r_2,\cdots,p^r_n\}\subset\CP^1$ to $u_\infty: (\CP^1; p_2,\cdots,p_n)\to (X,J)$ restricted to the complement of a neighborhood of $\{p_2,\cdots,p_n\}$.  Here $u_\infty$ is precisely the main component of $J([\Sigma_b])$.  Therefore, the configuration $(\CP^1; p_2^r,\cdots,p_n^r)$ clearly converges to $(\CP^1; p_2,\cdots,p_n)$ after shrinking the neighborhoods.

   The type (ii) case is similar: if $I_1=\emptyset$, the component of $h-e_1$ can be ignored.  Otherwise, $I_1$ must contain exactly one element $k$ by the energy constraint.  This means the bubble at $p_k$ is now $h-e_1-e_k$ instead of $e_k$ in the first case.  Regardless, the formation of the bubble of $h-e_1-e_k$ is obtained by removing a shrinking neighborhood of $\beta(J_r)_k$ on the $J_r([\Sigma_b])$.  Therefore, $\beta(J_r)_k\to\beta(J)_k$ when $r\to \infty$.

   For type (iii), if either of $I_1$ or $I_2$ is empty, we'll simply ignore that component since the bubbling happens away from the configuration.

   If $I_1=\{k\}$, the intersection between $J_r(h-e_1-e_k)$ or $J_r(e_k)$ and the irreducible $J_r([\Sigma_b])$ gives $\beta(J_r)_k$.  From the same argument as in type (ii), one may remove from the domain $\CP^1$ a small neighborhood of $\beta(J_r)_k$, then the $J_r([\Sigma_b])$ converges to $J(e_1-\sum_{i_0\in I_0}e_{i_0})$ uniformly outside a slightly larger neighborhood.  As the neighborhood shrinks, this establishes the convergence $J_r([\Sigma_b])_k\to J(e_1-\sum_{i_0\in I_0}e_{i_0})_k$.

   Lastly, since only the convergence in the main component is relevant to our discussion, which cannot be multiply-covered, the proof goes through without any changes when non-simple components are involved.  The independence of $\beta$ under the $Symp_h(X,\w)$ action is obvious.
\end{proof}

\begin{lma}\label{l:composition}
   Let $(X,\w_l)$ be a type $\mathbb{D}$ rational surface with $PD(\w_l)=lh-(l-2)e_1-e_2-\cdots-e_n$ for $l\ge n+5$.  Then $\beta\circ\alpha:\Conf_{n-1}(\CP^1)\to\Conf_{n-1}(\CP^1)/\PSL(2,\CC)$ induces a surjective map of the corresponding fundamental groups.
\end{lma}

\begin{proof}

  Considering the complex structure $\alpha(\underline p)$, we have a natural ruling given by curves of homology class $h-e_1$.  This ruling has $(n-1)$ special fibers consisting of $J_{\alpha(\underline p)}(h-e_i)$ and $J_{\alpha(\underline p)}(e_i)$.  From this perspective, $\beta\circ\alpha(\underline p)$ is given by the intersections between the main component of $J_{\alpha(\underline p)}(2h-e_1-\cdots-e_5)$
  and these special fibers.  Also, $[u(\underline p)]\in\Conf_{n-1}(\CP^1)/PSL(2,\CC)$ can be regarded as the intersection between $u$ and the special fibers.

  On the other hand, for any two sections $S_0$ and $S_1$ of this ruling, one can define a biholomorphism $\mu_{0,1}:S_0\to S_1$, which sends $S_0\ni p_0\mapsto p_1\in S_1$ when $p_0, p_1$ belong to the same fiber.  In particular, this defines a biholomorphism from the main component of $J_{\alpha(\underline p)}(2h-e_1-\cdots-e_5)$ to the $\alpha$-defining curve $u$ as in \ref{d:qn}.    Since the marked points of both $u$ and $J_{\alpha(\underline p)}(2h-e_1-\cdots-e_5)_{main}$ intersects the same ordered set of special fibers, this means $\beta\circ\alpha(\underline p)=[\underline p]\in \Conf_{n-1}(\CP^1)/\PSL(2,\CC)$ is the canonical projection, hence the desired surjectivity is obvious.

\end{proof}

\begin{prp}\label{prp:surjective}
    In the situation of \ref{l:composition}, we have a surjective group homomorphism $\pi_0(Symp_h(X,\w))\twoheadrightarrow PB_{n-1}(S^2)/\mathbb{Z}_2$.
\end{prp}

\begin{proof}
  From Lemma \ref{l:composition}, we have 
  $$\pi_1(\mJ^s_\w/Symp_h(X,\w))\twoheadrightarrow \pi_1(\Conf_{n-1}(\CP^1)/PSL(2,\CC))\cong PB_{n-1}(S^2)/\mathbb{Z}_2.$$  
  % Since $n\ge13$,  $codim(Z)=codim(\ov Z)\ge6$, therefore, $\pi_1((\Conf_{n-1}(\CP^1)/PSL(2,\CC))-\overline{Z})=P_{n-1}(S^2)/\mathbb{Z}_2$.

  On the other hand, from Lemma \ref{l:codim} we have 
  $$\pi_1(\mJ^s_\w/Symp_h(X,\w))=\pi_1(\mJ_\w/Symp_h(X,\w)).$$ 

  From the exact sequence associated to $Symp_h(X,\w)\to\mJ_\w\to \mJ_\w/Symp_h(X,\w)$, $\pi_0(Symp_h(X,\w))\cong \pi_1(\mJ_\w/Symp_h(X,\w))$, hence the claim.

\end{proof}

Therefore, combining with the Hopfian property \ref{Hopfian}, the two surjective maps \ref{Dsurjbraid} and \ref{prp:surjective} concludes Theorem \ref{genSMC} for $[\w_l]=lh-(l-2)e_1-e_2-\cdots-e_n$.  From Lemma \ref{l:dense}, we conclude \ref{genSMC} for all type $\mathbb{D}$ classes which are $c_1$-positive.

\section{Generation of Lagrangian Dehn twists and parametrized Gromov-Witten invariants} % (fold)
\label{sec:kroheimer_mcduff_fibration_ball_swappings_and_dehn_twists}

 We first give a technical overview of the proof of Theorem \ref{t:generation}.  The proof is divided into two steps.  Section \ref{sub:meridians_of_} contains the main setup.  Given a symplectic form $\w$ of type $\mathbb{D}$ or $\mathbb{E}$, we consider $\mA_\w$ as a subset of $\mA_{\w'}$ for some type $\mathbb{A}$ perturbation $\w'$.  What we need to clarify is the kernel of \eqref{e:surjective} since we know it is not an isomorphism in general.  We will see that all Torelli elements come from $\pi_1(\mA_{\w})$, and we already knew that $\pi_1(\mA_{\w'})$ does not give any non-trivial Torelli elements.  We prove that the extra generators of $\pi_1(\mA_{\w})$ come from the deletion of certain $U_D$ from $\mA_{\w'}$, where such $D$-classes are $\w'$-symplectic and $\w$-Lagrangian.  

 The only thing left to verify is the connection between these meridians and Lagrangian Dehn twists.  We construct a loop around $U_D$ out of Dehn twists.  The main technical argument in Section \ref{sub:a_parametrized_gromov_witten_invariant} is the computation of a parametrized Gromov-Witten using symplectic field theory, which proves that the loop we constructed is indeed homotopic to a meridian.

\subsection{Meridians of $U_D$ and Lagrangian Dehn twists} % (fold)
\label{sub:meridians_of_}

 % The goal of this section is to introduce the notion of a \textit{meridian} of a component $U_D$ of $\mA_\w^2$ as in Definition \ref{d:setStratification}. \footnote{Define $U_D$ in this definition.  The original definition only in $\mJ$.}

 We start by considering the  following piece of long exact sequence associated to Kronheimer-McDuff fibration with a base point:

\begin{equation}\label{e:SES}
    \pi_1(\Diff_0(X), id)\xrightarrow{i_\w} \pi_1(\mS_\w, \w)\xrightarrow{j_\w} \pi_0(Symp_h(X,\w))\xrightarrow{k_\w} \pi_0(\Diff_0(X))
\end{equation}

We will omit the dependence on $\w$ for these $i, j, k$ maps when it is clear from the context.  We first notice the following elementary fact.

\begin{lma}\label{l:connectingMap}
  The image of a loop $[\{\w_t\}_{t\in[0,1]}]\in\pi_1(\mS_\w,\w)$ under the connecting map $j$ in \eqref{e:SES} can be given by Moser's method.  More specifically, $\w_t$ yields a family of diffeomorphisms $f_t:X\to X$ such that $(f_t)_*\w_0=\w_t$, and $j([\{\w_t\}])=[f_1]$.
\end{lma}

\begin{proof}
  The family of diffeomorphisms $\{f_t\}_{t\in[0,1]}$ given by Moser's technique is precisely a lift of $\w_t$ to $\Diff_0(X)$ over the (based) Serre fibration $(\Diff_0(X),id)\to(\mS_\w,\w)$, therefore, the statement simply unwraps the definition of the connecting map of homotopy exact sequences in our situation.
\end{proof}

We would like to point out two facts regarding Lemma \ref{l:connectingMap}.

\begin{itemize}
  \item When $X$ is a rational surface, it is a consequence of Theorem A.1 in \cite{LLW15} that $j$ is onto, because the image of $k$ is trivial in \eqref{e:SES}.
  \item Although it is an abstract property of Serre fibrations, we point out that the independence of various choices of the class $j([\{\w_t\}])=[f_1]\in\pi_0(Symp_h(X))$ is also a consequence of a family Moser's technique.  For example, two different primitives $\lambda_1(t)$ and $\lambda_1(t)$ satisfying $\frac{d}{dt}\w(t)=d\lambda_i(t)$ can be connected by a linear interpolation, which yields a smooth two-dimensional family of Moser vector fields that provides the needed isotopy.  We leave the details for interested readers.
\end{itemize}

Let $L\subset (X,\w)$ be a Lagrangian sphere with a chosen Weinstein neighborhood $\eW_L$.  We recall a construction from \cite[Proposition 1.1]{Sei08}, which gives an explicit smooth isotopy from $\tau_L^2$ to the identity, where the isotopy is supported in $\eW_L$ that supports $\tau_L$.  Consider the standard cotangent bundle $(T^*S^2,\w_{std})$, and denote its zero section as $Z$.   There is a symplectic deformation $\w^s$ of $\w_{std}$, which admits a circle action $\mu^s_\lambda, 0\le \lambda\le 1$.  Take a cut-off function $h(x)$ such that $h=0$ when $x\gg0$ and $h'(0)=\frac{1}{2}$.  For technical reasons, we take 
\begin{equation}\label{e:identity}
     h(x)=\frac{x}{2}\text{ when }|x|<\epsilon
\end{equation}
 for a small number $\epsilon$. Then $\tau_Z^2$ is defined as $\mu^0_{4\pi h'(||v||)}$, where $||v||$ is the magnitude of the cotangent vector in the round metric.  The desired smooth isotopy can be chosen as $\phi_s:=\mu^{1-s}_{s\cdot 4\pi h'(||v||)}$.  For each $\mu_\lambda^s$, $Z$ is a fixed point set, therefore, throughout this isotopy, $Z$ is point-wise fixed. By shrinking the support of $h(x)$ if necessary, both $\tau_Z$ and the above isotopy can be implanted inside $\eW_L$.

We summarize the above discussions as follows:

\begin{lma}\label{l:fixedIsotopy}
    $\tau_L^2$ is smoothly isotopic to identity through a compactly supported isotopy $\phi_t$ in any Weinstein neighborhood $\eW_L$ that supports $\tau_L^2$.  Moreover, this isotopy fixes the zero section pointwise.
\end{lma}

What plays a central role in our proof is a loop of compatible pairs $(\w_t^L,\gamma_t^{L,\w})$.  Take the isotopy $\phi_t^L$ from $\tau_L^2$ to $id$ as above, it induces a loop of cohomologous symplectic forms 
$$\w_t^L:=(\phi_t^L)_*(\w),$$ where $\w_1^L=\w_0^L=\w$.  It is tautological from Lemma \ref{l:connectingMap} that $j_\w[\{\w_t^L\}]=[\tau_L^2]\in\pi_0(Symp_h(X,\w))$.

We now construct a smooth loop of almost complex structures $\gamma^{L,\w}_t$ associated to any Lagrangian $2$-sphere $L$ in a closed symplectic four manifold $(X,\w)$.
Take an arbitrary almost complex structure $J_0$ compatible with $\w_0^L=\w$, and define $J'_t:=(\phi_t^L)_*(J_0)$.  This usually does not form a loop, but $J'_1$ is compatible with $\w_0^L$.  Therefore, one may concatenate the above path of $\{J'_t\}_{t\in[0,1]}$ with another arbitrary path in $\mJ_{\w_0^L}$, which connects $J'_1$ to $J_0$, and is constant outside the Weinstein neighborhood $\eW_L$.  Note that the choices of the second half of the loop form a contractible space.

  The above construction gives a loop of almost complex structures, denoted as $\{\gamma_t^{L,\w}\}_{t\in[0,1]}$.  Concatenating a constant path to $\w_t^L$, we may assume that $\gamma_t^{L,\w}$ is compatible with $\w_t^L$ for every $t$.  Denote this loop as $\gamma^{L,\w}:=\{\gamma^{L,\w}_t\}_{t\in[0,1]}$, and call $J_0$ the \textbf{base point} of $\gamma^{L, \w}$.  

  \begin{rmk}\label{rem:welldefinedgamma}
    As is readily seen from our definition, $\gamma_t^{L,\w}$ is only well-defined up to a variety of choices.  

    \begin{enumerate}
      \item Choices of concatenations from $J_1'$ to $J_0$.  We already pointed out the choices of such paths form a contractible space.  It is also clear from our construction that, one may choose this path of almost complex structures which is constant outside of $\eW_L$.  Therefore, as long as we fix such a choice of inside $T^*S^2$ for each base point, $\gamma_t^{L,\w}$ is well-defined.

      \item Choices of base points $J_0$.  When $J_0$ varies smoothly along some path $J_0^s$, there is also a choice of a family of paths connecting $(J_1^s)'$ to $J_0^s$, such that the resulting $\gamma^{L,\w}$ varies smoothly.  Indeed, for any choice of the simplicial family $J_0^\Delta$, one may choose a $\Delta$-family of paths varying smoothly, which connect $(J_1^\Delta)$ to $J_0^\Delta$, by the contractibility of $\mJ_\w$.  But we make no claim that there is a canonical choice of such a path for each $J_0\in\mJ_\w$, which is much stronger. \footnote{If one only requires continuity for this family, a canonical choice can be obtained as follows.  $\phi^L$ induces an automorphism of $\mJ_\w$, which is homotopic to identity by contractibility of $\mJ_\w$.  Composing the inverse of the contraction from the identity map, this induces a canonical choice of continuous path for every $J_0$.  From this perspective, what we did above is to create a smooth isotopy between two simplicial maps $\sigma_0,\sigma_1:\Delta\to \mJ_\w$ by smoothing theory of the finite-dimensional objects.  The canonical choice is hard because one needs to smooth a mapping of infinite dimension.} 

      In particular, the change of $J_0$ will not affect the free homotopy class of $\gamma^{L,\w}$, but only its base point.  Sometimes we simply write $\gamma^L$ when the context is clear.
    \end{enumerate}

  \end{rmk}

We have the following easy property for $\w_t^L$ and $\gamma^{L,\w}$.

\begin{lma}\label{l:loopDT}
   For each Lagrangian sphere $L\subset (X,\w)$, the above constructed $\{\w_t^L\}_{t\in[0,1]}$ satisfies $j_\w([\{\w_t^L\}])=[\tau_L^2]$.

   As a result, given any almost complex structure $J_0$ compatible with $\w$, under the map $\pi_1(\mA_{\w},J_0)\xrightarrow{\sim}\pi_1(\mS_{\w},\w)\xrightarrow{j_\w}\pi_0(Symp_h(X,\w))$, $[\gamma^{L,\w}]$ has an image  $[\tau_L^2]$.

\end{lma}

\begin{proof}
    % We may concatenate the loop $\w_t^L$ by a constant loop at $\w$ without changing its homotopy class.  Up to a reparametrization, this new loop would have the property that 

    % From our construction, $\gamma_t^{L,\w}\in\mJ_{\w_t^L}$.
    From Lemma \ref{rem:AtoS} (2), the isomorphism $\pi_1(\mA_{\w},J_0)\xrightarrow{\sim}\pi_1(\mS_{\w},\w)$ sends exactly the homotopy class of $\{J_t\}_{t\in[0,1]}$ to that of $\{\w^L_t\}_{t\in[0,1]}$.  The claim then follows from Lemma \ref{l:connectingMap}.
\end{proof}

Although $\gamma^{L,\w}$ is defined for all symplectic four manifold, we will stick to the case when $X$ is a rational surface in the rest of the section.

Back to our previous setup when $L\subset\eW_L\subset (X,\w)$.   \eqref{e:identity} guarantees that $\tau_L^2=id$ in a small neighborhood of $L$.  One can then choose a deformation $\w^r$ with $\w^0=\w$, such that $L$ is $\w^r$-symplectic for $1\ge r>0$, and is supported in the subset of $\eW_L$ where $\tau_L^2$ is identity (an easy way to find such a family of $\w^r$ is to perform a symplectic cut at the level set $T^*_{\epsilon\cdot r}S^2$ for a small $\epsilon>0$).  When the deformation is sufficiently small, $\gamma^{L,\w}_t$ \emph{tames} $\w^r$ for all $s, t\in[0,1]$.   By Lemma \ref{rem:AtoS}(1), $\gamma^{L,\w}$ can be considered as a loop in $\mA_{\w^1}$.  In other words, $\gamma^{L,\w}\subset \mA_{\w^1}\cap\mA_{\w}$.

\begin{dfn}\label{d:wL}
  We define $\w_L:=\w^1$ as above.  In particular, $\w_L=\w$ outside of $\eW_L$ and $L$ is $\w_L$-symplectic.
\end{dfn}

We will use the notation $\gamma^{L,\w_L}_t$ to denote exactly the same loop of almost complex structure as $\gamma^{L,\w}_t$, but the change in superscript indicates we regard this loop as one in $\mA_{\w_L}$ and $\mA_\w$, respectively.  We note the following simple fact:

\begin{lma}\label{l:neighborhood}
   $\gamma^{L,\w_L}$ can be isotoped, as a free loop, to some $\wt\gamma^{L,\w_L}$ contained in a decentered tubular neighborhood $V\setminus U_{[L]}$ of $U_{[L]}$ by moving the base point.  Moreover, $\wt\gamma^{L,\w_L}$ is the boundary of a disk $\sigma: D^2\to \mA_{\w_L}$ in $V$, where $J_{\sigma(\zeta)}$ is independent of $\zeta\in D^2$ outside $\eW_L$.
\end{lma}

\begin{proof}
   Assume $J_0$ is the base point of $\gamma^{L,\w_L}$.  Since it lies in $\mA_{\w_L}\cap\mA_{\w}$, none of the almost complex structures on this loop admits a rational curve of class $[L]$, i.e. $\gamma^{L,\w_L}\cap U_{[L]}=\emptyset$.

   % As remarked in \ref{rem:welldefinedgamma}, our construction of $\gamma_t^{L,\w_L}$ depends continuously on $J_0$ when it varies in a continuous path: this is clear for the portion obtained by the push-forward of $\phi_t$, and the choice of paths connecting $(\phi_1)_*(J_0)$ and $J_0$ can be chosen continuously because both endpoints do.  Therefore, 

   Since $U_{[L]}$ is a codimension 2 submanifold in $\mA_{\w_L}$, one may find a homotopy of base points $\{J^s_0\}_{s\in[0,1]}$, where $J^0_0=J_0$, and the homotopy is supported within a neighborhood of $L$ where $\tau_L^2$ is identity, so that $L$ is $J_0^1$-holomorphic (hence $J_0^1\in U_{[L]}$ by definition), and none of $J_s$, $s<1$ admits a curve in class $[L]$.  Remark \ref{rem:welldefinedgamma} therefore gives a family of loops $\{\gamma_{t,s}^{L,\w_L}\}_{s,t\in[0,1]}$ for each $s$, while $\gamma_{0,s}^{L,\w_L}=J_0^s$.  Note that when we apply our construction to $J^1_0\in U_{[L]}$, the sphere $L$ is a $\gamma_{t,1}^{L,\w_L}$-holomorphic curve for all $t\in[0,1]$.  This follows from Lemma \ref{l:fixedIsotopy}, since $\phi^L_t$ fixes $L$ pointwise for every $t$.  Therefore,  $\gamma_{t,1}^{L,\w_L}$ is a loop lying entirely in $U_{[L]}\subset\mA_{\w_L}^2$.  Therefore, $\wt\gamma^{L,\w_L}=\gamma_{t,\epsilon}^{L,\w_L}$ lies inside $V\setminus U_{[L]}$ when $\epsilon$ is sufficiently small.

   Furthermore, the loop $\gamma_{t,1}^{L,\w_L}$ constructed above can further be contracted to a point inside $U_{[L]}$.  Since each $\gamma_{t,1}^{L,\w_L}$ admits $L$ as a pseudo-holomorphic curve, one may first deform this loop of almost complex structures on $L$ so that $\gamma_{t,1}^{L,\w_L}|_L$ is constant as $t$ varies, then deform the resulting loop of almost complex structures to constant while fixing their restrictions on $L$.  All involved spaces of choices are contractible, and the deformation is entirely supported in $\eW_L$, while preserving $L$ as a pseudo-holomorphic curve.  Concatenating with the isotopy from $\gamma_{t,\epsilon}^{L,\w_L}$ to $\gamma_{t,1}^{L,\w_L}$, this gives a topological disk that bounds $\wt\gamma^{L,\w_L}$, which can be smooth to the desired $\sigma$.
\end{proof}

From now on, we assume the base point of $\gamma^{L,\w_L}$ has the property required in Lemma \ref{l:neighborhood}, hence the loop lies entirely inside some tubular neighborhood $V$ of $U_{[L]}$.  Since $U_{[L]}$ forms a connected codimension 2 submanifold in $\mA_{\w_L}$, its decentered tubular neighborhood $V\setminus U_{[L]}$ has an associated $S^1$ bundle.  The corresponding exact sequence of homotopy groups has the form
\begin{equation}\label{e:nbhLES}
       \cdots\longrightarrow\pi_1(S^1)\longrightarrow \pi_1(V\setminus U_{[L]})\longrightarrow \pi_1(U_{[L]})\longrightarrow\cdots
\end{equation}

FromLemma \ref{l:neighborhood}, $[\gamma^{L,\w_L}]\in\pi_1(V\setminus U_{[L]})$ has a vanishing image in $\pi_1(U_{[L]})$, hence itself is an image of some element in $\pi_1(S^1)$.  $U_{[L]}$ has a natural coorientation from the appendix of \cite{Abr98}, fixing a isomorphism $\pi_1(S^1)\cong\ZZ$, hence $[\gamma^{L,\w_L}]$ gives a well-defined integer.  We call this integer the \textbf{linking number of $\gamma^{L,\w_L}$}, which is equivalent to the intersection number between a disk $\sigma\subset V$ with $U_{[L]}$ when $\partial\sigma=\gamma^{L,\w_L}$.  If a free loop in $V$ has linking number $\pm1$ with $U_{[L]}$, we call it a \textbf{meridian} of $U_{[L]}$.  Since $U_{[L]}$ is connected by Lemma \ref{l:connected}, we have

\begin{lma}\label{l:meridianunique}
  If $\gamma$ and $\gamma'$ are both meridians of $U_{[L]}\subset V$, then they are homotopic as free loops up to a change of orientation.
\end{lma}

  % Note that from Lemma \ref{Combinf}, $\mA_\w^0=\mA_{\w'}^0$ for all $\w, \w'\in P_K$.  Since our constructions requires $\gamma^{L,\w_L}\subset\mA_\w^0$, it can be considered as a loop in $\mA^0_{\w'}$ for any $\w'\in P_K$.

  % Lastly, we note that the definitions of linking number and meridian are entirely local to a neighborhood of $Z$ and the codimension 2 submanifold of almost complex structures which admits an embedded pseudo-holomorphic curve of class $[Z]$.  Therefore, these definitions apply to a neighborhood of any component of $U_D\subset\mA^2_\w$ in Definition \ref{estable2}, which we will use repeatedly below.

  The following result relies on a computation of a parametrized Gromov-Witten type invariant, whose proof will be postponed to the next section.

  \begin{prp}\label{c:meridian}
     $\gamma^{L,\w_L}$ is a meridian of $U_{[L]}\subset \mA_{\w_L}$.
  \end{prp}

We are now ready to prove Theorem \ref{t:generation} with the help of Proposition \ref{c:meridian}.  Consider $\w\in S\mR_n(X)$.  Perturb $\w$ to some $\w_0\in S\mR_n(X)$, where all $c_i>c_{i+1}$ and $c_1+c_2+c_3<1$ for $\w_0$.  This implies the Lagrangian root lattice $\mL_\w$ is empty, or equivalently, $\w_0(D)>0$ for all simple and positive roots in $\Gamma_{K_0}$ (see Lemma \ref{l:-2class}).  In particular, all simple and positive roots in $\Gamma_{K_0}$ can be represented as embedded $\w_0$-symplectic spheres by Lemma \ref{l:-2class}.

We will fix a coarse filtration $\Pi$ for $\w$ and $\w_0$, defined in the same way as in the proof of Proposition \ref{p:universal}.  The main question at hand is to compare $\mA_\w^0\cup\mA_\w^2$ and $\mA_{\w_0}^0\cup\mA_{\w_0}^2$.  

Since $\mA^0_\w=\mA^0_{\w_0}$ by \eqref{e:A0}, we only need to compare the second stratum by decomposing them into $U_D$ for $D\in\Gamma_{K_0}$ running over all positive and simple roots, with the additional assumption that $D\in\mD(E,\w)$ for some $E\in\Pi$ as in Lemma \ref{estable2}.  For convenience, we will consider $U_D=\emptyset$ when $D\notin\mD(E,\w)$ for any $E\in \Pi$. 

  Similar to Lemma \ref{l:commonDense}, if a simple or positive root $D$ pairs $\w$ positively, one can choose a common open dense set for $U_{\w_0,D}$ and $U_{\w,D}$ since $\w_0(D)>0$ always holds. Hence, $\mT_{\w,\w_0}$ can be taken as the union of all such dense sets over positive and simple roots satisfying $\w(D)>0$.  Therefore, we have an inclusion $\iota_{\omega,\omega_0}: \mA_\w^0\cup\mT_{\w,\w_0}\hookrightarrow(\mA_{\w_0}^0\cup\mA_{\w_0}^2)\setminus(\bigcup_{D\in\mL_\w}U_{\w_0,D})$.  The complement of the $\iota_{\w,\w_0}$-image is a meager set in $\bigcup_{\{D_i:\w(D_i)>0\}}U_{\w_0, D_i}$.  Hence a similar codimension argument as Lemma \ref{l:complementfund} implies two isomorphisms as follows

\begin{lma}\label{l:fundamentalgroup}
  The following two homomorphisms induced by inclusions are both isomorphisms
       $$(\iota_{\omega})_*:\pi_1(\mA_\w^0\cup\mT_{\w,\w_0},\ast)\xrightarrow{\sim}\pi_1(\mA_\w,\ast),$$
       $$(\iota_{\omega,\omega_0})_*: \pi_1(\mA_\w^0\cup\mT_{\w,\w_0},\ast)\xrightarrow{\sim}\pi_1\left((\mA_{\w_0}^0\cup\mA_{\w_0}^2)\setminus(\bigcup_{D\in\mL_\w}U_{\w_0,D}),\ast\right).$$

\end{lma}

     % $   \pi_1((\mA_{\w_0}^0\cup\mA_{\w_0}^2)\setminus(\bigcup_{D\in\mL_\w}U_{\w_0,D}))\cong\pi_1(\mA_\w^0\cup\mT_\w)$

 % the difference between $(\mA_{\w_0}^0\cup\mA_{\w_0}^2)\setminus(\bigcup_{D\in\mL_\w}U_{\w_0,D})$ and $\mA_\w^0\cup\mT_\w$

\begin{dfn}\label{d:meridian}
    Fix a base point $*\in\mJ_\w^0\subset\mA_\w^0=\mA_{\w_0}^0$, a free loop $\{\gamma_t\}_{t\in[0,1]}\subset\mA_{\w_0}^0$ and a codimension $2$ submanifold $U_{\w_0,D}\subset \mA^0_{\w_0}\cup\mA_{\w_0}^2$.  Assume that $\gamma$ is a meridian of $U_{\w_0,D}$ in $\mA^0_{\w_0}$, and $\alpha$ is a path that connects $*$ and $\gamma_0$ in $\mA_{\w_0}^0$.  We call any based loop which is homotopic to the form of $\alpha^{-1}\circ\gamma\circ\alpha$ a \textbf{based meridian of $U_{\w_0,D}$}.
\end{dfn}

\begin{lma}\label{l:basedImage}
    Take a based meridian $\gamma_D$ of $U_{\w_0, D}$ for any $D\in\mL_\w$.  Consider the following composition of homomorphisms 
         \begin{multline}\label{e:generationFinal}
         \pi_1((\mA_{\w_0}^0\cup\mA_{\w_0}^2)\setminus(\bigcup_{D\in\mL_\w}U_{\w_0,D}),\ast)
         \xrightarrow{(\iota_{\w,\w_0})_*^{-1}}\pi_1(\mA_\w^0\cup\mT_\w,\ast)\\
         \xrightarrow{(\iota_\w)_*}\pi_1(\mA_\w,\ast)
         \xrightarrow{\sim}\pi_1(\mS_\w,\w)\xrightarrow{j_{\w}}\pi_0(Symp_h(X,\w)).
     \end{multline}

     Here, the based point $\ast$ can be taken as any almost complex structure in $\mJ_\w^0$.  Then the image of $[\gamma_D]$ in $\pi_0(Symp_h(X,\w))$ is the mapping class of a squared Lagrangian Dehn twist or its inverse.
\end{lma}

\begin{proof}
  Assume that $\gamma_D=\alpha^{-1}\circ\gamma\circ\alpha$.  As a free loop, $\gamma$ or $\gamma^{-1}$ is homotopic to some $\gamma^{L,\w_0}$.  Assume it is $\gamma$ without loss of generality.  By a further isotopy, we may assume that $\gamma_D=\alpha^{-1}\circ\gamma^{L,\w_0}\circ\alpha$.

  From Proposition \ref{c:meridian}, $\gamma^{L,\w_0}$ is a meridian of $U_{\w_0, D}$.  Using $\iota_{\w,\w_0}^{-1}$, one may also consider it as a based loop $\gamma^{L,\w}$ in $\mA_\w$ with base point $\gamma^{L,\w}_0\in\mJ_\w^0$.  Then $[\gamma^{L,\w}]$ induces a symplectic mapping class of a squared Dehn twist $[\tau_L^2]\in \pi_0(Symp(X,\w))$ from Lemma \ref{l:loopDT}.  The change of base point is given by the path of almost complex structures $\alpha_t$, which underlies a path of isotopic symplectic forms $\tau_t$ from the definition of $\mA_\w$, where $\tau_0=\tau_1=\w$.  From Moser's technique, this induces a family of diffeomorphisms $\{f_t\}_{t\in[0,1]}$ with the particular property that $(f_1)^*\w=\w$.  Therefore, the image of $[\gamma_D]$ via the map in \eqref{e:generationFinal} is precisely $[f_1^{-1}\circ\tau_L^2\circ f_1]=[\tau_{f_1(L)}^2]$: the first two maps are simply inclusions, and the third map gives a concatenation of $(f_t)_*(\w)\circ \w_t^L\circ(f_{1-t})_*(\w)$ from Lemma \ref{rem:AtoS} and \ref{l:loopDT}.  One then unrolls the definition of $j_\w$. 
\end{proof}

\begin{lma}\label{l:generationlma}
$\pi_1((\mA_{\w_0}^0\cup\mA_{\w_0}^2)\setminus(\bigcup_{D\in\mL_\w}U_{\w_0,D}))$ is generated by $i_{\w_0}(\pi_1(\Diff_0(X), id))$, along with the based meridians of $U_{\w_0,D_i}$ for $D_i\in\mL_\w$.

   % \end{enumerate}

\end{lma}

\begin{proof}

  From Theorem \ref{aA}, $\pi_0(Symp_h(X,\w_0))=1$, so by the exact sequence \eqref{e:SES} and a codimension argument, $\pi_1(\mA^0_{\w_0}\cup \mA^2_{\w_0},\ast)\cong\pi_1(\mA_{\w_0},\ast)=i_{\w_0}(\pi_1(\Diff_0(X), id))$.

    To see the generation, suppose we have any based loop $\beta$ in $(\mA_{\w_0}^0\cup\mA_{\w_0}^2)\setminus(\bigcup_{D\in\mL_\w}U_{\w_0,D})$.  It is homotopic to the some loop $\beta'$ which gives an element in $i_{\w_0}(\pi_1(\Diff_0(X),id))$ in $\mA_{\w_0}^0\cup\mA_{\w_0}^2$.  Let this homotopy be smooth and transverse to each $U_{\w_0,D}$.  This homotopy can be modified, by adding a based meridian at each intersection with some $U_{\w_0,D}$ (see Figure \ref{fig:modification}).  Therefore, one obtains a homotopy from $\beta$ to a product of $\beta'$ and a series of based meridians.

\end{proof}

Corollary \ref{t:generation} is now an easy consequence of our previous discussions.

\begin{proof}[Proof of Corollary \ref{t:generation}]
  Since $k_\w$ in \eqref{e:SES} is trivial by \cite{LLW22}, we have a surjective homomorphism $j_\w: \pi_1(\mS_\w,\w)\twoheadrightarrow\pi_0(Symp_h(X,\w))$.  By Lemma \ref{rem:AtoS}, any loop $\alpha$ in $\mS_\w$ can be lifted by a choice of compatible loop $\alpha'$ in $\mA_\w$.  Lemma \ref{l:fundamentalgroup} guarantees that $\alpha'$ is homotopic to some loop $\alpha''$ in $(\mA_{\w_0}^0\cup\mA_{\w_0}^2)\setminus(\bigcup_{D\in\mL_\w}U_{\w_0,D})$.  \ref{l:basedImage} shows we can always decompose $\alpha''$ into loops generated from $i_{\w_0}\pi_1(\Diff_0(X), id)$ and based meridians.

  Lemma \ref{l:basedImage} asserts that the images of based meridians under \eqref{e:generationFinal} are squared Dehn twists (or their inverses).  For a loop $\beta\subset (\mA_{\w_0}^0\cup\mA_{\w_0}^2)\setminus(\bigcup_{D\in\mL_\w}U_{\w_0,D})$ whose homotopy class $[\beta]\in i_{\w_0}\pi_1(\Diff_0(X), id)$, we may assume that $\beta=\{\varphi_t(J_0)\}$, where $J_0\in (\mA_{\w_0}^0\cup\mA_{\w_0}^2)\setminus(\bigcup_{D\in\mL_\w}U_{\w_0,D})$ is the base point.  In fact, the base point $J_0$ is always taken in the generic part $\mA_{\w_0}^0=\mA_\w^0$, therefore, as an element in $\pi_1(\mA_\w,J_0)$, $[\beta]\in i_\w(\Diff(X),\id)$.  Therefore, $i_\w([\beta])=1\in\pi_0(Symp(X,\w))$, and $i_\w([\alpha])$ has the mapping class of a product of squared Dehn twists. 

  % ,  and , but the former will be mapped to identity under $j_\w$.  Therefore, $\pi_0(Symp_h(X,\w))$ is generated by $j_\w$-images of based meridians, hence Lagrangian Dehn twists.

\end{proof}

\begin{figure}[tb]
  \centering{}
  \includegraphics[scale=0.7]{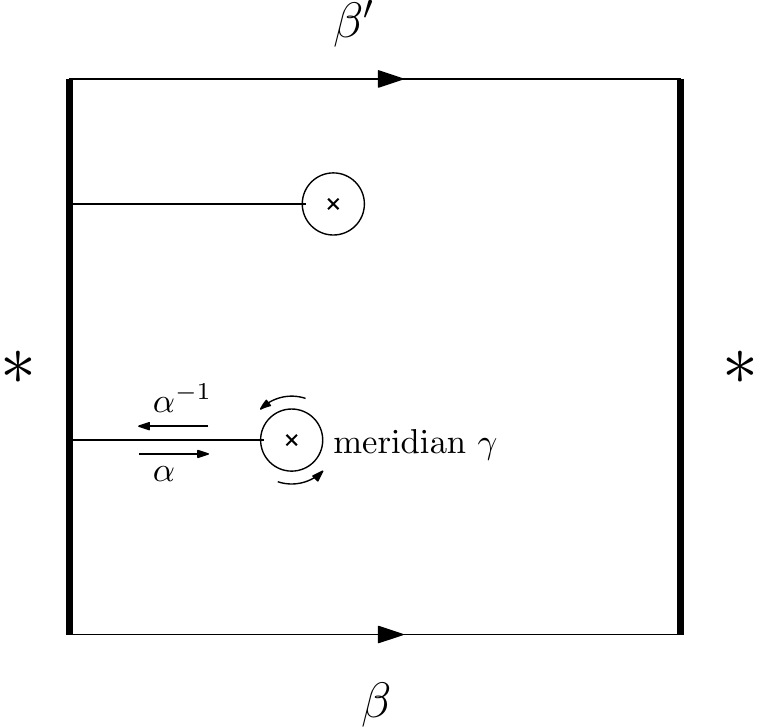}
  \caption{Modification of the homotopy}
  \label{fig:modification}
\end{figure}

\subsection{A parametrized Gromov-Witten invariant} % (fold)
\label{sub:a_parametrized_gromov_witten_invariant}

The goal of this section is to prove Proposition \ref{c:meridian}.

\subsubsection{Inputs from Symplectic Field Theory} % (fold)
\label{ssub:inputs_from_symplectic_field_theory}

We briefly review the basic setup and compactness results in symplectic field theory.  We keep the exposition minimum for our applications without proofs, and refer readers to the more comprehensive references for this topic, e.g. \cite{BEHWZ}.  For expositions closer to our applications, see  \cite{Hind04}, \cite{Eva}, \cite[Section 2.2]{LW12}, \cite[Section 4.1]{WuExotic}.

Given a Lagrangian sphere embedding $L\hookrightarrow (X^4,\w)$, we take its Weinstein neighborhood $\eW_L$ as in previous sections.  $\partial\eW_L\cong\partial T^*_\epsilon S^2$ obtains a standard contact structure $(\RP^3,\xi)$ and contact form $\lambda$ from the standard Liouville flow $\eta$ of $T^*S^2$.  Denote the Reeb vector field on $\RP^3$ as $R$, its periodic orbits come in an $S^2$-family and are of the same length. Each periodic orbit corresponds to a lift of a closed geodesic on $S^2$.

A symplectic neighborhood $N(\partial\eW_L)$ of $\partial\eW_L$ is symplectomorphic to $((-\epsilon,\epsilon)\times\RP^3, d(e^t\lambda))$ (this is a general property of \textit{contact type hypersurfaces}).  The first coordinate $t$ is induced by the flowtime along the Liouville flow $\eta$ from $\partial\eW_L$.  We say an almost complex structure $J$ is \textbf{$\lambda$-compatible} in $N(\partial\eW_L)$, if $d\lambda(\cdot, J\cdot)$ is a nondegenerate bundle metric on the contact structure along $\RP^3$.  $J$ is \textbf{adjusted to $\eta$}, or \textbf{cylindrical near the boundary} if it is

\begin{itemize}
  \item $\eta$-invariant,
  \item  $\lambda$-compatible, and
  \item $J(\partial_t)=R$ inside an open set $(-\epsilon',\epsilon')\times\RP^3\Subset N(\partial\eW_L)$.
\end{itemize}
 Any given almost complex structure can be isotoped into an adjusted one by a deformation supported in a slightly larger open set that contains $N(\partial\eW_L)$.

 There are three main ingredients from SFT that we will need.

\subsubsection*{Compactness theorem} % (fold)
\label{ssub:compactness_theorem}

The \textbf{neck-stretching process} introduced in \cite{EGHsft,BEHWZ} is a deformation of an almost complex structure $J_0=J$ adjusted to $\eta$.  More concretely, one may construct this family $\{J_s\}_{s\ge0}$ by prolonging the cylindrical part and extending by the $\eta$-invariance.  Alternatively, we consider $J_s=J$ outside of $N(\partial\eW_L)$, while in the cylindrical part, we modify the third condition of adjustness by $J_s(\partial_t)=sR$, and ask the $\eta$-invariance to hold only for $J|_\xi$.  When $s\to\infty$, $J_\infty$ defines an almost complex structure on the union of the following three pieces

\begin{itemize}
  \item $\ov W$ is the symplectic completion of $X\setminus N(\partial\eW_L)$;
  \item $\ov U$ is the symplectic completion of $N(\partial\eW_L)$, which is symplectomorphic to $T^*S^2$;
  \item $S(\partial\eW_L)$, the symplectization of the standard contact $\RP^3$.
\end{itemize}

Each of these pieces obtains a cylindrical collar ($S(\partial\eW_L)$ is cylindrical on its own) and has a natural $t$-parameter therein.  Then $J_\infty=J$ outside of $N(\partial\eW_L)$ (this complement has two connected components contained in $\ov W$ and $\ov U$, respectively), and it is $\partial_t$-invariant inside each cylindrical collar.

Given a neck-stretching family $\{J_s\}_{s\ge0}$, we have the following special case of the celebrated SFT compactness theorem from \cite{BEHWZ}:

 % and $J_{t_i}$-holomorphic curves $u_{t_i}: \CP^1\to (X, J_{t_i})$, where $[u_{t_i}]$ is a fixed homology class.  Then the compactness theorem from SFT asserts that $u_{t_i}$ converges to a $J_\infty$-holomorphic building.  I

\begin{thm}\label{t:SFTcompactness}
   Given $s_i\to \infty$, suppose that $u_i$ are $J_{s_i}$-holomorphic and $[u_i]=D$ has a fixed homology class.  Then $u_i$ converges in the SFT sense to a $J_\infty$-holomorphic building $u_\infty$.
\end{thm}

In particular, this yields a (possibly disconnected or empty) $J_\infty$-holomorphic curves with punctures that converges to Reeb orbits in the ideal contact boundary in each pieces $\ov W$, $\ov U$ and $S(\partial\eW_L)$ defined above.  Note that there might be more than one level of curves that are mapped to $S(\partial\eW_L)$.

We should point out that, one may glue up $\ov W$, $\ov U$ and $S(\partial\eW_L)$ smoothly back into $X$.  In doing that, the neck-stretching limit $u_\infty$ as above also gives a smooth closed curve in $X$, which has $[u_\infty]=D$.

\subsubsection*{Relative first chern class}

Given a holomorphic curve with asymptotic punctures at the ideal contact boundary, one would like to define a similar invariant as the Chern class.  For this purpose, one needs to fix a trivialization of the contact distribution near each Reeb orbit appearing as an asymptotic puncture, which we denote as $\Phi$.  This trivialization will also determine the Conley-Zehnder index of the relevant Reeb orbit.  Note that, since $\partial\eW_L$ is a standard contact $\RP^3$, any Reeb orbit therein is a multiple cover of a lift of a geodesic in the $S^2$ equipped with the round metric.  Denote $cov(\gamma)$ as the multiplicity of a Reeb orbit $\gamma$ over the simple one. Although we will not give the explicit definition of the Conley-Zehnder index or relative Chern class (interested readers can find an accessible exposition tailored to our situation in  \cite{Hind04,Eva,LW12}), the following calculations will suffice for our applications.

\begin{lma}[\cite{Hind04}, Lemma 7; \cite{Eva}, 6.3]\label{l:firstchern}
    Given a $J_\infty$-holomorphic building $u_\infty$ as the limit of a sequence of homologous $u_i$ stretched along $\partial\eW_L$, and assume that $u^W$ is the holomorphic curve in $\ov W$, $u^S$ is a collection of holomorphic curves in $S(\partial\eW_L)$, while $u^U$ is the holomorphic curve in $\ov U$.

    Then there exists a trivialisation of the contact type hyperplanes $\Phi$ near each orbit $\gamma$ on $(\RP^3,\xi_{std})$, such that their Conley-Zehnder index is given by $2cov(\gamma)$, and the relative Chern classes satisfies $c_1^\Phi(u^U)=c_1^\Phi(u^S)=0$ and $c_1^\Phi(u^W)=c_1([u_i])$.
\end{lma}

\subsubsection*{Dimension formula}

\begin{thm}\label{t:dimension}
   Suppose that $u^Z:\Sigma\to (Z,\w)$ is a finite energy punctured holomorphic curve, where $(Z,\w)$ is any of $\ov U$, $\ov W$ or $S(\partial\eW_L)$.  Suppose $\Sigma$ has genus zero, while $u^Z$ has $s^+$ positive punctures, converging to a series of Reeb orbits $\gamma_k^+$, $1\le k\le s^+$; and $s^-$ negative punctures, converging to $\gamma_k^-$, $1\le k\le s^-$.  Then
   \begin{equation}\label{e:index}
        \mbox{ind}(u^Z)=-2+2(s^++s^-)+2c_1^{\Phi}([u^Z])+\sum_{k=1}^{s^+}2cov(\gamma_k^+)-\sum_{k=1}^{s^-}2cov(\gamma_k^-).
   \end{equation}

\end{thm}

Here $c_1^\Phi(TZ)$ is the relative first Chern class of $(TZ,J_\infty)$ relative to the trivialization $\Phi$ along the ends, and $[u^Z]$ denotes the Borel-Moore homology class of $u$ as a non-compact cycle.

\subsubsection{A Gromov-Witten reformulation of meridians}

 We are now ready to circle back to the Definition of a meridian \eqref{e:nbhLES}.

 Given a rational surface $(X,\w)$ and a Lagrangian sphere $L\subset X$, we deform $\w$ to $\w_L$ as in \ref{sub:meridians_of_}.  Take the loop $\gamma^{L,\w_L}$ as in Proposition \ref{c:meridian}, and a disk $\sigma\subset\mA_{\w_L}\setminus \mA_{\w_L}^4$ with $\partial\sigma=\gamma^{L,\w_L}$.   Consider the moduli space defined by the following parametrized Cauchy-Riemann problem.

\begin{equation} \label{e:moduli}
     \left\{
        \begin{aligned}
          &\ov\partial_{J_{\sigma(\zeta)}}u=0\\
           &u:\CP^1\to X,\hskip 2mm \sigma: D^2\to \mA_{\w_L},\hskip 2mm\zeta\in D^2 \\
          &[u]=[L], \hskip2mm \sigma|_{\partial D^2}=\gamma^{L,\w_L},
        \end{aligned}
     \right.
\end{equation}

Denote the moduli space of parametrized holomorphic maps satisfying \eqref{e:moduli} as $\eM(\sigma, [L])$.  We emphasize that $\gamma^{L,\w}$ tames both $\w$ and $\w_L$, and we fix the standard orientation on $D^2$ as a unit disk in $\CC$.

The Fredholm property of this counting problem is similar to the usual counting \cite{MS04}, and has been studied in detailed in \cite{BuserelGW, LOrelGW, McDuffBlowup}.  \cite[Proposition 2.8]{BuserelGW} showed this problem has transversality if and only if $\sigma\pitchfork U_{[L]}$.  This is indeed a consequence of \cite[Lemma A.3.6]{MS04}, as well as the proof of \cite[Theorem 3.1.5(II)]{MS04}.  These theorems assert that $d\pi(u,J):T_{(u,J)}\eM([L];\mJ^l)\to T_J\mJ^l$ has its kernel and cokernel both isomorphic to that of $D_u$, the linearization the $\ov\partial$-operator.  Here $\mJ^l$ is the space of $\w$-tamed almost complex structures which are $l$-differentiable, and $\eM([L];\mJ^l)$ is the universal moduli of the $\ov\partial_J$-equation over all $J\in\mJ^l$.  Since $\ker(d\pi(u,J))|_{\sigma(\zeta)}=T_{\sigma(\zeta)}U_{[L]}$, and $\coker(d\pi(u,J))|_{\sigma(\zeta)}=N_{\sigma(\zeta)}U_{[L]}$ for every $\sigma(\zeta)\in U_{[L]}$.  Therefore, $T_\zeta\sigma$ surjects to onto $\coker(D_u)$ if and only if it intersects $U_{[L]}$ transversally when $\ov\partial_{J_{\sigma(\zeta)}}u=0$.

Regular solutions of this moduli space also inherit a natural orientation from that of $\sigma$, following the appendix of \cite{Abr98}.  The determinant line bundle of the linearized $\ov\partial$-operator can be written as $\det(D_u)=\Lambda^{\max}(\ker(D_u))\otimes\Lambda^{\max}(\coker(D_u))$.  Since $U_{[L]}$ is connected, and it contains an almost complex structure $J_{\inte}$, which is integrable in a neighborhood of the unique solution $[u_{\inte}]=[L]$ satisfying $\ov\partial_{J_{\inte}}u_{\inte}=0$.  The natural orientation of $det(D_{u_{\inte}})$ therefore gives a canonical orientation of $det(D_u)$ for all solutions $u$ satisfying $\ov\partial_J u=0$ where $J\in U_{[L]}$.  By positivity of intersections, all $\ov\partial$-solutions in class $[L]$ are unique.  Hence $\Lambda^{\max}\ker(D_u)$ also has a natural orientation as the orientation line bundle of $G=PSL(2,\CC)$.  Therefore, $\coker(D_u)$ is oriented naturally.  As mentioned above, the $\coker(D_u)$ is naturally isomorphic to $\coker(d\pi(u,J_{\sigma(\zeta)}))$, while the latter is identified with the normal fiber of the corresponding $J\in U_{[L]}$.  Therefore, if $(u,J_{\sigma(\zeta)})\in\eM(\sigma,[L])$ is a transversal solution, its sign can be defined as $+1$ if $d\sigma(T_\zeta\sigma)$ has the same orientation as $\coker(d\pi(u,J_{\sigma(\zeta)}))=N_{\sigma(\zeta)}U_{[D]}$, and $-1$ otherwise.  

Therefore, the linking number of the loop $\gamma^{L,\w_L}$ coincides with the algebraic count of solutions for \eqref{e:moduli}.  In summary, we have the following equivalent definition of a meridian.

\begin{lma}\label{l:enumerative}

   The enumerative count of $\#\eM(\sigma,[L])$ coincides with the algebraic intersection of $\sigma$ and $U_{[L]}$ inside the tubular neighborhood $V$.  Therefore, $\gamma^{L,\w_L}$ is a meridian if and only if $\#\eM(\sigma,[L])=\pm1$, for any $\sigma:D^2\to \mA_{\w_L}$, such that $\partial\sigma=\gamma^{L,\w_L}$ and $\sigma\subset V$ which is a tubular neighborhood of $U_{[L]}$.
\end{lma}

  % If we assume that $\sigma$ intersects $\mA^2_{\w_L}$ transversally at $J$

\subsubsection{Proof of Proposition \ref{c:meridian}} % (fold)
\label{sub:proof_of_corollary_c:meridian}

In view of Lemma \ref{l:enumerative}, Proposition \ref{c:meridian} boils down to the following proposition.

\begin{prp}\label{prp:dehnSolution}
   Given a positive rational surface $(X, \w)$ and a Lagrangian sphere $L\subset X$.  Take a Weinstein neighborhood $\eW_L$ of $L$.  Construct $\gamma^{L,\w_L}$ with base point $J_0$ in a decentered tubular neighborhood $V\setminus U_{[L]}$ of $U_{[L]}$, then perturb $\w$ to a form $\w_L$ as in Definition \ref{d:wL}.

   Then the algebraic count of $\#\eM(\sigma,[L])$ of the parametrized Gromov-Witten problem \eqref{e:moduli} is $\pm 1$.
\end{prp}

We first prove a special case when $X=X_5:=(\CP^2\#5\ov\CP^2,\w_{\mon})$, the five point blow-up with a monotone symplectic form.  Take the homology class $E_1-E_2$, which admits an embedded Lagrangian sphere $L_0$ (see \cite{LW12}).

Take a neighborhood $\eW_{L_0}$ of $L_0$, and an $\w_{\mon}$-tamed almost complex structure $I_0$ adjusted to $\partial\eW_{L_0}$ and construct $\w_{L_0}$ and $\gamma^{L_0,\w_{\mon}}$ accordingly.  Note that from \cite[Corollary 1.2]{BLW12}, the choice of $L_0$ is unique up to a symplectomorphism of $X_5$ so is not of our concern.

 % We now take $I_0$ and construct the corresponding $\gamma^{L_0,\w_{\mon}}$ in the neighborhood $\eW_{L_0}$ by applying the push-forward of the smooth isotopy from identity to $\tau_{L_0}^2$.  Extend $\gamma^{L_0,\w_{\mon}}$ to a family of $\w_{L_0}$-tamed almost complex structure parametrized by a disk $\sigma_{L_0}$ and consider the corresponding moduli problem \eqref{e:moduli}.  For convenience, we ask $\sigma_{L_0}(\zeta)=I_0$ outside of $\eW_{L_0}$ for all $\zeta$.  In addition, we may perturb $I_0$ outside $\eW_{L_0}$ generically, so that no $(-2)$ spheres are allowed unless it is contained entirely in $\eW_{L_0}$.

If $\#\eM(\sigma_{L_0},[L_0])=0$, we see from \eqref{e:nbhLES} that $\gamma^{L_0,\w_{\mon}}$ is contractible in $V\backslash U_{[L_0]}$, hence $\sigma_{L_0}$ can be isotoped so that there does not exist $\sigma_{L_0}(\zeta)$-curves inside $\eW_{L_0}$.  Therefore, the disk $\sigma_{L_0}\subset\mA^0_{\w_{L_0}}$ when perturbed generically to avoid also $\mA^4_{\w_{L_0}}$.  Moreover, from Lemma \ref{l:neighborhood}, one may assume $J_{\sigma_{L_0}}=I_0$ outside $\partial\eW_{L_0}$.  Therefore, a generic perturbation outside $\partial\eW_{L_0}$ of the base point $I_0$ also guarantees that $\sigma_{L_0}$ is disjoint from any other $U_D$ for $D\in\mL_{\w_{\mon}}$.

  Therefore, we may assume that $\sigma_{L_0}\subset\mA^0_{\w_{\mon}}$ from Lemma \ref{Combinf}.  But this implies that the squared Dehn twist $\tau_{L_0}^2$ is isotopic to identity: $[\gamma^{L_0,\w_\mon}]=1\in\pi_1(\mA_{\w_{\mon}},I_0)\cong\pi_1(\mS_{\w_{\mon}},\w_\mon)$ is mapped to $[\tau_{L_0}^2]\in\pi_0(Symp_h(X,\w_\mon))$ by the map $j_{\w_\mon}$ in \eqref{e:SES} from Lemma \ref{l:loopDT}.  This is a contradiction with \cite[Example 2.10]{Sei08}.  Therefore, $\#\eM(\sigma_{L_0},[L_0])\neq0$.

  Take a loop $\gamma'\subset\mA_{\w_{L_0}}^0$ based at $I_0$, whose linking number against $U_{[L_0]}$ is $1$.  By Lemma \ref{Combinf}, this is also a loop in $\mA^0_{\w_{\mon}}$.  Therefore, $j_{\w_{\mon}}([\gamma^{L_0,\w_\mon}])=(\#\eM(\sigma_{L_0},[L_0]))\cdot j_{\w_{\mon}}([\gamma'])\in\pi_0(Symp_h(X_5,\w_\mon))$.  However, from \cite{Ev11} we know that $\pi_0(Symp_h(X_5))\cong PB_5(S^2)/\mathbb{Z}_2$.  In \cite[Example 1.13]{Sei08}, Seidel constructed an injective homomorphism $\pi_1(\Conf_5(\CP^2)/\PSL_3(\CC))\to \pi_0(Symp_h(X_5,\w_\mon))$.  We also have $\pi_1(\Conf_5(\CP^2)/\PSL_3(\CC))\cong \pi_1(\Conf_5(\CP^1)/\PSL_2(\CC))=PB_5(S^2)/\ZZ_2$. Note that the image under Seidel's monomorphism of the standard generators of $PB_5(S^2)/\ZZ_2$ matches the generators in \cite{Ev11}, as explained in \cite[Section 5]{Wu13}.  Since all generators in $\pi_1(\Conf_5(\CP^2)/\PSL_3(\CC))$ are given by a meridian around the discriminant locus of colliding two points in $\Conf_5(\CP^2)$, their monodromies corresponds exactly to the squared Dehn twists (see \cite[Lemma 1.11]{Sei08}).  Going back to $\Conf_5(\CP^1)/PGL_2(\CC)$, this shows $\tau_{L_0}^2$ corresponds to a standard pure braid generator, up to a conjugation.

  A pure braid generator cannot be a non-trivial power of any other pure braid elements.  This can be seen from the proof of \cite[Theorem 5(e)]{GG13} (see equation (16) and the paragraph behind it): the abelianization of of $PB_n(S^2)$ is generated by $(n(n-3)+2)/2$ of the standard pure braid generators (different choices of this set of generators are conjugate therefore irrelevant).  Since $[\tau_{L_0}^2]\in \pi_0(Symp_h(X_5))\cong PB_5(S^2)/\mathbb{Z}_2$ is a generator, it yields a primitive element in the abelianization $\mathbb{Z}^6\cong Ab(PB_5(S^2)/\mathbb{Z}_2)$, hence cannot be a power of any other elements.  This forces $\#\eM(\sigma,[L_0])=\pm 1$.

In summary, we have

\begin{lma}\label{l:compactcount}
    % Consider $\gamma^{L_0,\w_{\mon}}$ constructed in Lemma \ref{l:loopDT} near a Lagrangian sphere of class $E_1-E_2$ in $X_5$.  

    For any choice of $\sigma: D^2\to\mA_{\w_\mon}$ with $\partial\sigma=\gamma^{L_0,\w_\mon}$, the algebraic count of solutions of parametrized Gromov-Witten problem \eqref{e:moduli} is $\pm1$.
\end{lma}

\begin{comment}

 which is tamed by some symplectic form that is isotopic to $\w_Z$ via a compactly supported isotopy.  It contains a codimension $2$ submanifold $(\mA^{t}_{\w_Z})^2$ consisting of almost complex structures $J$ which admits an embedded $J$-representative of class $[Z]$.  The proof that $(\mA^{t}_{\w_Z})^2\subset\mA^{t}_{\w_Z}$ form a codimension $2$ submanifold is similar to the compact case due to the lack of compactness issues.

Let us now focus on the case when $X=(T^*S^2, \w_{std})$.  Fix an almost complex structure $J_0$ compatible with $\w_{std}$.  There exists a small perturbation $\w_Z$ of $\w_{std}$ near the zero section $Z$ such that $\w_Z|_{Z}>0$.  Consider $\mA^{t}_{\w_Z}$, the space of almost complex structures tamed by some $\w'$ which is isotopic to $\w_Z$ via a compactly supported smooth isotopy.  In addition, we require these almost complex structures to coincide with $J_0$ outside a compact set.

  Note that we do consider the problem to be of coefficient $\mathbb{Z}$, but we are not concerned of the actual sign of the solutions, hence we omit the discussion of orientations and determine the count up to a sign.  The proof of Lemma \ref{l:cotangentCount} will occupy the rest of the section.  To start, we first give an alternative description of this family Gromov-Witten invariant.

\end{comment}

  % To prove Lemma \ref{l:cotangentCount}, we first consider the following ``compactified model''.
  % Therefore, we have

\begin{proof}[Proof of Proposition \ref{prp:dehnSolution}]

  We first consider a counting problem similar to \eqref{e:moduli} in the local setting of $T^*S^2$.  In this case, denote the zero section as $Z$, and $\w$ can be taken as the standard Liouville form, while the perturbation in a neighborhood $\eW_Z$ is denoted as $\w_Z$.  $\gamma^{Z,\w_Z}$ and $\sigma_Z$ can be defined similarly starting from an almost complex structure $J$ that is cylindrical outside $\eW_Z$.  We may then consider the parametrized Gromov-Witten invariant $\#\eM(\sigma_Z,[Z])$.

  We will apply an SFT argument on an arbitrary positive rational surface $(X,\w)$ containing a Lagrangian sphere $L\subset X$.  Take an almost complex structure such that $(\eW_{L}, J|_{\eW_{L}})\cong(\eW_{Z}, J_Z|_{\eW_{Z}})$ as an open almost complex manifold, while the isomorphism is required to respect the cylindrical structure near the boundary.  Construct a $\sigma$-family from of almost complex structures as before in $\eW_L$.  Since it is fixed outside the neighborhood $\eW_{L}$ and adjusted to its boundary, we may apply a neck-stretching operation and obtain a family of almost complex structures paramatrized by disks $\{\sigma_t\}_{t\in[0,\infty]}\subset\mA_{\w_L}$, where $\sigma_Z:=\sigma_\infty|_{T^*S^2}$ in the limit.  Clearly, along the stretching process, we preserve all solutions that are contained in $\eW_Z$, and by maximum principle, any solution to \eqref{e:moduli} inside $T^*S^2$ stays in the neighborhood $\eW_{Z}$.

  We claim that when $t$ is large enough, $\sigma_t$-solutions to \eqref{e:moduli} are contained entirely in $\eW_{L}$.  Otherwise, there is a sequence of $J_{\sigma_{t_i}(\zeta_{t_i})}$-holomorphic curves $u_i$, whose homology class $[u_i]=[L]$, and their SFT limit $u_\infty$ is a $\sigma_\infty(\zeta)$-holomorphic building for some $\zeta\in D^2$ with non-empty components in both $\eW_{L}$ and its complement.  Since the components in $X\setminus\eW_{L}$ has only negative ends, its index computed by \eqref{e:index} reads

  \begin{equation}\label{e:newindex}
       \mbox{ind}(u)=-2+2s^- +2c_1^{\Phi}(TX)([u])-\sum_{k=1}^{s_1} 2\cov(\gamma_k).
  \end{equation}

By Lemma \ref{l:firstchern}, the total $c_1^\Phi$ of all components in $X\setminus\eW_{L}$ vanishes since it equals $c_1([L])$.  This means at least one of these connected components in $X\setminus\eW_L$ would have $c_1^\Phi(u)\le0$.  Since $\cov(\gamma_k)\ge1$ for each asymptotic, we have $\mbox{ind}(u)<0$ for such a complement.  However, this can always be avoided by choosing $J$ generically in $X\setminus\eW_{L}$. Therefore, all curves contributing to $\#\eM(\sigma_Z,[Z])$ must come from solutions in $\eW_{L}$, which is unaffected by the neck-stretching including their transversality properties.  Therefore, we have

\begin{equation}\label{e:countEqual}
     \#\eM(\sigma_Z,[Z])=\#\eM(\sigma,[L]),
\end{equation}

where the right hand side is the counting in $(X,\w_L)$.  Since the left hand side of \eqref{e:countEqual} is independent of $(X,L)$, we know the count for any $(X,L)$ is $\pm1$ from Lemma \ref{l:compactcount} by taking $X=X_5$.  This concludes Proposition \ref{prp:dehnSolution}.

\end{proof}

\begin{rmk}\label{rem:general}
   As is clear from the proof of Proposition \eqref{c:meridian}, the parametrized Gromov-Witten problem \ref{e:moduli} does not even depend on the fact that $X$ is a rational surface.  Indeed, the whole proof carries over to any symplectic four manifold, except one only has a codimension $2$ submanifold $U_{[L]}$ but not the coarse stratification any more.

   Our computation is similar in many ways to \cite{SmirnovDehn}, where Smirnov established a similar non-vanishing result in the Seiberg-Witten setting.  Indeed, the loop $\gamma^{L,\w_L}$ as we constructed should exactly correspond to the boundary of the Atiyah flop family, and it is possible to approach the non-compact count this way, but we will still need SFT to show the count is independent of $(X,L)$.
\end{rmk}

\printbibliography

\end{document}